
\documentclass{tac}

\usepackage[utf8]{inputenc} 
\usepackage[T1]{fontenc}
\usepackage[english]{babel}
\usepackage{amsfonts, amssymb, amsmath}
\usepackage{mathtools} 
\usepackage{enumitem}
\usepackage{tensor} 

\usepackage[x11names]{xcolor}
\usepackage[hyperfootnotes=false, unicode=true]{hyperref}
\hypersetup{
	pdfauthor={Seerp Roald Koudenburg},
	pdftitle={Algebraic Kan extensions in double categories},
  pdfkeywords={double monad, algebraic Kan extension, free bicommutative Hopf monoid},
  linkbordercolor={NavajoWhite1},
  citebordercolor={LightCyan2},
  urlbordercolor={DarkSeaGreen1}
}

\usepackage{tikz} 
\usetikzlibrary{decorations.markings, matrix, arrows, external, fit, calc}

\tikzstyle{map} = [->, font=\scriptsize]
\tikzstyle{linj} = [left hook->, font=\scriptsize]
\tikzstyle{rinj} = [right hook->, font=\scriptsize]
\tikzstyle{sur} = [->>, font=\scriptsize]
\tikzstyle{cell} = [double,double equal sign distance,-implies, shorten >= 3.5pt, shorten <= 3.5pt, font=\scriptsize]
\tikzstyle{cell125} = [cell, shorten >= 1pt, shorten <= 1pt]
\tikzstyle{eq} = [double,double equal sign distance]

\tikzstyle{iso} = [above, sloped, inner sep=1.5pt]
\tikzstyle{nat} = [above, sloped, inner sep=2pt]
\tikzstyle{desc} = [fill=white, inner sep=2pt]

\tikzstyle{textbaseline} = [baseline={([yshift=-2.8pt]current bounding box.center)}]

\tikzstyle{barred} = [decoration={markings, mark=at position 0.5 with {\draw[-] (0,-1.5pt) -- (0,1.5pt);}}, postaction ={decorate}]

\tikzstyle{math} = [matrix of math nodes, row sep=1em, column sep=1em, text height=1.5ex, text depth=0.25ex, nodes in empty cells]
\tikzstyle{math125em} = [math, row sep=1.25em, column sep=1.25em]
\tikzstyle{math175em} = [math, row sep=1.75em, column sep=1.75em]
\tikzstyle{math2em} = [math, row sep=2em, column sep=2em]
\tikzstyle{tab} = [math, row sep=1.75em, column sep=1.25em]

\makeatletter
\def\slashedarrowfill@#1#2#3#4#5{%
  $\m@th\thickmuskip0mu\medmuskip\thickmuskip\thinmuskip\thickmuskip
   \relax#5#1\mkern-7mu%
   \cleaders\hbox{$#5\mkern-2mu#2\mkern-2mu$}\hfill
   \mathclap{#3}\mathclap{#2}%
   \cleaders\hbox{$#5\mkern-2mu#2\mkern-2mu$}\hfill
   \mkern-7mu#4$%
}
\def\rightslashedarrowfill@{%
  \slashedarrowfill@\relbar\relbar\mapstochar\rightarrow}
\newcommand\xslashedrightarrow[2][]{%
  \ext@arrow 0055{\rightslashedarrowfill@}{#1}{#2}}
\makeatother

\def\slashedrightarrow{\xslashedrightarrow{}}


\newtheoremrm{definition}{Definition}


\providecommand{\corref}[1]{Corollary~\ref{#1}}
\providecommand{\defref}[1]{Definition~\ref{#1}}
\providecommand{\exref}[1]{Example~\ref{#1}}

\providecommand{\lemref}[1]{Lemma~\ref{#1}}
\providecommand{\propref}[1]{Proposition~\ref{#1}}

\providecommand{\thmref}[1]{Theorem~\ref{#1}}



\providecommand{\of}{\circ}
\providecommand{\iso}{\cong}

\providecommand{\brar}{\slashedrightarrow}
\providecommand{\xrar}[1]{\xrightarrow{#1}}
\providecommand{\xlar}[1]{\xleftarrow{#1}}

\providecommand{\Rar}{\Rightarrow}
\providecommand{\xRar}[1]{\xRightarrow{#1}}

\providecommand{\into}{\hookrightarrow}

\providecommand{\eps}{\varepsilon}

\DeclareMathOperator{\dash}{--}
\providecommand{\ndash}{\nobreakdash-}

\providecommand{\tens}{\otimes}

\providecommand{\ul}[1]{\underline{#1}{}}
\providecommand{\ull}[1]{\ul{\ul{#1}}{}}

\providecommand{\mf}[1]{\mathfrak{#1}}
\providecommand{\mb}[1]{\mathbb{#1}}

\providecommand{\brcs}[1]{\lbrace #1 \rbrace}

\providecommand{\brks}[1]{\lbrack #1 \rbrack}
\providecommand{\bigbrks}[1]{\bigl\lbrack #1 \bigr\rbrack}

\providecommand{\pars}[1]{\left(#1\right)}
\providecommand{\bigpars}[1]{\bigl(#1\bigr)}
\providecommand{\Bigpars}[1]{\Bigl(#1\Bigr)}
\providecommand{\lns}[1]{\lvert#1\rvert}

\providecommand{\angles}[1]{\langle#1\rangle}

\providecommand{\gen}[1]{\angles{#1}}


\providecommand{\ol}[1]{\overline{#1}}

\providecommand{\set}[1]{\brcs{#1}}

\providecommand{\djunion}{\sqcup}

\providecommand{\sym}{\Sigma}

\providecommand{\card}[1]{\lns{#1}}

\providecommand{\pb}[2]{\tensor[_{#1}]{\times}{_{#2}}}


\providecommand{\natarrow}{\Rightarrow}

\providecommand{\map}[3]{#1\colon#2\to#3}

\providecommand{\nat}[3]{#1\colon#2\natarrow#3}
\providecommand{\cell}[3]{#1\colon#2\Rightarrow#3}

\providecommand{\hmap}[3]{#1\colon#2\slashedrightarrow#3}

\providecommand{\inv}[1]{{#1}^{-1}}

\DeclareMathOperator{\id}{id}

\DeclareMathOperator{\ob}{ob}

\providecommand{\ladj}{\dashv}

\providecommand{\op}[1]{#1^\textup{op}}
\providecommand{\co}[1]{#1^\textup{co}}

\providecommand{\catvar}[1]{\mathcal{#1}}

\providecommand{\Dbl}{\mathsf{Dbl}}

\providecommand{\2}{\mathsf 2}
\renewcommand{\C}{\catvar C}
\providecommand{\D}{\catvar D}
\providecommand{\E}{\catvar E}
\renewcommand{\L}{\catvar L}
\providecommand{\K}{\catvar K}

\providecommand{\V}{\catvar V}

\providecommand{\Set}{\mathsf{Set}}
\providecommand{\Cat}{\mathsf{Cat}}
\providecommand{\enCat}[1]{#1\text-\Cat}
\providecommand{\twoCat}{2\text{-}\Cat}

\providecommand{\Dbl}{\mathsf{Dbl}}
\providecommand{\lDbl}{\Dbl_\textup l}
\providecommand{\oDbl}{\Dbl_\textup o}
\providecommand{\noDbl}{\Dbl_\textup{no}}
\providecommand{\nlDbl}{\Dbl_\textup{nl}}

\providecommand{\Tens}{\bigotimes}

\providecommand{\inhom}[1]{\brks{#1}}

\providecommand{\PAlg}[1]{\mathsf{Alg}(#1)}
\providecommand{\sMonCat}{\mathsf{sMonCat}}

\providecommand{\Mat}[1]{#1\text-\mathsf{Mat}}
\providecommand{\Span}[1]{\mathsf{Span}(#1)}
\providecommand{\Mod}[1]{\mathsf{Mod}(#1)}
\providecommand{\Prof}{\mathsf{Prof}}
\providecommand{\enProf}[1]{#1\text-\Prof}
\providecommand{\inProf}[1]{\Prof(#1)}

\providecommand{\Alg}[2]{\mathsf{Alg}_\textup{#1}\!\pars{#2}}
\providecommand{\MonProf}[2]{#1\text-\mathsf{MonProf}_{\textup{#2}}}
\providecommand{\sMonProf}[2]{#1\text-\mathsf{sMonProf}_{\textup{#2}}}
\providecommand{\hc}{\odot}

\providecommand{\tab}[1]{\gen{#1}}

\title{Algebraic Kan extensions in double categories}
\author{Seerp Roald Koudenburg}
\address{Cortenhoeve 14\\2411 JM Bodegraven\\The Netherlands}
\eaddress{roaldkoudenburg@gmail.com}
\copyrightyear{2015}
\thanks{Many of the results in this paper first appeared as part of my PhD thesis ``Algebraic weighted colimits'' that was written under the guidance of Simon Willerton. I would like to thank Simon for his support and encouragement. I thank the anonymous referee for helpful suggestions and the prompt review of this paper, and also the University of Sheffield for its financial support of my PhD studies.}
\keywords{double monad, algebraic Kan extension, free bicommutative Hopf monoid}
\amsclass{18D05, 18C15, 18A40, 16T05}

\begin{document}
	\maketitle
	\begin{abstract}
		We study Kan extensions in three weakenings of the Eilenberg-Moore double category associated to a double monad, that was introduced by Grandis and Paré. To be precise, given a normal oplax double monad $T$ on a double category $\K$, we consider the double categories consisting of pseudo $T$-algebras, `weak' vertical $T$-morphisms, horizontal $T$-morphisms and $T$-cells, where `weak' means either `lax', `colax' or `pseudo'. Denoting these double categories by $\Alg wT$, where w = l, c or ps accordingly, our main result gives, in each of these cases, conditions ensuring that (pointwise) Kan extensions can be lifted along the forgetful double functor $\Alg wT \to \K$. As an application we recover and generalise a result by Getzler, on the lifting of pointwise left Kan extensions along symmetric monoidal enriched functors. As an application of Getzler's result we prove, in suitable symmetric monoidal categories, the existence of bicommutative Hopf monoids that are freely generated by cocommutative comonoids.
	\end{abstract}

	\section{Introduction}
	When given a symmetric monoidal functor $\map jAB$ one, instead of considering `ordinary' Kan extensions along $j$, often considers `symmetric monoidal' Kan extensions along $j$. Precisely, while we can consider Kan extensions along $j$ in the $2$-category $\Cat$ of categories, functors and transformations, it is often more useful to consider such extensions in the $2$-category $\mathsf{sMonCat}$ of symmetric monoidal categories, symmetric monoidal functors and monoidal transformations.
	
	For example consider algebras of `PROPs': informally, a PROP $\mb P$ is a certain kind of symmetric monoidal category that describes a type of algebraic structure, on the objects $A$ of any symmetric monoidal category, that involves operations of the form $A^{\tens m} \to A^{\tens n}$. There is, for instance, a PROP that describes monoids, and one that describes Hopf monoids. An algebra $A$ of $\mb P$, in a symmetric monoidal category $M$, is then an object $A$ in $M$ that is equipped with the algebraic structure described by $\mb P$---formally, $A$ is simply a symmetric monoidal functor $\map A{\mb P}M$. Presenting algebraic structures as symmetric monoidal functors like this has the advantage that, often, freely generated such structures can be constructed as left Kan extensions in $\mathsf{sMonCat}$. For example, the PROP $\mb C$ of cocommutative comonoids embeds into the PROP $\mb H$ of bicommutative Hopf monoids, and the bicommutative Hopf monoids in $M$ that are freely generated by cocommutative comonoids correspond precisely to left Kan extensions along the embedding $\mb C \into \mb H$. We will show in \S6 that, as a consequence of our main result, all such Kan extensions exist under reasonable conditions on $M$.
	
	As a second example, Getzler shows in \cite{Getzler09} that `operads' in $M$---which describe algebraic structures involving operations of the form $A^{\tens n} \to A$---can also be regarded as symmetric monoidal functors $\mathsf T \to M$, where $\mathsf T$ is a certain symmetric monoidal category that describes the algebraic structure of operads. Several generalisations of operads, such as `cyclic operads' and `modular operads' can be similarly presented. The main result of \cite{Getzler09} gives conditions ensuring that left Kan extensions can be `lifted' along the forgetful $2$-functor $\mathsf{sMonCat} \to \Cat$\footnote{In fact it gives conditions ensuring that left Kan extensions can be lifted along the forgetful $2$-functor $\V\text -\mathsf{sMonCat} \to \enCat\V$, from the $2$-category of symmetric monoidal categories enriched in a suitable closed symmetric monoidal category $\V$, to the $2$-category of $\V$-categories.} which, as a consequence, gives a coherent way of freely generating many types of generalised operad. To describe this `lifting' of Kan extensions more precisely, let us consider symmetric monoidal functors $\map jAB$ and $\map dAM$: we say that the ordinary left Kan extension $\map lBM$ of $d$ along $j$, in $\Cat$, can be lifted to $\mathsf{sMonCat}$ whenever $l$ admits a `canonical' symmetric monoidal structure that makes it into the left Kan extension of $d$ along $j$ in $\mathsf{sMonCat}$.
	
	Using the language of double categories, the horizontal dual of the main result of this paper, which is stated below, can be thought of as generalising Getzler's result of `lifting symmetric monoidal Kan extensions' to the broader idea of `lifting algebraic Kan extensions'. In the remainder of this introduction we will informally explain some of the details of the main result and at the same time describe the contents of this paper. We will however not explain the condition (p) except for remarking that, to recover Getzler's result, we will apply the main result to the double monad whose algebras are symmetric monoidal categories and in that case (p) holds as soon as the tensor product of $M$ preserves colimits in each variable.
	
	\begin{theorem}[Horizontal dual of \thmref{main result}.] \label{horizontal dual main theorem}
  	Let $T$ be a normal pseudo double monad on a double category $\K$, and assume that $T$ is pointwise left exact. Let `weak' mean either `colax', `lax' or `pseudo'. Given pseudo $T$\ndash algebras $A$, $B$ and $M$, consider the following conditions on a horizontal $T$\ndash morphism \mbox{$\hmap JAB$} and a weak vertical $T$\ndash morphism $\map dAM$:
  	\begin{enumerate}
  		\item[(p)]	the algebraic structure of $M$ preserves the pointwise left Kan extension of $d$ along $J$;
  		\item[(e)]	the structure cell of $J$ is pointwise left $d$-exact;
  		\item[(l)]	the forgetful double functor $\Alg wT \to \K$ lifts the pointwise left Kan extension of $d$ along $J$.
  	\end{enumerate}
  	The following hold:
  	\begin{enumerate}[label=\textup{(\alph*)}]
  		\item if `weak' means `colax' then (e) implies (l);
  		\item if `weak' means `lax' then (p) implies (l);
  		\item if `weak' means `pseudo' then any two of (p), (e) and (l) imply the third.
  	\end{enumerate}
  \end{theorem}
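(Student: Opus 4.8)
The plan is to begin with the pointwise left Kan extension $(l,\eta)$ of $d$ along $J$ in the underlying double category $\K$---its existence being presupposed by each of the conditions (p), (e) and (l)---and then, under the appropriate hypothesis, to equip $l$ with a weak vertical $T$\ndash morphism structure making $\eta$ a $T$\ndash cell that exhibits $l$ as the pointwise left Kan extension in $\Alg wT$. The backbone of every case is the pointwise left exactness of $T$: applying $T$ repeatedly shows that $(T^n l, T^n\eta)$ is the pointwise left Kan extension of $T^n d$ along $T^n J$ for every $n$, which is what lets the defining universal property be invoked at the level where the algebra structures $a$, $b$, $m$ and the structure cells $\bar d$ and $\bar J$ live.

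For the colax case (a) the colax structure cell of a vertical $T$\ndash morphism points in the direction that permits a direct construction: I would paste $\bar d$ and $\bar J$ onto the unit $T\eta$ and factor the result through $T\eta$ by the universal property of $(Tl,T\eta)$, taking $\bar l$ to be this factorisation. The associativity and unit axioms for $\bar l$ then reduce, via the uniqueness clause of the universal property applied at levels $T$ and $T^2$, to the corresponding axioms for $\bar d$ and $\bar J$. This already makes $l$ a colax vertical $T$\ndash morphism and $\eta$ a $T$\ndash cell; the remaining---and genuinely new---point is the universal property of $\eta$ inside $\Alg cT$, namely that any $T$\ndash cell factoring through $\eta$ does so through a $T$\ndash cell. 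This is exactly the content controlled by hypothesis (e): the pointwise left $d$\ndash exactness of $\bar J$ is what guarantees that the one\ndash dimensional factorisations are compatible with the algebra structure, upgrading the universal property of $l$ in $\K$ to one in $\Alg cT$.

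For the lax case (b) the structure cell points the opposite way, so the factorisation above is no longer available; here I would use (p) instead. Since the algebraic structure $m$ of $M$ preserves the Kan extension, $m \of Tl$ is the pointwise left Kan extension of $m \of Td$ along $TJ$ (with unit the whiskering of $T\eta$ by $m$), and pasting $\bar d$ and factoring through this Kan extension yields the lax structure cell $\bar l$; preservation by $m$ then simultaneously furnishes the algebra\ndash morphism axioms and the lifted universal property. For the pseudo case (c) both of the previous constructions are available, the structure cells $\bar d$ and $\bar J$ are invertible, and a diagram chase using the uniqueness in the universal property shows that, among (p), (e) and (l), any two force the third.

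The step I expect to be the main obstacle is the verification of the lifted universal property: proving that a cell which factors through $\eta$ in $\K$ factors through it by a genuine morphism of algebras. This is where the bulk of the work lies, since it requires tracking the interaction of $\bar d$, $\bar J$ and the algebra multiplications through the universal factorisations at levels $T$ and $T^2$, and it is precisely this interaction that the exactness hypothesis (e) and the preservation hypothesis (p) are tailored to control.
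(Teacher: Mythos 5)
There is a genuine gap, and it starts with your ``backbone''. Pointwise left exactness of $T$ means only that the cells $\mu_J$ and $\eta_J$ are pointwise left exact (\defref{definition: right exact monad}); it does \emph{not} say that $T$ preserves pointwise left Kan extensions, so you are not entitled to the claim that $(T^nl, T^n\eta)$ is the pointwise left Kan extension of $T^nd$ along $T^nJ$, and no hypothesis of the theorem gives $T\eta$ a universal property of its own. The cells that actually carry a universal property are $1_m \of T\eta$ (by hypothesis (p)) and $\eta \of \bar J$ (by hypothesis (e)). The exactness of $\mu_J$ and $\eta_J$ is used for something else entirely: it enters when checking the associativity and unit axioms of the constructed structure cell $\bar l$, by observing that both sides of each axiom agree after composition with a composite such as $\eta \of \bar J \of \mu_J$ (colax case) or $1_m \of T\eta \of \mu_J$ (lax case), which defines a pointwise left Kan extension because (e), respectively (p), holds \emph{and} $\mu_J$, $\eta_J$ are pointwise left exact.

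This misreading makes your colax case collapse. A colax structure cell on $l$ has the form $\cell{\bar l}{l \of b}{m \of Tl}$, and the $T$-cell axiom for $\eta$ reads $\bar d \hc (1_m \of T\eta) = (\eta \of \bar J) \hc \bar l$; so $\bar l$ must arise as the factorisation of $\bar d \hc (1_m \of T\eta)$ through the cell $\eta \of \bar J$. Any factorisation through $T\eta$ or $1_m \of T\eta$ would produce a cell whose vertical \emph{source} is $m \of Tl$, i.e.\ a lax structure cell pointing the wrong way---and such a factorisation is unavailable anyway without (p). Hypothesis (e) is therefore not, as you suggest, merely what upgrades the one-dimensional universal property at the end: it is precisely what makes $\eta \of \bar J$ define $l \of b$ as the pointwise left Kan extension of $d \of a$ along $TJ$, and hence what produces $\bar l$ at all (it is then used a second time, as you anticipate, to show that factorisations of $T$-cells through $\eta$ are again $T$-cells). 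Your lax case is essentially the paper's: there one does factor $\bar d \hc (\eta \of \bar J)$ through $1_m \of T\eta$ using (p). Note finally that verifying the $T$-cell axiom for the factorisation of a cell with horizontal source $J \hc H$ requires the compositor $T_\hc$ to be invertible, which is where the hypothesis that $T$ be \emph{pseudo} enters in the pointwise case; this is absent from your sketch.
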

	
	We start in \S2 by recalling the relevant terminology on double categories. Briefly, the features of double categories that distinguish them from bicategories are that, besides objects, they consist of two types of morphisms, `vertical' ones denoted $\map fAB$ and `horizontal' ones denoted $\hmap JAB$, as well as cells that are shaped like squares, which can be composed both vertically and horizontally. Amongst others, we will recall the notions of `restriction' and `extension' of a horizontal morphism along vertical morphisms, as well as the notion of `tabulation' of a horizontal morphism---the latter can be thought of as generalising the notion of comma object in $2$-categories. Following this we recall the notion of `left Kan extension' in any double category $\K$ which, as can be deduced from the way the main result is stated, defines the extension of a vertical morphism $\map dAM$ along a horizontal morphism $\hmap JAB$; the resulting extension, if it exists, is a vertical morphism of the form $\map lBM$ which, like in the case of $2$-categories, is defined by a cell satisfying a universal property. In fact, the notion of Kan extension in $\K$ generalises that of Kan extension in the $2$-category $V(\K)$, which is the `vertical part' of $\K$, as soon as $\K$ has all restrictions. Double categories that have all restrictions are called `equipments'.
	
	We continue by recalling the stronger notion of `pointwise' Kan extension and, to get some feeling for it, study in detail such extensions in the equipment $\enProf\V$ consisting of categories enriched in a suitable monoidal category $\V$, $\V$-functors, `$\V$-profunctors' and their transformations. In doing so we are naturally lead to an extension of the notions of `weighted limit' and `Kan extension along $\V$-functors' to the setting in which $\V$ is just a monoidal category, where classically (see e.g.\ \cite{Kelly82}) $\V$ is assumed to be closed symmetric monoidal. At the end of \S2 we recall the notion of `pointwise left exactness', which is crucial in the statement of the main result: a cell of a double category is called pointwise left exact if, whenever it is vertically postcomposed with a cell defining a pointwise left Kan extension, the resulting composite defines again a such an extension. For example any $\V$-natural transformation of $\V$-functors that satisfies the `left Beck-Chevalley condition', in the sense of e.g.\ \cite{Guitart80}, gives rise to a pointwise left exact cell in $\enProf\V$.
	
	In \S3 we recall the description of the $2$-category consisting of double categories, so-called `normal pseudo' double functors, and double transformations; by a normal pseudo double monad $T = (T, \mu, \eta)$ on a double category $\K$, as in the main result, we simply mean a monad on $\K$ in this $2$-category. We remark that, like all double transformations, the multiplication $\mu$ and the unit $\eta$ consist of a natural family of vertical morphisms, one for each object in $\K$, as well as a natural family of cells, one for each horizontal morphism in $\K$. We call the double monad $T$ pointwise left exact, as is required in the main result, whenever each of these cells is pointwise left exact.
	
	Any normal pseudo double monad $T$ on $\K$ induces a strict $2$-monad on the vertical part $V(\K)$ of $\K$, so that we can consider pseudo $V(T)$-algebras in $V(\K)$, in the usual $2$-categorical sense of e.g.\ \cite{Street74}, as well as `weak' $V(T)$-morphisms between them, where we take `weak' to mean either `lax', `colax' or `pseudo'. Moreover, we shall consider a notion of `horizontal $T$-morphism' between pseudo $V(T)$\ndash algebras, which generalises slightly a notion introduced in \cite{Grandis-Pare04}, and show that, for each choice of `weak', pseudo $V(T)$\ndash algebras, weak $V(T)$-morphisms, horizontal $T$-morphisms, together with an appropriate notion of `$T$-cell', form a double category $\Alg wT$, where the subscript $\textup w \in \set{\textup c, \textup l, \textup{ps}}$ according to the choice of weakness. We remark that, without going into details, a horizontal $T$-morphism $A \brar B$ is a horizontal morphism $\hmap JAB$ that is equipped with a cell $\bar J$ defining its algebraic structure. If the pointwise left Kan extension of a map $\map dAM$ along $J$ exists in $\K$ then its defining cell can be vertically postcomposed with $\bar J$ and, in these terms, condition (e) of the main result means that the resulting composite defines again a pointwise left Kan extension.
	
	In \S4 we consider Kan extensions in $\Alg wT$. We show that some restrictions (of horizontal morphisms) can be lifted along the double functor $\Alg wT \to \K$, that forgets the algebraic structure, and that all tabulations can be lifted. We conclude that, if $\K$ is an equipment that has so-called `opcartesian' tabulations, then pointwise Kan extensions in $\Alg wT$ can be defined in terms of ordinary Kan extensions, in a way that is analogous to Street's definition of pointwise Kan extension in $2$-categories, that was introduced in \cite{Street74}.
	
	Finally, \S5 is devoted to stating and proving the main result. We show that Getzler's result can be recovered and, in \S6, that the existence of freely generated bicommutative Hopf monoids can be obtained as an application, as promised.
		
	\section{Kan extensions in double categories}
	Throughout this paper the terminology and notation of \cite{Koudenburg14a} are used, which we recall here.
	
	\subsection{Double categories.}
	By a double category we mean a weakly internal category in the $2$-category $\Cat$ of categories, functors and natural transformations, as follows.
	\begin{definition} \label{double category}
		A \emph{double category} $\K$ consists of a diagram of functors
		\begin{displaymath}
			\begin{tikzpicture}[textbaseline]
    		\matrix(m)[math, column sep=2em]{\K_\textup c \pb RL \K_\textup c & \K_\textup c & \K_\textup v \\};
    		\path[map]  (m-1-1) edge[transform canvas={yshift=6pt}] node[above] {$\pi_1$} (m-1-2)
    												edge[transform canvas={yshift=-1.5pt}] node[above, inner sep=0.75pt] {$\hc$} (m-1-2)
    												edge[transform canvas={yshift=-6pt}] node[below] {$\pi_2$} (m-1-2)
        		        (m-1-2) edge[transform canvas={yshift=5pt}] node[above] {$L$} (m-1-3)
        		                edge[transform canvas={yshift=-5pt}] node[below] {$R$} (m-1-3)
        		        (m-1-3) edge[transform canvas={yshift=-1.5pt}] node[above, inner sep=0.75pt] {$1$} (m-1-2);
  		\end{tikzpicture}
		\end{displaymath}
		 (where $\K_\textup c \pb RL \K_\textup c$ is the pullback of $R$ and $L$, with projections $\pi_1$ and $\pi_2$), such that
		\begin{displaymath}
  		L \of \hc = L \of \pi_1, \qquad R \of \hc = R \of \pi_2 \qquad \text{and} \qquad L \of 1 = \id = R \of 1,
  	\end{displaymath}
		together with natural isomorphisms
		\begin{displaymath}
			\mf a\colon (J \hc H) \hc K \iso J \hc (H \hc K), \quad \mf l\colon 1_A \hc M \iso M \quad \text{and} \quad \mf r\colon M \hc 1_B \iso M,
		\end{displaymath}
		for all $(J, H, K) \in \K_\textup c \pb RL \K_\textup c \pb RL \K_\textup c$ and $M \in \K_\textup v$ with $LM = A$ and $RM = B$. The natural isomorphisms $\mf a$, $\mf l$ and $\mf r$ are required to satisfy the usual coherence axioms for a monoidal category or bicategory (see e.g.\ Section VII.1 of \cite{MacLane98}), while their images under both $R$ and $L$ must be identities.
	\end{definition}
	
	The category $\K_\textup v$ consists of the objects and \emph{vertical morphisms} of $\K$, while the objects and morphisms of $\K_\textup c$ form the \emph{horizontal morphisms} and \emph{cells} of $\K$. We denote a horizontal morphism $J \in \K_\textup c$ with $LJ = A$ and $RJ = B$ as a barred arrow $\hmap JAB$, while a cell $\map\phi JK \in \K_\textup c$ with $L\phi = \map fAC$ and $R\phi = \map gBD$ will be depicted as
	\begin{equation} \label{cell}
	  \begin{tikzpicture}[textbaseline]
	    \matrix(m)[math175em]{A & B \\ C & D \\};
	    \path[map]  (m-1-1) edge[barred] node[above] {$J$} (m-1-2)
	                        edge node[left] {$f$} (m-2-1)
	                (m-1-2) edge node[right] {$g$} (m-2-2)
	                (m-2-1) edge[barred] node[below] {$K$} (m-2-2);
	    \path[transform canvas={shift={($(m-1-2)!(0,0)!(m-2-2)$)}}] (m-1-1) edge[cell] node[right] {$\phi$} (m-2-1);
	  \end{tikzpicture}
	\end{equation}
	and denoted by $\cell\phi JK$; we call $J$ and $K$ the \emph{horizontal source} and \emph{target} of $\phi$, while $f$ and $g$ are called its \emph{vertical source} and \emph{target}. A cell whose vertical source and target are identities is called \emph{horizontal}.
	
	The compositions of $\K_\textup v$ and $\K_\textup c$, which are associative and unital, define vertical compositions for the vertical morphisms and cells of $\K$, both of which we denote by $\of$, and whose identities are denoted $\map{\id_A}AA$ and $\cell{\id_J}JJ$ (which is a horizontal cell). The functors $\map\hc{\K_\textup c \pb RL \K_\textup c}{\K_\textup c}$ and $\map 1{\K_\textup v}{\K_\textup c}$ define horizontal compositions for the horizontal morphisms and cells of $\K$, which are associative up to invertible horizontal cells $\mf a\colon (J \hc H) \hc K \iso J \hc (H \hc K)$, that are called \emph{associators}, and unital up to invertible horizontal cells $\mf l\colon 1_A \hc M \iso M$ and $\mf r\colon M \hc 1_B \iso M$, called \emph{unitors}. A cell $\phi$ as in \eqref{cell} that has units $J = 1_A$ and $K = 1_C$ as horizontal source and target is called \emph{vertical}; we will often denote it by the more descriptive $\cell\phi fg$.
	
	To make our drawings of cells more readable we will depict both vertical identities and horizontal units simply as $\begin{tikzpicture}[textbaseline] \path (0.2,0) edge[eq] (0.7,0); \node at (0,0) {$A$}; \node at (0.9,0) {$A$};\end{tikzpicture}$. Likewise when writing down, or depicting, compositions of cells we will often leave out the associators and unitors.
	
	Every double category $\K$ contains both a \emph{vertical $2$-category} $V(\K)$, consisting of its objects, vertical morphisms and vertical cells, as well as a \emph{horizontal bicategory} $H(\K)$, consisting of its objects, horizontal morphisms and horizontal cells. For details see Definition~1.8 of \cite{Koudenburg14a}. Like $2$-categories, any double category $\K$ has both a \emph{vertical dual} $\op \K$, that is given by taking $(\op \K)_\textup v = \op{(\K_\textup v)}$ and $(\op \K)_\textup c = \op{(\K_\textup c)}$, and a \emph{horizontal dual} $\co \K$, that is obtained by swapping the functors $L$ and $\map R{\K_\textup c}{\K_\textup v}$; for details see Definition~1.7 of \cite{Koudenburg14a}.
	
	\begin{example} \label{example: profunctors}
		The archetypal double category is that of profunctors. Denoted $\Prof$, it has small categories as objects and functors as vertical morphisms, while its horizontal morphisms  $\hmap JAB$ are \emph{profunctors}, that is functors of the form \mbox{$\map J{\op A \times B}\Set$}. A cell $\phi$ in $\Prof$, of the form \eqref{cell}, is a natural transformation \mbox{$\nat\phi J{K(f,g)}$} where $K(f, g) = K \of (\op f \times g)$. The horizontal composite \mbox{$J \hc H$} of profunctors $\hmap JAB$ and $\hmap HBE$ is given by choosing a coequaliser \mbox{$(J \hc H)(x, z)$} of each pair of functions
		\begin{equation} \label{composite of unenriched profunctors}
			\coprod\limits_{\map v{y_1}{y_2} \in B} J(x, y_1) \times H(y_2, z) \rightrightarrows \coprod\limits_{y \in B} J(x, y) \times H(y, z),
		\end{equation}
		that are induced by postcomposing the maps in $J(x, y_1)$ with $\map v{y_1}{y_2}$ and precomposing the maps in $H(y_2, z)$ with $v$. The unit profunctor $\hmap{1_A}AA$ is given by the hom-sets $1_A(x_1, x_2) = A(x_1, x_2)$.	We shall describe a $\V$-enriched variant of $\Prof$, where $\V$ is a suitable monoidal category, in detail in \exref{example: enriched profunctors}.
	\end{example}
	
	\begin{example} \label{example: spans}
		A \emph{span} $\hmap JAB$ in a category $\E$ is a diagram $A \xlar{d_0} J \xrar{d_1} B$ in $\E$. Spans can be composed as soon as $\E$ has pullbacks; with this composition objects and morphisms of $\E$, together with spans in $\E$ and their morphisms, form a double category $\Span\E$. Details can be found in Example 1.5 of \cite{Koudenburg14a}.
	\end{example}
	
	\begin{example} \label{example: matrices}
		Given a category $\V$ and sets $A$ and $B$, a \emph{$\V$-matrix} $\hmap JAB$ is simply a family of $\V$-objects $J(x,y)$, one for each pair of objects $x \in A$ and $y \in B$. If $\V$ is equipped with a monoidal structure $(\tens, 1)$, and has coproducts that are preserved by $\tens$ on both sides, then sets, functions between sets and $\V$-matrices form a double category $\Mat\V$ as follows. A cell $\phi$ in $\Mat\V$, of the form \eqref{cell}, is given by a family of $\V$-maps $\map{\phi_{x,y}}{J(x,y)}{K(fx,gy)}$, while the horizontal composite $J \hc H$ of $\hmap JAB$ and $\hmap HBE$ is given by `matrix multiplication':
		\begin{displaymath}
			(J \hc H)(x, z) = \coprod_{y \in B} J(x, y) \tens H(y, z).
		\end{displaymath}
		The unit matrix $\hmap{1_A}AA$ is given by $1_A(x,x) = 1$ and $1_A(x_1, x_2) = \emptyset$, the initial object of $\V$, whenever $x_1 \neq x_2$.
	\end{example}
	
	\subsection{Equipments.}
	Important to the theory of double categories are the notions of cartesian and opcartesian cells. A cartesian cell defines the restriction of a horizontal morphism along a pair of vertical morphisms and, dually, opcartesian cells define extensions, as follows.
	\begin{definition} \label{definition: cartesian and opcartesian cells}
		The cell $\phi$ on the left below is called \emph{cartesian} if any cell $\psi$, as in the middle, factors uniquely through $\phi$ as shown.
		\begin{displaymath}
  		\begin{tikzpicture}[textbaseline]
    		\matrix(m)[math175em]{A & B \\ C & D \\};
    		\path[map]  (m-1-1) edge[barred] node[above] {$J$} (m-1-2)
                        		edge node[left] {$f$} (m-2-1)
		                (m-1-2) edge node[right] {$g$} (m-2-2)
    		            (m-2-1) edge[barred] node[below] {$K$} (m-2-2);
    		\path[transform canvas={shift={($(m-1-1)!0.5!(m-2-1)$)}}] (m-1-2) edge[cell] node[right] {$\phi$} (m-2-2);
  		\end{tikzpicture} \quad\quad \begin{tikzpicture}[textbaseline]
    		\matrix(m)[math175em]{X & Y \\ A & B \\ C & D \\};
    		\path[map]  (m-1-1) edge[barred] node[above] {$H$} (m-1-2)
        		                edge node[left] {$h$} (m-2-1)
        		        (m-1-2) edge node[right] {$k$} (m-2-2)
        		        (m-2-1) edge node[left] {$f$} (m-3-1)
        		        (m-2-2) edge node[right] {$g$} (m-3-2)
        		        (m-3-1) edge[barred] node[below] {$K$} (m-3-2);
    		\path[transform canvas={shift={($(m-2-1)!0.5!(m-1-1)$)}}] (m-2-2) edge[cell] node[right] {$\psi$} (m-3-2);
  		\end{tikzpicture} = \begin{tikzpicture}[textbaseline]
    		\matrix(m)[math175em]{X & Y \\ A & B \\ C & D \\};
    		\path[map]  (m-1-1) edge[barred] node[above] {$H$} (m-1-2)
        		                edge node[left] {$h$} (m-2-1)
            		    (m-1-2) edge node[right] {$k$} (m-2-2)
            		    (m-2-1) edge[barred] node[below] {$J$} (m-2-2)
            		            edge node[left] {$f$} (m-3-1)
            		    (m-2-2) edge node[right] {$g$} (m-3-2)
            		    (m-3-1) edge[barred] node[below] {$K$} (m-3-2);
    		\path[transform canvas={shift=(m-2-1))}]
        		        (m-1-2) edge[cell] node[right] {$\psi'$} (m-2-2)
        		        (m-2-2) edge[transform canvas={yshift=-0.3em}, cell] node[right] {$\phi$} (m-3-2);
  		\end{tikzpicture} \quad\quad \begin{tikzpicture}[textbaseline]
    		\matrix(m)[math175em]{A & B \\ C & D \\ X & Y \\};
    		\path[map]  (m-1-1) edge[barred] node[above] {$J$} (m-1-2)
        		                edge node[left] {$f$} (m-2-1)
        		        (m-1-2) edge node[right] {$g$} (m-2-2)
        		        (m-2-1) edge node[left] {$h$} (m-3-1)
        		        (m-2-2) edge node[right] {$k$} (m-3-2)
        		        (m-3-1) edge[barred] node[below] {$L$} (m-3-2);
    		\path[transform canvas={shift={($(m-2-1)!0.5!(m-1-1)$)}}] (m-2-2) edge[cell] node[right] {$\chi$} (m-3-2);
  		\end{tikzpicture} = \begin{tikzpicture}[textbaseline]
    		\matrix(m)[math175em]{A & B \\ C & D \\ X & Y \\};
    		\path[map]  (m-1-1) edge[barred] node[above] {$J$} (m-1-2)
        		                edge node[left] {$f$} (m-2-1)
            		    (m-1-2) edge node[right] {$g$} (m-2-2)
            		    (m-2-1) edge[barred] node[below] {$K$} (m-2-2)
            		            edge node[left] {$h$} (m-3-1)
            		    (m-2-2) edge node[right] {$k$} (m-3-2)
            		    (m-3-1) edge[barred] node[below] {$L$} (m-3-2);
    		\path[transform canvas={shift=(m-2-1))}]
        		        (m-1-2) edge[cell] node[right] {$\phi$} (m-2-2)
        		        (m-2-2) edge[transform canvas={yshift=-0.3em}, cell] node[right] {$\chi'$} (m-3-2);
  		\end{tikzpicture}
		\end{displaymath}
		Vertically dual, the cell $\phi$ is called \emph{opcartesian} if any cell $\chi$ as on the right factors uniquely through $\phi$ as shown.
	\end{definition}
	If a cartesian cell like $\phi$ exists then we call $J$ the \emph{restriction of $K$ along $f$ and $g$}, and write $K(f, g) = J$; if $K = 1_C$ then we write $C(f, g) = 1_C(f,g)$. By their universal property any two cartesian cells defining the same restriction factor through each other as invertible horizontal cells. Moreover, since the vertical composite of two cartesian cells is again cartesian, and since vertical identities $\id_K$ are cartesian, it follows that restrictions are pseudofunctorial, in the sense that $K(f, g)(h, k) \iso K(f \of h, g \of k)$ and $K(\id, \id) \iso K$. Dually, if an opcartesian cell like $\phi$ exists then we call $K$ the \emph{extension of $J$ along $f$ and $g$}; like restrictions, extensions are unique up to isomorphism and pseudofunctorial.  We shall usually not name cartesian and opcartesian cells, but simply depict them like the two cells below. 
	
	For each vertical morphism $\map fAC$ the restriction $f_* = \hmap{C(f, \id)}AC$, if it exists, is called the \emph{companion} of $f$; it is defined by a cartesian cell as on left below. Dually the extension of $1_A$ along $f$ and $\id_A$, if it exists, is called the \emph{conjoint} of $f$; it is denoted by $f^*$ and defined by an opcartesian cell as on the right.
	\begin{displaymath}
		\begin{tikzpicture}[textbaseline]
    		\matrix(m)[math175em]{A & C \\ C & C \\};
    		\path[map]  (m-1-1) edge[barred] node[above] {$f_*$} (m-1-2)
                        		edge node[left] {$f$} (m-2-1);
		    \path				(m-1-2) edge[eq] (m-2-2)
    		            (m-2-1) edge[eq] (m-2-2);
    		\draw ($(m-1-1)!0.5!(m-2-2)$) node {cart};
  		\end{tikzpicture} \qquad\qquad\qquad\qquad\qquad \begin{tikzpicture}[textbaseline]
    		\matrix(m)[math175em]{A & A \\ C & A \\};
    		\path				(m-1-1) edge[eq] (m-1-2)
                    (m-1-2) edge[eq] (m-2-2);
        \path[map]	(m-2-1) edge[barred] node[below] {$f^*$} (m-2-2)
    		            (m-1-1) edge node[left] {$f$} (m-2-1);
    		\draw ($(m-1-1)!0.5!(m-2-2)$) node {opcart};
  		\end{tikzpicture}
	\end{displaymath}
	\begin{example} \label{cartesian and opcartesian cells for profunctors}
		In \exref{example: profunctors} we have already used the notation $K(f, g)$ to denote the profunctor $\hmap{K \of (\op f \times g)}AB$. It is readily seen that the cell $\cell\eps{K(f,g)}K$ given by the identity transformation on $K\of (\op f \times g)$ is cartesian in the double category $\Prof$, so that $K \of (\op f \times g)$ is indeed the restriction of $K$ along $f$ and $g$.
		
		We can take the conjoint $f^*$ of a functor $\map fAC$ to be the restriction $C(\id, f)$: the opcartesian cell defining it is given by the actions \mbox{$\map f{A(x_1, x_2)}{C(fx_1, fx_2)}$} of $f$ on the hom-sets. That $f^* \iso C(\id, f)$ is no coincidence is explained below.
	\end{example}
	
	Generalising the situation above, in every double category the conjoint $f^*$ of a vertical morphism $\map fAC$ can be equivalently defined as the restriction $C(\id, f)$. This is because the vertical identity cell $1_f$ factors uniquely through the opcartesian cell defining $f^*$ as a cartesian cell that defines $C(\id, f)$, and conversely. Horizontally dual, the same relation exists between the companion $f_*$ and the extension of $1_A$ along $\id_A$ and $f$.
	
	Thus companions and conjoints are defined by cartesian, or equivalently opcartesian, cells. Conversely the existence of all companions and conjoints implies the existence of all restrictions and extensions: for any $\hmap KCD$ the composite on the left below is cartesian while, for any $\hmap JBA$, the composite on the right is opcartesian. For details see Theorem 4.1 of \cite{Shulman08} or Lemma 2.8 of \cite{Koudenburg14a}.
	\begin{equation} \label{cartesian and opcartesian cells in terms of companions and conjoints}
		\begin{tikzpicture}[textbaseline]
			\matrix(m)[math175em]{A & C & D & B \\ C & C & D & D \\};
			\path[map]	(m-1-1) edge[barred] node[above] {$f_*$} (m-1-2)
													edge node[left] {$f$} (m-2-1)
									(m-1-2) edge[barred] node[above] {$K$} (m-1-3)
									(m-1-3) edge[barred] node[above] {$g^*$} (m-1-4)
									(m-1-4) edge node[right] {$g$} (m-2-4)
									(m-2-2) edge[barred] node[below] {$K$} (m-2-3);
			\path				(m-1-2) edge[eq] (m-2-2)
									(m-1-3) edge[eq] (m-2-3)
									(m-2-1) edge[eq] (m-2-2)
									(m-2-3) edge[eq] (m-2-4);
			\draw				($(m-1-1)!0.5!(m-2-2)$) node {cart}
									($(m-1-3)!0.5!(m-2-4)$) node {cart};
		\end{tikzpicture} \qquad\qquad \begin{tikzpicture}[textbaseline]
			\matrix(m)[math175em]{B & B & A & A \\ D & B & A & C \\};
			\path[map]	(m-1-1) edge node[left] {$g$} (m-2-1)
									(m-1-2) edge[barred] node[above] {$J$} (m-1-3)
									(m-2-3) edge[barred] node[below] {$f_*$} (m-2-4)
									(m-1-4) edge node[right] {$f$} (m-2-4)
									(m-2-1) edge[barred] node[below] {$g^*$} (m-2-2)
									(m-2-2) edge[barred] node[below] {$J$} (m-2-3);
			\path				(m-1-2) edge[eq] (m-2-2)
									(m-1-3) edge[eq] (m-2-3)
									(m-1-1) edge[eq] (m-1-2)
									(m-1-3) edge[eq] (m-1-4);
			\draw				($(m-1-1)!0.5!(m-2-2)$) node {opcart}
									($(m-1-3)!0.5!(m-2-4)$) node {opcart};
		\end{tikzpicture}
	\end{equation}
	In summary, the following conditions on a double category $\K$ are equivalent: $\K$ has all companions and conjoints; $\K$ has all restrictions; $\K$ has all extensions.
	
	\begin{definition}
		An \emph{equipment} is a double category that satisfies the conditions above. 
	\end{definition}

	\begin{example}
		Since the double category $\Prof$ has all restrictions, as we saw in \exref{cartesian and opcartesian cells for profunctors}, it is an equipment. The double categories $\Span\E$ (\exref{example: spans}) and $\Mat\V$ (\exref{example: matrices}) are equipments as well. Indeed the extension of a span $A \xlar{d_0} J \xrar{d_1} B$ in $\E$, along morphisms $\map fAC$ and $\map gBD$, is given by the span $C \xlar{f \of d_0} J \xrar{g \of d_1} D$, while the restriction $K(f, g)$ of a $\V$-matrix $\hmap KCD$ is given by the family of $\V$-objects $K(f,g)(x,y) = K(fx, gy)$.
	\end{example}
	
	\subsection{Monoids and bimodules.}
	Next we recall the notions of monoid and bimodule in double categories, following Section 11 of \cite{Shulman08}. These notions are useful: for example, as we recall below, monoids and bimodules in $\Span \E$ are categories and profunctors internal in $\E$ while monoids and bimodules in $\Mat\V$ are $\V$-enriched categories and $\V$\ndash profunctors.
	\begin{definition}
		Let $\K$ be a double category.
		\begin{enumerate}[label=-]
			\item	A \emph{monoid} in $\K$ consists of a quadruple $A = (A_0, A, \mu, \eta)$ where $\hmap A{A_0}{A_0}$ is a horizontal morphism in $\K$ and $\cell \mu{A \hc A}A$ and $\cell \eta{1_{A_0}}A$ are horizontal cells satisfying the usual coherence axioms for monoids.
			\item Given monoids $A$ and $C$, a \emph{morphism of monoids} $\map fAC$ consists of a vertical morphism $\map{f_0}{A_0}{C_0}$ and a cell $f$, as on the left below, such that $\mu_C \of (f \hc f) = f \of \mu_A$ and $f \of \eta_A = \eta_C \of 1_{f_0}$.
		\end{enumerate}
		\begin{displaymath}
			\begin{tikzpicture}[baseline]
				\matrix(m)[math175em]{A_0 & A_0 \\ C_0 & C_0 \\};
				\path[map]	(m-1-1) edge[barred] node[above] {$A$} (m-1-2)
														edge node[left] {$f_0$} (m-2-1)
										(m-1-2) edge node[right] {$f_0$}  (m-2-2)
										(m-2-1) edge[barred] node[below] {$C$} (m-2-2);
				\path[transform canvas={shift=($(m-1-1)!0.5!(m-2-1)$)}]	(m-1-2) edge[cell] node[right] {$f$} (m-2-2);
			\end{tikzpicture} \qquad\qquad \begin{tikzpicture}[baseline]
				\matrix(m)[math175em]{A & B \\ C & D \\};
				\path[map]	(m-1-1) edge[barred] node[above] {$J$} (m-1-2)
														edge node[left] {$f$} (m-2-1)
										(m-1-2) edge node[right] {$g$}  (m-2-2)
										(m-2-1) edge[barred] node[below] {$K$} (m-2-2);
				\path[transform canvas={shift=($(m-1-1)!0.5!(m-2-1)$)}]	(m-1-2) edge[cell] node[right] {$\phi$} (m-2-2);
			\end{tikzpicture} \qquad\qquad\begin{tikzpicture}[baseline]
				\matrix(m)[math175em]{A_0 & B_0 \\ C_0 & D_0 \\};
				\path[map]	(m-1-1) edge[barred] node[above] {$J$} (m-1-2)
														edge node[left] {$f_0$} (m-2-1)
										(m-1-2) edge node[right] {$g_0$}  (m-2-2)
										(m-2-1) edge[barred] node[below] {$K$} (m-2-2);
				\path[transform canvas={shift=($(m-1-1)!0.5!(m-2-1)$)}]	(m-1-2) edge[cell] node[right] {$\phi$} (m-2-2);
			\end{tikzpicture}
		\end{displaymath}
		\begin{enumerate}[label=-]
			\item	Given monoids $A$ and $B$, an \emph{$(A,B)$-bimodule} $\hmap JAB$ consists of a horizontal morphism $\hmap J{A_0}{B_0}$ that is equipped with horziontal cells $\cell \lambda{A \hc J}J$ and $\cell \rho{J \hc B}J$ defining the \emph{actions} of $A$ and $B$ on $J$, which satisfy the usual coherence axioms for bimodules.
			\item Given morphisms of monoids $\map fAC$ and $\map gBD$, and bimodules $\hmap JAB$ and $\hmap KCD$, a \emph{cell} $\phi$ as in the middle above is a cell in $\K$ as on the right, such that $\lambda_K \of (f \hc \phi) = \phi \of \lambda_J$ and $\rho_K \of (\phi \hc g) = \phi \of \rho_J$.
			\item The \emph{horizontal composite} $J \hc_B H$ of bimodules $\hmap JAB$ and $\hmap HBE$ is the following reflexive coequaliser in $H(\K)(A,E)$, if it exists:
			\begin{displaymath}
				\begin{tikzpicture}
					\matrix(m)[math2em]{J \hc B \hc H &[1.25em] J \hc H & J \hc_B H. \\};
					\path[map]	(m-1-1) edge[transform canvas={yshift=4pt}, cell] node[above] {$\rho_J \hc \id$} (m-1-2)
															edge[transform canvas={yshift=-4pt}, cell] node[below] {$\id \hc \lambda_H$} (m-1-2)
											(m-1-2) edge[cell] (m-1-3);
				\end{tikzpicture}
			\end{displaymath}
		\end{enumerate}
	\end{definition}
	The following is Proposition 11.10 of \cite{Shulman08}. Recall that each double category $\K$ contains a bicategory $H(\K)$ consisting of horizontal morphisms and horizontal cells. We say that $\K$ has \emph{local reflexive coequalisers} if the categories $H(\K)(A, B)$ have reflexive coequalisers that are preserved by horizontal composition on both sides.
	\begin{proposition}[Shulman] \label{bimodule double categories}
		If an equipment $\K$ has local reflexive coequalisers then monoids and bimodules in $\K$, together with their morphisms and cells, again form an equipment $\Mod\K$ that has local reflexive coequalisers, whose horizontal composition is given as above.
	\end{proposition}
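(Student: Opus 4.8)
The plan is to verify in turn that $\Mod\K$ is a double category, that it is an equipment, and that it has local reflexive coequalisers, at each stage reducing the claim to the corresponding structure in $\K$ and invoking the hypothesis on local reflexive coequalisers to supply the colimits.

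First I would establish the double category structure. The horizontal composite $J \hc_B H$ exists as a reflexive coequaliser in $H(\K)(A_0, E_0)$ by hypothesis, the reflexivity being witnessed by $\id \hc \eta_B \hc \id$. The first task is to equip this coequaliser with an $(A,E)$\ndash bimodule structure. Because horizontal composition in $\K$ preserves reflexive coequalisers on both sides, $A \hc (J \hc_B H)$ is again the coequaliser of the defining pair with $A$ precomposed, so the composite $A \hc J \hc H \to J \hc H \to J \hc_B H$ induced by $\lambda_J$ descends uniquely to a left $A$\ndash action, and dually $\rho_H$ yields a right $E$\ndash action; the bimodule axioms follow from those of $J$ and $H$ together with uniqueness in the coequaliser universal property. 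The associators and unitors are the crux: using preservation of reflexive coequalisers on both sides, both $(J \hc_B H) \hc_E K$ and $J \hc_B (H \hc_E K)$ are canonically the colimit of a single two-sided diagram built from $J \hc B \hc H \hc E \hc K$ and its faces, hence canonically isomorphic, while the unitor $A \hc_A J \iso J$ arises from the split coequaliser exhibiting $J$ as $A \hc_A J$ via its own left action. The coherence axioms then follow from those of $\K$ by uniqueness of the comparison isomorphisms, and horizontal composition of cells descends from $\K$ to the coequalisers in the same manner.

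Next I would show $\Mod\K$ is an equipment by constructing all restrictions, invoking the equivalence recalled above the statement. Given a bimodule $\hmap KCD$ and monoid morphisms $\map fAC$ and $\map gBD$, I would take the underlying restriction $K(f_0, g_0)$ in $\K$ and equip it with $(A,B)$\ndash actions: the left action is the unique factorisation, through the cartesian cell defining $K(f_0, g_0)$, of the cell assembled from $\lambda_K$ and the monoid-morphism cell $f$, and dually on the right. The cartesian cell of $\K$ is then a cell of bimodules by construction, and its universal property in $\Mod\K$ follows because the factorisation already guaranteed in $\K$ automatically respects the actions, again by uniqueness. Hence all restrictions exist in $\Mod\K$, so it is an equipment.

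Finally I would verify local reflexive coequalisers. A reflexive coequaliser of $(A,B)$\ndash bimodules is computed as the reflexive coequaliser of underlying horizontal morphisms in $H(\K)(A_0, B_0)$ carrying the induced bimodule structure; that this underlying coequaliser admits a unique compatible bimodule structure uses that reflexive coequalisers commute with the reflexive coequalisers defining the actions, which holds because horizontal composition preserves them. Preservation of these coequalisers by $\hc_B$ on either side follows similarly, since $\hc_B$ is itself a reflexive coequaliser of horizontal composites and reflexive coequalisers commute with one another in $\K$. The \textbf{main obstacle} throughout is the coherent well-definedness of the tensor $\hc_B$: every associativity and unit datum, as well as every induced action, must be produced from the universal property of reflexive coequalisers and shown mutually compatible, which rests squarely on the hypothesis that horizontal composition in $\K$ preserves reflexive coequalisers on both sides.
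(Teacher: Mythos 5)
The paper offers no proof of this proposition: it is quoted from Shulman \cite{Shulman08} (Proposition 11.10 there) and used as a black box, so there is nothing in the text to compare against except the cited source. Your outline reproduces the standard argument given there and is sound — the bimodule structure, associators and unitors on $J \hc_B H$ obtained from the universal property of reflexive coequalisers together with their preservation by $\hc$, the unitor via the split coequaliser $A \hc A \hc J \rightrightarrows A \hc J \to J$, restrictions built on $K(f_0,g_0)$ with actions induced through the cartesian cell, and local reflexive coequalisers in $\Mod\K$ computed on underlying horizontal morphisms; the two points that would need the most care in a full write-up, namely the commutation of iterated reflexive coequalisers underlying the associator and the check that the lifted cartesian cell retains its universal property against bimodule cells, are both correctly reduced by you to uniqueness of factorisations through colimits and cartesian cells.
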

	
	\begin{example} \label{example: internal profunctors}
		Let $\E$ be a category with pullbacks. Monoids in $\Span \E$ are \emph{internal categories} in $\E$ and morphisms of such monoids are internal functors. Bimodules and their cells in $\Span \E$ are \emph{internal profunctors} in $\E$ and their transformations. Internal categories and internal profunctors are described in some detail in Example~1.6 of \cite{Koudenburg14a}. If $\E$ has reflexive coequalisers preserved by pullback then $\Span\E$ has local reflexive coequalisers, so that internal categories, functors, profunctors and transformations in $\E$ form an equipment $\Mod{\Span \E}$ which is denoted $\inProf\E$. 
	\end{example}
	
	For a suitable monoidal category $\V$ the equipment of bimodules in $\Mat\V$ (\exref{example: matrices}) is that of $\V$-enriched profunctors, which we shall now describe; it will be used as an example throughout. We remark that, in the case that $\V$ is closed symmetric monoidal---hence enriched over itself---, the classical meaning of a $\V$\ndash profunctor $A \brar B$, where $A$ and $B$ are $\V$-categories, is that of a $\V$-functor of the form $\op A \tens B \to \V$. The advantage of defining $\V$-profunctors as bimodules in $\Mat\V$ is that, while this extends the classical definition, in this way it is not necessary for $\V$ to be closed symmetric monoidal---$\V$ being a monoidal category suffices. This allows us, in the next subsection, to extend the classical notions of `weighted limit' and `enriched Kan extension' to settings in which the enriching category $\V$ is not closed symmetric monoidal.
	\begin{example} \label{example: enriched profunctors}
		Let $\V = (\V, \tens, 1)$ be a monoidal category that has coproducts which are preserved by $\tens$ on both sides. A monoid in $\Mat\V$ is a category \emph{enriched} in $\V$, while a morphism of such monoids is a $\V$-functor, both in the usual sense; see e.g.\ Section 1.2 of \cite{Kelly82}. A bimodule $\hmap JAB$ in $\Mat\V$ is a \emph{$\V$\ndash profunctor} in the sense of Section 7 of \cite{Day-Street97}: it consists of a family $J(x,y)$ of $\V$-objects, indexed by pairs of objects $x \in A$ and $y \in B$, that is equipped with associative and unital actions
		\begin{displaymath}
			\map \lambda{A(x_1, x_2) \tens J(x_2, y)}{J(x_1, y)} \quad \text{and} \quad \map \rho{J(x, y_1) \tens B(y_1, y_2)}{J(x, y_2)}
		\end{displaymath}
		satisfying the usual compatibility axiom for bimodules. Given a map\footnote{As is the custom, by a map $\map f{x_1}{x_2}$ in a $\V$-category $A$ we mean a $\V$-map $\map f1{A(x_1, x_2)}$.} $\map f{x_1}{x_2}$ in $A$ we write $\lambda_f$ for the composite
		\begin{displaymath}
			J(x_2, y) \xrar{f \tens \id} A(x_1, x_2) \tens J(x_2, y) \xrar\lambda J(x_1, y);
		\end{displaymath}
		likewise for $\map g{y_1}{y_2}$ in $B$ we write $\map{\rho_g = \rho \of (\id \tens g)}{J(x, y_1)}{J(x, y_2)}$.
		
		If $\V$ is closed symmetric monoidal then $\V$\ndash pro\-func\-tors $\hmap JAB$ can be identified with $\V$-functors of the form $\map J{\op A \tens B}\V$. Indeed the actions of $J$ correspond, under the adjunctions $\dash \tens J(x,y) \ladj \inhom{J(x, y), \dash}$ that are part of the closed structure on $\V$, to families of maps
		\begin{displaymath}
			A(x_1, x_2) \to \inhom{J(x_2, y), J(x_1, y)} \qquad \text{and} \qquad B(y_1, y_2) \to \inhom{J(x,y_1), J(x,y_2)}
		\end{displaymath}
		respectively, which combine to form families of partial $\V$-functors $\map{J(\dash, y)}{\op A}\V$ and $\map{J(x, \dash)}B\V$. The compatibility axiom for bimodules ensures that the latter correspond to a single $\V$-functor $\map J{\op A \tens B}\V$; for details see Section 1.4 of \cite{Kelly82}.
		
		A cell of $\V$-profunctors
		\begin{displaymath}
			\begin{tikzpicture}
				\matrix(m)[math175em]{A & B \\ C & D, \\};
				\path[map]  (m-1-1) edge[barred] node[above] {$J$} (m-1-2)
														edge node[left] {$f$} (m-2-1)
										(m-1-2) edge node[right] {$g$} (m-2-2)
										(m-2-1) edge[barred] node[below] {$K$} (m-2-2);
				\path[transform canvas={shift={($(m-1-2)!(0,0)!(m-2-2)$)}}] (m-1-1) edge[cell] node[right] {$\phi$} (m-2-1);
			\end{tikzpicture}
		\end{displaymath}
		called a \emph{transformation}, consists of a family of $\V$-maps $\map{\phi_{(x, y)}}{J(x,y)}{K(fx, gy)}$ (often simply denoted $\phi$) that are compatible with the actions, in the sense that the identities
		\begin{flalign*}
			&& \map{\lambda \of (f \tens \phi) &= \phi \of \lambda}{A(x_1, x_2) \tens J(x_2, y)}{K(fx_1, gy)}& \\
			\text{and} && \map{\rho \of (\phi \tens g) &= \phi \of \rho}{J(x, y_1) \tens B(y_1, y_2)}{K(fx, gy_2)} &
		\end{flalign*}
		are satisfied, where $f$ and $g$ denote the actions of the $\V$-functors $f$ and $g$ on the hom-objects of $A$ and $B$ respectively. If $\V$ is closed symmetric monoidal then transformations in the above sense can be identified with the usual $\V$-natural transformations between the $\V$-functors $\map J{\op A \tens B}\V$ and $K(f, g) = K \of (\op f \tens g)$.
	
		If $\V$ has reflexive coequalisers preserved by $\tens$ on both sides then $\Mat\V$ has local reflexive coequalisers, so that $\V$-categories, $\V$-functors, $\V$\ndash profunctors and their transformations form a double category $\Mod{\Mat\V}$ which is denoted $\enProf\V$. Its horizontal composite $J \hc H$, of $\V$-profunctors $\hmap JAB$ and \mbox{$\hmap HBE$}, is obtained by choosing coequalisers $(J \hc H)(x, z)$ for the pairs of $\V$-maps
		\begin{equation} \label{composite of profunctors}
			\coprod\limits_{y_1, y_2 \in B} J(x, y_1) \tens B(y_1, y_2) \tens H(y_2, z) \rightrightarrows \coprod\limits_{y \in B} J(x, y) \tens H(y, z),
		\end{equation}
		that are induced by letting $B(y_1, y_2)$ act on $J(x, y_1)$ and $H(y_2, z)$ respectively; compare the unenriched situation \eqref{composite of unenriched profunctors}. The horizontal composite $\phi \hc \chi$ of the transformations on the left below is given by the family of unique factorisations shown on the right, where the $\V$\ndash maps drawn horizontally are the coequalisers defining $(J \hc H)(x, z)$ and $(K \hc L)(fx, hz)$, and the $\V$-map on the left is induced by the tensorproducts $\phi_{(x,y)} \tens \chi_{(y,z)}$.
		\begin{displaymath}
	  	\begin{tikzpicture}[baseline]
				\matrix(m)[math175em]{A & B & E \\ C & D & F \\};
				\path[map]	(m-1-1) edge[barred] node[above] {$J$} (m-1-2)
														edge node[left] {$f$} (m-2-1)
										(m-1-2) edge[barred] node[above] {$H$} (m-1-3)
														edge node[right] {$g$} (m-2-2)
										(m-1-3) edge node[right] {$h$} (m-2-3)
										(m-2-1) edge[barred] node[below] {$K$} (m-2-2)
										(m-2-2) edge[barred] node[below] {$L$} (m-2-3);
				\path[transform canvas={shift=($(m-1-1)!0.5!(m-2-2)$)}]
										(m-1-2) edge[cell] node[right] {$\phi$} (m-2-2)
										(m-1-3) edge[cell] node[right] {$\chi$} (m-2-3);
			\end{tikzpicture} \qquad\qquad \begin{tikzpicture}[baseline]
				\matrix(m)[math175em]{\coprod\limits_{y \in B} J(x, y) \tens H(y, z) & (J \hc H)(x, z) \\
				\coprod\limits_{v \in D} K(fx, v) \tens L(v, hz) & (K \hc L)(fx, hz) \\};
				\path[map]	(m-1-1) edge (m-1-2)
														edge (m-2-1)
										(m-1-2) edge[dashed] node[right] {$(\phi \hc \chi)_{(x,z)}$} (m-2-2)
										(m-2-1) edge (m-2-2);
			\end{tikzpicture}
	  \end{displaymath}
	  The unit $\V$-profunctor $\hmap{1_A}AA$, for a $\V$-category $A$, is given by the hom-objects $1_A(x_1, x_2) = A(x_1, x_2)$; its actions are given by the composition of $A$. Finally, $\enProf\V$ is an equipment in which the restriction $K(f,g)$ of a $\V$\ndash profunctor $\hmap KCD$ along $\V$-functors $\map fAC$ and $\map gBD$ is given by the family of $\V$-objects $K(f,g)(x,y) = K(fx,gy)$, that is equipped with actions induced by those of $K$.
	
		We remark that, for $\V$-functors $f$ and $\map gAC$, the vertical cells $f \Rar g$ in $\enProf\V$ can be identified with $\V$-natural transformations $f \Rar g$, in the classical sense; see Section 1.2 of \cite{Kelly82}. Indeed any vertical cell $\nat\phi fg$ is given by a family of $\V$-maps \mbox{$\map{\phi_{(x_1, x_2)}}{A(x_1, x_2)}{C(fx_1, gx_2)}$} which is, because of its compatibility with the actions of $A$ and $C$, completely determined by the composites
		\begin{displaymath}
			\phi_x = \bigbrks{1 \xrar{\eta_x} A(x, x) \xrar{\phi_{(x,x)}} C(fx, gx)},
		\end{displaymath}
		which make the diagram below commute; that is, they form a $\V$-natural transformation of $\V$-functors $f \Rar g$.
		\begin{displaymath}
			\begin{tikzpicture}
				\matrix(m)[math175em]{ A(x_1, x_2) &[-2em] C(fx_1, fx_2) \tens C(fx_2, gx_2) \\
					C(fx_1, gx_1) \tens C(gx_1, gx_2) & C(fx_1, gx_2) \\ };
				\path[map]	(m-1-1) edge node[above] {$f \tens \phi_{x_2}$} (m-1-2)
														edge node[left] {$\phi_{x_1} \tens g$} (m-2-1)
										(m-1-2) edge node[right] {$\mu$} (m-2-2)
										(m-2-1) edge node[below] {$\mu$} (m-2-2);
			\end{tikzpicture}
		\end{displaymath}
		This identification induces an isomorphism
		\begin{equation} \label{vertical 2-category of V-Prof}
			V(\enProf\V) \iso \enCat\V
		\end{equation}
		between the vertical $2$-category contained in $\enProf\V$ and the $2$-category $\enCat\V$ of $\V$\ndash cat\-e\-go\-ries, $\V$-functors and $\V$-natural transformations.
	\end{example}
	
	\subsection{Kan extensions in double categories.}
	Analogous to that in $2$-categories there is a notion of Kan extension in double categories, that was introduced in \cite{Grandis-Pare08} and which is recalled below. The stronger notion of pointwise Kan extension, which we also consider, was introduced in \cite{Koudenburg14a}. To get some feeling for the latter we shall consider pointwise Kan extensions in the double category $\enProf\V$ in the next subsection.
	\begin{definition}\label{definition: right Kan extension}
		Let $\map dBM$ and $\hmap JAB$ be morphisms in a double category. The cell $\eps$ in the right-hand side below is said to define $r$ as the \emph{right Kan extension of $d$ along $J$} if every cell $\phi$ below factors uniquely through $\eps$ as shown.
		\begin{displaymath}
			\begin{tikzpicture}[textbaseline]
  			\matrix(m)[math175em]{A & B \\ M & M \\};
  			\path[map]  (m-1-1) edge[barred] node[above] {$J$} (m-1-2)
														edge node[left] {$s$} (m-2-1)
										(m-1-2) edge node[right] {$d$} (m-2-2);
				\path				(m-2-1) edge[eq] (m-2-2);
				\path[transform canvas={shift={($(m-1-2)!(0,0)!(m-2-2)$)}}] (m-1-1) edge[cell] node[right] {$\phi$} (m-2-1);
			\end{tikzpicture} = \begin{tikzpicture}[textbaseline]
				\matrix(m)[math175em]{A & A & B \\ M & M & M \\};
				\path[map]	(m-1-1) edge node[left] {$s$} (m-2-1)
										(m-1-2) edge[barred] node[above] {$J$} (m-1-3)
														edge node[right] {$r$} (m-2-2)
										(m-1-3) edge node[right] {$d$} (m-2-3);
				\path				(m-1-1) edge[eq] (m-1-2)
										(m-2-1) edge[eq] (m-2-2)
										(m-2-2) edge[eq] (m-2-3);
				\path[transform canvas={shift=($(m-1-1)!0.5!(m-2-2)$)}]
										(m-1-2) edge[cell] node[right] {$\phi'$} (m-2-2)
										(m-1-3) edge[cell] node[right] {$\eps$} (m-2-3);
			\end{tikzpicture}
		\end{displaymath}
		
		We call the right Kan extension $r$ above \emph{pointwise} if cells $\phi$ of the more general form below also factor uniquely through $\eps$, as shown.
		\begin{displaymath}
			\begin{tikzpicture}[textbaseline]
				\matrix(m)[math175em]{C & A & B \\ M & & M \\};
				\path[map]	(m-1-1) edge[barred] node[above] {$H$} (m-1-2)
														edge node[left] {$s$} (m-2-1)
										(m-1-2) edge[barred] node[above] {$J$} (m-1-3)
										(m-1-3) edge node[right] {$d$} (m-2-3)
										(m-1-2) edge[cell] node[right] {$\phi$} (m-2-2);
				\path				(m-2-1) edge[eq] (m-2-3);
			\end{tikzpicture} = \begin{tikzpicture}[textbaseline]
				\matrix(m)[math175em]{C & A & B \\ M & M & M \\};
				\path[map]	(m-1-1) edge[barred] node[above] {$H$} (m-1-2)
														edge node[left] {$s$} (m-2-1)
										(m-1-2) edge[barred] node[above] {$J$} (m-1-3)
														edge node[right] {$r$} (m-2-2)
										(m-1-3) edge node[right] {$d$} (m-2-3);
				\path				(m-2-1) edge[eq] (m-2-2)
										(m-2-2) edge[eq] (m-2-3);
				\path[transform canvas={shift={($(m-1-2)!0.5!(m-2-3)$)}}] (m-1-1) edge[cell] node[right] {$\phi'$} (m-2-1)
										(m-1-2) edge[cell] node[right] {$\eps$} (m-2-2);
			\end{tikzpicture}
  	\end{displaymath}
	\end{definition}
		
	The following, which is a simple consequence of the universal property of opcartesian cells, shows that the notion of Kan extension in double categories generalises that of Kan extension in $2$-categories (for a definition see e.g.\ Section 3 of \cite{Koudenburg14a}). Remember that any double category $\K$ contains a $2$-category $V(\K)$ of objects, vertical morphisms and vertical cells of $\K$.
	\begin{proposition} \label{right Kan extensions along conjoints as right Kan extensions in V(K)}
		Let $\map dBM$, $\map jBA$ and $\map rAM$ be morphisms in a double category $\K$, and assume that the conjoint $\hmap{j^*}AB$ of $j$ exists. Consider a vertical cell $\eps$ as on the left below, as well as its factorisation through the opcartesian cell defining $j^*$, as shown.
		\begin{displaymath}
			\begin{tikzpicture}[textbaseline]
    		\matrix(m)[math175em]{B & B \\ A & \phantom A \\ M & M \\};
    		\path[map]  (m-1-1) edge node[left] {$j$} (m-2-1)
       			        (m-1-2) edge node[right] {$d$} (m-3-2)
       			        (m-2-1) edge node[left] {$r$} (m-3-1);
      	\path				(m-1-1) edge[eq] (m-1-2)
      							(m-3-1) edge[eq] (m-3-2);
    		\path[transform canvas={shift={($(m-2-1)!0.5!(m-1-1)$)}}] (m-2-2) edge[cell] node[right] {$\eps$} (m-3-2);
  		\end{tikzpicture} = \begin{tikzpicture}[textbaseline]
				\matrix(m)[math175em]{B & B \\ A & B \\ M & M \\};
				\path[map]	(m-1-1) edge node[left] {$j$} (m-2-1)
										(m-2-1) edge[barred] node[below] {$j^*$} (m-2-2)
														edge node[left] {$r$} (m-3-1)
										(m-2-2) edge node[right] {$d$} (m-3-2);
				\path				(m-1-1) edge[eq] (m-1-2)
										(m-1-2) edge[eq] (m-2-2)
										(m-3-1) edge[eq] (m-3-2);
				\path[transform canvas={shift=(m-2-1), yshift=-2pt}]	(m-2-2) edge[cell] node[right] {$\eps'$} (m-3-2);
				\draw				($(m-1-1)!0.5!(m-2-2)$)	node {\textup{opcart}};
			\end{tikzpicture}
		\end{displaymath}
		The vertical cell $\eps$ defines $r$ as the right Kan extension of $d$ along $j$ in $V(\K)$ precisely if its factorisation $\eps'$ defines $r$ as the right Kan extension of $d$ along $j^*$ in $\K$.
	\end{proposition}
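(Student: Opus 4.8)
The plan is to express each of the two universal properties as the bijectivity of a family of maps and then to use the opcartesian cell defining $j^*$ to conjugate one family into the other.

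Write $\pi$ for the opcartesian cell defining $j^*$, so that the displayed factorisation reads $\eps = \eps' \of \pi$. First I would record, for each vertical morphism $\map sAM$, the bijection
\begin{equation*}
	\Phi_s \colon \bigset{\cell\phi{j^*}{1_M} \mid L\phi = s,\ R\phi = d} \xrar\iso \bigset{\text{vertical cells } s \of j \Rar d},
\end{equation*}
given on a cell $\phi$ by $\Phi_s(\phi) = \phi \of \pi$. This is precisely the universal property of $\pi$ (\defref{definition: cartesian and opcartesian cells}), specialised to horizontal source $1_B$ and target $1_M$ with middle legs $s$ and $d$: vertical postcomposition with $\pi$ identifies cells out of $j^*$ having vertical source $s$ with vertical cells whose source is $s \of j$. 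By construction $\Phi_r(\eps') = \eps$.

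Next I would reformulate the two Kan-extension conditions. By \defref{definition: right Kan extension} the cell $\eps'$ exhibits $r$ as the right Kan extension of $d$ along $j^*$ exactly when, for each $s$, the assignment $\phi' \mapsto \phi' \hc \eps'$ is a bijection from vertical cells $s \Rar r$ onto cells $\cell\phi{j^*}{1_M}$ with $L\phi = s$ and $R\phi = d$. On the other side, recall that the vertical cells of $\K$ are the $2$\ndash cells of $V(\K)$, that vertical pasting in $V(\K)$ is horizontal composition $\hc$ in $\K$, and that whiskering a vertical cell $\nat\beta sr$ by $j$ is its vertical composite $\beta \of 1_j$ with the vertical identity cell $1_j$; hence the $2$\ndash categorical universal property states that $\eps$ exhibits $r$ as the right Kan extension of $d$ along $j$ exactly when, for each $s$, the assignment $\beta \mapsto (\beta \of 1_j) \hc \eps$ is a bijection from $2$-cells $s \Rar r$ onto $2$-cells $s \of j \Rar d$.

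The crux is to connect these two assignments through $\Phi$, namely the identity
\begin{equation*}
	(\beta \of 1_j) \hc \eps = \Phi_s(\beta \hc \eps') \qquad \text{for every } \nat\beta sr.
\end{equation*}
To prove it I would expand $\Phi_s(\beta \hc \eps') = (\beta \hc \eps') \of \pi$ and apply the middle-four interchange law together with $\eps = \eps' \of \pi$, giving $(\beta \of 1_j) \hc (\eps' \of \pi) = (\beta \hc \eps') \of (1_j \hc \pi)$; it then suffices to note the unit-coherence identity $1_j \hc \pi \iso \pi$, which says that horizontally precomposing $\pi$ (whose left leg is $j$) with the unit cell $1_j$ reproduces $\pi$ modulo the unitors $1_B \hc 1_B \iso 1_B$ and $1_A \hc j^* \iso j^*$. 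This last verification is the only genuinely computational step, and the main—if routine—obstacle: it amounts to checking that the left leg of $\pi$ absorbs $1_j$. With the identity in hand the equivalence is immediate: since each $\Phi_s$ is a bijection carrying $\phi' \mapsto \phi' \hc \eps'$ to $\beta \mapsto (\beta \of 1_j) \hc \eps$, the former is a bijection for every $s$ if and only if the latter is, which by the two reformulations says exactly that $\eps'$ defines the right Kan extension of $d$ along $j^*$ in $\K$ if and only if $\eps$ defines the right Kan extension of $d$ along $j$ in $V(\K)$.
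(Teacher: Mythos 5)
Your argument is correct and is exactly the route the paper intends: the paper offers no written proof beyond calling the statement ``a simple consequence of the universal property of opcartesian cells'', and your bijection $\Phi_s$, the interchange computation $(\beta \of 1_j) \hc (\eps' \of \pi) = (\beta \hc \eps') \of (1_j \hc \pi)$, and the absorption $1_j \hc \pi = \pi$ (which is precisely the naturality of the left unitor $\mf l$, suppressed per the paper's conventions) spell out that consequence faithfully. Nothing is missing.
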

	
	The following result for iterated pointwise Kan extensions is a straightforward consequence of \defref{definition: right Kan extension}.
	\begin{proposition} \label{iterated pointwise Kan extensions}
		Consider horizontally composable cells
		\begin{displaymath}
			\begin{tikzpicture}
				\matrix(m)[math175em]{A & B & C \\ M & M & M \\};
				\path[map]	(m-1-1) edge[barred] node[above] {$J$} (m-1-2)
														edge node[left] {$s$} (m-2-1)
										(m-1-2) edge[barred] node[above] {$H$} (m-1-3)
														edge node[right] {$r$} (m-2-2)
										(m-1-3) edge node[right] {$d$} (m-2-3);
				\path	(m-2-1) edge[eq] (m-2-2)
							(m-2-2) edge[eq] (m-2-3);
				\path[transform canvas={shift={($(m-1-2)!0.5!(m-2-3)$)}}] (m-1-1) edge[cell] node[right] {$\gamma$} (m-2-1)
				(m-1-2) edge[cell] node[right] {$\eps$} (m-2-2);
			\end{tikzpicture}
		\end{displaymath}
		in a double category, and suppose that $\eps$ defines $r$ as the pointwise right Kan extension of $d$ along $H$. Then $\gamma$ defines $s$ as the pointwise right Kan extension of $r$ along $J$ precisely if $\gamma \hc \eps$ defines $s$ as the pointwise right Kan extension of $d$ along $J \hc H$.
	\end{proposition}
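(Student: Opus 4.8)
The plan is to derive both implications directly from the universal property of pointwise right Kan extensions in \defref{definition: right Kan extension}; the only ingredients are the existence and uniqueness of the factorisations it guarantees, together with the associativity of horizontal composition. Throughout I write $\hmap GXA$ for an arbitrary horizontal morphism and suppress the associators, so that a general cell to be tested against $\gamma \hc \eps$ has horizontal source $G \hc (J \hc H) \iso (G \hc J) \hc H$; splitting this source in the two available ways is what ties the three Kan extensions together.

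For the forward implication, suppose $\gamma$ defines $s$ as the pointwise right Kan extension of $r$ along $J$. Given a test cell $\phi$ of horizontal source $G \hc (J \hc H)$, vertical source $t$ and vertical target $d$, I would first read its source as $(G \hc J) \hc H$ and invoke the pointwiseness of $\eps$ to obtain a unique factorisation $\phi = \psi \hc \eps$, where $\psi$ has horizontal source $G \hc J$ and vertical target $r$. Applying next the pointwiseness of $\gamma$ to $\psi$ gives a unique factorisation $\psi = \phi' \hc \gamma$, where $\phi'$ has horizontal source $G$ and vertical target $s$. Associativity then yields $\phi = \phi' \hc (\gamma \hc \eps)$, and chaining the two uniqueness statements shows this factorisation through $\gamma \hc \eps$ is unique; hence $\gamma \hc \eps$ defines $s$ as the pointwise right Kan extension of $d$ along $J \hc H$.

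For the backward implication, suppose instead that $\gamma \hc \eps$ defines $s$ as the pointwise right Kan extension of $d$ along $J \hc H$. Given a test cell $\psi$ of horizontal source $G \hc J$ and vertical target $r$, I would horizontally postcompose with $\eps$ to obtain a cell $\psi \hc \eps$ of horizontal source $G \hc (J \hc H)$ and vertical target $d$, which by hypothesis factors uniquely as $\psi \hc \eps = \phi' \hc (\gamma \hc \eps) = (\phi' \hc \gamma) \hc \eps$. The decisive step is to cancel $\eps$ on the right: since $\eps$ is \emph{pointwise}, the cell $\psi \hc \eps$ admits a unique factorisation through $\eps$, and both $\psi$ and $\phi' \hc \gamma$ are such factorisations, so they must coincide, giving $\psi = \phi' \hc \gamma$. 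Uniqueness of this factorisation of $\psi$ through $\gamma$ then follows by postcomposing any rival factorisation with $\eps$ and appealing to uniqueness for $\gamma \hc \eps$. Thus $\gamma$ defines $s$ as the pointwise right Kan extension of $r$ along $J$.

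The argument is entirely formal, and the one point deserving care is the cancellation of $\eps$ in the backward direction: it is exactly here that $\eps$ must be \emph{pointwise}, so that cells with the wider horizontal source $(G \hc J) \hc H$---rather than merely $H$---can be uniquely factored; an ordinary right Kan extension would not suffice. The remaining bookkeeping, namely keeping track of vertical sources and targets and reinstating the suppressed associators, is routine.
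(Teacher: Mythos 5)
Your argument is correct and is exactly the ``straightforward consequence'' of \defref{definition: right Kan extension} that the paper has in mind (it offers no written proof): both directions follow from chaining the unique factorisations, and you rightly isolate the cancellation of $\eps$ in the backward direction as the step that requires $\eps$ to be pointwise rather than merely a right Kan extension.
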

	
	For completeness we record the definition of pointwise left Kan extension in a double category $\K$, which coincides with that of pointwise right Kan extension in its horizontal dual $\co\K$.
	\begin{definition} \label{definition: pointwise left Kan extension}
  	Let $\map dAM$ and $\hmap JAB$ be morphisms in a double category. The cell $\eta$ in the right-hand side below defines $l$ as the \emph{pointwise left Kan extension of $d$ along $J$} if every cell $\phi$ below factors uniquely through $\eta$ as shown.
  	\begin{displaymath}
  		\begin{tikzpicture}[textbaseline]
				\matrix(m)[math175em]{A & B & C \\ M & & M \\};
				\path[map]	(m-1-1) edge[barred] node[above] {$J$} (m-1-2)
														edge node[left] {$d$} (m-2-1)
										(m-1-2) edge[barred] node[above] {$H$} (m-1-3)
										(m-1-3) edge node[right] {$k$} (m-2-3)
										(m-1-2) edge[cell] node[right] {$\phi$} (m-2-2);
				\path				(m-2-1) edge[eq] (m-2-3);
			\end{tikzpicture} = \begin{tikzpicture}[textbaseline]
				\matrix(m)[math175em]{A & B & C \\ M & M & M \\};
				\path[map]	(m-1-1) edge[barred] node[above] {$J$} (m-1-2)
														edge node[left] {$d$} (m-2-1)
										(m-1-2) edge[barred] node[above] {$H$} (m-1-3)
														edge node[right] {$l$} (m-2-2)
										(m-1-3) edge node[right] {$k$} (m-2-3);
				\path				(m-2-1) edge[eq] (m-2-2)
										(m-2-2) edge[eq] (m-2-3);
				\path[transform canvas={shift={($(m-1-2)!0.5!(m-2-3)$)}}] (m-1-1) edge[cell] node[right] {$\eta$} (m-2-1)
										(m-1-2) edge[cell] node[right] {$\phi'$} (m-2-2);
			\end{tikzpicture}
  	\end{displaymath}
  \end{definition}
	
	\subsection{Pointwise Kan extensions in \texorpdfstring{$\enProf \V$}{V-Prof}.}
	Here we study pointwise Kan extensions in the double category $\enProf \V$ and show that they can be described in terms of `$\V$-weighted limits'. While such limits are classically defined in the case that $\V$ is a closed symmetric monoidal category, we shall consider a more general definition for which a monoidal structure on $\V$ alone suffices; this enables us to treat all cases of $\enProf\V$.
	
	For the rest of this subsection we take $\V$ to be a cocomplete monoidal category whose tensor product preserves colimits on both sides, so that $\V$-categories, $\V$\ndash functors, $\V$\ndash profunctors and their transformations form a double category $\enProf\V$; see \exref{example: enriched profunctors}. We write $1$ for the unit $\V$-category, that has a single object $*$ and hom-object $1(*, *) = 1$. We identify $\V$-functors $1 \to A$ with objects in $A$ and $\V$\ndash profunctors \mbox{$1 \brar 1$} with $\V$-objects; transformations of such profunctors are identified with $\V$\ndash maps. 
	
	By a \emph{$\V$-weight} $J$ on a $\V$-category $B$ we mean a $\V$\ndash pro\-func\-tor $\hmap J1B$.
	\begin{definition} \label{definition: weighted limit}
		Let $\map dBM$ be a $\V$-functor, $J$ a $\V$-weight on $B$ and $r$ an object of $M$. A transformation $\eps$ in $\enProf\V$, as in the right-hand side below, is said to define $r$ as the \emph{$J$-weighted limit of $d$} if every transformation $\phi$ below factors uniquely through $\eps$ as shown.
		\begin{displaymath}
			\begin{tikzpicture}[textbaseline]
				\matrix(m)[math175em]{1 & 1 & B \\ M & & M \\};
				\path[map]	(m-1-1) edge[barred] node[above] {$H$} (m-1-2)
														edge node[left] {$s$} (m-2-1)
										(m-1-2) edge[barred] node[above] {$J$} (m-1-3)
										(m-1-3) edge node[right] {$d$} (m-2-3)
										(m-1-2) edge[cell] node[right] {$\phi$} (m-2-2);
				\path				(m-2-1) edge[eq] (m-2-3);
			\end{tikzpicture} = \begin{tikzpicture}[textbaseline]
				\matrix(m)[math175em]{1 & 1 & B \\ M & M & M \\};
				\path[map]	(m-1-1) edge[barred] node[above] {$H$} (m-1-2)
														edge node[left] {$s$} (m-2-1)
										(m-1-2) edge[barred] node[above] {$J$} (m-1-3)
														edge node[right] {$r$} (m-2-2)
										(m-1-3) edge node[right] {$d$} (m-2-3);
				\path				(m-2-1) edge[eq] (m-2-2)
										(m-2-2) edge[eq] (m-2-3);
				\path[transform canvas={shift={($(m-1-2)!0.5!(m-2-3)$)}}] (m-1-1) edge[cell] node[right] {$\phi'$} (m-2-1)
										(m-1-2) edge[cell] node[right] {$\eps$} (m-2-2);
			\end{tikzpicture}
  	\end{displaymath}
	\end{definition}
	The definition above extends the classical definition of weighted limit as follows. Note that if $\V$ is closed symmetric monoidal then $\V$-weights on $B$ can be identified with $\V$\ndash functors $\map JB\V$, using the isomorphism $\op 1 \tens B = 1 \tens B \iso B$; see \exref{example: enriched profunctors}. This recovers the classical definition of $\V$-weight, see e.g.\ Section 3.1 of \cite{Kelly82} where such $\V$-functors are called `indexing types'. Using this identification, the transformation $\eps$ above can be regarded as a $\V$-natural transformation between the $\V$-functors $\map JB\V$ and $\map{M(r,d)}B\V$.
	\begin{proposition} \label{classical definition of weighted limit}
		Let $d$, $J$, $r$ and $\eps$ be as above. If $\V$ is complete and closed symmetric monoidal then the transformation $\eps$ defines $r$ as the $J$-weighted limit of $d$, in the above sense, precisely if the pair $(r, \eps)$, where $\nat\eps J{M(r, d)}$ is regarded as a $\V$\ndash natural transformation of $\V$\ndash functors $B \to \V$, forms the `limit of $d$ indexed by $J$' in the sense of Section 3.1 of \cite{Kelly82}.
	\end{proposition}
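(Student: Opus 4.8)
The plan is to unwind the universal property of \defref{definition: weighted limit} into a representing condition and then match it, link by link, with the enriched isomorphism defining Kelly's indexed limit. Throughout I use the identifications of \exref{example: enriched profunctors}: a $\V$-weight $\hmap J1B$ is a $\V$-functor $\map JB\V$, a transformation between weights is a $\V$-natural transformation, and $\eps$ is the $\V$-natural transformation $\nat\eps J{M(r, d)}$ of the statement.

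First I would make the composites in the factorisation explicit. Since the unit $\V$-category $1$ has $1(*, *) = 1$, the coequaliser defining $H \hc J$ is trivial and $(H \hc J)(*, y) \iso H \tens J(*, y)$; moreover the horizontal composite $\phi' \hc \eps$, followed by the composition $M(s, r) \hc M(r, d) \to M(s, d)$ of $M$, carries this to $M(s, dy)$ by $\mu_M \of (\phi' \tens \eps_y)$. Thus the factorisation $\phi = \phi' \hc \eps$ states precisely that $\phi_y = \mu_M \of (\phi' \tens \eps_y)$ for a unique $\V$-map $\map{\phi'}H{M(s, r)}$. Consequently $\eps$ defines $r$ as the $J$-weighted limit of $d$ if and only if, for every $\V$-object $H$ and object $s$ of $M$, composing with $\eps$ is a bijection from the set $\V_0\bigpars{H, M(s, r)}$ of $\V$-maps $H \to M(s, r)$ onto the set of transformations $H \tens J \Rar M(s, d)$, naturally in $H$.

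Next I would bring in the closed structure and completeness of $\V$. Transposing $H$ across the adjunction $\dash \tens H \ladj \inhom{H, \dash}$ turns a transformation $H \tens J \Rar M(s, d)$ into a transformation $J \Rar \inhom{H, M(s, d)}$, and since $\inhom{H, \dash}$ is a right adjoint it commutes with the end $\inhom{B, \V}\bigpars{J, M(s, d)} = \int_y \inhom{J(*, y), M(s, dy)}$ computing the hom-objects of $[B, \V]$ (this end exists because $\V$ is complete). Hence the set of transformations $H \tens J \Rar M(s, d)$ is identified with $\V_0\bigpars{H, \inhom{B, \V}\bigpars{J, M(s, d)}}$, and the condition of the previous paragraph becomes an isomorphism
\begin{equation*}
	\V_0\bigpars{H, M(s, r)} \iso \V_0\bigpars{H, \inhom{B, \V}\bigpars{J, M(s, d)}}
\end{equation*}
natural in $H$, for each $s$. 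It is exactly the freedom to vary $H$ over all of $\V$ that promotes Kelly's underlying-set universal property to the enriched one.

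Finally, the ordinary Yoneda lemma applied to this natural isomorphism yields, for each $s$, an isomorphism $M(s, r) \iso \inhom{B, \V}\bigpars{J, M(s, d)}$ in $\V_0$, and by construction it is the comparison $\V$-map induced by composing with $\eps$. As this comparison is the component at $s$ of the $\V$-natural transformation $M(\dash, r) \Rar \inhom{B, \V}\bigpars{J, M(\dash, d)}$ of $\V$-functors $\op M \to \V$ induced by the fixed $\V$-natural $\eps$, and as a $\V$-natural transformation whose components are isomorphisms is a $\V$-natural isomorphism, we recover precisely the $\V$-natural isomorphism $M\bigpars{\dash, \set{J, d}} \iso \inhom{B, \V}\bigpars{J, M(\dash, d)}$ that defines $r = \set{J, d}$ in the sense of Section~3.1 of \cite{Kelly82}; reading the chain backwards yields the converse, establishing the `precisely if'. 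I expect the main obstacle to be this middle step: correctly matching the bimodule description of a transformation with the $\V$-natural-transformation description and extracting the variable $H$ through the closed structure, so that the end $\inhom{B, \V}\bigpars{J, M(s, d)}$ emerges and the completeness hypothesis is genuinely used.
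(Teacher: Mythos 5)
Your proof is correct and follows essentially the same route as the paper's: both reduce the unique‑factorisation property to a representability statement for each $s \in M$, pass through the tensor–hom adjunction and the completeness of $\V$ to produce the end $\int_{y \in B}\inhom{Jy, M(s,dy)}$, and identify the result with the condition surrounding Kelly's formula (3.3). The only cosmetic difference is that you transpose the variable $H$ out of the transformation and finish with an explicit Yoneda step, where the paper instead transposes against $J$ and reads off the universal property of the end directly.
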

	\begin{proof}
		In terms of $\V$-natural transformations between $\V$-functors $B \to \V$, the unique factorisations through $\eps$ above can be restated as the following universal property for the composite $\V$-natural transformations
		\begin{displaymath}
			\eps_s = \bigbrks{M(s, r) \tens J \xRar{\id \tens \eps} M(s,r) \tens M(r, d) \xRar\mu M(s,d)},
		\end{displaymath}
		where $s \in M$: for any $\V$-natural transformation \mbox{$\nat\phi{H \hc J}M(s, d)$}, where $H \in \V$, there exists a unique $\V$-map \mbox{$\map{\phi'}H{M(s,r)}$} such that
		\begin{displaymath}
			\phi = \bigbrks{H \hc J \xRar{\phi' \hc \id} M(s, r) \hc J \xRar{\eps_s} M(s,r)}.
		\end{displaymath}
	
		Now the adjunctions $\dash \tens Y \ladj \inhom{Y, \dash}$, that define the closed structure on $\V$, induce a bijective correspondence between $\V$-natural transformations \mbox{$H \hc J \Rar M(s,d)$}, of $\V$\ndash func\-tors $B \to \V$, and $\V$-natural transformations $H \Rar \inhom{J, M(s,d)}$, from the $\V$-object $H$ to the $\V$\ndash functor $\map{\inhom{J, M(s,d)}}{\op B \tens B}\V$ given on objects by $(y_1, y_2) \mapsto \inhom{Jy_1, M(s, dy_2)}$; the latter in the sense of Section~2.1 of \cite{Kelly82}. Since this correspondence is natural in $H$ the universality of the transformations $\nat{\eps_s}{M(s, r) \tens J}{M(s,d)}$ above translates into the universality of the corresponding transformations $M(s,r) \Rar \inhom{J, M(s,d)}$; in other words they define $M(s,r)$ as the end $\int_{y \in B}\inhom{Jy, M(s,dy)}$, for each $s \in M$. The latter is equivalent to the meaning of the definition of `$(r, \eps)$ is the limit of $d$ indexed by $J$' that is given in Section 3.1 of \cite{Kelly82}; see the sentence therein containing formula (3.3).
	\end{proof}
	
	Next we describe the pointwise Kan extensions of $\enProf\V$ in terms of weighted limits and, as a consequence, obtain an `enriched variant' of \propref{right Kan extensions along conjoints as right Kan extensions in V(K)}. Recall that vertical cells in $\enProf\V$ can be identified with $\V$-natural transformations of $\V$\ndash functors, under the isomorphism \eqref{vertical 2-category of V-Prof}.
	\begin{proposition} \label{pointwise Kan extensions in terms of weighted limits}
		Consider $\V$-functors $\map rAM$ and $\map dBM$ as well as a $\V$\ndash profunctor $\hmap JAB$.
		\begin{displaymath}
			\begin{tikzpicture}[textbaseline]
  			\matrix(m)[math175em]{A & B \\ M & M \\};
  			\path[map]  (m-1-1) edge[barred] node[above] {$J$} (m-1-2)
														edge node[left] {$r$} (m-2-1)
										(m-1-2) edge node[right] {$d$} (m-2-2);
				\path				(m-2-1) edge[eq] (m-2-2);
				\path[transform canvas={shift={($(m-1-2)!(0,0)!(m-2-2)$)}}] (m-1-1) edge[cell] node[right] {$\eps$} (m-2-1);
			\end{tikzpicture} \qquad\qquad \begin{tikzpicture}[textbaseline]
				\matrix(m)[math175em]{1 & B \\ A & B \\ M & M \\};
				\path[map]	(m-1-1) edge[barred] node[above] {$J(x, \id)$} (m-1-2)
														edge node[left] {$x$} (m-2-1)
										(m-2-1) edge[barred] node[below] {$J$} (m-2-2)
														edge node[left] {$r$} (m-3-1)
										(m-2-2) edge node[right] {$d$} (m-3-2);
				\path				(m-1-2) edge[eq] (m-2-2)
										(m-3-1) edge[eq] (m-3-2);
				\path[transform canvas={shift=(m-2-1), yshift=-2pt}]	(m-2-2) edge[cell] node[right] {$\eps$} (m-3-2);
				\draw				($(m-1-1)!0.5!(m-2-2)$)	node {\textup{cart}};
			\end{tikzpicture} \qquad\qquad \begin{tikzpicture}[textbaseline]
    		\matrix(m)[math175em]{B & B \\ A & \phantom A \\ M & M \\};
    		\path[map]  (m-1-1) edge node[left] {$j$} (m-2-1)
       			        (m-1-2) edge node[right] {$d$} (m-3-2)
       			        (m-2-1) edge node[left] {$r$} (m-3-1);
      	\path				(m-1-1) edge[eq] (m-1-2)
      							(m-3-1) edge[eq] (m-3-2);
    		\path[transform canvas={shift={($(m-2-1)!0.5!(m-1-1)$)}}] (m-2-2) edge[cell] node[right] {$\psi$} (m-3-2);
  		\end{tikzpicture} = \begin{tikzpicture}[textbaseline]
				\matrix(m)[math175em]{B & B \\ A & B \\ M & M \\};
				\path[map]	(m-1-1) edge node[left] {$j$} (m-2-1)
										(m-2-1) edge[barred] node[below] {$j^*$} (m-2-2)
														edge node[left] {$r$} (m-3-1)
										(m-2-2) edge node[right] {$d$} (m-3-2);
				\path				(m-1-1) edge[eq] (m-1-2)
										(m-1-2) edge[eq] (m-2-2)
										(m-3-1) edge[eq] (m-3-2);
				\path[transform canvas={shift=(m-2-1), yshift=-2pt}]	(m-2-2) edge[cell] node[right] {$\eps$} (m-3-2);
				\draw				($(m-1-1)!0.5!(m-2-2)$)	node {\textup{opcart}};
			\end{tikzpicture}
		\end{displaymath}
		For a transformation $\eps$ in $\enProf\V$ as on the left above, the following are equivalent:
		\begin{enumerate}[label=(\alph*)]
			\item $\eps$ defines $r$ as the pointwise right Kan extension of $d$ along $J$;
			\item for each $x \in A$ the composite in the middle above defines $rx$ as the $J(x, \id)$\ndash weighted limit of $d$.
		\end{enumerate}
		In particular, pointwise right Kan extensions along a $\V$-weight $\hmap J1B$ coincide with $J$-weighted limits.
		
		Finally assume that $\V$ is complete and closed symmetric monoidal. If $J = j^*$, for a $\V$-functor $\map jBA$, and $\eps$ is the factorisation of a $\V$-natural transformation $\nat\psi{rj}d$ through the opcartesian cell defining $j^*$, as on the right above, then condition (b) coincides with the meaning of the definition of `$\psi$ exhibits $r$ as the right Kan extension of $d$ along $j$' that is given in Section 4.1 of \cite{Kelly82}.
	\end{proposition}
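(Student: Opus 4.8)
The plan is to prove the equivalence (a) $\iff$ (b) by unwinding both universal properties into statements about families of $\V$-maps, using the description of $\enProf\V$ from \exref{example: enriched profunctors}. The single fact that makes the two conditions line up is that, in $\enProf\V$, the cartesian cell defining the restriction $J(x, \id)$ acts by identities on components (its restriction is $J(x,\id)(*, b) = J(x, b)$, as in \exref{example: enriched profunctors}), so that the middle composite $\eps_x$ is simply $\eps$ \emph{evaluated at the point $x$}: its components are the $\V$-maps $\eps_{(x, b)} \colon J(x, b) \to M(rx, db)$, that is, the $x$-th row of the family defining $\eps$. Equivalently, via the companion identity $J(x, \id) \iso x_* \hc J$ valid in any equipment, precomposing a weight $H_0 \colon 1 \brar 1$ with the companion $x_*$ turns a $J(x, \id)$-weighted-limit test cell into a pointwise Kan extension test cell over $J$ whose source is $H_0 \hc x_* \colon 1 \brar A$.

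For (a) $\implies$ (b) I would fix $x$ and take an arbitrary transformation $\phi$ testing the $J(x,\id)$-weighted limit of $d$, with weight $H_0 \colon 1 \brar 1$ and object $s \in M$. Using $H_0 \hc J(x, \id) \iso (H_0 \hc x_*) \hc J$, I regard $\phi$ as a pointwise-Kan-extension test cell over $J$ with source $H_0 \hc x_* \colon 1 \brar A$, and apply the universal property of $\eps$ from (a) to obtain a unique factorisation. Transporting it back across the companion isomorphism, and recalling that $\eps_x$ is $\eps$ evaluated at $x$, shows that the factorisation is precisely one through $\eps_x$; existence and uniqueness are inherited directly, so $\eps_x$ defines $rx$ as the $J(x,\id)$-weighted limit.

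For the converse (b) $\implies$ (a) I would start from a general pointwise-Kan-extension test cell $\phi \colon H \hc J \Rar 1_M$, with $H \colon C \brar A$ and $s \colon C \to M$. Since a transformation out of $H \hc J$ is, by the coequaliser presentation \eqref{composite of profunctors}, the same as a family of $\V$-maps $H(c, x) \tens J(x, b) \to M(sc, db)$ that is dinatural in $x$ and compatible with the outer actions, I would read off, for each $c \in C$ and each $x \in A$, weighted-limit data for $\eps_x$ with weight $H(c, x)$ and object $sc$. Invoking (b) at each $x$ produces unique components $\phi'_{(c, x)} \colon H(c, x) \to M(sc, rx)$, which I would then verify assemble into a single transformation $\phi' \colon H \Rar 1_M$ satisfying $\phi = \phi' \hc \eps$.

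The main obstacle is precisely this assembly step in (b) $\implies$ (a): one must check that the pointwise factorisations $\phi'_{(c,x)}$ respect the left $C$-action and the right $A$-action on $H$, which is exactly the dinaturality in $x$ needed for the induced map to descend along the coequaliser defining $H \hc J$, and that the compatibility of $\eps$ with the $\V$-actions makes the descended map equal to $\phi$. Uniqueness then follows from the uniqueness at each point together with the fact that the coequaliser projections are epimorphic. Finally, the ``in particular'' clause is the special case $A = 1$: there is a single point $x$, with $J(x, \id) \iso J$ and $rx = r$, so that (b) reduces to the assertion that $\eps$ defines $r$ as the $J$-weighted limit, and the equivalence (a) $\iff$ (b) becomes the claimed coincidence of pointwise right Kan extensions along the weight $J$ with $J$-weighted limits.
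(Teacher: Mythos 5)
Your overall strategy is the paper's own, so the comparison is mostly about completeness. For (a) $\Rightarrow$ (b) the paper simply invokes \thmref{pointwise Kan extensions in terms of Kan extensions} to conclude that each $\eps_x$ defines $rx$ as a pointwise right Kan extension along $J(x,\id)$, and then restricts that universal property to test cells with $\hmap H11$; your direct argument via $H_0 \hc J(x,\id) \iso (H_0 \hc x_*) \hc J$ re-derives exactly the special case of that theorem that is needed, which is a legitimate (if slightly longer) route, provided you justify transporting factorisations across the opcartesian cell defining $x_*$ --- that is, that this cell is pointwise initial in the sense of \propref{pointwise exact cells}(a). For (b) $\Rightarrow$ (a) your plan coincides with the paper's: restrict $\phi$ at each pair of objects, factor through $\eps_x$ using (b), and assemble.

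Two things are missing, however. First, the step you flag as ``the main obstacle'' --- that the pointwise factorisations $\phi'_{(c,x)}$ are compatible with the left and right actions and hence assemble into a transformation $\nat{\phi'}H{M(s,r)}$ --- is in fact essentially the entire content of the paper's proof of (b) $\Rightarrow$ (a), and you do not carry it out. It is not automatic: compatibility with the right $A$-action on $H$ is verified after composing with $\eps_{x_2}$, by trading the right action of $A$ on $H$ for the left action of $A$ on $J$ (using that $\phi$ is defined on the coequaliser $H \hc_A J$, see \eqref{composite of profunctors}) and then using the compatibility of $\eps$ with that left action, before cancelling by uniqueness of factorisations through $\eps_{x_2}$; the left-action case is similar but uses the compatibility of $\phi$ with the action of $C$. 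A proposal that only names this check has not yet proved the proposition. Second, you do not address the final assertion at all: the identification, when $\V$ is complete and closed symmetric monoidal and $J = j^*$, of condition (b) with Kelly's definition of `$\psi$ exhibits $r$ as the right Kan extension of $d$ along $j$'. The paper handles this by computing $\eps$ explicitly in terms of $\psi$ and reducing to \propref{classical definition of weighted limit} and Theorem~4.6 of \cite{Kelly82}; your proof needs a corresponding argument.
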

	\begin{proof}
		For each $x \in A$ we denote by $\eps_x$ the composite in the middle above.
		
		(a) $\Rar$ (b). By \thmref{pointwise Kan extensions in terms of Kan extensions} below condition (a) implies that $\eps_x$ defines $rx$ as the pointwise right Kan extension of $d$ along $J(x, \id)$, for each $x \in A$. By restricting the universal property of \defref{definition: right Kan extension} for $\eps_x$ to cells $\phi$ with $\hmap H11$, we obtain the universal property of \defref{definition: weighted limit} for $\eps_x$, so that (b) follows.
		
		(b) $\Rar$ (a). Consider a transformation $\phi$ in $\enProf\V$, as in the middle composite below; we have to show that it factors uniquely as $\phi = \phi' \hc \eps$.
		\begin{equation} \label{weighted limit factorisation}
			\phi_{(z, x)} = \begin{tikzpicture}[textbaseline]
				\matrix(m)[math175em]{1 & 1 & B \\ C & A & B \\ M & & M \\};
				\path[map]	(m-1-1) edge[barred] node[above] {$H(z, x)$} (m-1-2)
														edge node[left] {$z$} (m-2-1)
										(m-1-2) edge[barred] node[above] {$J(x, \id)$} (m-1-3)
														edge node[right] {$x$} (m-2-2)
										(m-2-1) edge[barred] node[below] {$H$} (m-2-2)
														edge node[left] {$s$} (m-3-1)
										(m-2-2) edge[barred] node[below] {$J$} (m-2-3)
										(m-2-3) edge node[right] {$d$} (m-3-3)
										(m-2-2) edge[cell] node[right] {$\phi$} (m-3-2);
				\path				(m-1-3) edge[eq] (m-2-3)
										(m-3-1) edge[eq] (m-3-3);
				\draw				($(m-1-1)!0.5!(m-2-2)$) node {cart}
										($(m-1-2)!0.5!(m-2-3)$) node[xshift=2.5pt] {cart};
			\end{tikzpicture} = \begin{tikzpicture}[textbaseline]
				\matrix(m)[math175em]{1 & 1 & B \\ M & M & M \\};
				\path[map]	(m-1-1) edge[barred] node[above] {$H(z, x)$} (m-1-2)
														edge node[left] {$sz$} (m-2-1)
										(m-1-2) edge[barred] node[above] {$J(x, \id)$} (m-1-3)
														edge node[right, inner sep=1.5pt] {$rx$} (m-2-2)
										(m-1-3) edge node[right] {$d$} (m-2-3);
				\path				(m-2-1) edge[eq] (m-2-2)
										(m-2-2) edge[eq] (m-2-3);
				\path[map, transform canvas={shift=($(m-1-2)!0.5!(m-2-3)$), xshift=-10pt}]	(m-1-1) edge[cell] node[right] {$\phi'_{(z, x)}$} (m-2-1);
				\path[map, transform canvas={shift=($(m-1-2)!0.5!(m-2-3)$)}]	(m-1-2) edge[cell] node[right] {$\eps_x$} (m-2-2);
			\end{tikzpicture}
		\end{equation}
		Because the cells $\eps_x$ define weighted limits, the restrictions $\phi_{(z, x)}$ of $\phi$ above, for each pair $z \in C$ and $x \in A$, factor uniquely through $\eps_x$ as cells $\phi'_{(z, x)}$ as shown. These factorisations are simply $\V$-maps $\map{\phi'_{(z,x)}}{H(z,x)}{M(sz, rx)}$, and it remains to show that, as a family, they combine to form a transformation $\nat{\phi'}H{M(s,r)}$. Indeed, the unique factorisations above then imply that $\phi$ factors uniquely as $\phi = \phi' \hc \eps$, as needed.
		
		Thus we have to show that the $\V$-maps $\phi'_{(z,x)}$ are compatible with the left and right actions of $C$, $A$ and $M$. In terms of cells in $\enProf\V$, the compatibility with the left actions means that the identities
		\begin{displaymath}
			\begin{tikzpicture}[textbaseline]
				\matrix(m)[math175em, column sep=2em]{1 & 1 & 1 \\ 1 & & 1 \\ M & & M \\};
				\path[map]	(m-1-1) edge[barred] node[above] {$C(z_1, z_2)$} (m-1-2)
										(m-1-2) edge[barred] node[above] {$H(z_2, x)$} (m-1-3)
										(m-2-1) edge[barred] node[below] {$H(z_1, x)$} (m-2-3)
														edge node[left] {$sz_1$} (m-3-1)
										(m-2-3) edge node[right] {$rx$} (m-3-3)
										(m-1-2) edge[cell] node[right] {$\lambda$} (m-2-2)
										(m-2-2) edge[cell, transform canvas={yshift=-3pt}] node[right] {$\phi'_{(z_1, x)}$} (m-3-2);
				\path				(m-1-1) edge[eq] (m-2-1)
										(m-1-3) edge[eq] (m-2-3)
										(m-3-1) edge[eq] (m-3-3);
			\end{tikzpicture} = \begin{tikzpicture}[textbaseline]
				\matrix(m)[math175em, column sep=2em]{1 & 1 & 1 \\ M & M & M \\};
				\path[map]	(m-1-1) edge[barred] node[above] {$C(z_1, z_2)$} (m-1-2)
														edge node[left] {$sz_1$} (m-2-1)
										(m-1-2) edge[barred] node[above] {$H(z_2, x)$} (m-1-3)
														edge node[left, inner sep=0.5pt] {$sz_2$} (m-2-2)
										(m-1-3) edge node[right] {$rx$} (m-2-3);
				\path				(m-2-1) edge[eq] (m-2-2)
										(m-2-2) edge[eq] (m-2-3);
				\path[map, transform canvas={shift=($(m-1-2)!0.5!(m-2-3)$)}]	(m-1-1) edge[cell] node[left] {$s$} (m-2-1)
										(m-1-2) edge[cell, transform canvas={xshift=-11.5pt}] node[right] {$\phi'_{(z_2, x)}$} (m-2-2);
			\end{tikzpicture}
		\end{displaymath}
		hold, where $\lambda$ is the $\V$-map $C(z_1, z_2) \tens H(z_2, x) \to H(z_1, x)$ given by the left action of $C$ on $H$ and $s$ is the $\V$-map $C(z_1, z_2) \to M(sz_1, sz_2)$ given by the action of the $\V$-functor $s$ on hom-objects. Because factorisations through $\eps_x$ are unique we may equivalently show that this identity holds after composing both sides on the right with $\eps_x$. That it does is shown below where, to save space, only the non-identity cells are depicted while objects and morphisms are left out. The first and last identities here follow from the factorisations \eqref{weighted limit factorisation} while the second identity follows from the compatibility of $\phi$ with the action of $C$.
		\begin{displaymath}
			\begin{tikzpicture}[textbaseline, x=1.75em, y=1.75em, font=\scriptsize]
				\draw	(0,1) -- (3,1) -- (3,0) -- (0,0) -- (0,2) -- (2,2) -- (2,0);
				\draw[shift={(0,0.5)}]	(1,0) node {$\phi'_{(z_1, x)}$}
							(1,1) node {$\lambda$}
							(2.5,0) node {$\eps_x$};
			\end{tikzpicture} \mspace{15mu} = \mspace{15mu} \begin{tikzpicture}[textbaseline, x=1.75em, y=1.75em, font=\scriptsize]
				\draw (0,1) -- (3,1) -- (3,0) -- (0,0) -- (0,2) -- (2,2) -- (2,1);
				\draw[shift={(0,0.5)}]	(1.5,0) node {$\phi_{(z_1, x)}$}
							(1,1) node {$\lambda$};
			\end{tikzpicture} \mspace{15mu} = \mspace{15mu} \begin{tikzpicture}[textbaseline, x=1.75em, y=1.75em, font=\scriptsize]
				\draw (0,0) -- (3,0) -- (3,1) -- (0,1) -- (0,0)
							(1,1) -- (1,0);
				\draw[shift={(0,0.5)}]	(0.5,0) node {$s$}
							(2,0) node {$\phi_{(z_2, x)}$};
			\end{tikzpicture} \mspace{15mu} = \mspace{15mu} \begin{tikzpicture}[textbaseline, x=1.75em, y=1.75em, font=\scriptsize]
				\draw (0,0) -- (3.75,0) -- (3.75,1) -- (0,1) -- (0,0)
							(1,1) -- (1,0)
							(2.75,1) -- (2.75,0);
				\draw[shift={(0.5,0.5)}]	(0,0) node {$s$}
							(1.375,0) node {$\phi'_{(z_2, x)}$}
							(2.75,0) node {$\eps_x$};
			\end{tikzpicture}
		\end{displaymath}
		
		That the family of $\V$-maps $\phi'_{(z,x)}$ is compatible with the right actions as well follows similarly, from the following equation of transformations (which are of the form \mbox{$H(z,x_1) \hc A(x_1, x_2) \hc J(x_2, \id) \Rar 1_M$}) and the fact that factorisations through $\eps_{x_2}$ are unique.
		\begin{displaymath}
			\begin{tikzpicture}[textbaseline, x=1.75em, y=1.75em, font=\scriptsize]
			\draw	(0,1) -- (3,1) -- (3,0) -- (0,0) -- (0,2) -- (2,2) -- (2,0);
				\draw[shift={(0,0.5)}]	(1,0) node {$\phi'_{(z, x_2)}$}
							(1,1) node {$\rho$}
							(2.5,0) node {$\eps_{x_2}$};
			\end{tikzpicture} \mspace{3mu} = \mspace{3mu} \begin{tikzpicture}[textbaseline, x=1.75em, y=1.75em, font=\scriptsize]
				\draw (0,1) -- (3,1) -- (3,0) -- (0,0) -- (0,2) -- (2,2) -- (2,1);
				\draw[shift={(0,0.5)}]	(1.5,0) node {$\phi_{(z, x_2)}$}
							(1,1) node {$\rho$};
			\end{tikzpicture} \mspace{3mu} = \mspace{3mu} \begin{tikzpicture}[textbaseline, x=1.75em, y=1.75em, font=\scriptsize]
				\draw (3,1) -- (0,1) -- (0,0) -- (3,0) -- (3,2) -- (1,2) -- (1,1);
				\draw[shift={(0,0.5)}]	(1.5,0) node {$\phi_{(z, x_1)}$}
							(2,1) node {$\lambda$};
			\end{tikzpicture} \mspace{3mu} = \mspace{3mu} \begin{tikzpicture}[textbaseline, x=1.75em, y=1.75em, font=\scriptsize]
				\draw (3.75,1) -- (0,1) -- (0,0) -- (3.75,0) -- (3.75,2) -- (1.75,2) -- (1.75,0);
				\draw[shift={(0.5,0.5)}]	(0.375,0) node {$\phi'_{(z, x_1)}$}
							(2.25,0) node {$\eps_{x_1}$}
							(2.25,1) node {$\lambda$};
			\end{tikzpicture} \mspace{3mu} = \mspace{3mu} \begin{tikzpicture}[textbaseline, x=1.75em, y=1.75em, font=\scriptsize]
				\draw (0,0) -- (3.75,0) -- (3.75,1) -- (0,1) -- (0,0)
							(1.75,1) -- (1.75,0)
							(2.75,1) -- (2.75,0);
				\draw[shift={(0,0.5)}]	(0.875,0) node {$\phi'_{(z,x_1)}$}
							(2.25,0) node {$r$}
							(3.25,0) node {$\eps_{x_2}$};
			\end{tikzpicture}
		\end{displaymath}
		The identities here follow from the factorisations \eqref{weighted limit factorisation}; the fact that the domain $H \hc_A J$ of $\phi$ coequalises the actions of $A$ on $H$ and on $J$, see \eqref{composite of profunctors}; the factorisations \eqref{weighted limit factorisation} again; the compatibility of $\eps$ with the left actions.
		
		To prove the final assertion we assume that $J = j^*$ for a $\V$-functor $\map jBA$, and that $\eps$ is the unique factorisation of a $\V$-natural transformation $\nat\psi{rj}d$ through the opcartesian cell defining $j^*$. As soon as we consider $\psi$ as a vertical cell of $\enProf\V$, under the isomorphism \eqref{vertical 2-category of V-Prof}, it is easy to check that $\eps$ must be given by the $\V$-maps
		\begin{displaymath}
			A(x, jy) \xrar r M(rx, rjy) \xrar{\rho_{\psi_y}} M(rx,dy),
		\end{displaymath}
		for pairs $x \in A$ and $y \in B$. It then follows from \propref{classical definition of weighted limit} that, if $\V$ is complete and closed symmetric monoidal, then condition (b) coincides with condition (ii) of Theorem~4.6 of \cite{Kelly82}, which lists equivalent meanings of the definition of `$\psi$ exhibits $r$ as the right Kan extension of $d$ along $j$'.
	\end{proof}
	
	\subsection{Pointwise Kan extensions in terms of Kan extensions.}
	The main goal of \cite{Koudenburg14a} was to give a condition on equipments $\K$ ensuring that an analogue of \propref{right Kan extensions along conjoints as right Kan extensions in V(K)} holds for pointwise Kan extensions, where by `pointwise Kan extension in the $2$-category $V(\K)$', we mean the classical notion given by Street in \cite{Street74}. This condition is given in terms of the notion of tabulation, as follows.
	\begin{definition} \label{definition: tabulation}
		Given a horizontal morphism $\hmap JAB$ in a double category $\K$, the \emph{tabulation} $\tab J$ of $J$ consists of an object $\tab J$ equipped with a cell $\pi$ as on the left below, satisfying the following $1$-dimensional and $2$-dimensional universal properties.
		\begin{displaymath}
			\begin{tikzpicture}[textbaseline]
  			\matrix(m)[tab]{\tab J & \tab J \\ A & B \\};
  			\path[map]  (m-1-1) edge node[left] {$\pi_A$} (m-2-1)
										(m-1-2) edge node[right] {$\pi_B$} (m-2-2)
										(m-2-1) edge[barred] node[below] {$J$} (m-2-2);
				\path				(m-1-1) edge[eq] (m-1-2);
				\path[transform canvas={shift={($(m-1-2)!(0,0)!(m-2-2)$)}}] (m-1-1) edge[cell] node[right] {$\pi$} (m-2-1);
			\end{tikzpicture} \qquad\qquad\qquad\qquad\qquad \begin{tikzpicture}[textbaseline]
  			\matrix(m)[math175em]{X & X \\ A & B \\};
  			\path[map]  (m-1-1) edge node[left] {$\phi_A$} (m-2-1)
										(m-1-2) edge node[right] {$\phi_B$} (m-2-2)
										(m-2-1) edge[barred] node[below] {$J$} (m-2-2);
				\path				(m-1-1) edge[eq] (m-1-2);
				\path[transform canvas={shift={($(m-1-2)!(0,0)!(m-2-2)$)}}] (m-1-1) edge[cell] node[right] {$\phi$} (m-2-1);
			\end{tikzpicture}
		\end{displaymath}
		Given any other cell $\phi$ in $\K$ as on the right above, the $1$-dimensional property states that there exists a unique vertical morphism $\map{\phi'}X{\tab J}$ such that $\pi \of 1_{\phi'} = \phi$.
		
		The $2$-dimensional property is the following. Suppose we are given a further cell $\psi$ as in the identity below, which factors through $\pi$ as $\map{\psi'}Y{\tab J}$, like $\phi$ factors as $\phi'$. Then for any pair of cells $\xi_A$ and $\xi_B$ as below, so that the identity holds, there exists a unique cell $\xi'$ as on the right below such that $1_{\pi_A} \of \xi' = \xi_A$ and $1_{\pi_B} \of \xi' = \xi_B$.
		\begin{displaymath}
			 \begin{tikzpicture}[textbaseline]
				\matrix(m)[math175em]{X & Y & Y \\ A & A & B \\};
				\path[map]	(m-1-1) edge[barred] node[above] {$H$} (m-1-2)
														edge node[left] {$\phi_A$} (m-2-1)
										(m-1-2) edge node[right, inner sep=1pt] {$\psi_A$} (m-2-2)
										(m-1-3) edge node[right] {$\psi_B$} (m-2-3)
										(m-2-2) edge[barred] node[below] {$J$} (m-2-3);
				\path				(m-1-2) edge[eq] (m-1-3)
										(m-2-1) edge[eq] (m-2-2);
				\path[transform canvas={shift={($(m-1-2)!0.5!(m-2-3)$)}}] (m-1-1) edge[cell] node[right] {$\xi_A$} (m-2-1)
										(m-1-2) edge[cell] node[right] {$\psi$} (m-2-2);
			\end{tikzpicture} = \begin{tikzpicture}[textbaseline]
				\matrix(m)[math175em]{X & X & Y \\ A & B & B \\};
				\path[map]	(m-1-1) edge node[left] {$\phi_A$} (m-2-1)
										(m-1-2) edge[barred] node[above] {$H$} (m-1-3)
														edge node[right, inner sep=1pt] {$\phi_B$} (m-2-2)
										(m-1-3) edge node[right] {$\psi_B$} (m-2-3)
										(m-2-1) edge[barred] node[below] {$J$} (m-2-2);
				\path				(m-1-1) edge[eq] (m-1-2)
										(m-2-2) edge[eq] (m-2-3);
				\path[transform canvas={shift={($(m-1-2)!0.5!(m-2-3)$)}}] (m-1-1) edge[cell] node[right] {$\phi$} (m-2-1)
										(m-1-2) edge[cell] node[right] {$\xi_B$} (m-2-2);
			\end{tikzpicture} \qquad \qquad \begin{tikzpicture}[textbaseline]
  			\matrix(m)[math175em]{X & Y \\ \tab J & \tab J \\};
  			\path[map]  (m-1-1) edge[barred] node[above] {$H$} (m-1-2)
														edge node[left] {$\phi'$} (m-2-1)
										(m-1-2) edge node[right] {$\psi'$} (m-2-2);
				\path				(m-2-1) edge[eq] (m-2-2);
				\path[transform canvas={shift={($(m-1-2)!(0,0)!(m-2-2)$)}}] (m-1-1) edge[cell] node[right] {$\xi'$} (m-2-1);
			\end{tikzpicture}
		\end{displaymath}
		A tabulation is called \emph{opcartesian} whenever its defining cell $\pi$ is opcartesian.
	\end{definition}
	
	\begin{example} \label{example: tabulations of profunctors}
		The tabulation $\tab J$ of a profunctor $\map J{\op A \times B}\Set$ is the category that has triples $(x, u, y)$ as objects, where $(x, y) \in A \times B$ and $\map uxy$ in $J(x,y)$, while a map $(x, u, y) \to (x', u', y')$ is a pair $\map{(p, q)}{(x,y)}{(x', y')}$ in $A \times B$ making the diagram
	\begin{displaymath}
		\begin{tikzpicture}[textbaseline]
			\matrix(m)[math175em]{x & y \\ x' & y' \\};
			\path[map]	(m-1-1) edge node[above] {$u$} (m-1-2)
													edge node[left] {$p$} (m-2-1)
									(m-1-2) edge node[right] {$q$} (m-2-2)
									(m-2-1) edge node[below] {$u'$} (m-2-2);
		\end{tikzpicture}
	\end{displaymath}
	commute in $J$. The functors $\pi_A$ and $\pi_B$ are the projections and the natural transformation $\nat \pi{1_{\tab J}}J$ maps the pair $(p, q)$ to the diagonal $u' \of p = q \of u$. It is easy to check that $\pi$ satisfies both the $1$-dimensional and $2$-dimensional universal property above, and that it is opcartesian.
	\end{example}
	
	\begin{example}
		Generalising the previous example, it was shown in Proposition 5.15 of \cite{Koudenburg14a} that the double category $\inProf\E$, of profunctors internal to $\E$, admits all opcartesian tabulations.
	\end{example}
	
	The main result, Theorem 5.11, of \cite{Koudenburg14a} is the following.
	\begin{theorem} \label{pointwise Kan extensions in terms of Kan extensions}
		In an equipment $\K$ consider the cell $\eps$ on the left below.
		\begin{displaymath}
			\begin{tikzpicture}[textbaseline]
  			\matrix(m)[math175em]{A & B \\ M & M \\};
  			\path[map]  (m-1-1) edge[barred] node[above] {$J$} (m-1-2)
														edge node[left] {$r$} (m-2-1)
										(m-1-2) edge node[right] {$d$} (m-2-2);
				\path				(m-2-1) edge[eq] (m-2-2);
				\path[transform canvas={shift={($(m-1-2)!(0,0)!(m-2-2)$)}}] (m-1-1) edge[cell] node[right] {$\eps$} (m-2-1);
			\end{tikzpicture} \qquad\qquad\qquad\qquad\qquad \begin{tikzpicture}[textbaseline]
				\matrix(m)[math175em]{C & B \\ A & B \\ M & M \\};
				\path[map]	(m-1-1) edge[barred] node[above] {$J(f, \id)$} (m-1-2)
														edge node[left] {$f$} (m-2-1)
										(m-2-1) edge[barred] node[below] {$J$} (m-2-2)
														edge node[left] {$r$} (m-3-1)
										(m-2-2) edge node[right] {$d$} (m-3-2);
				\path				(m-1-2) edge[eq] (m-2-2)
										(m-3-1) edge[eq] (m-3-2);
				\path[transform canvas={shift=(m-2-1), yshift=-2pt}]	(m-2-2) edge[cell] node[right] {$\eps$} (m-3-2);
				\draw				($(m-1-1)!0.5!(m-2-2)$)	node {\textup{cart}};
			\end{tikzpicture} 
		\end{displaymath}
		For the following conditions the implications (a) $\Leftrightarrow$ (b) $\Rightarrow$ (c) hold, while (c) $\Rightarrow$ (a) holds as soon as $\K$ has opcartesian tabulations.
		\begin{enumerate}[label=(\alph*)]
			\item The cell $\eps$ defines $r$ as the pointwise right Kan extension of $d$ along $J$;
			\item for all $\map fCA$ the composite on the right above defines $r \of f$ as the pointwise right Kan extension of $d$ along $J(f, \id)$;
			\item for all $\map fCA$ the composite on the right above defines $r \of f$ as the right Kan extension of $d$ along $J(f, \id)$.
		\end{enumerate}
	\end{theorem}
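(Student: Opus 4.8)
The plan is to first dispatch the two easy implications and then concentrate on the substantive directions, (a)~$\Rightarrow$~(b) and (c)~$\Rightarrow$~(a). The implication (b)~$\Rightarrow$~(c) is immediate, since a pointwise right Kan extension is in particular an ordinary one. For (b)~$\Rightarrow$~(a) I would simply specialise (b) to $\map{\id_A}AA$: because $J(\id, \id) \iso J$ and the cartesian cell defining $J(\id,\id)$ is then an invertible horizontal cell, the composite on the right reduces to $\eps$ itself, so (b) at $f = \id_A$ is exactly (a). Thus the theorem reduces to proving (a)~$\Rightarrow$~(b) and, under the assumption of opcartesian tabulations, (c)~$\Rightarrow$~(a).

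For (a)~$\Rightarrow$~(b) the key observation is that restriction along $f$ can be absorbed into the weight. Using the companion $f_*$ of $\map fCA$ we have $J(f, \id) \iso f_* \hc J$, with the cartesian cell defining $J(f,\id)$ realising this isomorphism; see the discussion following \eqref{cartesian and opcartesian cells in terms of companions and conjoints}. Consequently a pointwise test cell for $\eps_f := \eps \of \textup{cart}$, with weight $\hmap{H'}{C'}C$, has horizontal source $H' \hc J(f, \id) \iso (H' \hc f_*) \hc J$, and I would apply the pointwise universal property of $\eps$ from condition (a) with the weight taken to be $H' \hc f_*$. This produces a unique factorisation through $\eps$; transporting it back along the cells exhibiting $f_*$ as a companion---equivalently, as the extension of $1_C$ along $\id$ and $f$---which convert cells out of $H' \hc f_*$ with right boundary $r$ into cells out of $H'$ with right boundary $r \of f$, yields the required factorisation through $\eps_f$, and uniqueness is inherited at each step.

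The substantive direction is (c)~$\Rightarrow$~(a). Given a pointwise test cell with weight $\hmap HCA$, I would form the opcartesian tabulation $(\tab H, \pi_C, \pi_A, \pi)$ of $H$. Because $\pi$ is opcartesian it exhibits $H$ as the extension of $1_{\tab H}$ along $\pi_C$ and $\pi_A$, so that $H \iso (\pi_C)^* \hc (\pi_A)_*$ and hence $H \hc J \iso (\pi_C)^* \hc J(\pi_A, \id)$. Precomposing the test cell with the opcartesian cell defining the conjoint $(\pi_C)^*$ reduces it to a cell over $J(\pi_A, \id)$ whose left boundary is precomposed with $\pi_C$; condition (c) applied at $\map{\pi_A}{\tab H}A$ then factors this uniquely through $\eps_{\pi_A} = \eps \of \textup{cart}$, producing a vertical cell $\beta$ over $\tab H$ with boundaries $s \of \pi_C$ and $r \of \pi_A$. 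Finally I would descend $\beta$ along the opcartesian cell $\pi$ to a cell $\phi'$ over $H$ with the correct boundaries $s$ and $r$, satisfying $\phi' \of \pi = \beta$, and claim that $\phi = \phi' \hc \eps$.

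The main obstacle is the verification, in this last direction, that $\phi'$ really factors the original test cell through $\eps$ rather than merely through $\eps_{\pi_A}$. This is a coherence check interlocking three universal properties: the opcartesian cell $\pi$ (used both to decompose $H$ and to descend $\beta$), the cartesian cell $\textup{cart}\colon J(\pi_A, \id) \Rar J$, and the interchange law. Concretely, one must identify the comparison cell $(\pi_C)^* \hc J(\pi_A,\id) \iso H \hc J$ restricted along the opcartesian cell of $(\pi_C)^*$ with the composite $\pi \hc \textup{cart}$, after which $\beta \hc \eps_{\pi_A} = (\phi' \of \pi) \hc (\eps \of \textup{cart}) = (\phi' \hc \eps) \of (\pi \hc \textup{cart})$ by interchange, and uniqueness of the opcartesian factorisation forces $\phi = \phi' \hc \eps$. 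Uniqueness of $\phi'$ then follows by composing any candidate with $\pi$, invoking the uniqueness clause of condition (c), and applying the opcartesian property of $\pi$; I expect the two-dimensional part of the tabulation's universal property to be what secures the coherence of these identifications.
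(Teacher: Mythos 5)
This theorem is not proved in the present paper---it is imported as Theorem 5.11 of \cite{Koudenburg14a}---so there is no in-text proof to compare against; judged on its own terms your proposal is correct, and it follows exactly the route that the surrounding machinery (companions, conjoints, the composites \eqref{cartesian and opcartesian cells in terms of companions and conjoints}, opcartesian tabulations) is set up to support. Your reductions are the right ones: (a) $\Leftrightarrow$ (b) by absorbing the restriction via $J(f,\id) \iso f_* \hc J$ and transporting test cells across the opcartesian cell exhibiting $H' \hc f_*$ as an extension, and (c) $\Rightarrow$ (a) by writing $H \iso \pi_C^* \hc (\pi_A)_*$ from the opcartesian tabulation cell $\pi$ and moving the test cell across the opcartesian cell $1_{\tab H} \hc J(\pi_A, \id) \Rar H \hc J$. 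One small correction to your closing paragraph: the coherence you flag as the main obstacle is settled by the companion/conjoint triangle identities ($\textup{opcart} \hc \textup{cart} \iso \id$) together with interchange and the uniqueness of factorisations through opcartesian cells; the two-dimensional universal property of the tabulation is not needed, so your argument for (c) $\Rightarrow$ (a) in fact uses only the opcartesianness of $\pi$, i.e.\ only that every horizontal morphism decomposes as a conjoint followed by a companion.
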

	
	As a consequence the analogue of \propref{right Kan extensions along conjoints as right Kan extensions in V(K)} for pointwise Kan extensions below follows, which is Proposition 5.12 of \cite{Koudenburg14a}.
	\begin{proposition} \label{pointwise right Kan extensions along conjoints as pointwise right Kan extensions in V(K)}
		Let $\map dBM$, $\map jBA$ and $\map rAM$ be morphisms in an equipment $\K$ that has opcartesian tabulations. Consider a vertical cell $\eps$ as on the left below, as well as its factorisation through the opcartesian cell defining $j^*$, as shown.
		\begin{displaymath}
			\begin{tikzpicture}[textbaseline]
    		\matrix(m)[math175em]{B & B \\ A & \phantom A \\ M & M \\};
    		\path[map]  (m-1-1) edge node[left] {$j$} (m-2-1)
       			        (m-1-2) edge node[right] {$d$} (m-3-2)
       			        (m-2-1) edge node[left] {$r$} (m-3-1);
      	\path				(m-1-1) edge[eq] (m-1-2)
      							(m-3-1) edge[eq] (m-3-2);
    		\path[transform canvas={shift={($(m-2-1)!0.5!(m-1-1)$)}}] (m-2-2) edge[cell] node[right] {$\eps$} (m-3-2);
  		\end{tikzpicture} = \begin{tikzpicture}[textbaseline]
				\matrix(m)[math175em]{B & B \\ A & B \\ M & M \\};
				\path[map]	(m-1-1) edge node[left] {$j$} (m-2-1)
										(m-2-1) edge[barred] node[below] {$j^*$} (m-2-2)
														edge node[left] {$r$} (m-3-1)
										(m-2-2) edge node[right] {$d$} (m-3-2);
				\path				(m-1-1) edge[eq] (m-1-2)
										(m-1-2) edge[eq] (m-2-2)
										(m-3-1) edge[eq] (m-3-2);
				\path[transform canvas={shift=(m-2-1), yshift=-2pt}]	(m-2-2) edge[cell] node[right] {$\eps'$} (m-3-2);
				\draw				($(m-1-1)!0.5!(m-2-2)$)	node {\textup{opcart}};
			\end{tikzpicture}
		\end{displaymath}
		The vertical cell $\eps$ defines $r$ as the pointwise right Kan extension of $d$ along $j$ in $V(\K)$, in the sense of \cite{Street74}, precisely if its factorisation $\eps'$ defines $r$ as the pointwise right Kan extension of $d$ along $j^*$ in $\K$.
	\end{proposition}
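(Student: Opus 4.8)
The plan is to deduce this from the two preceding results, \thmref{pointwise Kan extensions in terms of Kan extensions} and the non-pointwise \propref{right Kan extensions along conjoints as right Kan extensions in V(K)}, by reducing each of the two pointwise notions to a family of \emph{ordinary} Kan extensions indexed by the vertical morphisms $\map fCA$. On the $V(\K)$-side I would first unwind Street's definition (\cite{Street74}): $\eps$ exhibits $r$ as the pointwise right Kan extension of $d$ along $j$ precisely if, for every $\map fCA$, the whiskering of $\eps$ with the comma object $f \downarrow j$ exhibits $r \of f$ as the ordinary right Kan extension of $d \of q$ along $p$, where $\map p{f \downarrow j}C$ and $\map q{f \downarrow j}B$ are the comma projections. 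On the $\K$-side, \thmref{pointwise Kan extensions in terms of Kan extensions} applied to the horizontal morphism $j^*$ says that $\eps'$ exhibits $r$ as the pointwise right Kan extension of $d$ along $j^*$ precisely if, for every such $f$, the restriction of $\eps'$ along $f$ exhibits $r \of f$ as the ordinary right Kan extension of $d$ along $j^*(f, \id)$ in $\K$. It therefore suffices to match these two families term by term.

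The bridge between the families is the identification, for each $\map fCA$, of the comma object $f \downarrow j$ with the opcartesian tabulation of the restriction $j^*(f, \id)$; this tabulation exists because $\K$ has opcartesian tabulations, and since $j^* \iso A(\id, j)$ the pseudofunctoriality of restriction gives $j^*(f, \id) \iso A(f, j)$. Writing $\map p{\tab{j^*(f,\id)}}C$ and $\map q{\tab{j^*(f,\id)}}B$ for the tabulation projections (\defref{definition: tabulation}), opcartesianness decomposes the restriction as $j^*(f, \id) \iso p^* \hc q_*$, the conjoint of $p$ followed by the companion of $q$. I would then chase: the ordinary right Kan extension of $d$ along $p^* \hc q_*$ reduces, because right Kan extension along the companion $q_*$ is (absolutely) computed by precomposition with $q$, to the ordinary right Kan extension of $d \of q$ along the conjoint $p^*$; and by \propref{right Kan extensions along conjoints as right Kan extensions in V(K)} applied to $p$, the latter coincides with the ordinary right Kan extension of $d \of q$ along $p$ in $V(\K)$. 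This is exactly the $f$-indexed condition produced by Street's definition.

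Finally I would check that these identifications are compatible with the distinguished cells, so that the restriction of $\eps'$ along $f$ corresponds, under \propref{right Kan extensions along conjoints as right Kan extensions in V(K)} and the decomposition $j^*(f,\id) \iso p^* \hc q_*$, to the whiskering of $\eps$ with $f \downarrow j$; since $\eps'$ is by hypothesis the factorisation of $\eps$ through the opcartesian cell defining $j^*$, this amounts to unwinding the universal property of that opcartesian cell alongside that of the tabulation. Running this chain of equivalences for every $\map fCA$ and bundling the two families back up yields the stated equivalence. The step I expect to be the main obstacle is the per-$f$ bridge: establishing the tabulation-composite decomposition $j^*(f, \id) \iso p^* \hc q_*$ and verifying that right Kan extension along the companion $q_*$ is absolute, so that the iterated Kan extension along the composite is legitimate even though ordinary (non-pointwise) Kan extensions do not compose in general. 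The accompanying two-dimensional bookkeeping, tracking the exhibiting cells through the comma/tabulation universal property so that they agree on the nose, is routine but is where the care is needed.
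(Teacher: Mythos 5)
Your argument is correct and follows essentially the route the paper intends: the paper gives no proof here but cites the statement from \cite{Koudenburg14a} as a consequence of \thmref{pointwise Kan extensions in terms of Kan extensions}, and your per-$f$ reduction of both sides to ordinary Kan extensions, bridged by identifying the comma object $f \slash j$ with the opcartesian tabulation of $j^*(f, \id) \iso A(f, j)$, is exactly how that consequence is drawn. The one point worth tightening is in your bridge: what legitimises the iteration along $j^*(f, \id) \iso \pi_C^* \hc (\pi_B)_*$ is that the right Kan extension along the companion $(\pi_B)_*$ is \emph{pointwise} (its defining cell factors test cells with an arbitrary horizontal morphism, in particular $\pi_C^*$, on the left), which is the property actually needed for the one-step-at-a-time factorisation, rather than absoluteness per se.
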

	
	\subsection{Exact cells.}
	The final notions that we need to recall are those of `exact' and `initial' cell.	
	\begin{definition} \label{definition: exact cell}
		In a double category consider a cell $\phi$, as on the left below, and a vertical morphism $\map dDM$. We call $\phi$ \emph{(pointwise) right $d$-exact} if for any cell $\eps$, as on the right, that defines $r$ as the (pointwise) right Kan extension of $d$ along $K$, the composite $\eps \of \phi$ defines $r \of f$ as the (pointwise) right Kan extension of $d \of g$ along $J$. If the converse holds as well then we call $\phi$ \emph{(pointwise) $d$-initial}.
		\begin{displaymath}
			\begin{tikzpicture}[textbaseline]
				\matrix(m)[math175em]{A & B \\ C & D \\};
				\path[map]  (m-1-1) edge[barred] node[above] {$J$} (m-1-2)
														edge node[left] {$f$} (m-2-1)
										(m-1-2) edge node[right] {$g$} (m-2-2)
										(m-2-1) edge[barred] node[below] {$K$} (m-2-2);
				\path[transform canvas={shift={($(m-1-2)!(0,0)!(m-2-2)$)}}] (m-1-1) edge[cell] node[right] {$\phi$} (m-2-1);
			\end{tikzpicture} \qquad\qquad\qquad\qquad \begin{tikzpicture}[textbaseline]
  			\matrix(m)[math175em]{C & D \\ M & M \\};
  			\path[map]  (m-1-1) edge[barred] node[above] {$K$} (m-1-2)
														edge node[left] {$r$} (m-2-1)
										(m-1-2) edge node[right] {$d$} (m-2-2);
				\path				(m-2-1) edge[eq] (m-2-2);
				\path[transform canvas={shift={($(m-1-2)!(0,0)!(m-2-2)$)}}] (m-1-1) edge[cell] node[right] {$\eps$} (m-2-1);
			\end{tikzpicture}
		\end{displaymath}
		If $\phi$ is (pointwise) right $d$-exact for all vertical morphisms $\map dDM$, where $M$ varies, then it is called \emph{(pointwise) right exact}. Likewise $\phi$ is called \emph{(pointwise) initial} whenever it is (pointwise) $d$-initial for all $\map dDM$. (Pointwise) left exact and (pointwise) final cells are defined likewise.
	\end{definition}
	The uniqueness of Kan extensions implies that the notions of right $d$-exactness  and $d$-initialness do not depend on the choice of the cell $\eps$ that defines the Kan extension along $d$.
	\begin{example}
		In Example 4.3 of \cite{Koudenburg14a} it was shown that every `initial' functor $\map fAC$ induces an initial cell in $\Prof$, while in Example 4.6 of the same paper any natural transformation $\nat\phi{fj}{kg}$ between composites of functors, that satisfies the `right Beck-Chevalley condition', was shown to give rise to a pointwise right exact cell. We shall return to this condition later, at the end of \S5.
	\end{example}
	
	The following, which combines Proposition 4.2 and Corollary 4.5 of \cite{Koudenburg14a}, describes classes of initial and right exact cells.
	\begin{proposition} \label{pointwise exact cells}
		In a double category consider a cell $\phi$ as on the left below, and assume that the companions $\hmap{f_*}AC$ and $\hmap{g_*}BD$ of $f$ and $g$ exists. It follows that the opcartesian and cartesian cell in the composite on the right exist, see \eqref{cartesian and opcartesian cells in terms of companions and conjoints}, and we write $\phi_*$ for the unique factorisation of $\phi$ through these cells, as shown.
		\begin{displaymath}
			\begin{tikzpicture}[textbaseline]
				\matrix(m)[math175em]{A & B \\ C & D \\};
				\path[map]	(m-1-1) edge[barred] node[above] {$J$} (m-1-2)
														edge node[left] {$f$} (m-2-1)
										(m-1-2) edge node[right] {$g$} (m-2-2)
										(m-2-1) edge[barred] node[below] {$K$} (m-2-2);
				\path[transform canvas={shift={($(m-1-2)!(0,0)!(m-2-2)$)}}] (m-1-1) edge[cell] node[right] {$\phi$} (m-2-1);
			\end{tikzpicture} = \begin{tikzpicture}[textbaseline]
				\matrix(m)[math175em]{A & & B \\ A & B & D \\ A & & D \\ C & & D \\};
				\path[map]	(m-1-1) edge[barred] node[above] {$J$} (m-1-3)
										(m-1-3) edge node[right] {$g$} (m-2-3)
										(m-2-1) edge[barred] node[below] {$J$} (m-2-2)
										(m-2-2) edge[barred] node[below] {$g_*$} (m-2-3)
										(m-3-1) edge[barred] node[below] {$K(f, \id)$} (m-3-3)
														edge node[left] {$f$} (m-4-1)
										(m-4-1) edge[barred] node[below] {$K$} (m-4-3);
				\path				(m-1-1) edge[eq] (m-2-1)
										(m-2-1) edge[eq] (m-3-1)
										(m-2-3) edge[eq] (m-3-3)
										(m-3-3) edge[eq] (m-4-3)
										(m-2-2) edge[cell] node[right] {$\phi_*$} (m-3-2);
				\draw				($(m-1-1)!0.5!(m-2-3)$) node {\textup{opcart}}
										($(m-3-1)!0.5!(m-4-3)$) node[yshift=-3pt] {\textup{cart}};
			\end{tikzpicture}
		\end{displaymath}
		The following hold:
		\begin{enumerate}[label=\textup{(\alph*)}]
			\item if $f = \id_A$ and $\phi$ is opcartesian then $\phi$ is both initial and pointwise initial;
			\item $\phi$ is pointwise right exact if the following equivalent conditions hold: $\phi_* \of \textup{opcart}$ is opcartesian; $\textup{cart} \of \phi_*$ is cartesian; $\phi_*$ is invertible.
		\end{enumerate}
	\end{proposition}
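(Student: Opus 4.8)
The plan is to treat the two parts separately, in both cases reducing to the universal properties of (op)cartesian cells and to \thmref{pointwise Kan extensions in terms of Kan extensions}. For the non-pointwise half of part (a), set $f = \id_A$ and unwind \defref{definition: exact cell}: I must show that a cell $\eps$ defines $r$ as the right Kan extension of $d$ along $K$ exactly when $\eps \of \phi$ defines $r$ as the right Kan extension of $d \of g$ along $J$. Since $\phi$ is opcartesian, every test cell with horizontal source $J$ factors uniquely as $\bar\beta \of \phi$, with $\bar\beta$ having horizontal source $K$; composing this bijection with the factorisation through $\eps$ identifies the two Kan extension universal properties. As the correspondence runs in both directions, $\phi$ is in fact $d$-initial for every $d$, hence initial.

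For the pointwise half of (a) the same bijection is wanted, but the test cells now have horizontal source $H \hc J$ for an arbitrary $\hmap HCA$, whereas the universal property of $\phi$ only factors cells with source $J$. The crux, and what I expect to be the main obstacle, is to show that $\id_H \hc \phi$ is again opcartesian. Here the standing hypothesis that the companion $g_*$ exists is essential: with $f = \id_A$ the cell $\phi$ exhibits $K$ as the extension of $J$ along $\id_A$ and $g$, so $K \iso J \hc g_*$ and $\phi$ is, up to isomorphism, the horizontal composite of $\id_J$ with the opcartesian cell defining $g_*$. Consequently $\id_H \hc \phi$ is isomorphic to $\id_{H \hc J}$ composed with that same opcartesian cell, which is opcartesian by the companion calculus (Lemma 2.8 of \cite{Koudenburg14a}). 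With this in hand the bijection argument of the previous paragraph applies verbatim to the generalised factorisations of \defref{definition: right Kan extension}.

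Turning to part (b), I first check that the three conditions are equivalent. That $\phi_*$ invertible implies the other two is immediate, since an invertible cell is both cartesian and opcartesian and vertical composites of opcartesian, respectively cartesian, cells are again of that kind. For the converse, if $\phi_* \of \textup{opcart}$ is opcartesian then both it and $\textup{opcart}$ are opcartesian cells out of $J$ with vertical source $\id_A$ and target $g$; they therefore both exhibit the extension of $J$ along $\id_A$ and $g$, and the resulting comparison isomorphism between their horizontal targets $J \hc g_*$ and $K(f, \id)$ must equal $\phi_*$ by uniqueness of factorisations through $\textup{opcart}$, so $\phi_*$ is invertible. The case of ``$\textup{cart} \of \phi_*$ cartesian'' is dual.

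Finally I show these conditions force pointwise right exactness. Fixing $\map dDM$ and a cell $\eps$ defining $r$ as the pointwise right Kan extension of $d$ along $K$, I decompose $\phi = \textup{cart} \of \phi_* \of \textup{opcart}$ and pass through the factors one at a time. By the implication (a) $\Rightarrow$ (b) of \thmref{pointwise Kan extensions in terms of Kan extensions}, $\eps \of \textup{cart}$ defines $r \of f$ as the pointwise right Kan extension of $d$ along $K(f, \id)$; precomposing with the invertible $\phi_*$ preserves this property, yielding a pointwise right Kan extension along $J \hc g_*$; and precomposing once more with $\textup{opcart}$, which is pointwise right exact by part (a), shows that $\eps \of \phi$ defines $r \of f$ as the pointwise right Kan extension of $d \of g$ along $J$. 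Since $d$ was arbitrary, $\phi$ is pointwise right exact.
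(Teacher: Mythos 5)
The paper does not actually prove this proposition in the text — it is imported as a combination of Proposition 4.2 and Corollary 4.5 of \cite{Koudenburg14a} — so there is no in-text argument to compare against; judged on its own terms, your reconstruction is correct. The two key moves are the right ones. For (a), the bijection between test cells induced by the opcartesian $\phi$ settles the non-pointwise case, and the observation that $\phi$ is, up to an invertible horizontal cell, $\id_J \hc \textup{opcart}_{g_*}$, so that $\id_H \hc \phi$ is again opcartesian by \eqref{cartesian and opcartesian cells in terms of companions and conjoints}, is exactly what is needed to extend the bijection to test cells with source $H \hc J$; this is where the standing hypothesis that $g_*$ exists earns its keep. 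For (b), the equivalence of the three conditions via uniqueness of factorisations through (op)cartesian cells, and the three-step decomposition $\phi = \textup{cart} \of \phi_* \of \textup{opcart}$ combined with part (a), are both sound. One small point deserves a remark: \thmref{pointwise Kan extensions in terms of Kan extensions} is stated for equipments, whereas the present proposition lives in an arbitrary double category in which only the companions $f_*$ and $g_*$ are assumed, so its hypotheses are not literally satisfied when you invoke the implication (a) $\Rightarrow$ (b). That implication does hold in this generality — its proof uses only the companion $f_*$, via $K(f, \id) \iso f_* \hc K$ and the bijection on test cells induced by the opcartesian cell $\id_H \hc \textup{opcart}_{f_*}$, i.e.\ the same whiskering argument you already deploy in part (a) — but you should either say so explicitly or carry out that step directly rather than citing the theorem as a black box.
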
	
	
	\section{Eilenberg-Moore double categories}
	In this section we recall the notion of a `normal oplax double monad' $T$ on a double category and consider several weakenings of the `Eilenberg-Moore double category' associated to $T$, that was introduced in Section 7.1 of \cite{Grandis-Pare04}.
	
	More precisely, by a `normal oplax double monad' we shall mean a monad $T$ in the $2$-category $\noDbl$ of double categories, `normal oplax double functors' and their transformations. Since the assignment $\K \mapsto V(\K)$, that maps a double category $\K$ to its vertical $2$-category $V(\K)$, extends to a $2$\ndash functor $\noDbl \to \twoCat$, any normal oplax double monad $T$ on $\K$ induces a strict $2$-monad $V(T)$ on $V(\K)$. After generalising slightly the notions of `horizontal $T$-morphism' and `$T$-cell', that were introduced in \cite{Grandis-Pare04}, we will, for any choice of `weak' $\in \set{\text{colax, lax, pseudo}}$, show that lax $V(T)$-algebras, weak $V(T)$-morphisms, horizontal $T$-morphisms and $T$-cells form a double category $\Alg wT$.
	
	\subsection{Double functors and their transformations.}
	We start by recalling the notions of double functor and double transformation; references include Sections 7.2 and 7.3 of \cite{Grandis-Pare99} and Section 6 of \cite{Shulman08}. 
	\begin{definition} \label{definition: double functor}
  	Let $\K$ and $\L$ be double categories. A \emph{lax double functor} $\map F\K\L$ consists of a pair of functors $\map{F_\textup v}{\K_\textup v}{\L_\textup v}$ and $\map{F_\textup c}{\K_\textup c}{\L_\textup c}$ (both will be denoted $F$), such that $L F_\textup c = F_\textup v L$ and $R F_\textup c = F_\textup v R$, together with natural transformations
  	\begin{displaymath}
    	\begin{tikzpicture}
    	  \matrix(m)[math, column sep=2em, row sep=2.25em]
    	    { \K_\textup c \times_{\K_\textup v} \K_\textup c & \K_\textup c & \K_\textup v \\
    	      \L_\textup c \times_{\L_\textup v} \L_\textup c & \L_\textup c & \L_\textup v, \\};
    	  \path[map]  (m-1-1) edge node[above] {$\hc_\K$} (m-1-2)
    	                      edge node[left] {$F_\textup c \times_{F_\textup v} F_\textup c$} (m-2-1)
    	              (m-1-3) edge node[above] {$1$} (m-1-2)
    	                      edge node[right] {$F_\textup v$} (m-2-3)
    	              (m-2-1) edge node[below] {$\hc_\L$} (m-2-2)
    	              (m-1-2) edge node[left] {$F_\textup c$} (m-2-2)
    	              (m-2-3) edge node[below] {$1$} (m-2-2);
    	  \path[map]  (m-2-1) edge[cell, shorten >= 18pt, shorten <= 18pt] node[above left, inner sep=1.5pt] {$F_\hc$} (m-1-2)
    	              (m-2-3) edge[cell, shorten >= 12.5pt, shorten <= 12.5pt] node[above right, inner sep=1.5pt] {$F_1$} (m-1-2);;
    	\end{tikzpicture}
  	\end{displaymath}
  	whose components are horizontal cells, that satisfy the usual associativity and unit axioms for monoidal functors, see e.g.\ Section XI.2 of \cite{MacLane98}. The transformations $F_\hc$ and $F_1$ are called the \emph{compositor} and \emph{unitor} of $F$.
  	
  	Vertically dual, in the definition of an \emph{oplax double functor} $\map F\K\L$ the directions of the compositor $F_\hc$ and unitor $F_1$ are reversed. A \emph{pseudo double functor} is a lax (or, equivalently, an oplax) double functor whose compositor and unitor are invertible; a lax, oplax or pseudo double functor whose unitor $F_1$ is the identity is called \emph{normal}.
	\end{definition}
	
	We shall mostly be interested in normal oplax double functors. Unpacking the above, such a double functor $\map F\K\L$ maps the objects, vertical and horizontal morphisms, as well as cells of $\K$ to those of $\L$, in a way that preserves horizontal and vertical sources and targets. Moreover, vertical composition of morphisms and cells is preserved, as are the horizontal units: $F1_A = 1_{FA}$ and $F1_f = 1_{Ff}$ for each $A$ and $\map fAC$ in $\K$, while horizontal composition is preserved only up to natural coherent cells
	\begin{displaymath}
		\begin{tikzpicture}[textbaseline]
			\matrix(m)[math175em]{FA & & FE \\ FA & FB & FE \\};
			\path[map]	(m-1-1) edge[barred] node[above] {$F(J \hc H)$} (m-1-3)
									(m-2-1) edge[barred] node[below] {$FJ$} (m-2-2)
									(m-2-2) edge[barred] node[below] {$FH$} (m-2-3)
									(m-1-2) edge[cell] node[right] {$F_\hc$} (m-2-2);
			\path				(m-1-1) edge[eq] (m-2-1)
									(m-1-3) edge[eq] (m-2-3);
		\end{tikzpicture}
	\end{displaymath}
	for each composable pair $\hmap JAB$ and $\hmap HBE$ in $\K$. We will often use the unit axioms for $F$: they state that, for any horizontal morphism $\hmap JAB$, the composite
	\begin{equation} \label{unit axiom for normal oplax double functors}
		F(1_A \hc J) \xRar{F_\hc} F1_A \hc FJ = 1_{FA} \hc FJ \xRar{\mf l} FJ
	\end{equation}
	coincides with $F\mf l$, and similar for $J \hc 1_B$.
	
	\begin{example} \label{example: span functors}
		Any functor $\map F\D\E$ between categories with pullbacks induces a normal oplax double functor $\map{\Span F}{\Span\D}{\Span\E}$, simply by applying $F$ to the spans of $\D$. The compositor of $\Span F$ is induced by the universal property of pullbacks; in particular, if $F$ preserves pullbacks then $\Span F$ is a normal pseudo double functor.
	\end{example}
	
	\begin{definition} \label{definition: transformation}
  	A \emph{double transformation} $\nat\xi FG$ between lax double functors $F$ and $\map G\K\L$ is given by natural transformations $\nat{\xi_\textup v}{F_\textup v}{G_\textup v}$ and $\nat{\xi_\textup c}{F_\textup c}{G_\textup c}$ (both will be denoted $\xi$), with $L\xi_\textup c = \xi_\textup v L$ and $R\xi_\textup c = \xi_\textup vR$, such that the following diagrams commute, where $(J, H) \in \K_\textup c \pb RL \K_\textup c$ and $A \in \K_\textup v$.
  	\begin{displaymath}
			\begin{tikzpicture}[baseline]
				\matrix(m)[math175em]{ FJ \hc FH & F(J \hc H) \\ GJ \hc GH & G(J \hc H) \\};
				\path	(m-1-1) edge[cell] node[above] {$F_\hc$} (m-1-2)
											edge[cell] node[left] {$\xi_J \hc \xi_H$} (m-2-1)
							(m-1-2) edge[cell] node[right] {$\xi_{J \hc H}$} (m-2-2)
							(m-2-1) edge[cell] node[below] {$G_\hc$} (m-2-2);
			\end{tikzpicture} \qquad \qquad \begin{tikzpicture}[baseline]
				\matrix(m)[math175em]{ 1_{FA} & F1_A \\ 1_{GA} & G1_A \\};
				\path	(m-1-1) edge[cell] node[above] {$F_1$} (m-1-2)
											edge[cell] node[left] {$1_{\xi_A}$} (m-2-1)
							(m-1-2) edge[cell] node[right] {$\xi_{1_A}$} (m-2-2)
							(m-2-1) edge[cell] node[below] {$G_1$} (m-2-2);
			\end{tikzpicture}
		\end{displaymath}
		
		Analogously, in the definition of a double transformation $\nat\xi FG$ between oplax double functors $F$ and $\map G\K\L$ the directions of the coherence cells in the diagrams above are reversed; in both cases we shall call the commuting of these diagrams respectively the composition and unit axiom for $\xi$.
	\end{definition}
	
	Double categories, lax double functors and double transformations form a $2$\ndash cat\-e\-gory which we denote $\lDbl$; analogously $\oDbl$ denotes the $2$-category of double categories, oplax double functors and double transformations. We denote by $\nlDbl \subset \lDbl$ and $\noDbl \subset \oDbl$ the sub-$2$-categories consisting of normal double functors. Notice that the unit axiom for a double transformation $\nat\xi FG$ between normal double functors reduces to the identity $\xi_{1_A} = 1_{\xi_A}$, for all objects $A$ of $\K$.
	
	\begin{example}
		Every transformation $\nat\xi FG$, between functors of categories that have pullbacks, induces a double transformation $\nat{\Span\xi}{\Span F}{\Span G}$ between normal oplax double functors, that is given by \mbox{$A \mapsto \xi_A$} on objects and $(A \leftarrow J \rightarrow B) \mapsto \xi_J$ on spans. We conclude that the assignment $\E \mapsto \Span\E$, that maps a category $\E$ with pullbacks to the double category $\Span\E$ of spans in $\E$, extends to a $2$-functor $\map{\mathsf{Span}}{\mathsf{pbCat}}\noDbl$, where $\mathsf{pbCat}$ denotes the $2$-category of categories with pullbacks, all functors between such categories and their transformations.
	\end{example}
	
	The following propositions record some useful properties of double functors and their transformations. Remember that every pseudo double category $\K$ contains a vertical $2$-category $V(\K)$; in the proposition below $\twoCat$ denotes the $2$-category of $2$-categories, strict $2$-functors and $2$\ndash transformations.
	\begin{proposition}\label{normal functors induce vertical functors}
	  The assignment $\K \mapsto V(\K)$ extends to a strict $2$-functor
	  \begin{displaymath}
		  \map V\noDbl \twoCat.
		\end{displaymath}
	\end{proposition}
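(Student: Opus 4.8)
The plan is to define the prospective $2$-functor $V$ on the three levels of $\noDbl$ and then to check strict $2$-functoriality, almost all of which is routine once the right dictionary is in place; the only genuine issue is the interaction of the oplax compositor with the unit cells. Recall from Definition~1.8 of \cite{Koudenburg14a} that the $2$-cells of $V(\K)$ are the vertical cells of $\K$, that their \emph{horizontal} composite in $V(\K)$ is computed by the vertical composite $\of$ of cells in $\K$, and that their \emph{vertical} composite in $V(\K)$ is computed by the horizontal composite $\hc$ in $\K$, conjugated by unitors: for $\cell\phi fg$ and $\cell\psi gh$ with $\map{f,g,h}AC$ it is the cell $\mf l \of (\phi \hc \psi) \of \inv{\mf l}$, of the form $f \Rar h$. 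On objects $V$ is the given assignment $\K \mapsto V(\K)$; on a normal oplax double functor $\map F\K\L$ I let $V(F)$ act as $F$ does on objects, vertical morphisms and vertical cells, which is well defined because normality gives $F1_A = 1_{FA}$, so that $F$ sends a vertical cell to a vertical cell; on a double transformation $\nat\xi FG$ I let $V(\xi)$ have the vertical morphisms $\xi_A$ as its $1$-cell components.

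First I would check that each $V(F)$ is a strict $2$-functor. Preservation of identities and composites of vertical morphisms is immediate from $F_\textup v$ being a functor $\K_\textup v \to \L_\textup v$, preservation of identity $2$-cells follows from $F1_f = 1_{Ff}$, and preservation of the \emph{horizontal} composite of $2$-cells is equally immediate, since that composite is the vertical composite $\of$ in $\K$ and $F_\textup c$ is a functor $\K_\textup c \to \L_\textup c$.

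The hard part will be preservation of the \emph{vertical} composite of $2$-cells, because an oplax $F$ respects $\hc$ only up to the compositor $F_\hc$, which in general is not invertible. Here normality rescues the argument. Applying $F_\textup c$ to $\mf l \of (\phi \hc \psi) \of \inv{\mf l}$ and using naturality of $F_\hc$ at the cell $(\phi, \psi)$ reduces everything to the identity $\mf l \of F_\hc = F\mf l$ at $(1_A, 1_A)$, which is precisely the unit axiom \eqref{unit axiom for normal oplax double functors} with $J = 1_A$. Since $\mf l$ and $F\mf l$ are invertible this identity forces the component of $F_\hc$ at $(1_A, 1_A)$ to be invertible, equal to $\inv{\mf l} \of F\mf l$; substituting and cancelling then gives
\begin{equation*}
	F\bigpars{\mf l \of (\phi \hc \psi) \of \inv{\mf l}} = \mf l \of (F\phi \hc F\psi) \of \inv{\mf l},
\end{equation*}
which says exactly that $V(F)$ preserves the vertical composite of $2$-cells.

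Finally I would verify that each $V(\xi)$ is a $2$-natural transformation $V(F) \Rar V(G)$ and that $V$ is itself strictly $2$-functorial. The $1$-dimensional naturality $\xi_C \of Ff = Gf \of \xi_A$ is the naturality of $\xi_\textup v$, while the $2$-dimensional naturality, for a vertical cell $\cell\phi fg$, unwinds through the dictionary above to the equation $1_{\xi_C} \of F\phi = G\phi \of 1_{\xi_A}$ in $\K_\textup c$; this is the naturality of $\xi_\textup c$ at the morphism $\phi$, combined with the identities $\xi_{1_A} = 1_{\xi_A}$ and $\xi_{1_C} = 1_{\xi_C}$ valid for transformations of normal double functors. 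Strict functoriality of $V$---on identity and composite double functors, and on the vertical and horizontal composites of double transformations---is then routine, since on objects, vertical morphisms and vertical cells each of these operations is computed by the underlying strict composites in $\K_\textup v$ and $\K_\textup c$.
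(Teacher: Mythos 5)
Your proposal is correct and follows essentially the same route as the paper's (sketch) proof: the paper's one-line justification that ``$V(F)$ is a strict $2$-functor follows from the unit axiom for $F$'' is exactly the point you work out in detail, namely that naturality of $F_\hc$ together with \eqref{unit axiom for normal oplax double functors} forces $(F_\hc)_{(1_A,1_A)} = \inv{\mf l} \of F\mf l$ and hence strict preservation of the vertical composite of $2$-cells, and the $2$-naturality of $V(\xi)$ likewise reduces to $\xi_{1_A} = 1_{\xi_A}$ as you say.
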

	\begin{proof}[(sketch).]
	  The image $\map{V(F)}{V(\K)}{V(\L)}$ of a double functor $\map F\K\L$ is simply the restriction of $F$ to objects, vertical morphisms and vertical cells; that $V(F)$ is a strict $2$-functor follows from the unit axiom for $F$. Likewise the vertical part $\nat{\xi_\textup v}{F_\textup v}{G_\textup v}$ of any double transformation $\nat\xi FG$ forms a $2$-transformation $\nat{V(\xi)}{V(F)}{V(G)}$.
	\end{proof}
	
	The following is Proposition 6.8 of \cite{Shulman08}.
	\begin{proposition}[Shulman] \label{preserving of cartesian and opcartesian cells}
	Every lax double functor between equipments preserves cartesian cells, while every oplax double functor between equipments preserves opcartesian cells.
	\end{proposition}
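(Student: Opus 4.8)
The two assertions are interchanged by passing to the vertical duals $\op\K$, $\op\L$ and $\op F$: vertical dualisation reverses the cells of a double category, so it turns cartesian cells into opcartesian ones and, reversing the direction of the compositor, turns normal lax double functors into normal oplax ones. It therefore suffices to prove the first assertion, that a normal lax double functor $\map F\K\L$ preserves cartesian cells; throughout I may then use that both $\K$ and $\L$ are equipments, so that all companions and conjoints exist. The plan is to recognise cartesian cells through the companion--conjoint factorisation \eqref{cartesian and opcartesian cells in terms of companions and conjoints}. Given a cartesian cell $\cell\phi JK$ over $\map fAC$ and $\map gBD$, so that $J \iso K(f, g)$, I would form the comparison cell
\begin{displaymath}
	\bar\phi = \bigbrks{J \iso 1_A \hc J \hc 1_B \xRar{\psi_f \hc \phi \hc \alpha_g} f_* \hc K \hc g^*},
\end{displaymath}
built from the opcartesian binding cell $\cell{\psi_f}{1_A}{f_*}$ of the companion of $f$ and the opcartesian binding cell $\cell{\alpha_g}{1_B}{g^*}$ of the conjoint of $g$. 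In an equipment $\phi$ is cartesian precisely when $\bar\phi$ is invertible, since $\bar\phi$ is the factorisation of $\phi$ through the cartesian cell of \eqref{cartesian and opcartesian cells in terms of companions and conjoints}. The goal is thus to prove that the corresponding comparison $\overline{F\phi}$, formed in $\L$ from $Ff_*$ and $Fg^*$, is invertible.

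First I would check that $F$ preserves companions and conjoints: that $Ff_*$, equipped with the $F$-images of the two binding cells of $f_*$, is the companion of $Ff$, and similarly for conjoints. Here the two companion identities both transport. The identity expressed as a vertical composite is preserved because $F$ respects vertical composition and sends $1_f$ to $1_{Ff}$; the identity expressed through horizontal composition is preserved because the unit axioms for a normal lax double functor force the unit components of the compositor $F_\hc$ to be invertible (compare \eqref{unit axiom for normal oplax double functors}), which is exactly what is needed to carry that identity across. In particular $F\psi_f$ and $F\alpha_g$ are the opcartesian binding cells of $Ff_*$ and $Fg^*$, so that $\overline{F\phi}$ really is the companion--conjoint comparison for $F\phi$.

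The heart of the argument, and the step I expect to be the main obstacle, is to show that $F_\hc$ is \emph{invertible on the components obtained by gluing along a companion on the left or a conjoint on the right}; that is, that
\begin{displaymath}
	\cell{F_\hc}{Ff_* \hc FK}{F(f_* \hc K)} \qquad\text{and}\qquad \cell{F_\hc}{FL \hc Fg^*}{F(L \hc g^*)}
\end{displaymath}
are invertible for all $K$ and $L$. The idea is to build their inverses from the binding cells, using once more that the unit components of $F_\hc$ are invertible: one feeds the cartesian binding $\cell{\phi_f}{f_*}{1_C}$ into the naturality square of $F_\hc$ to compare $Ff_* \hc FK$ with the unit component $1_{FC} \hc FK$, and then uses the companion identities to verify that the resulting cell is a two-sided inverse. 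This is precisely where the lax/oplax dichotomy is forced: the binding cell available to reduce the compositor to a unit component points in the direction compatible only with the lax compositor $FX \hc FY \Rar F(X \hc Y)$ for the `restriction' gluings (companion on the left, conjoint on the right), and with the oplax compositor for the dual `extension' gluings --- which is why lax functors preserve cartesian cells and oplax functors preserve opcartesian cells, and no more.

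With these invertibilities in hand the proof concludes formally. Naturality of $F_\hc$ with respect to $\psi_f$, $\phi$ and $\alpha_g$ expresses $\overline{F\phi}$, up to the compositor components just discussed, as $F(\bar\phi)$ pre- and postcomposed with compositor cells: on the source side these are the unit components around $F(1_A \hc J \hc 1_B)$, invertible by normality, and on the target side the companion/conjoint components around $F(f_* \hc K \hc g^*)$, invertible by the key step. Since $\phi$ is cartesian, $\bar\phi$ and hence $F(\bar\phi)$ is invertible, so $\overline{F\phi}$ is invertible as well, and therefore $F\phi$ is cartesian.
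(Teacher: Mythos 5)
The paper offers no proof to measure you against---it simply cites Proposition 6.8 of Shulman's paper---so your attempt has to stand on its own. Its overall architecture is sound: reducing the opcartesian statement to the cartesian one by vertical duality is correct, and recognising a cartesian cell $\cell\phi JK$ over $(f,g)$ by the invertibility of its comparison $J \Rar f_* \hc K \hc g^*$ is exactly the right handle. For \emph{normal} lax double functors everything you outline does work: the two companion identities transport (the vertically composed one because $F$ preserves vertical composition and identity cells, the horizontally composed one because normality plus the unit axiom makes the unit components of $F_\hc$ invertible), the compositor components $Ff_* \hc FK \Rar F(f_* \hc K)$ and $FL \hc Fg^* \Rar F(L \hc g^*)$ really are invertible, and the conclusion follows by naturality. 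Since every double functor to which this paper applies the proposition is normal or pseudo, your version would cover all of its uses.

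The gap is that the proposition is stated for \emph{arbitrary} lax double functors, and you have silently inserted ``normal'' where none is assumed; your argument genuinely needs it and fails in two places without it. First, $Ff_*$ need not be the companion of $Ff$: the candidate cartesian binding cell $F\phi_f$ lands in $F1_C$ rather than $1_{FC}$, and a non-normal lax functor has no cell $F1_C \Rar 1_{FC}$ to correct this; one only obtains a canonical comparison $\cell{\xi_f}{(Ff)_*}{Ff_*}$ (the factorisation of $F\psi_f \of F_1$ through the opcartesian cell defining $(Ff)_*$), which is not invertible in general. Second, your key step is then false: taking $\K = 1$, the terminal double category (an equipment), a lax double functor $1 \to \L$ is precisely a monoid $(A_0, A, \mu, \eta)$ in $\L$; here the relevant component of $F_\hc$ is the multiplication $\cell\mu{A \hc A}A$ and the component of $F_1$ is $\cell\eta{1_{A_0}}A$, and neither is invertible for, say, a small category regarded as a monoid in $\Span\Set$---only the composite $\mu \of (\eta \hc \id)$ is, by the unit axiom. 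The repair is to aim not at inverting $F_\hc$ but at showing that $F_\hc \of (\xi_f \hc \id_{FK} \hc \xi'_g)$ is a two-sided inverse to the comparison $F(f_* \hc K \hc g^*) \Rar (Ff)_* \hc FK \hc (Fg)^*$; both required identities follow from the naturality of $F_\hc$ and $F_1$, the unit axioms, and the companion and conjoint identities $\phi_f \of \psi_f = 1_f$ and $\psi_f \hc \phi_f \iso \id$, without ever inverting a coherence cell of $F$. With that modification your outline goes through in the stated generality.
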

	
	Having introduced double functors and their transformations we turn to double monads.
	\begin{definition}
		An \emph{oplax double monad} on a double category $\K$ is a monad $T = (T, \mu, \eta)$ on $\K$ in the $2$-category $\oDbl$, consisting of an oplax double endofunctor $\map T\K\K$, a multiplication $\nat\mu{T^2}T$ and a unit $\nat\eta{\id_\K}T$, which satisfy the associativity axiom $\mu \of T\mu = \mu \of \mu T$ and unit axiom $\mu \of T\eta = \id_T = \mu \of \eta T$.
		
		Dually a \emph{lax double monad} is a monad in $\lDbl$. An oplax, or lax, double monad is called \emph{normal} or \emph{pseudo} whenever its underlying double endofunctor is a normal or pseudo double functor.
	\end{definition}
	
	Notice that, \propref{normal functors induce vertical functors}, any normal oplax double monad $T$ on $\K$ restricts to a strict $2$-monad $V(T)$ on the vertical $2$-category $V(\K)$. The following example describes the `free monoid'-monad on $\Mat\V$. In \exref{example: free strict monoidal enriched category monad} below we will see that it induces the `free strict monoidal $\V$-category'-monad on $\enProf\V$, which we shall use throughout.
	
	For many other examples of lax double monads we refer to Section 3 of \cite{Cruttwell-Shulman10}, where `monads on virtual double categories' are considered. Analogous to the generalisation of the notion of monoidal category to that of `multicategory', a `virtual double category' consists of objects, vertical morphisms and horizontal morphisms, like a double category, but has cells of the form $(J_1, \dotsc, J_n) \Rar K$, where $(J_1, \dotsc, J_n)$ is a sequence of horizontal morphisms $\hmap{J_1}{A_0}{A_1}, \dotsc, \hmap{J_n}{A_{n-1}}{A_n}$. Virtual double categories that have all `composites' and `units', in the sense of Section 5 of \cite{Cruttwell-Shulman10}, can be identified with double categories in our sense, and `monads' on such virtual double categories correspond to the lax double monads in our sense.
	\begin{example} \label{example: free monoid double monad}
		Let $\V = (\V, \tens, 1, \mf s)$ be a symmetric monoidal category that has coproducts  preserved by $\tens$ on both sides, so that $\V$-matrices form a double category $\Mat\V$; see \exref{example: matrices}. The pseudo double endofunctor $T$ underlying the \emph{`free monoid'-monad} on $\Mat\V$ assigns to a set $A$ the free monoid $TA = \coprod_{n \geq 0} A^n$ and, accordingly, maps a function $\map fAC$ to $Tf = \coprod_{n \geq 0} f^n$. The elements of $TA$ are (possibly empty) sequences $\ul x = (x_1, \dotsc, x_n)$ of elements in $A$; we write $\lns{\ul x} = n$. The function $Tf$ simply applies $f$ coordinatewise.
		
		The assignments $A \mapsto TA$ and $f \mapsto Tf$ extend to a pseudo double endofunctor $T$ on $\Mat\V$ as follows. Its image of a $\V$-matrix $\hmap JAB$ is given by
		\begin{displaymath}
			TJ(\ul x, \ul y) = \begin{cases}
				\Tens_{i = 1}^n J(x_i, y_i) & \text{if $\card{\ul x} = n = \card{\ul y}$;} \\
				\emptyset	& \text{otherwise.}
			\end{cases}
		\end{displaymath}
		Likewise the image $T\phi$ of the cell on the left below is given by the family of $\V$-maps on the right. 
		\begin{displaymath}
	  	\begin{tikzpicture}[textbaseline]
	  	  \matrix(m)[math175em]{A & B \\ C & D \\};
	  	  \path[map]  (m-1-1) edge[barred] node[above] {$J$} (m-1-2)
	  	                      edge node[left] {$f$} (m-2-1)
	  	              (m-1-2) edge node[right] {$g$} (m-2-2)
	  	              (m-2-1) edge[barred] node[below] {$K$} (m-2-2);
	  	  \path[transform canvas={shift={($(m-1-2)!(0,0)!(m-2-2)$)}}] (m-1-1) edge[cell] node[right] {$\phi$} (m-2-1);
	  	\end{tikzpicture} \qquad\qquad (T\phi)_{(\ul x, \ul y)} = \begin{cases}
				\Tens_{i = 1}^n \phi_{(x_i, y_i)} & \text{if $\card{\ul x} = n = \card{\ul y}$;} \\
				\id_\emptyset	& \text{otherwise.}
			\end{cases}
		\end{displaymath}
		Next the compositor $T_\hc \colon TJ \hc TH \iso T(J \hc H)$ is given, on sequences $\ul x$ and $\ul z$ of equal length $n$, by the canonical isomorphisms
		\begin{multline*}
			(TJ \hc TH)(\ul x, \ul z) = \mspace{-7mu}\coprod_{\ul y \in TB} \mspace{-5mu} TJ(\ul x, \ul y) \tens TH(\ul y, \ul z) \iso \mspace{-7mu}\coprod_{\ul y \in B^n} \mspace{-2mu} \Bigpars{\Tens_{i=1}^n J(x_i, y_i)} \tens \Bigpars{\Tens_{i=1}^n H(y_i, z_i)} \\
			\iso \coprod_{\ul y \in B^n} \Tens_{i=1}^n \bigpars{J(x_i, y_i) \tens H(y_i, z_i)} \iso \Tens_{i=1}^n \coprod_{y \in B} \bigpars{J(x_i, y) \tens H(y, z_i)} = T(J \hc H)(\ul x, \ul z),
		\end{multline*}
		where the first and the third isomorphism follow from the fact that $\tens$ preserves coproducts on both sides, while the second isomorphism is given by reordering the factors $J(x_i, y_i)$ and $H(y_i, z_i)$; of course $(T_\hc)_{(\ul x, \ul z)} = \id_\emptyset$ if $\card{\ul x} \neq \card{\ul z}$. On a set $A$ the unitor $T_1\colon 1_{TA} \iso T1_A$ is given by the isomorphisms $1_{TA}(\ul x, \ul y) = 1 \iso 1^{\tens n} = (T1_A)(\ul x, \ul y)$ if $\ul x = \ul y$ and $\id_\emptyset$ otherwise. This completes the definition of the pseudo double endofunctor $T$ on $\Mat\V$.
		
		The multiplication $\nat\mu{T^2}T$ and unit $\nat\eta\id T$, that make $T$ into a double monad, restrict to sets as concatenation of double sequences and insertion as one-element sequences respectively. On a $\V$-matrix $\hmap JAB$ the multiplication $\cell{\mu_J}{T^2J}{TJ}$ is given as follows: if $\ull x = (\ul x_1, \dotsc, \ul x_n)$ and $\ull y = (\ul y_1, \dotsc, \ul y_n)$ are double sequences of equal length, such that $\card{\ul x_i} = m_i = \card{\ul y_i}$ for all $i = 1, \dotsc, n$, then $(\mu_J)_{(\ull x, \ull y)}$ is the $\V$-isomorphism
		\begin{equation} \label{concatenation}
			(T^2J)(\ull x, \ull y) = \Tens_{i=1}^n \Bigpars{\Tens_{j=1}^{m_i} J(x_{ij}, y_{ij})} \iso \mspace{-9mu} \Tens_{i=1}^{m_1 + \dotsb + m_n} \mspace{-9mu} J\bigpars{(\mu\ull x)_i, (\mu\ull y)_i} = (TJ)(\mu\ull x, \mu\ull y);
		\end{equation}
		in all other cases it is the unique $\V$-map $\emptyset \to (TJ)(\mu\ull x, \mu\ull y)$. Finally, the unit $\cell{\eta_J}J{TJ}$ is given by the isomorphisms $J(x, y) \iso (TJ)(\eta x, \eta y)$; this completes the description of the pseudo double monad $T$.
		
		We remark that $T$ can be modified into the following lax double monad on $\Mat\V$, which we denote $T_\Sigma$, by introducing `twists' in the images $TJ$, as follows:
		\begin{displaymath}
			T_\Sigma J(\ul x, \ul y) = \begin{cases}
				\coprod_{\sigma \in \Sigma_n} \Tens_{i = 1}^n J(x_{\sigma i}, y_i) & \text{if $\card{\ul x} = n = \card{\ul y}$;} \\
				\emptyset	& \text{otherwise,}
			\end{cases}
		\end{displaymath}
		where $\Sigma_n$ denotes the group of permutations of the set $\set{1, \dotsc, n}$. The double monad $T_\Sigma$ induces the `free symmetric strict monoidal $\V$-category'-monad on $\enProf\V$; see \exref{example: free strict monoidal enriched category monad}.
	\end{example}
	
	Remember that monoids and bimodules in an equipment $\K$ form again an equipment $\Mod\K$ whenever $\K$ has local reflexive coequalisers, that is each category $H(\K)(A, B)$ has reflexive coequalisers which are preserved by $\hc$ on both sides; see \propref{bimodule double categories}. In the proposition below, which is Proposition~11.12 of \cite{Shulman08}, $\mathsf{qEquip}_\textup l \subset \lDbl$ and $\mathsf{qEquip}_\textup{nl} \subset \nlDbl$ denote the full sub-$2$\ndash categories consisting of equipments that have local reflexive coequalisers.
	\begin{proposition}[Shulman] \label{mod is a 2-functor}
		The assignment $\K \mapsto \Mod\K$ extends to a $2$-functor $\map{\mathsf{Mod}}{\mathsf{qEquip}_\textup l}{\mathsf{qEquip}_\textup{nl}}$, whose image of a pseudo double functor \mbox{$\map F\D\E$} is again pseudo whenever $F$ preserves local reflexive coequalisers.
	\end{proposition}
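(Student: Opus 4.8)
The plan is to construct $\Mod F$ and $\Mod\xi$ componentwise from the lax structure of $F$ and the components of $\xi$, and to derive every axiom and coherence by transporting the corresponding datum for $F$ or $\xi$ through the universal properties of the reflexive coequalisers that define horizontal composition in $\Mod\K$. That both $\Mod\K$ and $\Mod\L$ lie in $\mathsf{qEquip}_\textup{nl}$ is precisely \propref{bimodule double categories}, so it remains to handle $1$- and $2$-cells.

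First I would define the image of a lax double functor $\map F\K\L$. It sends a monoid $A = (A_0, A, \mu, \eta)$ to the monoid on $FA_0$ whose multiplication and unit are the composites
\begin{displaymath}
  FA \hc FA \xRar{F_\hc} F(A \hc A) \xRar{F\mu} FA \qquad \text{and} \qquad 1_{FA_0} \xRar{F_1} F1_{A_0} \xRar{F\eta} FA;
\end{displaymath}
the monoid axioms follow by combining the associativity and unit axioms of the compositor and unitor of $F$ with the $F$-images of the monoid axioms for $A$. In the same way $F_\hc$ turns the actions $\lambda$ and $\rho$ of a bimodule $\hmap JAB$ into actions on $FJ$, making it an $(FA, FB)$\ndash bimodule, and a cell of bimodules is sent to its $F$-image, whose compatibility with the actions is immediate from the naturality of $F_\hc$. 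Since the horizontal unit on a monoid $A$ in $\Mod\K$ is $A$ regarded as a bimodule over itself, $F$ carries it to $FA$ regarded as a bimodule over itself, so that $\Mod F(1_A) = 1_{\Mod F(A)}$: the functor $\Mod F$ is normal, with identity unitor.

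The crux is the compositor. By definition $\hmap{J \hc_B H}AE$ is the reflexive coequaliser of $\rho_J \hc \id$ and $\id \hc \lambda_H$ in $H(\K)(A_0, E_0)$, with coequalising cell $\cell q{J \hc H}{J \hc_B H}$, and similarly $\Mod F(J) \hc_{FB} \Mod F(H)$ is such a coequaliser in $H(\L)(FA_0, FE_0)$. I would let the compositor be the unique cell
\begin{displaymath}
  \cell{(\Mod F)_\hc}{\Mod F(J) \hc_{FB} \Mod F(H)}{\Mod F(J \hc_B H)}
\end{displaymath}
induced, through the universal property of the latter coequaliser, by $Fq \of F_\hc$; that $Fq \of F_\hc$ coequalises the defining pair follows from the naturality and associativity coherence of $F_\hc$ together with the fact that $Fq$ coequalises the $F$-images of $\rho_J \hc \id$ and $\id \hc \lambda_H$. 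The coherence axioms for $(\Mod F)_\hc$ then reduce, after precomposition with the relevant coequaliser cells, to the coherence axioms for $F_\hc$ and $F_1$; here one uses that horizontal composition in $H(\L)$ preserves reflexive coequalisers on both sides, so that the iterated coequaliser cells remain epimorphic.

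Next I would send $\nat\xi FG$ to the double transformation $\nat{\Mod\xi}{\Mod F}{\Mod G}$ with component $(\xi_{A_0}, \xi_A)$ at a monoid $A$ and component $\xi_J$ at a bimodule $J$; the unit and composition axioms for $\xi$ are exactly what make $\xi_A$ a morphism of monoids and $\xi_J$ a cell of bimodules, and naturality of $\Mod\xi$ against the bimodule composites is again checked by precomposing with coequaliser cells and invoking the composition axiom for $\xi$. Functoriality of $\Mod$ on $1$- and $2$-cells is routine on the underlying monoids, bimodules and cells, the only subtle point being the comparison of compositors, which follows from uniqueness of the induced factorisations. Finally, when $F$ is pseudo and preserves local reflexive coequalisers, $F_\hc$ is invertible and $F(J \hc_B H)$ is again the coequaliser of $F(J \hc B \hc H) \rightrightarrows F(J \hc H)$; matching this coequaliser with the one defining $\Mod F(J) \hc_{FB} \Mod F(H)$ via the isomorphism $F_\hc$ shows that $(\Mod F)_\hc$ is invertible, so that $\Mod F$, being normal, is pseudo. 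The main obstacle is the third paragraph: because $F$ does not preserve bimodule composites on the nose, every piece of structure on $\Mod F$ and all of its coherence must be produced and verified through these universal properties, repeatedly invoking the preservation of the defining coequalisers by horizontal composition to ensure that the mediating cells are uniquely determined.
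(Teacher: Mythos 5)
The paper offers no proof of this statement: it is quoted verbatim as Proposition~11.12 of \cite{Shulman08}, so there is nothing internal to compare against. Your construction is correct and is essentially the standard argument that Shulman's proof follows --- transporting monoids and bimodules along $F_\hc$ and $F_1$, obtaining normality from the fact that the unit bimodule on a monoid $A$ is $A$ itself, inducing the compositor $\Mod F(J) \hc_{FB} \Mod F(H) \Rar \Mod F(J \hc_B H)$ through the universal property of the defining reflexive coequaliser (with the coequalising condition verified via naturality and associativity of $F_\hc$), using preservation of reflexive coequalisers by $\hc$ to make the iterated coequaliser cells jointly epimorphic when checking coherence, and identifying the two coequalisers via the invertible $F_\hc$ in the pseudo case.
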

	
	By applying $\mathsf{Mod}$ to the `free monoid'-monad on $\Mat\V$, of \exref{example: free monoid double monad}, we obtain the `free strict monoidal $\V$-category'-monad on $\enProf\V$, as follows; this double monad, whose algebras are monoidal $\V$-categories (see \exref{example: monoidal enriched categories}), will be our main example.
	\begin{example} \label{example: free strict monoidal enriched category monad}
		Let $\V$ be a cocomplete symmetric monoidal category whose tensor product preserves colimits on both sides. We denote the image $\Mod T$ of the `free monoid'\ndash monad on $\Mat\V$, of \exref{example: free monoid double monad}, again by $T$; it is the \emph{`free strict monoidal $\V$\ndash category'-monad}, which is given as follows. Its image of a $\V$-category $A$ is the free strict monoidal $\V$-category generated by $A$, that is $TA = \coprod_{n \geq 0} A^{\tens n}$; likewise its image of a $\V$-functor $\map fAC$ is given by $Tf = \coprod_{n \geq 0} f^{\tens n}$. In detail, the objects of $TA$ are (possibly empty) sequences $\ul x = (x_1, \dotsc, x_n)$ of objects in $A$, while its hom-objects $(TA)(\ul x, \ul y)$ are given by the tensor products $\Tens_{i=1}^n A(x_i, y_i)$ if $\lns{\ul x} = \lns{\ul y}$, and $\emptyset$ otherwise. The functor $Tf$ simply applies $f$ coordinatewise.
		
		The assignments $A \mapsto TA$ and $f \mapsto Tf$ extend to a double functor $T$ on $\enProf\V$ as follows. Its image of a profunctor $\hmap JAB$ is given by
		\begin{displaymath}
			TJ(\ul x, \ul y) = \begin{cases}
				\Tens_{i = 1}^n J(x_i, y_i) & \text{if $\card{\ul x} = n = \card{\ul y}$;} \\
				\emptyset	& \text{otherwise,}
			\end{cases}
		\end{displaymath}
		on which the hom-objects of $TA$ and $TB$ act coordinatewise. It is clear that $T$ is normal, that is $T1_A = 1_{TA}$ for all $\V$-categories $A$. The invertible compositors $T_\hc\colon TJ \hc TH \iso T(J \hc H)$, for composable $\V$-profunctors $J$ and $H$, are induced by those of the `free monoid'\ndash monad on $\Mat\V$; that they are again invertible follows, by \propref{mod is a 2-functor}, from the fact that the latter preserves local reflexive coequalisers. This is a direct consequence of the fact that a tensor product of reflexive coequalisers in $\V$ forms again a reflexive coequaliser; see e.g.\ Corollary~A1.2.12 of \cite{Johnstone02} for the dual result. This completes the description of the normal pseudo double endofunctor $T$. The multiplication and unit, that make $T$ into a double monad, are induced by those of the `free monoid'-monad in the obvious way.
		
		Recall that, at the end of \exref{example: free monoid double monad}, we briefly considered a lax double monad $T_\Sigma$ on $\Mat\V$, that was obtained by introducing twists in the images $TJ$ of the `free monoid'-monad. Applying $\mathsf{Mod}$ to $T_\Sigma$, we obtain the \emph{`free symmetric strict monoidal $\V$-cat\-e\-gory'\ndash monad} on $\enProf\V$, whose algebras are symmetric monoidal $\V$-categories; see \exref{example: symmetric monoidal enriched category}. Again denoting it $T_\Sigma$, its image on a $\V$-category $A$ is given as follows: the objects of $T_\Sigma A$ are (possibly empty) sequences of objects in $A$, while its hom-objects are given by
		\begin{displaymath}
			T_\Sigma A(\ul x, \ul y) = \begin{cases}
				\coprod_{\sigma \in \Sigma_n} \Tens_{i = 1}^n A(x_{\sigma i}, y_i) & \text{if $\card{\ul x} = n = \card{\ul y}$;} \\
				\emptyset	& \text{otherwise.}
			\end{cases}
		\end{displaymath}
		Its action on $\V$-profunctors is given analogously; that $T_\Sigma$ is a normal pseudo double monad on $\enProf\V$ is shown in Proposition 3.23 of \cite{Koudenburg13}.
	\end{example}
	
	In closing this subsection we briefly consider monoidal double categories, and study pointwise Kan extensions along tensor products of horizontal morphisms. The following is Definition 9.1 of \cite{Shulman08}.
	\begin{definition}
		A \emph{monoidal double category} is a double category $\K$ equipped with a pseudo double functor $\map\tens{\K \times \K}\K$ and an object $1 \in \K$, together with invertible double transformations $\tens \of (\tens \times \id) \iso \tens \of (\id \times \tens)$ and $1 \tens \dash \iso \id \iso \dash \tens 1$ which satisfy the usual coherence axioms.
	\end{definition}
	
	We abbreviate by $1 = \hmap{1_1}11$ the horizontal unit for $1$. For an unraveling of the above definition we refer to Section 9 of \cite{Shulman08}; for us it suffices to notice the `interchange' isomorphisms
	\begin{displaymath}
		\tens_\hc\colon (J \tens H) \hc (K \tens L) \iso (J \hc K) \tens (H \hc L),
	\end{displaymath}
	which form the invertible compositors of $\tens$.
	
	\begin{example} \label{example: monoidal double category of enriched profunctors}
		Let $\V$ be a cocomplete monoidal category whose tensor product preserves colimits on both sides, so that $\V$-profunctors form a double category $\enProf\V$ as we saw in \exref{example: enriched profunctors}. If $\V$ is symmetric monoidal then we can form tensor products of $\V$-categories and $\V$-functors; see e.g.\ Section 1.4 of \cite{Kelly82}. It is straightforward to extend these tensor products to a monoidal structure on $\enProf\V$, in which the tensor product of $\V$-profunctors $\hmap{J \tens H}{A \tens C}{B \tens D}$ is given by the $\V$-objects $(J \tens H)\bigpars{(x, z)(y, w)} = J(x, y) \tens H(z, w)$, and whose unit is the $\V$-category $1$ consisting of a single object $*$ and hom-object $1(*,*) = 1$.
	\end{example}
	
	We consider pointwise Kan extensions along tensor products of horizontal morphisms. The following is a direct consequence of \propref{iterated pointwise Kan extensions}.
	\begin{proposition} \label{iterated pointwise Kan extensions in V-Prof}
		Consider horizontally composable cells
		\begin{displaymath}
			\begin{tikzpicture}
				\matrix(m)[math175em]{A \tens C & B \tens C & B \tens D \\ M & M & M \\};
				\path[map]	(m-1-1) edge[barred] node[above] {$J \tens 1_C$} (m-1-2)
														edge node[left] {$s$} (m-2-1)
										(m-1-2) edge[barred] node[above] {$1_B \tens H$} (m-1-3)
														edge node[right] {$r$} (m-2-2)
										(m-1-3) edge node[right] {$d$} (m-2-3);
				\path	(m-2-1) edge[eq] (m-2-2)
							(m-2-2) edge[eq] (m-2-3);
				\path[transform canvas={shift={($(m-1-2)!0.5!(m-2-3)$)}}] (m-1-1) edge[cell] node[right] {$\gamma$} (m-2-1)
				(m-1-2) edge[cell] node[right] {$\eps$} (m-2-2);
			\end{tikzpicture}
		\end{displaymath}
		in a monoidal double category and suppose that $\eps$ defines $r$ as the pointwise right Kan extension of $d$ along $1_B \tens H$. Then $\gamma$ defines $s$ as the pointwise right Kan extension of $r$ along $J \tens 1_C$ precisely if the composite
		\begin{displaymath}
			J \tens H \overset{\inv{\mf r} \tens \inv{\mf l}} \iso (J \hc 1_B) \tens (1_C \hc H) \overset{\inv{\tens_\hc}} \iso (J \tens 1_C) \hc (1_B \tens H) \xRar{\gamma \hc \eps} 1_M
		\end{displaymath}
		defines $s$ as the pointwise right Kan extension of $d$ along $J \tens H$.
	\end{proposition}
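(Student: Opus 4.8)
The plan is to reduce everything to \propref{iterated pointwise Kan extensions}, applied to the horizontal composite $(J \tens 1_C) \hc (1_B \tens H)$. First I would note that $\gamma$ and $\eps$ are genuinely horizontally composable: the horizontal target $B \tens C$ of $J \tens 1_C$ coincides with the horizontal source of $1_B \tens H$, so the composite $\gamma \hc \eps$ is defined, a cell with horizontal source $(J \tens 1_C) \hc (1_B \tens H)$ and vertical source and target $s$ and $d$. Since by hypothesis $\eps$ defines $r$ as the pointwise right Kan extension of $d$ along $1_B \tens H$, \propref{iterated pointwise Kan extensions}---with its ``$J$'' and ``$H$'' taken to be $J \tens 1_C$ and $1_B \tens H$---tells us that $\gamma$ defines $s$ as the pointwise right Kan extension of $r$ along $J \tens 1_C$ precisely if $\gamma \hc \eps$ defines $s$ as the pointwise right Kan extension of $d$ along $(J \tens 1_C) \hc (1_B \tens H)$.

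It remains to transport this last condition across the invertible horizontal cell
\begin{displaymath}
	\alpha = \bigbrks{J \tens H \overset{\inv{\mf r} \tens \inv{\mf l}}{\iso} (J \hc 1_B) \tens (1_C \hc H) \overset{\inv{\tens_\hc}}{\iso} (J \tens 1_C) \hc (1_B \tens H)},
\end{displaymath}
which is assembled from the unitors $\mf r$, $\mf l$ and the interchange isomorphism $\tens_\hc$, and whose vertical source and target are both identities. The cell displayed in the statement is exactly the vertical composite $(\gamma \hc \eps) \of \alpha$, with horizontal source $J \tens H$ and the same vertical legs $s$ and $d$. Hence what I need is that precomposition with $\alpha$ both preserves and reflects pointwise right Kan extensions: that $(\gamma \hc \eps) \of \alpha$ defines $s$ as the pointwise right Kan extension of $d$ along $J \tens H$ if and only if $\gamma \hc \eps$ defines $s$ as the pointwise right Kan extension of $d$ along $(J \tens 1_C) \hc (1_B \tens H)$. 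Chaining this equivalence with the one obtained above completes the argument.

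For the preservation-and-reflection property I would appeal to \propref{pointwise exact cells}(a). The cell $\alpha$ has identity vertical source, and, being an invertible horizontal cell, it is opcartesian: it differs from the identity opcartesian cell on $J \tens H$ by an isomorphism, and opcartesian cells are stable under vertical composition with invertible cells. Thus $\alpha$ is pointwise initial in the sense of \defref{definition: exact cell}; being pointwise $d$-initial for every $d$ means precisely that postcomposing a cell defining a pointwise right Kan extension along the horizontal target of $\alpha$ with $\alpha$ yields, and yields only, a cell defining a pointwise right Kan extension along its horizontal source, which is exactly the equivalence required. The only real work here is bookkeeping: one must check that the composite isomorphism indeed rewrites $(J \tens 1_C) \hc (1_B \tens H)$ as $J \tens H$ via the unitors and $\tens_\hc$, and confirm that $\alpha$ is opcartesian. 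There is no substantive obstacle beyond this coherence-level verification.
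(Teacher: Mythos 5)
Your proposal is correct and follows exactly the route the paper intends: the paper offers no written proof beyond declaring the statement a direct consequence of \propref{iterated pointwise Kan extensions}, and your argument is precisely that application, supplemented with the transport along the invertible horizontal cell $\alpha$. Your justification of that last step---$\alpha$ is opcartesian with identity vertical source, hence pointwise initial by \propref{pointwise exact cells}(a)---is the right way to make the coherence bookkeeping precise.
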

	
	Lastly we consider Kan extensions along $\V$-profunctors of the form $J \tens 1_C$, in the monoidal double category $\enProf\V$.
	\begin{displaymath}
		\begin{tikzpicture}[baseline]
			\matrix(m)[math175em]{A \tens C & B \tens C \\ M & M \\};
			\path[map]	(m-1-1) edge[barred] node[above] {$J \tens 1_C$} (m-1-2)
													edge node[left] {$s$} (m-2-1)
									(m-1-2) edge node[right] {$d$} (m-2-2);
			\path				(m-2-1) edge[eq] (m-2-2);
			\path[transform canvas={shift=($(m-1-2)!0.5!(m-2-2)$)}]	(m-1-1) edge[cell] node[right] {$\gamma$} (m-2-1);
		\end{tikzpicture} \qquad\qquad\qquad\qquad \begin{tikzpicture}[baseline]
			\matrix(m)[math175em]{A & B \\ M & M \\};
			\path[map]	(m-1-1) edge[barred] node[above] {$J$} (m-1-2)
													edge node[left] {$s(\dash, z)$} (m-2-1)
									(m-1-2) edge node[right] {$d(\dash, z)$} (m-2-2);
			\path				(m-2-1) edge[eq] (m-2-2);
			\path[transform canvas={shift=($(m-1-2)!0.5!(m-2-2)$)}]	(m-1-1) edge[cell] node[right] {$\gamma_z$} (m-2-1);
		\end{tikzpicture}
	\end{displaymath}
	Given a transformation $\gamma$ in $\enProf\V$ as on the left above and an object $z \in C$, we write $\gamma_z$ for the transformation on the right that is given by the composite below; here we have identified $z$ with the corresponding $\V$-functor $\map z1C$.
	\begin{displaymath}
		J \iso J \tens 1 \xRar{\id \tens 1_z} J \tens 1_C \xRar\gamma 1_M.
	\end{displaymath}
	\begin{proposition} \label{pointwise Kan extensions along unit tensored V-profunctors}
		Let $\V$ be a cocomplete symmetric monoidal category whose tensor product preserves colimits on both sides, so that $\V$-profunctors form a monoidal double category $\enProf\V$. A transformation $\gamma$, as on the left above, defines $s$ as the pointwise right Kan extension of $d$ along $J \tens 1_C$ precisely if the transformations $\gamma_z$ on the right, for each $z \in C$, define $s(\dash, z)$ as the pointwise right Kan extension of $d(\dash, z)$ along $J$. An analogous result holds for pointwise right Kan extensions along the $\V$-profunctor $1_C \tens J$.
	\end{proposition}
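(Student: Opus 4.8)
The plan is to reduce both sides of the equivalence to object-indexed statements about weighted limits by means of \propref{pointwise Kan extensions in terms of weighted limits}, and then to compare these pointwise. First I would apply that proposition to the cell $\gamma$ and the $\V$-profunctor $J \tens 1_C$: it shows that $\gamma$ defines $s$ as the pointwise right Kan extension of $d$ along $J \tens 1_C$ precisely if, for every object $(x, z) \in A \tens C$, the composite of $\gamma$ with the cartesian cell exhibiting $(J \tens 1_C)\bigpars{(x,z), \id}$ defines $s(x, z)$ as the $(J \tens 1_C)\bigpars{(x,z), \id}$-weighted limit of $d$. Applying the same proposition to $\gamma_z$ and $J$, for each fixed $z \in C$, shows that $\gamma_z$ defines $s(\dash, z)$ as the pointwise right Kan extension of $d(\dash, z)$ along $J$ precisely if, for every $x \in A$, the restriction of $\gamma_z$ to $x$ defines $s(x, z)$ as the $J(x, \id)$-weighted limit of $d(\dash, z)$. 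It therefore suffices to prove, for each pair $(x, z)$, that the two resulting weighted-limit conditions agree.

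Two pieces of bookkeeping reduce this to a single computation. On the one hand, since restriction in $\enProf\V$ is computed coordinatewise and $1_C(z, \id) = C(z, \id)$, the weight $(J \tens 1_C)\bigpars{(x,z), \id}$ is exactly $J(x, \id) \tens C(z, \id)$. On the other hand, the defining cells coincide: the composite of $\gamma$ with the cartesian cell for $(J \tens 1_C)\bigpars{(x,z),\id}$ and the restriction of $\gamma_z$ to $x$ are both, up to the coherence isomorphisms, the restriction of $\gamma$ determined by the object $(x, z)$ of its source. I would verify this using the pseudofunctoriality of restrictions together with the fact that $\tens$, being a pseudo double functor, preserves cartesian cells and hence companions, by \propref{preserving of cartesian and opcartesian cells}. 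Granting these, what remains is to show that the $\bigpars{J(x, \id) \tens C(z, \id)}$-weighted limit of $d$ agrees, with its defining cell, with the $J(x, \id)$-weighted limit of $d(\dash, z)$.

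For this last identity I would factor the weight as $J(x, \id) \tens C(z, \id) \iso \bigpars{J(x, \id) \tens 1_1} \hc \bigpars{1_B \tens C(z, \id)}$ and invoke \propref{iterated pointwise Kan extensions in V-Prof}. That proposition requires as input that $d(\dash, z)$ is the pointwise right Kan extension of $d$ along $1_B \tens C(z, \id)$, which I would establish without circularity as follows. The identity cell exhibits $d$ as the pointwise right Kan extension of $d$ along the unit $1_{B \tens C}$, directly from the universal property and the unitors; hence, by the implication (a) $\Rightarrow$ (b) of \thmref{pointwise Kan extensions in terms of Kan extensions}, restricting along the $\V$-functor $g = \id_B \tens z \colon B \to B \tens C$ exhibits $d \of g = d(\dash, z)$ as the pointwise right Kan extension of $d$ along $1_{B \tens C}(g, \id)$. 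Finally $1_{B \tens C}(g, \id)$ is the companion $g_*$, which equals $1_B \tens C(z, \id)$ because $\tens$ preserves companions. Since tensoring with the unit weight $1_1$ leaves $J(x, \id)$ unchanged, \propref{iterated pointwise Kan extensions in V-Prof} then identifies the two weighted limits together with their defining cells, as required; the statement for $1_C \tens J$ is entirely analogous.

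The hard part will not be the conceptual content---the computation of the base case is really just the principle that a pointwise Kan extension along a representable companion weight such as $C(z, \id)$ is evaluation at $z$, and it follows cleanly from the unit case by restriction---but rather the coherence bookkeeping of the second paragraph. The delicate point is to check that restricting $\gamma$ at $(x, z)$ and forming $\gamma_z$ and then restricting at $x$ produce the \emph{same} cell under the associators, unitors, interchange isomorphisms $\tens_\hc$, and the companion-preservation isomorphisms for $\tens$, rather than merely isomorphic limit objects; matching these on the nose is where the bulk of the routine but fiddly verification lies.
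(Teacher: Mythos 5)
Your proposal is correct, and its opening reduction---passing to weighted limits at each pair $(x,z)$ via \propref{pointwise Kan extensions in terms of weighted limits}, with the weight $(J \tens 1_C)\bigpars{(x,z), \id} \iso J(x, \id) \tens z_*$ exhibited by a tensor product of cartesian cells---is exactly the paper's. Where you genuinely diverge is in comparing the $J(x,\id) \tens z_*$-weighted limit of $d$ with the $J(x,\id)$-weighted limit of $d(\dash, z)$. The paper does this in one stroke: the cell $\id_{J(x,\id)} \tens \eta_z$, the tensor of an identity with the opcartesian cell defining $z_*$ as an extension, is pointwise initial by \propref{preserving of cartesian and opcartesian cells} and \propref{pointwise exact cells}(a), so precomposing $\gamma_{(x,z)}$ with it preserves and reflects the Kan-extension property, and that precomposite is literally $\gamma \of (\delta_x \tens 1_z) = \gamma_z \of \delta_x$ because $1_z = \eps_z \of \eta_z$. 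You instead factor the weight as $\bigpars{J(x,\id) \tens 1_1} \hc \bigpars{1_B \tens z_*}$ and apply \propref{iterated pointwise Kan extensions in V-Prof}, which obliges you to establish the base case that $d(\dash,z)$ is the pointwise right Kan extension of $d$ along $1_B \tens z_*$; your derivation of that from the unit Kan extension together with (a) $\Rightarrow$ (b) of \thmref{pointwise Kan extensions in terms of Kan extensions} and preservation of companions by $\tens$ is sound. Both routes work; the paper's is shorter because the single identity $1_z = \eps_z \of \eta_z$ replaces both your base case and the horizontal-composition bookkeeping. One imprecision to fix in your second paragraph: the cells $\gamma_{(x,z)}$ and $\gamma_z \of \delta_x$ do \emph{not} coincide up to coherence---their horizontal sources $J(x,\id) \tens z_*$ and $J(x,\id)$ are profunctors into different codomains. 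What your third paragraph actually needs, and what does hold, is that $\gamma_{(x,z)}$ is, modulo the unitors and the interchange isomorphism $\tens_\hc$, the horizontal composite of $\gamma_z \of \delta_x$ with your base-case cell; this follows from $1_z = \eps_z \of \eta_z$ together with the naturality of $\tens_\hc$ and of the unitors.
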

	\begin{proof}
		For any $x \in A$ we write $\delta_x$ for the cartesian cell defining the restriction $J(x, \id)$. Likewise for any $z \in C$ we write $\eps_z$ for the cartesian cell defining the companion $\hmap{z_*}1C$ of $\map z1C$; remember from the discussion following \exref{cartesian and opcartesian cells for profunctors} that the unit cell $\cell{1_z}1{1_C}$ factors as $1_z = \eps_z \of \eta_z$, where $\cell{\eta_z}1{z_*}$ is an opcartesian cell that defines $z_*$ as the extension of $\hmap 111$ along $\id_1$ and $z$. 
		
		Notice that, by \propref{preserving of cartesian and opcartesian cells}, the tensor product $\cell{\delta_x \tens \eps_z}{J(x, \id) \tens z_*}{J \tens 1_C}$ is a cartesian cell defining the restriction of $J \tens 1_C$ along $\map{x \tens z}{1 \tens 1}{A \tens C}$. It follows from \propref{pointwise Kan extensions in terms of weighted limits} that we may equivalently prove that, for each pair $(x, z)$, the composite $\gamma_{(x, z)} = \gamma \of (\delta_x \tens \eps_z)$ defines $s(x,z)$ as the $J(x, \id) \tens z_*$-weighted limit of $d$ if and only if the composite $\gamma_z \of \delta_x$ defines $s(x, z)$ as the $J(x, \id)$-weighted limit of $d(\dash, z)$.
		
		To see this we use that $\id_{J(x, \id)} \tens \eta_z$ is pointwise initial, because tensor products of opcartesian cells are opcartesian (\propref{preserving of cartesian and opcartesian cells}) and by \propref{pointwise exact cells}(a). By regarding weighted limits as pointwise Kan extensions (\propref{pointwise Kan extensions in terms of weighted limits}), it follows that each $\gamma_{(x, z)}$ defines $s(x, z)$ as the $J(x, \id) \tens z_*$-weighted limit of $d$ precisely if the composite $\gamma_{(x, z)} \of (\id \tens \eta_z)$ defines $s(x, z)$ as the $J(x, \id) \tens 1$-weighted limit of $d(\dash, z)$. To complete the proof we notice that $\gamma_{(x, z)} \of (\id \tens \eta_z) = \gamma \of (\delta_x \tens 1_z)$ which, after composition with $J(x, \id) \iso J(x, \id) \tens 1$, coincides with $\gamma_z \of \delta_x$.
	\end{proof}
	
	\subsection{Eilenberg-Moore double categories.}
	In this subsection we define, for a normal oplax double monad $T$, several weakenings of the `Eilenberg-Moore double category' associated to $T$, that was introduced by Grandis and Par\'e in \cite{Grandis-Pare04}. We start by recalling from \cite{Street74} the definitions of lax algebras for strict $2$-monads, as well as several notions of weak morphism between such algebras, and the cells between those. The strict $2$-monads of interest to us are those of the form $V(T)$, where $T$ is a normal oplax double monad.
	
	Consider a strict $2$-monad $T = (T, \mu, \eta)$ on a $2$-category $\C$, consisting of a strict $2$\ndash functor $\map T\C\C$ and $2$-natural transformations $\nat\mu{T^2}T$ and $\nat\eta {\id_\C}T$ satisfying the usual axioms: $\mu \of T\mu = \mu \of \mu T$ and $\mu \of T\eta = \id_T = \mu \of \eta T$.
	\begin{definition} \label{definition: lax algebra}
  	A \emph{lax $T$\ndash algebra} is a quadruple $A = (A, a, \bar a, \hat a)$ consisting of an object $A$ equipped with a morphism $\map a{TA}A$, its \emph{structure map}, and cells $\cell{\bar a}{a \of Ta}{a \of \mu_A}$ and $\cell{\hat a}{\id_A}{a \of \eta_A}$, its \emph{associator} and \emph{unitor}, that satisfy the following three coherence conditions.
  	\begin{displaymath}
    	\begin{tikzpicture}[textbaseline]
      	\matrix(m)[math, column sep=0.75em, row sep=2em]
	      { & T^2 A &[-0.9em] \phantom{T^3A} & TA & \phantom{T^3A} \\
  	      T^3 A & \phantom{T^3A} & T^2 A & \phantom{T^3A} & A \\
  	      & T^2 A & & TA & \\ };
  	    \path[map]  (m-1-2) edge node[above] {$Ta$} (m-1-4)
  	                (m-1-4) edge node[above right] {$a$} (m-2-5)
  	                (m-2-1) edge node[above left] {$T^2 a$} (m-1-2)
  	                        edge node[below] {$T\mu_A$} (m-2-3)
  	                        edge node[below left] {$\mu_{TA}$} (m-3-2)
  	                (m-2-3) edge node[above left, inner sep=1.5pt] {$Ta$} (m-1-4)
  	                        edge node[below left, inner sep=1.5pt] {$\mu_A$} (m-3-4)
  	                (m-3-2) edge node[below] {$\mu_A$} (m-3-4)
  	                (m-3-4) edge node[below right] {$a$} (m-2-5)
  	                (m-1-2) edge[cell, shorten >=8pt, shorten <=8pt] node[below left] {$T\bar a$} (m-2-3)
  	                ($(m-2-3)!0.5!(m-2-5)+(0,0.8em)$) edge[cell] node[right] {$\bar a$} ($(m-2-3)!0.5!(m-2-5)-(0,0.8em)$);
  	  \end{tikzpicture} = \begin{tikzpicture}[textbaseline]
    	  \matrix(m)[math, column sep=0.75em, row sep=2em]
    	  { & T^2 A & \phantom{T^3A} &[-0.9em] TA & \phantom{T^3A} \\
    	    T^3 A & \phantom{T^3A} & TA & \phantom{T^3A} & A \\
    	    & T^2 A & & TA & \\ };
    	  \path[map]  (m-1-2) edge node[above] {$Ta$} (m-1-4)
    	                      edge node[below left, inner sep=1.5pt] {$\mu_A$} (m-2-3)
    	              (m-1-4) edge node[above right] {$a$} (m-2-5)
    	              (m-2-1) edge node[above left] {$T^2 a$} (m-1-2)
    	                      edge node[below left] {$\mu_{TA}$} (m-3-2)
    	              (m-2-3) edge node[below] {$a$} (m-2-5)
    	              (m-3-2) edge node[above left, inner sep=1.5pt] {$Ta$} (m-2-3)
    	                      edge node[below] {$\mu_A$} (m-3-4)
    	              (m-3-4) edge node[below right] {$a$} (m-2-5)
    	              (m-1-4) edge[cell, shorten >=8pt, shorten <=8pt] node[below right] {$\bar a$} (m-2-3)
    	              (m-2-3) edge[cell, shorten >=8pt, shorten <=8pt] node[below left] {$\bar a$} (m-3-4);
    	\end{tikzpicture}
  	\end{displaymath}
  	\begin{displaymath}
  	  \begin{tikzpicture}[textbaseline]
	      \matrix(m)[math, column sep=0.75em, row sep=2em]
  	    { \phantom{T^3A} & A &[-0.9em] \phantom{T^3A} & \phantom{T^3A} & \phantom{T^3A} \\
  	      TA & \phantom{T^3A} & & & A \\
  	      & T^2 A & & TA & \\ };
  	    \path[map]  (m-1-2) edge[bend left=22] node[above] {$\id$} (m-2-5)
  	                (m-2-1) edge node[above left] {$a$} (m-1-2)
  	                        edge[bend left=22] node[above right] {$\id$} (m-3-4)
  	                        edge node[below left] {$\eta_{TA}$} (m-3-2)
  	                (m-3-2) edge node[below] {$\mu_A$} (m-3-4)
  	                (m-3-4) edge node[below right] {$a$} (m-2-5);
  	  \end{tikzpicture} = \begin{tikzpicture}[textbaseline]
  	    \matrix(m)[math, column sep=0.75em, row sep=2em]
  	    { \phantom{T^3A} & A & \phantom{T^3A} &[-0.9em] & \phantom{T^3A} \\
  	      TA & \phantom{T^3A} & TA & \phantom{T^3A}  & A \\
  	      & T^2 A & & TA & \\ };
  	    \path[map]  (m-1-2) edge[bend left=22] node[above] {$\id$} (m-2-5)
  	                        edge node[below left, inner sep=1.5pt] {$\eta_A$} (m-2-3)
  	                (m-2-1) edge node[above left] {$a$} (m-1-2)
  	                        edge node[below left] {$\eta_{TA}$} (m-3-2)
  	                (m-2-3) edge node[below] {$a$} (m-2-5)
  	                (m-3-2) edge node[above left, inner sep=1.5pt] {$Ta$} (m-2-3)
  	                        edge node[below] {$\mu_A$} (m-3-4)
  	                (m-3-4) edge node[below right] {$a$} (m-2-5)
  	                (m-1-4) edge[cell, shorten >= 6pt, shorten <= 14pt] node[below right] {$\hat a$} (m-2-3)
  	                (m-2-3) edge[cell, shorten >= 8pt, shorten <= 8pt] node[below left] {$\bar a$} (m-3-4);
  	  \end{tikzpicture}
  	\end{displaymath}
  	\begin{displaymath}
  	  \begin{tikzpicture}[textbaseline]
  	    \matrix(m)[math, column sep=0.75em, row sep=2em]
  	    { \phantom{T^3A} &  &[-0.9em] \phantom{T^3A} & TA & \phantom{T^3A} \\
  	      TA & \phantom{T^3A} & T^2 A & \phantom{T^3A} & A \\
  	      & & & TA & \\ };
  	    \path[map]  (m-1-4) edge node[above right] {$a$} (m-2-5)
  	                (m-2-1) edge[bend left=22] node[above left] {$\id$} (m-1-4)
  	                        edge node[below] {$T\eta_A$} (m-2-3)
  	                        edge[bend right=22] node[below left] {$\id$} (m-3-4)
  	                (m-2-3) edge node[above left, inner sep=1.5pt] {$Ta$} (m-1-4)
  	                        edge node[below left, inner sep=1.5pt] {$\mu_A$} (m-3-4)
  	                (m-3-4) edge node[below right] {$a$} (m-2-5)
  	                (m-1-2) edge[cell, shorten >= 6pt, shorten <= 14pt] node[below left] {$T\hat a$} (m-2-3)
  	                ($(m-2-3)!0.5!(m-2-5)+(0,0.8em)$) edge[cell] node[right] {$\bar a$} ($(m-2-3)!0.5!(m-2-5)-(0,0.8em)$);
  	  \end{tikzpicture} = \begin{tikzpicture}[textbaseline]
	      \matrix(m)[math, column sep=0.75em, row sep=2em]
  	    { \phantom{T^3A} & \phantom{T^3A} & \phantom{T^3A} &[-0.9em] TA & \phantom{T^3A} \\
  	      TA & & & \phantom{T^3A} & A \\
  	      & & & TA & \\ };
  	    \path[map]  (m-1-4) edge node[above right] {$a$} (m-2-5)
  	                (m-2-1) edge[bend left=22] node[above left] {$\id$} (m-1-4)
  	                        edge[bend right=22] node[below left] {$\id$} (m-3-4)
  	                (m-3-4) edge node[below right] {$a$} (m-2-5);
  	  \end{tikzpicture}
  	\end{displaymath}
  	
  	Dually, in the notion of a \emph{colax $T$-algebra} $A = (A, a, \bar a, \hat a)$ the directions of the associator $\cell{\bar a}{a \of \mu_A}{a \of Ta}$ and the unitor $\cell{\hat a}{a \of \eta_A}{\id_A}$ are reversed. A lax or colax $T$-algebra $A$ is called a \emph{pseudo} $T$\ndash algebra if the cells $\bar a$ and $\hat a$ are isomorphisms; if they are identities then $A$ is called \emph{strict}.
  \end{definition}
  \begin{definition} \label{definition: weak algebra morphisms}
  	Given lax $T$-algebras $A = (A, a, \bar a, \hat a)$ and $C = (C, c, \bar c, \hat c)$, a \emph{lax $T$\ndash morphism} $A \to C$ is a morphism $\map fAC$ that is equipped with a \emph{structure cell} \mbox{$\cell{\bar f}{c \of Tf}{f \of a}$}, which satisfies the following associativity and unit axioms.
  	\begin{displaymath}
  	  \begin{tikzpicture}[textbaseline]
  	    \matrix(m)[math, column sep=0.75em, row sep=2em]
  	    { \phantom{T^2C} & T^2 C &[-0.9em] \phantom{T^2C} & TC & \phantom{T^2C} \\
  	      T^2 A & & TA & \phantom{T^2C} & C \\
  	      & TA & & A & \\ };
  	    \path[map]  (m-1-2) edge node[above] {$Tc$} (m-1-4)
  	                (m-1-4) edge node[above right] {$c$} (m-2-5)
  	                (m-2-1) edge node[above left] {$T^2 f$} (m-1-2)
  	                        edge node[below] {$Ta$} (m-2-3)
  	                        edge node[below left] {$\mu_A$} (m-3-2)
  	                (m-2-3) edge node[below right, inner sep=1.5pt] {$Tf$} (m-1-4)
  	                        edge node[above right, inner sep=1.5pt] {$a$} (m-3-4)
  	                (m-3-2) edge node[below] {$a$} (m-3-4)
  	                (m-3-4) edge node[below right] {$f$} (m-2-5)
  	                (m-2-3) edge[cell, shorten >= 8pt, shorten <= 8pt] node[below right] {$\bar a$} (m-3-2)
  	                (m-1-2) edge[cell, shorten >= 8pt, shorten <= 8pt] node[above right] {$T\bar f$} (m-2-3)
  	                ($(m-2-3)!0.5!(m-2-5)+(0,0.8em)$) edge[cell] node[right] {$\bar f$} ($(m-2-3)!0.5!(m-2-5)-(0,0.8em)$);
  	  \end{tikzpicture} = \begin{tikzpicture}[textbaseline]
  	    \matrix(m)[math, column sep=0.75em, row sep=2em]
  	    { \phantom{T^2C} & T^2 C & \phantom{T^2C} &[-0.9em] TC & \phantom{T^2C}\\
  	      T^2 A & & TC & \phantom{T^2C} & C \\
  	      & TA & & A & \\ };
  	    \path[map]  (m-1-2) edge node[above] {$Tc$} (m-1-4)
  	                        edge node[below left, inner sep=1.5pt] {$\mu_C$} (m-2-3)
  	                (m-1-4) edge node[above right] {$c$} (m-2-5)
  	                (m-2-1) edge node[above left] {$T^2 f$} (m-1-2)
  	                        edge node[below left] {$\mu_A$} (m-3-2)
  	                (m-2-3) edge node[below] {$c$} (m-2-5)
  	                (m-3-2) edge node[above left, inner sep=1.5pt] {$Tf$} (m-2-3)
  	                        edge node[below] {$a$} (m-3-4)
  	                (m-3-4) edge node[below right] {$f$} (m-2-5)
  	                (m-1-4) edge[cell, shorten >= 8pt, shorten <= 8pt] node[below right] {$\bar c$} (m-2-3)
  	                (m-2-3) edge[cell, shorten >= 8pt, shorten <= 8pt] node[below left] {$\bar f$} (m-3-4);
  	  \end{tikzpicture}
  	\end{displaymath}
  	\begin{displaymath}
  	  \begin{tikzpicture}[textbaseline]
      	\matrix(m)[math, column sep=0.75em, row sep=2em]
	      { \phantom{T^2C} & C &[-0.9em] \phantom{T^2C} & \phantom{T^2C} & \phantom{T^2C} \\
  	      A & \phantom{T^2C} & & & C \\
  	      & TA & & A & \\ };
  	    \path[map]  (m-1-2)	edge[bend left=21] node[above right] {$\id$} (m-2-5)
										(m-2-1) edge node[above left] {$f$} (m-1-2)
														edge[bend left=21] node[above right] {$\id$} (m-3-4)
  	                        edge node[below left] {$\eta_A$} (m-3-2)
  	                (m-3-2) edge node[below] {$a$} (m-3-4)
  	                (m-3-4) edge node[below right] {$f$} (m-2-5)
  	                (m-2-3) edge[cell, shorten >= 6pt, shorten <= 14pt] node[below right] {$\hat a$} (m-3-2);
  	 	\end{tikzpicture} = \begin{tikzpicture}[textbaseline]
  	    \matrix(m)[math, column sep=0.75em, row sep=2em]
  	    { \phantom{T^2C} & C & \phantom{T^2C} &[-0.9em] & \phantom{T^2C} \\
  	      A & \phantom{T^2C} & TC & \phantom{T^2C} & C \\
  	      & TA & & A & \\ };
  	    \path[map]  (m-1-2)	edge[bend left=21] node[above right] {$\id$} (m-2-5)
														edge node[below left, inner sep=1.5pt] {$\eta_C$} (m-2-3)
										(m-2-1) edge node[above left] {$f$} (m-1-2)
  	                        edge node[below left] {$\eta_A$} (m-3-2)
  	                (m-2-3) edge node[below] {$c$} (m-2-5)
  	                (m-3-2) edge node[above left, inner sep=1.5pt] {$Tf$} (m-2-3)
  	                				edge node[below] {$a$} (m-3-4)
  	                (m-3-4) edge node[below right] {$f$} (m-2-5)
  	                (m-2-3) edge[cell, shorten >= 8pt, shorten <= 8pt] node[below left] {$\bar f$} (m-3-4)
  	                (m-1-4) edge[cell, shorten >= 6pt, shorten <= 14pt] node[below right] {$\hat c$} (m-2-3);
  	  \end{tikzpicture}
  	\end{displaymath}
  
  	Dually a \emph{colax $T$-morphism} $\map fAC$ is equipped with a structure cell $\cell{\bar f}{f \of a}{c \of Tf}$ such that
  	\begin{displaymath}
  		(\bar c . T^2 f) \of (c . T\bar f) \of (\bar f . Ta) = (\bar f . \mu_A) \of (f . \bar a) \qquad \text{and} \qquad (\bar f . \eta_A) \of (f . \hat a) = \hat c . f.	
  	\end{displaymath}
  	Lax or colax $T$-morphisms whose structure cell $\bar f$ is invertible are called \emph{pseudo $T$\ndash mor\-phisms}; notice that, for an invertible cell $\cell{\bar f}{c \of Tf}{f \of a}$, the pair $(f, \bar f)$ forms a lax $T$-morphism if and only if $(f, \inv{\bar f})$ forms a colax $T$-morphism. Pseudo $T$-morphisms are called \emph{strict} whenever $\bar f$ is the identity cell.
  \end{definition}
  
  This leaves the definition of cells between $T$-morphisms.
	\begin{definition} \label{definition: vertical T-cell}
  	Given lax $T$-morphisms $f$ and $\map gAC$, a \emph{$T$-cell} between $f$ and $g$ is a cell $\cell\phi fg$ satisfying
  	\begin{displaymath}
  	  \begin{tikzpicture}[textbaseline]
  	    \matrix(m)[math, column sep=3em, row sep=3em]{TA & TC \\ A & C \\};
  	    \path[map]  (m-1-1) edge[bend left=35] node[above] {$Tf$} (m-1-2)
  	                        edge[bend right=35] node[below] {$Tg$} (m-1-2)
  	                        edge node[left] {$a$} (m-2-1)
  	                (m-1-2) edge node[right] {$c$} (m-2-2)
  	                (m-2-1) edge[bend right=35] node[below] {$g$} (m-2-2)
  	                ($(m-1-1)!0.5!(m-1-2)+(-0.3em,0.8em)$) edge[cell] node[right] {$T\phi$} ($(m-1-1)!0.5!(m-1-2)-(0.3em,0.8em)$)
  	                (m-1-2) edge[cell, shorten >= 21.5pt, shorten <= 21.5pt, transform canvas = {yshift=-10pt}] node[below right] {$\bar g$} (m-2-1);
  	  \end{tikzpicture} = \begin{tikzpicture}[textbaseline]
				\matrix(m)[math, column sep=3em, row sep=3em]{TA & TC \\ A & C. \\};
    	  \path[map]  (m-1-1) edge[bend left=35] node[above] {$Tf$} (m-1-2)
    	                      edge node[left] {$a$} (m-2-1)
    	              (m-1-2) edge node[right] {$c$} (m-2-2)
    	              (m-2-1) edge[bend left=35] node[above] {$f$} (m-2-2)
    	              				edge[bend right=35] node[below] {$g$} (m-2-2)
    	              ($(m-2-1)!0.5!(m-2-2)+(-0.3em,0.8em)$) edge[cell] node[right] {$\phi$} ($(m-2-1)!0.5!(m-2-2)-(0.3em,0.8em)$)
    	              (m-1-2) edge[cell, shorten >= 21.5pt, shorten <= 21.5pt, transform canvas = {yshift=10pt}] node[above left] {$\bar f$} (m-2-1);
    	\end{tikzpicture}
  	\end{displaymath}
  	Likewise a $T$-cell between colax $T$-morphisms $f$ and $\map gAC$ is a cell $\cell\phi fg$ satisfying $(c.T\phi) \of \bar f = \bar g \of (\phi.a)$.
	\end{definition}
	For any strict $2$-monad $T$ lax $T$-algebras, lax $T$-morphisms and $T$-cells form a $2$\ndash cat\-e\-gory $\Alg lT$, and we denote by $\Alg{ps}T$ the sub-2-category of lax $T$-algebras, pseudo $T$\ndash morphisms and all $T$-cells between them. Likewise lax $T$-algebras, colax $T$\ndash mor\-phisms and $T$-cells form a $2$-category $\Alg cT$.
	
	\begin{example} \label{example: monoidal enriched categories}
		Let $T$ be the `free strict monoidal $\V$-category'-monad described in \exref{example: free strict monoidal enriched category monad}. A lax $V(T)$-algebra $A = (A, \tens, \mf a, \mf i)$ is a \emph{lax monoidal $\V$-category}: it consists of a $\V$-category $A$ equipped with a $\V$-functor $\map\tens{TA}A$ that defines the tensor product $(x_1 \tens \dotsb \tens x_n) = \tens(\ul x)$ of each (possibly empty) sequence $\ul x$ in $TA$, as well as $\V$-natural maps
		\begin{displaymath}
			\map{\mf a}{\bigpars{(x_{11} \tens \dotsb \tens x_{1m_1}) \tens \dotsb \tens (x_{n1} \tens \dotsb \tens x_{nm_n})}}{(x_{11} \tens \dotsb \tens x_{nm_n})},
		\end{displaymath}
		for each double sequence $\ull x \in T^2A$, and $\map{\mf i}x{(x)}$, where $x \in A$. These transformations satisfy certain coherence conditions, see Definition 3.1.1 of \cite{Leinster04} for example. A lax monoidal $\V$-category $A$ whose structure transformations $\mf a$ and $\mf i$ are invertible is simply called a \emph{monoidal $\V$-category}; if they are identities then $A$ is called a \emph{strict} monoidal $\V$-category. These notions of monoidal category are often called \emph{unbiased}, referring to the fact that tensor products of all arities are part of their structure, in contrast to the classical notion, which is biased towards the nullary (unit) and binary tensor products.
		
		Lax $V(T)$-morphisms are \emph{lax monoidal $\V$-functors}, that is $\V$-functors $\map fAC$ equipped with $\V$-natural maps
		\begin{displaymath}
			\map{f_\tens}{\pars{fx_1 \tens \dotsb \tens fx_n}}{f(x_1 \tens \dotsb \tens x_n)},
		\end{displaymath}
		its \emph{compositors}, that are compatible with the coherence transformations of $A$ and $C$; see Definition~3.1.3 of \cite{Leinster04}. Dually, in the notion of colax $V(T)$-morphism the direction of the compositors is reversed; such $\V$-functors are called \emph{colax monoidal $\V$-functors}. Lax or colax monoidal $\V$-functors whose compositors are invertible are simply called \emph{monoidal $\V$-functors}; those with identities as compositors are called \emph{strict} monoidal $\V$-functors.
		
		A $V(T)$-cell $f \Rar g$ between lax monoidal $\V$-functors $f$ and $g$, called a \emph{monoidal transformation}, is simply a $\V$-natural transformation $\nat\phi fg$ that is compatible with the compositors $f_\tens$ and $g_\tens$: the diagrams below commute for each sequence $\ul x \in TA$. Monoidal transformations between colax monoidal $\V$-functors are defined analogously.
		\begin{displaymath}
			\begin{tikzpicture}
				\matrix(m)[math2em]
				{ \pars{fx_1 \tens \dotsb \tens fx_n} & f(x_1 \tens \dotsb \tens x_n) \\
					\pars{gx_1 \tens \dotsb \tens gx_n} & g(x_1 \tens \dotsb \tens x_n) \\ };
				\path[map]	(m-1-1) edge node[above] {$f_\tens$} (m-1-2)
														edge node[left] {$(\phi_{x_1} \tens \dotsb \tens \phi_{x_n})$} (m-2-1)
										(m-1-2) edge node[right] {$\phi_{(x_1 \tens \dotsb \tens x_n)}$} (m-2-2)
										(m-2-1) edge node[below] {$g_\tens$} (m-2-2);
			\end{tikzpicture}
		\end{displaymath}
	\end{example}
	\begin{example} \label{example: symmetric monoidal enriched category}
		Consider the `free symmetric strict monoidal $\V$-category'-monad $T_\Sigma$, that was briefly considered at the end of \exref{example: free strict monoidal enriched category monad}. The notions of lax $V(T_\Sigma)$\ndash algebra and lax $V(T_\Sigma)$-morphism are that of symmetric lax monoidal category and symmetric lax monoidal functor, as follows. A \emph{symmetric lax monoidal $\V$-category} $A$ is a lax monoidal $\V$-category $A$ that is equipped with $\V$-natural symmetries
		\begin{displaymath}
			\map{\mf s_\sigma}{(x_1 \tens \dotsb \tens x_n)}{(x_{\sigma 1} \tens \dotsb \tens x_{\sigma n})},
		\end{displaymath}
		for all $\sigma \in \Sigma_n$ and $\ul x \in A^{\tens n}$, that are functorial in the sense that $\mf s_\tau \of \mf s_\sigma = \mf s_{\tau \of \sigma}$ and $\mf s_{\id} = \id$, and that make the following diagrams commute. Firstly, given permutations $\sigma_i \in \Sigma_{m_i}$, the diagrams
		\begin{displaymath}
			\begin{tikzpicture}[font=\scriptsize]
				\matrix(m)[math2em]{\bigpars{(x_{11} \tens \dotsb \tens x_{1m_1}) \tens \dotsb \tens (x_{n1} \tens \dots \tens x_{nm_n})} & (x_{11} \tens \dotsb \tens x_{nm_n}) \\
					\bigpars{(x_{1\sigma_11} \tens \dotsb \tens x_{1\sigma_1m_1}) \tens \dotsb \tens (x_{n\sigma_n1} \tens \dotsb \tens x_{n\sigma_nm_n})} & (x_{1\sigma_11} \tens \dotsb \tens x_{n\sigma_nm_n}) \\};
				\path[map]	(m-1-1) edge node[above] {$\mf a$} (m-1-2)
														edge node[left] {$(\mf s_{\sigma_1} \tens \dotsb \tens \mf s_{\sigma_n})$} (m-2-1)
										(m-1-2) edge node[right] {$\mf s_{(\sigma_1, \dotsc, \sigma_n)}$} (m-2-2)
										(m-2-1) edge node[below] {$\mf a$} (m-2-2);
			\end{tikzpicture}
		\end{displaymath}
		commute where, writing $N = m_1 + \dotsb + m_n$, the permutation $\pars{\sigma_1, \dotsc, \sigma_n} \in \sym_N$ denotes the disjoint union of the $\sigma_i$. Secondly, for each $\tau \in \Sigma_n$, the diagrams
		\begin{displaymath}
			\begin{tikzpicture}[font=\scriptsize]
				\matrix(m)[math2em]{\bigpars{(x_{11} \tens \dotsb \tens x_{1m_1}) \tens \dotsb \tens (x_{n1} \tens \dots \tens x_{nm_n})} & (x_{11} \tens \dotsb \tens x_{nm_n}) \\
					\bigpars{(x_{\tau 11} \tens \dotsb \tens x_{\tau 1m_{\tau 1}}) \tens \dotsb \tens (x_{\tau n1} \tens \dotsb \tens x_{\tau nm_{\tau n}})} & (x_{\tau 11} \tens \dotsb \tens x_{\tau nm_{\tau n}}) \\};
				\path[map]	(m-1-1) edge node[above] {$\mf a$} (m-1-2)
														edge node[left] {$\mf s_\tau$} (m-2-1)
										(m-1-2) edge node[right] {$\mf s_{\tau_{(m_1, \dotsc, m_n)}}$} (m-2-2)
										(m-2-1) edge node[below] {$\mf a$} (m-2-2);
			\end{tikzpicture}
		\end{displaymath}
		commute, where the `block permutation' $\tau_{\pars{m_1, \dotsc, m_n}} \in \sym_N$ permutes the $n$ blocks of the $\pars{m_{\tau 1}, \dotsc, m_{\tau n}}$\ndash partition of $\set{1, \dotsc, N}$ exactly like $\tau$ permutes the elements of $\set{1, \dotsc, n}$, while preserving the ordering of the elements in each block.
		
		A lax monoidal functor $\map fAC$ between symmetric lax monoidal categories is called \emph{symmetric} whenever the diagrams below commute. Transformations between symmetric lax monoidal functors are simply monoidal transformations.
		\begin{equation} \label{symmetry axiom}
			\begin{tikzpicture}[textbaseline]
				\matrix(m)[math2em]
				{	(fx_1 \tens \dotsb \tens fx_n) & f(x_1 \tens \dotsb \tens x_n) \\
					(fx_{\sigma 1} \tens \dotsb \tens fx_{\sigma n}) & f(x_{\sigma 1} \tens \dotsb \tens x_{\sigma n}) \\ };
				\path[map]	(m-1-1) edge node[above] {$f_\tens$} (m-1-2)
														edge node[left] {$\mathfrak s_\sigma$} (m-2-1)
										(m-1-2) edge node[right] {$f\mathfrak s_\sigma$} (m-2-2)
										(m-2-1) edge node[below] {$f_\tens$} (m-2-2);
			\end{tikzpicture}
		\end{equation}
	\end{example}	
	
	Having recalled, for a normal oplax double monad $T$ on $\K$, the notions of lax $V(T)$\ndash al\-ge\-bra, weak $V(T)$-morphism and $V(T)$-cell in the $2$-category $V(\K)$, we now introduce the notion of horizontal $T$-morphism between lax $V(T)$-algebras and that of a cell between such morphisms. Both are straightforward generalisations of notions that were introduced by Grandis and Par\'e in Section 7.1 of \cite{Grandis-Pare04}, where $V(T)$\ndash algebras were assumed to be strict. Afterwards, in \propref{double category of horizontal T-morphisms}, we will show that lax $V(T)$-algebras, weak $V(T)$\ndash morphisms, horizontal $T$-morphisms and their cells form a double category.
	\begin{definition} \label{definition: horizontal T-morphism}
		Let $T = (T, \mu, \eta)$ be a normal oplax double monad on a double category $\K$. Given lax $V(T)$\ndash algebras $A = (A, a, \bar a, \hat a)$ and $B = (B, b, \bar b, \hat b)$, a \emph{horizontal $T$\ndash morphism} $A \slashedrightarrow B$ is a horizontal morphism $\hmap JAB$ equipped with a \emph{structure cell}
		\begin{displaymath}
			\begin{tikzpicture}
				\matrix(m)[math175em]{TA & TB \\ A & B \\};
				\path[map]  (m-1-1) edge[barred] node[above] {$TJ$} (m-1-2)
														edge node[left] {$a$} (m-2-1)
										(m-1-2) edge node[right] {$b$} (m-2-2)
										(m-2-1) edge[barred] node[below] {$J$} (m-2-2);
				\path[transform canvas={shift={($(m-1-2)!(0,0)!(m-2-2)$)}}] (m-1-1) edge[cell] node[right] {$\bar J$} (m-2-1);			
			\end{tikzpicture}
		\end{displaymath}
		satisfying the following associativity and unit axioms.
		\begin{align*}
			\begin{tikzpicture}[textbaseline]
  	    \matrix(m)[math175em, ampersand replacement=\&]
  	    {	T^2 A \& T^2 B \& T^2 B \\
					TA \& TB \& TB \\
					A \& B \& B \\ };
  	    \path[map]	(m-1-1) edge[barred] node[above] {$T^2 J$} (m-1-2)
														edge node[left] {$Ta$} (m-2-1)
										(m-1-2) edge node[right] {$Tb$} (m-2-2)
										(m-1-3) edge node[right] {$\mu_B$} (m-2-3)
										(m-2-1) edge[barred] node[below] {$TJ$} (m-2-2)
														edge node[left] {$a$} (m-3-1)
										(m-2-2) edge node[right] {$b$} (m-3-2)
										(m-2-3) edge node[right] {$b$} (m-3-3)
										(m-3-1) edge[barred] node[below] {$J$} (m-3-2);
				\path				(m-1-2) edge[eq] (m-1-3)
										(m-3-2) edge[eq] (m-3-3);
				\path[transform canvas={shift={($(m-2-1)!0.5!(m-2-2)$)}}]
										(m-1-2) edge[cell] node[right] {$T\bar J$} (m-2-2);
				\path[transform canvas={shift={($(m-2-1)!0.5!(m-2-2)-(0,0.25em)$)}}]
										(m-2-2) edge[cell] node[right] {$\bar J$} (m-3-2);
				\path[transform canvas={shift={($(m-1-1)!0.5!(m-2-2)$)}}]
										(m-2-3) edge[cell] node[right] {$\bar b$} (m-3-3);
  	  \end{tikzpicture} \quad &= \quad \begin{tikzpicture}[textbaseline]
	      \matrix(m)[math175em, ampersand replacement=\&]
  	    {	T^2 A \& T^2 A \& T^2 B \\
					TA \& TA \& TB \\
					A \& A \& B \\ };
				\path[map]	(m-1-1) edge node[left] {$Ta$} (m-2-1)
										(m-1-2) edge[barred] node[above] {$T^2 J$} (m-1-3)
														edge node[left] {$\mu_A$} (m-2-2)
										(m-1-3) edge node[right] {$\mu_B$} (m-2-3)
										(m-2-1) edge node[left] {$a$} (m-3-1)
										(m-2-2) edge[barred] node[below] {$TJ$} (m-2-3)
														edge node[left] {$a$} (m-3-2)
										(m-2-3) edge node[right] {$b$} (m-3-3)
										(m-3-2) edge[barred] node[below] {$J$} (m-3-3);
				\path				(m-1-1) edge[eq] (m-1-2)
										(m-3-1) edge[eq] (m-3-2);
				\path[transform canvas={shift={($(m-1-1)!0.5!(m-2-2)$)}}]
										(m-2-2) edge[cell] node[right] {$\bar a$} (m-3-2);
				\path[transform canvas={shift={($(m-2-1)!0.5!(m-2-2)$)}}]
										(m-1-3) edge[cell] node[right] {$\mu_J$} (m-2-3);
				\path[transform canvas={shift={($(m-2-1)!0.5!(m-2-2)-(0,0.25em)$)}}]
										(m-2-3) edge[cell] node[right] {$\bar J$} (m-3-3);
  	  \end{tikzpicture} \\
  		\begin{tikzpicture}[textbaseline]
    	  \matrix(m)[math175em, ampersand replacement=\&]
    	  { A \& B \& B \\
    	  	\phantom{TA} \& \phantom{T^2B} \& TB \\
    	  	A \& B \& B \\};
				\path[map]	(m-1-1) edge[barred] node[above] {$J$} (m-1-2)
										(m-1-3) edge node[right] {$\eta_B$} (m-2-3)
										(m-2-3) edge node[right] {$b$} (m-3-3)
										(m-3-1) edge[barred] node[below] {$J$} (m-3-2);
				\path				(m-1-1) edge[eq] (m-3-1)
										(m-1-2) edge[eq] (m-1-3)
														edge[eq] (m-3-2)
										(m-3-2) edge[eq] (m-3-3);
				\path[transform canvas={shift={($(m-1-1)!0.5!(m-2-2)$)}}]
										(m-2-3) edge[cell] node[right] {$\hat b$} (m-3-3);
    	\end{tikzpicture} \quad &= \quad \begin{tikzpicture}[textbaseline]
  	    \matrix(m)[math175em, ampersand replacement=\&]
    	  { A \& A \& B \\
					\phantom{TA} \& TA \& TB \\
					A \& A \& B \\ };
				\path[map]	(m-1-2) edge[barred] node[above] {$J$} (m-1-3)
														edge node[left] {$\eta_A$} (m-2-2)
										(m-1-3) edge node[right] {$\eta_B$} (m-2-3)
										(m-2-2) edge[barred] node[below] {$TJ$} (m-2-3)
														edge node[left] {$a$} (m-3-2)
										(m-2-3) edge node[right] {$b$} (m-3-3)
										(m-3-2) edge[barred] node[below] {$J$} (m-3-3);
				\path				(m-1-1) edge[eq] (m-1-2)
														edge[eq] (m-3-1)
										(m-3-1)	edge[eq] (m-3-2);
				\path[transform canvas={shift={($(m-1-1)!0.5!(m-2-2)$)}}]
										(m-2-2) edge[cell] node[right] {$\hat a$} (m-3-2);
				\path[transform canvas={shift={($(m-2-1)!0.5!(m-2-2)$)}}]
										(m-1-3) edge[cell] node[right] {$\eta_J$} (m-2-3);
				\path[transform canvas={shift={($(m-2-1)!0.5!(m-2-2)-(0,0.25em)$)}}]
										(m-2-3) edge[cell] node[right] {$\bar J$} (m-3-3);
    	\end{tikzpicture}
		\end{align*} 
	\end{definition}
	\begin{example} \label{example: monoidal enriched profunctors}
		Consider the double monad $T$ of free strict monoidal $\V$-categories described in \exref{example: free strict monoidal enriched category monad}; we will call a horizontal $T$-morphism $A \brar B$ between lax monoidal $\V$-categories $A$ and $B$ a \emph{monoidal $\V$-profunctor}. It consists of a $\V$\ndash profunctor $\hmap JAB$ equipped with $\V$-maps
		\begin{displaymath}
			\map{J_\tens}{J(x_1, y_1) \tens \dotsb \tens J(x_n, y_n)}{J\bigpars{(x_1 \tens \dotsb \tens x_n), (y_1 \tens \dotsb \tens y_n)}}
		\end{displaymath}
		which are compatible with the actions of $TA$ and $TB$ and satisfy the following coherence conditions. The unit axiom states that all diagrams of the form
		\begin{displaymath}
			\begin{tikzpicture}
				\matrix(m)[math175em, column sep=0.5em]{J(x,y) & & J\bigpars{x, (y)} \\ & J\bigpars{(x), (y)} & \\};
				\path[map]	(m-1-1) edge[above] node {$\rho_{\mf i}$} (m-1-3)
														edge[below left] node{$J_\tens$} (m-2-2)
										(m-2-2) edge[below right] node{$\lambda_{\mf i}$} (m-1-3);
			\end{tikzpicture}
		\end{displaymath}
		commute, while the associativity axiom means that those of the form
		\begin{displaymath}
			\begin{tikzpicture}[font=\scriptsize]
				\matrix(l)[math, yshift=6em]{\bigpars{J(x_{11}, y_{11}) \tens \dotsb \tens J(x_{1m_1}, y_{1m_1})} \tens \dotsb \tens \bigpars{J(x_{n1}, y_{n1}) \tens \dotsb \tens J(x_{nm_n}, y_{nm_n})} \\};
				\matrix(m)[math175em, row sep=3em, column sep=0em, ampersand replacement=\&]{\begin{aligned} J\bigpars{(x_{11} &\tens \dotsb \tens x_{1m_1}), (y_{11} \tens \dotsb \tens y_{1m_1})} \tens \dotsb \\ &\tens J\bigpars{(x_{n1} \tens \dotsb \tens x_{nm_n}), (y_{n1} \tens \dotsb \tens y_{nm_n})}\end{aligned} \& J(x_{11}, y_{11}) \tens \dotsb \tens J(x_{nm_n}, y_{nm_n}) \\
				\begin{aligned} J\Bigl (\bigpars{(x_{11} \tens \dotsb \tens x_{1m_1}) &\tens \dotsb \tens (x_{n1} \tens \dotsb \tens x_{nm_n})}, \\ \bigpars{(y_{11} \tens \dotsb \tens y_{1m_1}) &\tens \dotsb \tens (y_{n1} \tens \dotsb \tens y_{nm_n})}\Bigr )\end{aligned} \& J\bigpars{(x_{11} \tens \dotsb \tens x_{nm_n}), (y_{11} \tens \dotsb \tens y_{nm_n})} \\};
				\matrix(n)[math175em, yshift=-6em]{J\Bigpars{\bigpars{(x_{11} \tens \dotsb \tens x_{1m_1}) \tens \dotsb \tens (x_{n1} \tens \dotsb \tens x_{nm_n})}, (y_{11} \tens \dotsb \tens y_{nm_n})} \\};
				\path[map]	(l-1-1) edge node[above left, yshift=-2pt] {$J_\tens \tens \dotsb \tens J_\tens$} ([yshift=12pt]m-1-1)
														edge[iso] node {$\iso$} (m-1-2)
										($(m-1-1)+(0,-13pt)$) edge[xshift=-17pt] node[left] {$J_\tens$} ([yshift=13pt]m-2-1)
										(m-1-2) edge node[right] {$J_\tens$} (m-2-2)
										($(m-2-1)+(0,-15pt)$) edge node[below left, yshift=2pt] {$\rho_{\mf a}$} (n-1-1)
										(m-2-2) edge node[below right, yshift=1pt] {$\lambda_{\mf a}$} (n-1-1);
			\end{tikzpicture}
		\end{displaymath}
		commute. If $A$ and $B$ are symmetric lax monoidal $\V$-categories then we call $J$ a \emph{symmetric} monoidal $\V$-profunctor whenever the diagrams below commute.
		\begin{displaymath}
			\begin{tikzpicture}
				\matrix(m)[math2em]{J(x_1, y_1) \tens \dotsb \tens J(x_n, y_n) & J(x_{\sigma 1}, y_{\sigma 1}) \tens \dotsb \tens J(x_{\sigma n}, y_{\sigma n}) \\
					J\bigpars{(x_1 \tens \dotsb \tens x_n), (y_1 \tens \dotsb \tens y_n)} & J\bigpars{(x_{\sigma 1} \tens \dotsb \tens x_{\sigma n}), (y_{\sigma 1} \tens \dotsb \tens y_{\sigma n})} \\};
				\matrix(n)[math, yshift=-5em]{J\bigpars{(x_1 \tens \dotsb \tens x_n), (y_{\sigma 1} \tens \dotsb \tens y_{\sigma n})} \\};
				\path[map]	(m-1-1) edge node[above] {$\sigma$} (m-1-2)
														edge node[left] {$J_\tens$} (m-2-1)
										(m-1-2) edge node[right] {$J_\tens$} (m-2-2)
										(m-2-1) edge node[below left, yshift=1pt] {$\rho_{\mf s_\sigma}$} (n-1-1)
										(m-2-2) edge node[below right, yshift=1pt] {$\lambda_{\mf s_\sigma}$} (n-1-1);
			\end{tikzpicture}
		\end{displaymath}
	\end{example}
	The following definition introduces general $T$-cells; we will see in \propref{double category of horizontal T-morphisms} that it generalises the vertical $V(T)$-cells of \defref{definition: vertical T-cell}.
	\begin{definition} \label{definition: T-cell}
		Let $T$ be a normal oplax double monad on a double category $\K$. Consider a cell $\phi$ as on the left below, where $f$ and $g$ are lax $V(T)$-morphisms while $J$ and $K$ are horizontal $T$-morphisms. We call $\phi$ a \emph{$T$-cell} whenever the identity on the right is satisfied.
		\begin{displaymath}
			\begin{tikzpicture}[textbaseline]
				\matrix(m)[math175em]{A & B \\ C & D \\};
				\path[map]  (m-1-1) edge[barred] node[above] {$J$} (m-1-2)
														edge node[left] {$f$} (m-2-1)
										(m-1-2) edge node[right] {$g$} (m-2-2)
										(m-2-1) edge[barred] node[below] {$K$} (m-2-2);
				\path[transform canvas={shift={($(m-1-2)!(0,0)!(m-2-2)$)}}] (m-1-1) edge[cell] node[right] {$\phi$} (m-2-1);			
			\end{tikzpicture} \qquad \qquad \begin{tikzpicture}[textbaseline]
				\matrix(m)[math175em]{TA & TB & TB \\ TC & TD & B \\ C & D & D \\};
				\path[map]	(m-1-1) edge[barred] node[above] {$TJ$} (m-1-2)
														edge node[left] {$Tf$} (m-2-1)
										(m-1-2) edge node[right] {$Tg$} (m-2-2)
										(m-1-3) edge node[right] {$b$} (m-2-3)
										(m-2-1) edge[barred] node[below] {$TK$} (m-2-2)
														edge node[left] {$c$} (m-3-1)
										(m-2-2) edge node[right] {$d$} (m-3-2)
										(m-2-3) edge node[right] {$g$} (m-3-3)
										(m-3-1) edge[barred] node[below] {$K$} (m-3-2);
				\path				(m-1-2) edge[eq] (m-1-3)
										(m-3-2) edge[eq] (m-3-3);
				\path[transform canvas={shift={($(m-2-2)!0.5!(m-2-3)$)}}] (m-1-1) edge[cell] node[right] {$T\phi$} (m-2-1)
										(m-2-1) edge[cell, transform canvas={yshift=-3pt}] node[right] {$\bar K$} (m-3-1);
				\path[transform canvas={shift={($(m-2-3)!0.5!(m-3-2)$)}}] (m-1-2) edge[cell] node[right] {$\bar g$} (m-2-2);
			\end{tikzpicture} = \begin{tikzpicture}[textbaseline]
				\matrix(m)[math175em]{TA & TA & TB \\ TC & A & B \\ C & C & D \\};
				\path[map]	(m-1-1) edge node[left] {$Tf$} (m-2-1)
										(m-1-2) edge[barred] node[above] {$TJ$} (m-1-3)
														edge node[left] {$a$} (m-2-2)
										(m-1-3) edge node[right] {$b$} (m-2-3)
										(m-2-1) edge node[left] {$c$} (m-3-1)
										(m-2-2) edge[barred] node[below] {$J$} (m-2-3)
														edge node[left] {$f$} (m-3-2)
										(m-2-3) edge node[right] {$g$} (m-3-3)
										(m-3-2) edge[barred] node[below] {$K$} (m-3-3);
				\path				(m-1-1) edge[eq] (m-1-2)
										(m-3-1) edge[eq] (m-3-2);
				\path[transform canvas={shift={($(m-2-3)!0.5!(m-3-2)$)}}]	(m-1-1) edge[cell] node[right] {$\bar f$} (m-2-1);
				\path[transform canvas={shift={($(m-2-2)!0.5!(m-2-3)$)}}] (m-1-2) edge[cell] node[right] {$\bar J$} (m-2-2)
										(m-2-2) edge[cell, transform canvas={yshift=-3pt}] node[right] {$\phi$} (m-3-2);
			\end{tikzpicture}
		\end{displaymath}
		Similarly, in the case that $f$ and $g$ are colax $V(T)$-morphisms, we call $\phi$ a $T$-cell whenever $\bar f \hc (\bar K \of T\phi) = (\phi \of \bar J) \hc \bar g$.
	\end{definition}
	
	\begin{example}
		Let $T$ be the double monad of `free strict monoidal categories' described in \exref{example: free strict monoidal enriched category monad}, and consider a transformation $\phi$ in $\enProf\V$, as on the left above, where $f$ and $g$ are lax monoidal $\V$-functors (\exref{example: monoidal enriched categories}) while $J$ and $K$ are monoidal $\V$-profunctors (\exref{example: monoidal enriched profunctors}). Remember that $\phi$ is given by a family of $\V$\ndash maps $\map\phi{J(x,y)}{K(fx, gy)}$ that is compatible with the actions of $A$ and $B$. The transformation $\phi$ is a $T$-cell if the following diagram commutes; in that case we call $\phi$ a \emph{monoidal transformation}.
		\begin{displaymath}
			\begin{tikzpicture}
				\matrix(m)[math2em, column sep=0.45em]{	J(x_1, y_1) \tens \dotsb \tens J(x_n, y_n) & K(fx_1, gy_1) \tens \dotsb \tens K(fx_n, gy_n) \\
					J\bigpars{(x_1 \tens \dotsb \tens x_n), (y_1 \tens \dotsb \tens y_n)} & K\bigpars{(fx_1 \tens \dotsb \tens fx_n), (gy_1 \tens \dotsb \tens gy_n)} \\
					K\bigpars{f(x_1 \tens \dotsb \tens x_n), g(y_1 \tens \dotsb \tens y_n)} & K\bigpars{(fx_1 \tens \dotsb \tens fx_n), g(y_1 \tens \dotsb \tens y_n)} \\ };
				\path[map]	(m-1-1) edge node[above] {$\phi \tens \dotsb \tens \phi$} (m-1-2)
														edge node[left] {$J_\tens$} (m-2-1)
										(m-1-2) edge node[right] {$K_\tens$} (m-2-2)
										(m-2-1) edge node[left] {$\phi$} (m-3-1)
										(m-2-2) edge node[right] {$\rho_{g_\tens}$} (m-3-2)
										(m-3-1) edge node[below] {$\lambda_{f_\tens}$} (m-3-2);
			\end{tikzpicture}
		\end{displaymath}
		A monoidal transformation $\nat\phi J{K(f, g)}$ where $f$ and $g$ are colax monoidal functors is defined similarly.
	\end{example}
	
	Given a normal oplax double monad $T$, we now show that lax $V(T)$-algebras, any choice of weak $V(T)$-morphisms, horizontal $T$-morphisms and $T$-cells form a double category $\Alg wT$, such that $V(\Alg wT) = \Alg w{V(T)}$. Consequently we will call lax $V(T)$\ndash algebras simply \emph{lax $T$-algebras}, while weak $V(T)$-morphisms will be called \emph{weak vertical $T$-morphisms}.
	\begin{proposition} \label{double category of horizontal T-morphisms}
		Let $T$ be a normal oplax double monad on a double category $\K$, and let `weak' mean either `colax' or `lax'. The double category structure of $\K$ lifts to make lax $V(T)$-algebras, weak $V(T)$-morphisms, horizontal $T$-morphisms and $T$-cells into a double category $\Alg wT$, such that $V(\Alg wT) = \Alg w{V(T)}$.
	\end{proposition}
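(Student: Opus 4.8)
The plan is to build $\Alg wT$ by transporting the double-category structure of $\K$ along the evident forgetful assignments, which send a horizontal $T$-morphism $(J, \bar J)$ to its underlying horizontal morphism $J$, a $T$-cell to its underlying cell of $\K$, and a weak $V(T)$-morphism to its underlying vertical morphism. Since a $T$-cell is, by \defref{definition: T-cell}, a cell of $\K$ \emph{satisfying} an equation, these assignments are faithful on cells (and on vertical morphisms). Consequently, once horizontal composition and horizontal units are defined on the new data and shown to carry the required algebraic structure, the associativity and unit coherence axioms of \defref{double category} are inherited verbatim from $\K$---provided the associators $\mf a$ and unitors $\mf l, \mf r$ of $\K$ are themselves $T$-cells, so that they lift to $\Alg wT$. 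The work thus splits into: (1)~defining $\hc$ and $1$ on horizontal $T$-morphisms and $T$-cells; (2)~closure, i.e.\ checking that the resulting composites and units are again horizontal $T$-morphisms and $T$-cells; and (3)~identifying the vertical $2$-category.

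For the horizontal composite of $\hmap JAB$ and $\hmap HBE$ I would take the underlying morphism $J \hc H$ equipped with the structure cell
\[
	\overline{J \hc H} \;=\; (\bar J \hc \bar H) \of T_\hc,
\]
where $T_\hc\colon T(J \hc H) \Rar TJ \hc TH$ is the oplax compositor of $T$ (see \defref{definition: double functor}) and $\bar J \hc \bar H\colon TJ \hc TH \Rar J \hc H$ is the horizontal composite of the two structure cells, whose common middle leg $b$ matches. For the horizontal unit of a lax $T$-algebra $A = (A, a, \bar a, \hat a)$ I would take $1_A$ with structure cell $\overline{1_A} = 1_a$; here normality of $T$ gives $T1_A = 1_{TA}$, so that $1_a\colon 1_{TA} \Rar 1_A$ indeed has vertical source and target $a$. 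Verifying that $(J \hc H, \overline{J \hc H})$ satisfies the two axioms of \defref{definition: horizontal T-morphism} combines three ingredients: the associativity and unit coherence of the oplax functor $T$ (relating $T_\hc$ on iterated composites), the composition and unit axioms of the double transformations $\mu$ and $\eta$ of \defref{definition: transformation} (through which $\mu_{J \hc H}$ and $\eta_{J \hc H}$ factor over $T_\hc$), and the axioms for $\bar J$ and $\bar H$. The check for $\overline{1_A} = 1_a$ instead uses normality of $\mu$ and $\eta$ (so $\mu_{1_A} = 1_{\mu_A}$ and $\eta_{1_A} = 1_{\eta_A}$) together with the lax algebra axioms for $A$.

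Closure of $T$-cells is then a sequence of pasting arguments. Vertical composition of two $T$-cells is a $T$-cell by stacking their defining equations, and the cells $1_a$ and $\id_J$ are $T$-cells directly. The horizontal composite $\phi \hc \psi$ of $T$-cells is again a $T$-cell: expanding both sides against the definitions of $\overline{J \hc H}$ and $\overline{K \hc L}$, the equation reduces, through the naturality of $T_\hc$ applied to $\phi \hc \psi$, to the horizontal composite of the $T$-cell equations for $\phi$ and $\psi$. The same computation applied to the coherence isomorphisms shows that $\mf a$, $\mf l$ and $\mf r$ are $T$-cells, invoking precisely the associativity and unit coherence of the compositor $T_\hc$ (the monoidal-functor axioms for $T$). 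This supplies all data and coherence of $\Alg wT$ in both the lax and colax cases, which differ only in the direction of the structure cells of the \emph{vertical} morphisms, and hence in the form of the $T$-cell equation, but not in the horizontal composition and units just built.

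Finally, to see $V(\Alg wT) = \Alg w{V(T)}$, note that the vertical morphisms of $\Alg wT$ are by construction the weak $V(T)$-morphisms, while a vertical cell is a $T$-cell whose horizontal source and target are units $1_A$ and $1_C$. Substituting $J = 1_A$, $K = 1_C$, $\bar J = 1_a$ and $\bar K = 1_c$ into the equation of \defref{definition: T-cell}, and simplifying the horizontal-unit cells $T1 = 1$ by normality, one recovers exactly the defining equation of \defref{definition: vertical T-cell}, so the two $2$-categories coincide. I expect the main obstacle to be the associativity axiom in step~(2): it is the one place where all three families of coherence---the oplax associativity of $T$, the composition axiom of $\mu$, and the structure-cell axioms of $\bar J$ and $\bar H$---must be pasted simultaneously over $T^2(J \hc H)$, and the delicate point is the bookkeeping of where each instance of $T_\hc$ and of $\mu$ is applied.
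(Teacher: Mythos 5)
Your proposal is correct and follows essentially the same route as the paper: the structure cell of $J \hc H$ is taken to be $(\bar J \hc \bar H) \of T_\hc$ and that of $1_A$ to be $1_a$ (using normality), the horizontal $T$-morphism axioms are verified from the oplax coherence of $T$ together with the composition/unit axioms of $\mu$ and $\eta$ and the axioms for $\bar J$ and $\bar H$, closure of $T$-cells and the lifting of $\mf a$, $\mf l$, $\mf r$ are routine pastings, and the identification $V(\Alg wT) = \Alg w{V(T)}$ reduces to comparing the $T$-cell equation at horizontal units with the $V(T)$-cell equation. The only detail the paper makes explicit that you elide is that the algebra structure on a vertical composite $h \of f$ is forced to be $(\bar h \of 1_{Tf}) \hc (1_h \of \bar f)$ by the requirement $V(\Alg wT) = \Alg w{V(T)}$, but this is standard and does not affect the argument.
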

	We denote by $\Alg{ps}T$ the sub-double category of $\Alg lT$ consisting of lax $T$\ndash algebras, pseudo $V(T)$-morphisms, horizontal $T$-morphisms and all $T$-cells whose vertical source and target are pseudo $V(T)$-morphisms. Clearly $V(\Alg lT) = \Alg l{V(T)}$ implies that $V(\Alg{ps}T) = \Alg{ps}{V(T)}$.
	\begin{proof}
		We shall show that the double category structure of $\K$ lifts to make lax $V(T)$\ndash al\-ge\-bras, lax $V(T)$-morphisms, horizontal $T$-morphisms and $T$-cells into a double category $\Alg lT$; the case of $\Alg cT$ is similar. First notice that the equality $V(\Alg wT) = \Alg w{V(T)}$ determines the structure on the category $(\Alg wT)_\textup v$ completely; in particular the algebra structure on the vertical composite $h \of f$ of lax $V(T)$-morphisms $\map fAC$ and $\map hCE$ must be given by the composite $\ol{h \of f} = (\bar h \of 1_{Tf}) \hc (1_h \of \bar f)$. 
		
		To also lift the horizontal structure of $\K$ to $\Alg lT$ it remains to define structure cells for each composite $J \hc H$, of horizontal $T$-morphisms $\hmap JAB$ and $\hmap HBC$, as well as for the horizontal units $\hmap{1_A}AA$, in a way such that any composite of $T$\ndash cells forms again a $T$-cell, the horizontal units $1_f$ form $T$-cells, and the associators and unitors of $\K$ form $T$-cells. We define the structure cell of $J \hc H$ by
		\begin{equation} \label{structure cell of horizontal composite}
			\ol{J \hc H} = \begin{tikzpicture}[textbaseline]
				\matrix(m)[math175em]{ TA & & TC \\ TA & TB & TC \\ A & B & C. \\};
				\path[map]	(m-1-1) edge[barred] node[above] {$T(J \hc H)$} (m-1-3)
										(m-2-1) edge[barred] node[above] {$TJ$} (m-2-2)
														edge node[left] {$a$} (m-3-1)
										(m-2-2) edge[barred] node[above] {$TH$} (m-2-3)
														edge node[right] {$b$} (m-3-2)
										(m-2-3) edge node[right] {$c$} (m-3-3)
										(m-3-1) edge[barred] node[below] {$J$} (m-3-2)
										(m-3-2) edge[barred] node[below] {$H$} (m-3-3)
										(m-1-2) edge[cell] node[right] {$T_\hc$} (m-2-2);
				\path				(m-1-1) edge[eq] (m-2-1)
										(m-1-3) edge[eq] (m-2-3);
				\path[transform canvas={shift={($(m-2-2)!0.5!(m-2-3)$)}}]
										(m-2-1) edge[cell] node[right] {$\bar J$} (m-3-1)
										(m-2-2) edge[cell] node[right] {$\bar H$} (m-3-2);
			\end{tikzpicture}
		\end{equation}
		That this composite satisfies the associativity axiom of \defref{definition: horizontal T-morphism} is shown by the following equation of composites where, to save space, only the non-identity cells are depicted while objects and morphisms are left out. The cells $\bar c$ and $\bar a$ here are the structure cells of $C$ and $A$ respectively (see \defref{definition: lax algebra}) and the identities follow from the naturality of $T_\hc$; the associativity axioms of $H$ and $J$ and the fact that \mbox{$(T^2)_\hc = T_\hc \of TT_\hc$}; the composition axiom of $\mu$ (see \defref{definition: transformation}). The unit axiom for $\ol{J \hc H}$ follows similarly from that for $\bar H$ and $\bar J$, as well as the composition axiom for $\eta$.
		\begin{displaymath}
			\begin{tikzpicture}[textbaseline, x=2em, y=1.75em, font=\scriptsize]
				\draw	(2,0) -- (2,4) -- (0,4) -- (0,0) -- (3,0) -- (3,3) -- (0,3)
							(0,2) -- (2,2)
							(0,1) -- (2,1)
							(1,0) -- (1,1);
				\draw[shift={(0.5,0.5)}]
							(0,0) node {$\bar J$}
							(1,0) node {$\bar H$}
							(2,1) node {$\bar c$};
				\draw[shift={(0,0.5)}]
							(1,1) node {$T_\hc$}
							(1,2) node {$T(\bar J \hc \bar H)$}
							(1,3) node {$TT_\hc$};
			\end{tikzpicture} \mspace{20mu}=\mspace{20mu} \begin{tikzpicture}[textbaseline, x=2em, y=1.75em, font=\scriptsize]
				\draw	(2,0) -- (2,4) -- (0,4) -- (0,0) -- (3,0) -- (3,2) -- (0,2)
							(0,3) -- (2,3)
							(0,1) -- (2,1)
							(1,0) -- (1,2);
				\draw[shift={(0.5,0.5)}]
							(0,0) node {$\bar J$}
							(1,0) node {$\bar H$}
							(0,1) node {$T\bar J$}
							(1,1) node {$T\bar H$};
				\draw[shift={(0.5,0)}]
							(2,1) node {$\bar c$};
				\draw[shift={(0,0.5)}]
							(1,2) node {$T_\hc$}
							(1,3) node {$TT_\hc$};
			\end{tikzpicture} \mspace{20mu}=\mspace{20mu} \begin{tikzpicture}[textbaseline, x=2em, y=1.75em, font=\scriptsize]
				\draw	(3,2) -- (0,2) -- (0,0) -- (3,0) -- (3,3) -- (1,3) -- (1,0)
							(1,1) -- (3,1)
							(2,0) -- (2,2);
				\draw[shift={(0.5,0.5)}]
							(1,0) node {$\bar J$}
							(2,0) node {$\bar H$}
							(1,1) node {$\mu_J$}
							(2,1) node {$\mu_H$};
				\draw	(0.5,1) node {$\bar a$}
							(2,2.5) node {$(T^2)_\hc$};
			\end{tikzpicture} \mspace{20mu}=\mspace{20mu} \begin{tikzpicture}[textbaseline, x=2em, y=1.75em, font=\scriptsize]
				\draw (1,3) -- (1,0) -- (3,0) -- (3,3) -- (0,3) -- (0,0) -- (1,0)
							(1,2) -- (3,2)
							(1,1) -- (3,1)
							(2,0) -- (2,1);
				\draw[shift={(0.5,0.5)}]
							(1,0) node {$\bar J$}
							(2,0) node {$\bar H$}
							(0,1) node {$\bar a$};
				\draw[shift={(0,0.5)}]
							(2,1) node {$T_\hc$}
							(2,2) node {$\mu_{J \hc H}$};
			\end{tikzpicture}
		\end{displaymath}
		The  structure cell of a horizontal unit $1_A$ we take to be $T1_A = 1_{TA} \xRar{1_a} 1_A$, where \mbox{$\map a{TA}A$} is the structure map of $A$. The associativity and unit axioms follow from the unit axioms for $\mu$ and $\eta$.
		
		It is now easily checked that vertical and horizontal composites of $T$-cells form again $T$-cells and that, for a lax $T$-morphism $\map fAC$, the unit cell $1_f$ is a $T$-cell. It is also readily seen that the components of the associator and unitors of $\K$ form $T$-cells, using the associativity and unit axioms for $T$; we conclude that, with the chosen algebra structures above, the structure of double category $\K$ lifts to form a double category $\Alg lT$.
		
		Finally notice that the $T$-cell axiom for a vertical cell $\cell\phi fg$ coincides with the $V(T)$-cell axiom for $\phi$ (see \defref{definition: vertical T-cell}). We conclude that $V(\Alg lT) = \Alg l{V(T)}$ as asserted.
	\end{proof}
	
	\section{Kan extensions in \texorpdfstring{$\Alg wT$}{Alg(T)}}
	In this section we study Kan extensions in the double category $\Alg wT$. Firstly, in light of \propref{right Kan extensions along conjoints as right Kan extensions in V(K)}, we wonder whether $\Alg wT$ has restrictions, so that Kan extensions in $\Alg w{V(T)} = V(\Alg wT)$ correspond to Kan extensions in $\Alg wT$. Secondly we wonder whether $\Alg wT$ has opcartesian tabulations, so that we can obtain analogues of \thmref{pointwise Kan extensions in terms of Kan extensions} and \propref{pointwise right Kan extensions along conjoints as pointwise right Kan extensions in V(K)} for $\Alg wT$. As we shall see, restrictions and tabulations can often be `lifted' along the forgetful double functors $\Alg wT \to \K$.
	
	We start with the lifting of restrictions. Given a double functor $\map F\K\L$ and morphisms $\map fAC$, $\hmap KCD$ and $\map gBD$ in $\K$, we say that $F$ \emph{lifts the restriction $K(f, g)$} if for any cartesian cell $\phi$ in $\L$, as on the left below, there exists a unique cell $\phi'$ in $\K$, as on the right, such that $F\phi' = \phi$ and, moreover, $\phi'$ is cartesian.
	\begin{displaymath}
		\begin{tikzpicture}
			\matrix(m)[math175em]{FA & FB \\ FC & FD \\};
			\path[map]  (m-1-1) edge[barred] node[above] {$J$} (m-1-2)
													edge node[left] {$Ff$} (m-2-1)
									(m-1-2) edge node[right] {$Fg$} (m-2-2)
									(m-2-1) edge[barred] node[below] {$FK$} (m-2-2);
			\path[transform canvas={shift={($(m-1-2)!(0,0)!(m-2-2)$)}}] (m-1-1) edge[cell] node[right] {$\phi$} (m-2-1);			
		\end{tikzpicture} \qquad\qquad\qquad\qquad \begin{tikzpicture}
			\matrix(m)[math175em]{A & B \\ C & D \\};
			\path[map]  (m-1-1) edge[barred] node[above] {$J'$} (m-1-2)
													edge node[left] {$f$} (m-2-1)
									(m-1-2) edge node[right] {$g$} (m-2-2)
									(m-2-1) edge[barred] node[below] {$K$} (m-2-2);
			\path[transform canvas={shift={($(m-1-2)!(0,0)!(m-2-2)$)}}] (m-1-1) edge[cell] node[right] {$\phi'$} (m-2-1);
		\end{tikzpicture}
	\end{displaymath}
	\begin{proposition} \label{lifting restrictions}
		For a normal oplax double monad $T$ on a double category $\K$ the following hold for the forgetful double functors $\Alg wT \to \K$, where $\textup w \in \set{\textup c, \textup l, \textup{ps}}$:
		\begin{enumerate}[label=\textup{(\alph*)}]
			\item $\Alg cT \to \K$ lifts restrictions $K(f, g)$ where $g$ is a pseudo $T$-morphism;
			\item $\Alg lT \to \K$ lifts restrictions $K(f, g)$ where $f$ is a pseudo $T$-morphism;
			\item $\Alg{ps}T \to \K$ lifts all restrictions.
		\end{enumerate}
	\end{proposition}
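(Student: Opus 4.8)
The plan is to exploit the universal property of the cartesian cell $\phi$ that defines $J = K(f,g)$ in $\K$ in order to transport the horizontal $T$-morphism structure of $K$ back along $f$ and $g$. I treat (b) in detail; case (a) is handled by the analogous argument sketched at the end, and (c) follows from both. So let $f$ and $g$ be lax vertical $T$-morphisms with $f$ pseudo, let $K$ be a horizontal $T$-morphism with structure cell $\bar K$, and let $\cell\phi JK$ be a cartesian cell in $\K$ exhibiting $J = K(f,g)$, with vertical legs $f$ and $g$. I must equip $J$ with a unique structure cell $\bar J$ turning $\phi$ into a $T$-cell, and then verify that $\phi$ is cartesian in $\Alg lT$.

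First I would write down the defining identity of \defref{definition: T-cell} that $\bar J$ must satisfy, namely $\bar f \hc (\phi \of \bar J) = (\bar K \of T\phi) \hc \bar g$. Since $f$ is a pseudo $T$-morphism its structure cell $\bar f$ is invertible (\defref{definition: weak algebra morphisms}), so this is equivalent to $\phi \of \bar J = \inv{\bar f} \hc \bigpars{(\bar K \of T\phi) \hc \bar g}$. Inspecting boundaries, the right-hand side is a cell $TJ \Rar K$ whose vertical source and target are $f \of a$ and $g \of b$; it therefore factors uniquely through the cartesian cell $\phi$, and I define $\cell{\bar J}{TJ}J$ to be this factorisation, which has vertical legs $a$ and $b$ as required. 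By construction $\phi \of \bar J$ equals the displayed cell, which is exactly the $T$-cell axiom for $\phi$, and cartesian uniqueness shows that $\bar J$ is the \emph{only} structure cell on $J$ making $\phi$ a $T$-cell.

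The substantial step is to check that $\bar J$ obeys the associativity and unit axioms of \defref{definition: horizontal T-morphism}, so that $(J, \bar J)$ is a genuine horizontal $T$-morphism. Each axiom is an equality of cells $T^2 J \Rar J$, respectively $J \Rar J$; because factorisations through the cartesian $\phi$ are unique, it suffices to prove each after postcomposing with $\phi$. Postcomposing and repeatedly substituting the defining equation $\phi \of \bar J = \inv{\bar f} \hc \bigpars{(\bar K \of T\phi) \hc \bar g}$, the two axioms reduce to the associativity and unit axioms for $\bar K$, the algebra axioms for the algebras involved, the morphism axioms for $\bar f$ and $\bar g$, and the monad coherence (naturality of $\mu$, $\eta$ and of the compositor $T_\hc$); the invertibility of $\bar f$ is precisely what lets the $\bar f$-factors on the two sides be cancelled. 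I expect this diagram chase, rather than anything conceptual, to be the main labour of the proof.

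It remains to see that $\phi$ is cartesian in $\Alg lT$. Given a $T$-cell $\psi$ with horizontal target $K$ and the appropriate boundary, cartesianness in $\K$ yields a unique $\psi'$ with $\psi = \phi \of \psi'$; I would verify that $\psi'$ is a $T$-cell by writing out its $T$-cell equation, postcomposing with $\phi$, and invoking the $T$-cell equations for $\psi$ and $\phi$ together with the invertibility of $\bar f$, after which cartesian uniqueness gives the claim. Case (a) is proved dually: for colax morphisms the defining identity reads $\bar f \hc (\bar K \of T\phi) = (\phi \of \bar J) \hc \bar g$, which one solves for $\bar J$ by inverting $\bar g$ instead, so that it is now $g$ that must be pseudo. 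Finally (c) follows from (b), since in $\Alg{ps}T$ every vertical morphism---in particular $f$---is pseudo; as the factorisations above preserve the vertical legs, they remain within pseudo $T$-morphisms and their $T$-cells, so every restriction lifts.
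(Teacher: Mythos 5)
Your proposal is correct and follows essentially the same route as the paper: define $\bar J$ as the unique factorisation of $\inv{\bar f} \hc \bigpars{(\bar K \of T\phi) \hc \bar g}$ through the cartesian cell $\phi$, verify the associativity and unit axioms by postcomposing with $\phi$ and chasing diagrams, check cartesianness of $\phi$ as a $T$-cell the same way, and deduce (a) dually and (c) from (b). The ingredients you list for the diagram chase match those the paper actually uses, so nothing is missing.
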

	We conclude that if $\K$ is an equipment then so is $\Alg{ps}T$. Moreover, in that case $\Alg cT$ has all companions while, in general, it has only conjoints of pseudo $T$-morphisms; dually $\Alg lT$ has all conjoints but only companions of pseudo $T$\ndash morphisms.
	\begin{example}
		If $\map fAC$ is a monoidal $\V$-functor (\exref{example: monoidal enriched categories}), $\hmap KCD$ a monoidal $\V$-profunctor (\exref{example: monoidal enriched profunctors}) and $\map gBD$ a lax monoidal $\V$-functor then the restriction $K(f,g)$ admits a canonical monoidal structure that is given by the composites
		\begin{multline*}
			K(fx_1, gy_1) \tens \dotsb \tens K(fx_n, gy_n) \xrar{K_\tens} K\bigpars{(fx_1 \tens \dotsb \tens fx_n), (gy_1 \tens \dotsb \tens gy_n)} \\
			\xrar{\rho_{g_\tens} \of \lambda_{\inv f_\tens}} K\bigpars{f(x_1 \tens \dotsb \tens x_n), g(y_1 \tens \dotsb \tens y_n)}.
		\end{multline*}
	\end{example}
	\begin{proof}[of \propref{lifting restrictions}.]
		We will prove (b). The inclusion $\Alg{ps}T \to \Alg lT$ lifts all restrictions so that (c) follows, while proving (a) is similar to proving (b).
		
		Consider a cartesian cell $\phi$ in $\K$ as on the left below, where $f$ is a pseudo $T$\ndash morphism, $g$ is a lax $T$-morphism and $K$ is a horizontal $T$-morphism. We claim that the unique factorisation $\bar J$ of the composite on the right below, through the cartesian cell $\phi$, as shown, forms a structure cell for $J$.
		\begin{equation} \label{restriction structure cell}
			\begin{tikzpicture}[textbaseline]
				\matrix(m)[math175em]{A & B \\ C & D \\};
				\path[map]  (m-1-1) edge[barred] node[above] {$J$} (m-1-2)
														edge node[left] {$f$} (m-2-1)
										(m-1-2) edge node[right] {$g$} (m-2-2)
										(m-2-1) edge[barred] node[below] {$K$} (m-2-2);
				\path[transform canvas={shift={($(m-1-2)!(0,0)!(m-2-2)$)}}] (m-1-1) edge[cell] node[right] {$\phi$} (m-2-1);
			\end{tikzpicture} \qquad\quad \begin{tikzpicture}[textbaseline]
				\matrix(m)[math175em]{TA & TA & TB & TB \\ A & TC & TD & B \\ C & C & D & D \\};
				\path[map]	(m-1-1) edge node[left] {$a$} (m-2-1)
										(m-1-2) edge[barred] node[above] {$TJ$} (m-1-3)
														edge node[left] {$Tf$} (m-2-2)
										(m-1-3) edge node[right] {$Tg$} (m-2-3)
										(m-1-4) edge node[right] {$b$} (m-2-4)
										(m-2-1) edge node[left] {$f$} (m-3-1)
										(m-2-2) edge[barred] node[below] {$TK$} (m-2-3)
														edge node[left] {$c$} (m-3-2)
										(m-2-3) edge node[right] {$d$} (m-3-3)
										(m-2-4) edge node[right] {$g$} (m-3-4)
										(m-3-2) edge[barred] node[below] {$K$} (m-3-3);
				\path				(m-1-1) edge[eq] (m-1-2)
										(m-1-3) edge[eq] (m-1-4)
										(m-3-1) edge[eq] (m-3-2)
										(m-3-3) edge[eq] (m-3-4);
				\path[transform canvas={shift={($(m-2-3)!0.5!(m-3-3)$)}}]	(m-1-1) edge[cell, transform canvas={xshift=-8pt}] node[right] {$\inv{\bar f}$} (m-2-1)
										(m-1-3) edge[cell] node[right] {$\bar g$} (m-2-3);
				\path[transform canvas={shift=(m-2-3)}]	(m-1-2) edge[cell] node[right] {$T\phi$} (m-2-2)
										(m-2-2) edge[cell, transform canvas={yshift=-3pt}] node[right] {$\bar K$} (m-3-2);
			\end{tikzpicture} = \begin{tikzpicture}[textbaseline]
				\matrix(m)[math175em]{TA & TB \\ A & B \\ C & D \\};
				\path[map]	(m-1-1) edge[barred] node[above] {$TJ$} (m-1-2)
														edge node[left] {$a$} (m-2-1)
										(m-1-2) edge node[right] {$b$} (m-2-2)
										(m-2-1) edge[barred] node[below] {$J$} (m-2-2)
														edge node[left] {$f$} (m-3-1)
										(m-2-2) edge node[right] {$g$} (m-3-2)
										(m-3-1) edge[barred] node[below] {$K$} (m-3-2);
				\path[transform canvas={shift=(m-2-2)}]	(m-1-1) edge[cell] node[right] {$\bar J$} (m-2-1)
										(m-2-1) edge[cell, transform canvas={yshift=-3pt}] node[right] {$\phi$} (m-3-1);
			\end{tikzpicture}
		\end{equation}
		We have to show that $\bar J$ satisfies the associativity and unit axiom of \defref{definition: horizontal T-morphism}. To show that the first holds consider the following equation of composites in $\K$, whose identities are described below. Since its left-hand and right-hand side are the left-hand and right-hand side of the associativity axiom for $\bar J$ postcomposed with $\phi$, it implies the associativity axiom for $\bar J$, because factorisations through cartesian cells are unique.
		\begin{align*}
			\begin{tikzpicture}[textbaseline, x=2em, y=1.75em, font=\scriptsize]
				\draw	(1,3) -- (1,0) -- (0,0) -- (0,3) -- (2,3) -- (2,1) -- (0,1)
							(0,2) -- (1,2);
				\draw[shift={(0.5,0.5)}]
							(0,0) node {$\phi$}
							(0,1) node {$\bar J$}
							(0,2) node {$T\bar J$};
				\draw (1.5,2) node {$\bar b$};
			\end{tikzpicture} \mspace{15mu} \overset{(\textup i)} = \mspace{15mu} &\begin{tikzpicture}[textbaseline, x=2em, y=1.75em, font=\scriptsize]
				\draw (3,1) -- (4,1) -- (4,3) -- (3,3) -- (3,0) -- (0,0) -- (0,2) -- (3,2)
							(1,0) -- (1,3) -- (2,3) -- (2,0)
							(1,1) -- (2,1);
				\draw[shift={(0.5,0.5)}]
							(1,0) node {$\bar K$}
							(1,1) node {$T\phi$}
							(1,2) node {$T\bar J$};
				\draw[shift={(0.5,0)}]
							(0,1) node {$\inv{\bar f}$}
							(2,1) node {$\bar g$}
							(3,2) node {$\bar b$};
			\end{tikzpicture} \mspace{15mu} \overset{(\textup{ii})} = \mspace{15mu} \begin{tikzpicture}[textbaseline, x=2em, y=1.75em, font=\scriptsize]
				\draw (1,1) -- (4,1) -- (4,3) -- (1,3) -- (1,0) -- (0,0) -- (0,2) -- (1,2)
							(4,1) -- (4,0) -- (5,0) -- (5,3) -- (6,3) -- (6,1) -- (5,1)
							(2,3) -- (2,0) -- (3,0) -- (3,3)
							(2,2) -- (3,2)
							(4,2) -- (5,2);
				\draw[shift={(0.5,0)}]
							(0,1) node {$\inv{\bar f}$}
							(1,2) node {$T\inv{\bar f}$}
							(3,2) node {$T\bar g$}
							(4,1) node {$\bar g$}
							(5,2) node {$\bar b$};
				\draw[shift={(0.5,0.5)}]
							(2,0) node {$\bar K$}
							(2,1) node {$T\bar K$}
							(2,2) node {$T^2\phi$};
			\end{tikzpicture} \\[1em]
			\overset{(\textup{iii})} = \mspace{11.5mu} &\begin{tikzpicture}[textbaseline, x=2em, y=1.75em, font=\scriptsize]
				\draw (2,2) -- (4,2) -- (4,0) -- (2,0) -- (2,3) -- (3,3) -- (3,0)
							(1,2) -- (0,2) -- (0,0) -- (1,0) -- (1,3) -- (2,3)
							(1,1) -- (3,1)
							(4,1) -- (5,1)
							(4,2) -- (4,3) -- (5,3) -- (5,0) -- (6,0) -- (6,2) -- (5,2);
				\draw[shift={(0.5,0)}]
							(0,1) node {$\inv{\bar f}$}
							(1,2) node {$T\inv{\bar f}$}
							(3,1) node {$\bar d$}
							(5,1) node {$\bar g$};
				\draw[shift={(0.5,0.5)}]
							(2,0) node {$\bar K$}
							(2,1) node {$T\bar K$}
							(2,2) node {$T^2\phi$};
			\end{tikzpicture} \mspace{15mu} \overset{(\textup{iv})} = \mspace{15mu} \begin{tikzpicture}[textbaseline, x=2em, y=1.75em, font=\scriptsize]
				\draw	(4,3) -- (4,0) -- (3,0) -- (3,3) -- (5,3) -- (5,1) -- (3,1)
							(1,2) -- (0,2) -- (0,0) -- (1,0) -- (1,3) -- (2,3) -- (2,1) -- (1,1)
							(2,2) -- (4,2)
							(2,1) -- (2,0) -- (3,0)
							(5,2) -- (6,2) -- (6,0) -- (5,0) -- (5,1);
				\draw[shift={(0.5,0)}]
							(0,1) node {$\inv{\bar f}$}
							(1,2) node {$T\inv{\bar f}$}
							(2,1) node {$\bar c$}
							(5,1) node {$\bar g$};
				\draw[shift={(0.5,0.5)}]
							(3,0) node {$\bar K$}
							(3,1) node {$\mu_K$}
							(3,2) node {$T^2\phi$};
			\end{tikzpicture} \\[1em]
			\overset{(\textup v)} = \mspace{13.5mu} &\begin{tikzpicture}[textbaseline, x=2em, y=1.75em, font=\scriptsize]
				\draw (1,2) -- (4,2) -- (4,0) -- (1,0) -- (1,3) -- (0,3) -- (0,1) -- (1,1)
							(2,0) -- (2,3) -- (3,3) -- (3,0)
							(2,1) -- (3,1);
				\draw[shift={(0.5,0)}]
							(0,2) node {$\bar a$}
							(1,1) node {$\inv{\bar f}$}
							(3,1) node {$\bar g$};
				\draw[shift={(0.5,0.5)}]
							(2,0) node {$\bar K$}
							(2,1) node {$T\phi$}
							(2,2) node {$\mu_J$};
			\end{tikzpicture} \mspace{15mu} \overset{(\textup{vi})} = \mspace{15mu} \begin{tikzpicture}[textbaseline, x=2em, y=1.75em, font=\scriptsize]
				\draw	(1,3) -- (1,0) -- (2,0) -- (2,3) -- (0,3) -- (0,1) -- (2,1)
							(1,2) -- (2,2)
							(0.5, 2) node {$\bar a$};
				\draw[shift={(0.5,0.5)}]
							(1,0) node {$\phi$}
							(1,1) node {$\bar J$}
							(1,2) node {$\mu_J$};
			\end{tikzpicture}
		\end{align*}
		The identity (i) above follows from the factorisation \eqref{restriction structure cell}; (ii) follows from the $T$\ndash image of \eqref{restriction structure cell}, where the $T$-image of the left-hand side is rewritten as $T\inv{\bar f} \hc T(\bar K \of T\phi) \hc T\bar g$ by using the unit axioms for $T$ (see \eqref{unit axiom for normal oplax double functors}); (iii) follows from the associativity axiom for $\bar g$; (iv) from the associativity axiom for $\bar K$; (v) from the naturality of $\mu$ and the associativity axiom for $\bar f$; (vi) from the factorisation \eqref{restriction structure cell}. Showing that $\bar J$ satisfies the unit axiom is done analogously: it is equivalent to the identity obtained by composing it with $\phi$, which follows from the factorisation \eqref{restriction structure cell} and the unit axioms for $\bar g$, $\bar K$ and $\bar f$. This completes the proof that $\bar J$ forms a structure cell making $J$ into a horizontal $T$-morphism. 
		
		Composing \eqref{restriction structure cell} on the left with $\bar f$ we see that $\phi$ forms a $T$-cell \mbox{$(J, \bar J) \Rar (K, \bar K)$}, showing that $\phi$ lifts along $\Alg lT \to \K$. In fact, because the factorisation \eqref{restriction structure cell} is unique, this lift of $\phi$ is unique as well. To complete the proof we have to show that $\phi$ forms a cartesian $T$-cell. Hence consider a $T$-cell $\psi$ as on the left-hand side below, where $h$ and $k$ are lax $T$-morphisms while $H$ is a horizontal $T$-morphism.
		\begin{displaymath}
			\begin{tikzpicture}[textbaseline]
    		\matrix(m)[math175em]{X & Y \\ A & B \\ C & D \\};
    		\path[map]  (m-1-1) edge[barred] node[above] {$H$} (m-1-2)
        		                edge node[left] {$h$} (m-2-1)
        		        (m-1-2) edge node[right] {$k$} (m-2-2)
        		        (m-2-1) edge node[left] {$f$} (m-3-1)
        		        (m-2-2) edge node[right] {$g$} (m-3-2)
        		        (m-3-1) edge[barred] node[below] {$K$} (m-3-2);
    		\path[transform canvas={shift={($(m-2-1)!0.5!(m-1-1)$)}}] (m-2-2) edge[cell] node[right] {$\psi$} (m-3-2);
  		\end{tikzpicture} = \begin{tikzpicture}[textbaseline]
    		\matrix(m)[math175em]{X & Y \\ A & B \\ C & D \\};
    		\path[map]  (m-1-1) edge[barred] node[above] {$H$} (m-1-2)
        		                edge node[left] {$h$} (m-2-1)
            		    (m-1-2) edge node[right] {$k$} (m-2-2)
            		    (m-2-1) edge[barred] node[below] {$J$} (m-2-2)
            		            edge node[left] {$f$} (m-3-1)
            		    (m-2-2) edge node[right] {$g$} (m-3-2)
            		    (m-3-1) edge[barred] node[below] {$K$} (m-3-2);
    		\path[transform canvas={shift=(m-2-1))}]
        		        (m-1-2) edge[cell] node[right] {$\psi'$} (m-2-2)
        		        (m-2-2) edge[transform canvas={yshift=-0.3em}, cell] node[right] {$\phi$} (m-3-2);
  		\end{tikzpicture}
		\end{displaymath}
		Since $\phi$ is cartesian in $\K$ we obtain a unique factorisation $\psi'$ of $\psi$ as shown; we have to prove that $\psi'$ is a $T$-cell. The following equation shows that the $T$-cell axiom for $\psi'$ holds after composing it with $\phi$, where the identities follow from \eqref{restriction structure cell}, the factorisation above, the $T$-cell axiom for $\psi$ and again the factorisation above.
		\begin{displaymath}
			\begin{tikzpicture}[textbaseline, x=2em, y=1.75em, font=\scriptsize]
				\draw	(0,1) -- (2,1) -- (2,3) -- (0,3) -- (0,0) -- (1,0) -- (1,3)
							(0,2) -- (1,2)
							(1.5,2) node {$\bar k$};
				\draw[shift={(0.5,0.5)}]
							(0,0) node {$\phi$}
							(0,1) node {$\bar J$}
							(0,2) node {$T\psi'$};
			\end{tikzpicture} \mspace{8mu} = \mspace{8mu} \begin{tikzpicture}[textbaseline, x=2em, y=1.75em, font=\scriptsize]
				\draw (3,2) -- (0,2) -- (0,0) -- (3,0) -- (3,3) -- (4,3) -- (4,1) -- (3,1)
							(1,0) -- (1,3) -- (2,3) -- (2,0)
							(1,1) -- (2,1);
				\draw[shift={(0.5,0)}]
							(0,1) node {$\inv{\bar f}$}
							(2,1) node {$\bar g$}
							(3,2) node {$\bar k$};
				\draw[shift={(0.5,0.5)}]
							(1,0) node {$\bar K$}
							(1,1) node {$T\phi$}
							(1,2) node {$T\psi'$};
			\end{tikzpicture} \mspace{8mu} = \mspace{8mu} \begin{tikzpicture}[textbaseline, x=2em, y=1.75em, font=\scriptsize]
				\draw (1,2) -- (0,2) -- (0,0) -- (3,0) -- (3,2) -- (2,2)
							(1,0) -- (1,3) -- (2,3) -- (2,0)
							(1,1) -- (2,1)
							(3,2) -- (3,3) -- (4,3) -- (4,1) -- (3,1)
							(1.5,0.5) node {$\bar K$};
				\draw[shift={(0.5,0)}]
							(0,1) node {$\inv{\bar f}$}
							(1,2) node {$T\psi$}
							(2,1) node {$\bar g$}
							(3,2) node {$\bar k$};
			\end{tikzpicture} \mspace{8mu} = \mspace{8mu} \begin{tikzpicture}[textbaseline, x=2em, y=1.75em, font=\scriptsize]
				\draw (1,1) -- (0,1) -- (0,3) -- (2,3) -- (2,0) -- (1,0) -- (1,3)
							(1,2) -- (2,2)
							(1.5,2.5) node {$\bar J$};
				\draw[shift={(0.5,0)}]
							(0,2) node {$\bar h$}
							(1,1) node {$\psi$};
			\end{tikzpicture} \mspace{8mu} = \mspace{8mu} \begin{tikzpicture}[textbaseline, x=2em, y=1.75em, font=\scriptsize]
				\draw (1,3) -- (1,0) -- (2,0) -- (2,3) -- (0,3) -- (0,1) -- (2,1)
							(1,2) -- (2,2)
							(0.5,2) node {$\bar h$};
				\draw[shift={(0.5,0.5)}]
							(1,0) node {$\phi$}
							(1,1) node {$\psi'$}
							(1,2) node {$\bar J$};
			\end{tikzpicture}
		\end{displaymath}
		Since factorisations through $\phi$ are unique the $T$-cell axiom for $\psi'$ follows, completing the proof.
	\end{proof}
	
	We now turn to tabulations, see \defref{definition: tabulation}.
	\begin{proposition} \label{lifting tabulations}
		Let $T$ be a normal oplax double monad on an equipment $\K$. The forgetful functors $\Alg wT \to \K$ lift tabulations in the following sense. If the cell $\pi$ below forms the tabulation of $J$ in $\K$, where $J$ is a horizontal $T$-morphism, then there exists a unique lax $T$-algebra structure on $\gen J$ with respect to which $\pi_A$ and $\pi_B$ are strict $T$\ndash morphisms and $\pi$ is a $T$-cell. Thus a $T$-cell, $\pi$ forms the tabulation of $J$ in $\Alg wT$, which is opcartesian as soon as $\pi$ is opcartesian in $\K$.
		\begin{displaymath}
			\begin{tikzpicture}
  			\matrix(m)[tab]{\tab J & \tab J \\ A & B \\};
  			\path[map]  (m-1-1) edge node[left] {$\pi_A$} (m-2-1)
										(m-1-2) edge node[right] {$\pi_B$} (m-2-2)
										(m-2-1) edge[barred] node[below] {$J$} (m-2-2);
				\path				(m-1-1) edge[eq] (m-1-2);
				\path[transform canvas={shift={($(m-1-2)!(0,0)!(m-2-2)$)}}] (m-1-1) edge[cell] node[right] {$\pi$} (m-2-1);
			\end{tikzpicture}
		\end{displaymath}
	\end{proposition}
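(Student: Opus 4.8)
The plan is to transport every piece of structure on the object $\tab J$ from the two universal properties in \defref{definition: tabulation}, deducing each coherence condition from the corresponding axiom for $J$, $A$ or $B$ by the relevant uniqueness clause. Throughout, $a$ and $b$ denote the structure maps of $A$ and $B$, and $\bar J$ the structure cell of $J$.

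First I would produce the structure map. Since $T$ is normal, $T\pi$ is a cell $1_{T\tab J} \Rar TJ$ with vertical source $T\pi_A$ and target $T\pi_B$; vertically postcomposing it with $\bar J$ yields a cell $\bar J \of T\pi\colon 1_{T\tab J} \Rar J$ with source $a \of T\pi_A$ and target $b \of T\pi_B$. The $1$-dimensional property then gives a unique $\map\ell{T\tab J}{\tab J}$ with $\pi \of 1_\ell = \bar J \of T\pi$. Reading off vertical sources and targets gives $\pi_A \of \ell = a \of T\pi_A$ and $\pi_B \of \ell = b \of T\pi_B$, so that $\pi_A$ and $\pi_B$ are strict; moreover, since the structure cell of the horizontal unit $1_{\tab J}$ is $1_\ell$ (see the proof of \propref{double category of horizontal T-morphisms}), this same identity is the $T$-cell axiom (\defref{definition: T-cell}) for $\pi$. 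Uniqueness of $\ell$ among structure maps making $\pi_A$, $\pi_B$ strict and $\pi$ a $T$-cell is immediate from the $1$-dimensional property.

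For the associator and unitor I would use the $2$-dimensional property with trivial horizontal domain. Strictness of $\pi_A$ and $\pi_B$ forces $1_{\pi_A} \of \bar\ell = \bar a \of 1_{T^2\pi_A}$ and $1_{\pi_B} \of \bar\ell = \bar b \of 1_{T^2\pi_B}$ (using naturality of $\mu$ and the identities just found), so the candidate whiskerings $\xi_A = \bar a \of 1_{T^2\pi_A}$ and $\xi_B = \bar b \of 1_{T^2\pi_B}$ determine $\bar\ell$ uniquely; its existence requires checking the compatibility hypothesis of the $2$-dimensional property, which, after postcomposition with $\pi$, is precisely the associativity axiom of \defref{definition: horizontal T-morphism} for $\bar J$. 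The unitor is obtained identically from $\hat a$, $\hat b$ and the unit axiom for $\bar J$. The three coherence axioms of \defref{definition: lax algebra} for $(\tab J, \ell, \bar\ell, \hat\ell)$ then follow by the uniqueness clause of the $2$-dimensional property, each reducing to the corresponding coherence for $A$ and $B$; this settles existence and uniqueness of the lax $T$-algebra structure and the fact that $\pi$ is a $T$-cell.

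It remains to verify the two universal properties of $\pi$ inside $\Alg wT$. For the $1$-dimensional one, a $T$-cell $\phi$ from a lax $T$-algebra $X$ into $J$ factors in $\K$ as a unique $\map{\phi'}X{\tab J}$; I would equip $\phi'$ with a structure cell defined by the $2$-dimensional property whose whiskerings are the structure cells of $\phi_A$ and $\phi_B$, the compatibility hypothesis being the $T$-cell axiom for $\phi$, and verify the weak-morphism axioms of \defref{definition: weak algebra morphisms} by $2$-dimensional uniqueness. For the $2$-dimensional property, a factorisation $\xi'$ provided by the $2$-dimensional property in $\K$ is seen to be a $T$-cell by whiskering its $T$-cell axiom with $1_{\pi_A}$ and $1_{\pi_B}$, reducing it to the $T$-cell axioms of $\xi_A$ and $\xi_B$, and then removing the whiskerings by uniqueness of factorisations through $\pi$. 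Finally, if $\pi$ is opcartesian in $\K$ then it is opcartesian in $\Alg wT$ by exactly the argument closing the proof of \propref{lifting restrictions}: any $T$-cell factors uniquely in $\K$, and that factorisation is again a $T$-cell after postcomposition with $\pi$, by uniqueness of opcartesian factorisations. The main obstacle throughout is verifying the compatibility hypotheses of the $2$-dimensional property in the two preceding paragraphs: these are the lengthy pasting computations translating the associativity and unit axioms for $\bar J$, and the $T$-cell axiom for $\phi$, into the exact form required by \defref{definition: tabulation}, carried out in the same string-diagram style as in \propref{lifting restrictions}.
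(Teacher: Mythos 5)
Your proposal follows essentially the same route as the paper's proof: the structure map is obtained by factoring $\bar J \of T\pi$ through the $1$-dimensional property, the associator and unitor come from the $2$-dimensional property applied to the associativity and unit axioms of $\bar J$ (with whiskerings $\bar a$, $\bar b$ and $\hat a$, $\hat b$), all coherence and $T$-cell axioms are checked by joint monicity of $1_{\pi_A}$, $1_{\pi_B}$, and opcartesianness uses that $T\pi$ is opcartesian together with uniqueness of factorisations. The argument is correct; the only points left implicit are the invertibility of the lifted structure cell in the pseudo case and the explicit appeal to preservation of opcartesian cells by $T$, both of which are routine.
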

	\begin{example}
		The tabulation $\tab J$ (see \exref{example: tabulations of profunctors}) of a monoidal (unenriched) profunctor $\hmap JAB$ (see \exref{example: monoidal enriched profunctors}) admits a canonical lax monoidal structure whose tensor product on objects and morphisms is given by applying the tensor products of $A$, $J$ and $B$ coordinatewise:
		\begin{align*}
			\bigpars{(x_1, u_1, y_1) \tens \dotsb \tens (x_n, u_n, y_n)} &= \bigpars{(x_1 \tens \dotsb \tens x_n), (u_1 \tens \dotsb \tens u_n), (y_1 \tens \dotsb \tens y_n)}; \\
			\bigpars{(p_1, q_1) \tens \dotsb \tens (p_n, q_n)} &= \bigpars{(p_1 \tens \dotsb \tens p_n), (q_1 \tens \dotsb \tens q_n)}.
		\end{align*}
		Similarly its coherence maps are given by those of $A$ and $B$: $\mf a_{(\ull x, \ull u, \ull y)} = (\mf a_{\ull x}, \mf a_{\ull y})$ for any double sequence $(\ull x, \ull u, \ull y)$ of objects in $\tab J$, and $\mf i_{(x, u, y)} = (\mf i_x, \mf i_y)$ for any object $(x,u,y)$ in $\tab J$.
	\end{example}
	For the proof of \propref{lifting tabulations} it is useful to record the following simple property of tabulations.
	\begin{lemma}
		The projections $\pi_A$ and $\pi_B$ of a tabulation $\pi$, as on the left below, are jointly monic in the following sense. Any two cells $\phi$ and $\psi$ as on the right below coincide as soon as $1_{\pi_A} \of \phi = 1_{\pi_A} \of \psi$ and $1_{\pi_B} \of \phi = 1_{\pi_B} \of \psi$.
		\begin{displaymath}
			\begin{tikzpicture}[textbaseline]
  			\matrix(m)[tab]{\tab J & \tab J \\ A & B \\};
  			\path[map]  (m-1-1) edge node[left] {$\pi_A$} (m-2-1)
										(m-1-2) edge node[right] {$\pi_B$} (m-2-2)
										(m-2-1) edge[barred] node[below] {$J$} (m-2-2);
				\path				(m-1-1) edge[eq] (m-1-2);
				\path[transform canvas={shift={($(m-1-2)!(0,0)!(m-2-2)$)}}] (m-1-1) edge[cell] node[right] {$\pi$} (m-2-1);
			\end{tikzpicture} \qquad\qquad \begin{tikzpicture}[textbaseline]
				\matrix(m)[tab]{X & Y \\ \gen J & \gen J \\};
				\path[map]	(m-1-1) edge[barred] node[above] {$H$} (m-1-2)
														edge node[left] {$f$} (m-2-1)
										(m-1-2) edge node[right] {$g$} (m-2-2);
				\path				(m-2-1) edge[eq] (m-2-2);
				\path[transform canvas={shift={($(m-1-2)!0.5!(m-2-2)$)}}]	(m-1-1) edge[cell] node[right] {$\phi$} (m-2-1);
			\end{tikzpicture} \qquad\qquad \begin{tikzpicture}[textbaseline]
				\matrix(m)[tab]{X & Y \\ \gen J & \gen J \\};
				\path[map]	(m-1-1) edge[barred] node[above] {$H$} (m-1-2)
														edge node[left] {$f$} (m-2-1)
										(m-1-2) edge node[right] {$g$} (m-2-2);
				\path				(m-2-1) edge[eq] (m-2-2);
				\path[transform canvas={shift={($(m-1-2)!0.5!(m-2-2)$)}}]	(m-1-1) edge[cell] node[right] {$\psi$} (m-2-1);
			\end{tikzpicture}
		\end{displaymath}
	\end{lemma}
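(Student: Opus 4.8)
The plan is to invoke the uniqueness clause of the two-dimensional universal property of the tabulation $\pi$ (\defref{definition: tabulation}), applied to a single choice of boundary data for which \emph{both} $\phi$ and $\psi$ serve as factorisations. Once that is set up, the desired equality $\phi = \psi$ is immediate from uniqueness.

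First I would record that $f$ and $g$ are factorisations through $\pi$ in the sense of the one-dimensional property: the cell $\Phi := \pi \of 1_f$ has vertical source $\pi_A \of f$ and target $\pi_B \of f$, and trivially $\pi \of 1_f = \Phi$, so $f$ is the factorisation of $\Phi$; likewise $g$ factorises $\Psi := \pi \of 1_g$. Next I set $\xi_A := 1_{\pi_A} \of \phi = 1_{\pi_A} \of \psi$ and $\xi_B := 1_{\pi_B} \of \phi = 1_{\pi_B} \of \psi$, using the two hypotheses. These are cells $H \Rar 1_A$ and $H \Rar 1_B$ whose vertical sources and targets are exactly the composites $\pi_A \of f,\ \pi_A \of g$ and $\pi_B \of f,\ \pi_B \of g$, which is precisely the boundary data the two-dimensional property requires relative to the factorisations $f$ and $g$.

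The hypothesis needed to invoke that property is the identity $\xi_A \hc \Psi = \Phi \hc \xi_B$, and the key observation is that both sides reduce to $\pi \of \phi$. Indeed, by the middle-four interchange law $\xi_A \hc \Psi = (1_{\pi_A} \hc \pi) \of (\phi \hc 1_g)$ and $\Phi \hc \xi_B = (\pi \hc 1_{\pi_B}) \of (1_f \hc \phi)$; composing a cell with a horizontal unit cell on either side returns that cell up to the unitors $\mf l$ and $\mf r$, so after the evident unitor identifications both composites equal $\pi \of \phi$. Here I rely only on the coherence of the unitors and on the fact that their images under $L$ and $R$ are identities, so that all the boundary morphisms match on the nose. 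Thus the compatibility identity holds.

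The two-dimensional property then yields a \emph{unique} cell $\xi' \colon H \Rar 1_{\tab J}$ with vertical source $f$ and target $g$ satisfying $1_{\pi_A} \of \xi' = \xi_A$ and $1_{\pi_B} \of \xi' = \xi_B$. By the very definitions of $\xi_A$ and $\xi_B$, both $\phi$ and $\psi$ satisfy these two equations, so uniqueness forces $\phi = \psi$, as required. The only delicate point is the unitor bookkeeping in the compatibility identity; once the interchange rewriting is in place this is routine, and the substance of the argument is simply that $\phi$ and $\psi$ must agree because they induce the same pair of projections onto $A$ and $B$, and tabulations are by construction two-dimensionally universal.
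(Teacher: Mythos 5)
Your argument is correct and is essentially the paper's own proof: the paper likewise applies the $2$-dimensional universal property of $\pi$ to the interchange identity $(1_{\pi_A} \of \phi) \hc (\pi \of 1_g) = (\pi \of 1_f) \hc (1_{\pi_B} \of \phi)$, whose two sides both reduce to $\pi \of \phi$, and then concludes from the uniqueness of the resulting cell $\xi'$ that $\phi = \psi$ since both satisfy its defining equations. Your extra care with the unitor bookkeeping is fine but not a substantive difference.
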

	\begin{proof}
		Applying the $2$-dimensional universal property of $\pi$ to the identity
		\begin{displaymath}
			\begin{tikzpicture}[textbaseline, x=2em, y=1.75em, font=\scriptsize]
				\draw	(0,1) -- (2,1) -- (2,0) -- (1,0) -- (1,2) -- (0,2) -- (0,1);
				\draw[shift={(0.5,0.5)}]
							(0,1) node {$\phi$}
							(1,0) node {$\pi$};
			\end{tikzpicture} \mspace{20mu} = \mspace{20mu} \begin{tikzpicture}[textbaseline, x=2em, y=1.75em, font=\scriptsize]
				\draw (0,0) -- (1,0) -- (1,2) -- (2,2) -- (2,1) -- (0,1) -- (0,0);
				\draw[shift={(0.5,0.5)}]
							(0,0) node {$\pi$}
							(1,1) node {$\phi$};
			\end{tikzpicture}
		\end{displaymath}
		we find that there is exactly one cell $\chi$ such that $1_{\pi_A} \of \chi = 1_{\pi_A} \of \phi$ and $1_{\pi_B} \of \chi = 1_{\pi_B} \of \phi$. Since both $\chi = \phi$ and $\chi = \psi$ satisfy these identities we must have $\phi = \psi$.
	\end{proof}
	\begin{proof}[of \propref{lifting tabulations}.]
		Given a horizontal $T$-morphism $\hmap{(J, \bar J)}AB$, we assume that there exists a cell $\pi$ in $\K$, as on the left above, that defines the tabulation of its underlying horizontal morphism $\hmap JAB$. We will construct a lax $T$-algebra structure on $\gen J$ and check that it is unique such that $\pi_A$ and $\pi_B$ become strict $T$\ndash morphisms, and that $\pi$ becomes a $T$-cell. Following this we prove that as a $T$\ndash cell $\pi$ satisfies the universal properties of a tabulation, and that it is opcartesian whenever $\pi$ is opcartesian in $\K$.
		
		\emph{Lax $T$-algebra structure on $\gen J$.} The universal properties of the tabulation $\pi$ induce a lax $T$-algebra structure on $\gen J$ as follows. For the structure map \mbox{$\map w{T\gen J}{\gen J}$} consider the composite on the left-hand side below: by the $1$\ndash di\-men\-sional universal property of tabulations it factors uniquely through $\pi$ as a vertical morphism $\map w{T\gen J}{\gen J}$ as shown.
		\begin{equation} \label{structure map for tabulation}
			\begin{tikzpicture}[textbaseline]
				\matrix(m)[tab]{T\gen J & T\gen J \\ TA & TB \\ A & B \\};
				\path[map]	(m-1-1) edge node[left] {$T\pi_A$} (m-2-1)
										(m-1-2) edge node[right] {$T\pi_B$} (m-2-2)
										(m-2-1) edge[barred] node[below] {$TJ$} (m-2-2)
														edge node[left] {$a$} (m-3-1)
										(m-2-2) edge node[right] {$b$} (m-3-2)
										(m-3-1) edge[barred] node[below] {$J$} (m-3-2);
				\path				(m-1-1) edge[eq] (m-1-2);
				\path[transform canvas={shift=(m-2-2)}]	(m-1-1) edge[cell] node[right] {$T\pi$} (m-2-1)
										(m-2-1) edge[cell, transform canvas={yshift=-3pt}] node[right] {$\bar J$} (m-3-1);
			\end{tikzpicture} = \begin{tikzpicture}[textbaseline]
				\matrix(m)[tab]{T\gen J & T\gen J \\ \gen J & \gen J \\ A & B \\};
				\path[map]	(m-1-1) edge node[left] {$w$} (m-2-1)
										(m-1-2) edge node[right] {$w$} (m-2-2)
										(m-2-1) edge node[left] {$\pi_A$} (m-3-1)
										(m-2-2) edge node[right] {$\pi_B$} (m-3-2)
										(m-3-1) edge[barred] node[below] {$J$} (m-3-2);
				\path				(m-1-1) edge[eq] (m-1-2)
										(m-2-1) edge[eq] (m-2-2);
				\path[transform canvas={shift=(m-2-2)}] (m-2-1) edge[cell] node[right] {$\pi$} (m-3-1);
			\end{tikzpicture}
		\end{equation}
		
		To obtain the vertical associator cell $\cell{\bar w}{w \of Tw}{w \of \mu_{\gen J}}$ we consider the identity below, which is the associativity axiom for $J$ precomposed with $T^2\pi$. Notice that the second column of the left-hand side coincides with $\pi \of 1_w \of 1_{\mu_{\gen J}}$, and the first of the right-hand side with $\pi \of 1_w \of 1_{Tw}$. Invoking the 2-dimensional universal property of $\pi$ we obtain a unique cell $\cell{\bar w}{w \of Tw}{w \of \mu_{\gen J}}$ such that $1_{\pi_A} \of \bar w = \bar a \of 1_{T^2\pi_A}$ and $1_{\pi_B} \of \bar w = \bar b \of 1_{T^2\pi_B}$; we take $\bar w$ as the associator of $\gen J$.
		\begin{displaymath}
			\begin{tikzpicture}[textbaseline]
				\matrix(m)[tab]
				{ \phantom{T^2\gen J} & T^2\gen J & T^2\gen J \\
					T^2A & T^2A & T^2B \\
					TA & TA & TB \\
					A & A & B \\};
				\path[map]	(m-1-2) edge node[left] {$T^2\pi_A$} (m-2-2)
										(m-1-3) edge node[right] {$T^2\pi_B$} (m-2-3)
										(m-2-1) edge node[left] {$Ta$} (m-3-1)
										(m-2-2) edge[barred] node[below] {$T^2J$} (m-2-3)
														edge node[left] {$\mu_A$} (m-3-2)
										(m-2-3) edge node[right] {$\mu_B$} (m-3-3)
										(m-3-1) edge node[left] {$a$} (m-4-1)
										(m-3-2) edge[barred] node[below] {$TJ$} (m-3-3)
														edge node[left] {$a$} (m-4-2)
										(m-3-3) edge node[right] {$b$} (m-4-3)
										(m-4-2) edge[barred] node[below] {$J$} (m-4-3);
				\path				(m-1-2) edge[eq] (m-1-3)
										(m-2-1) edge[eq] (m-2-2)
										(m-4-1) edge[eq] (m-4-2);
				\path[transform canvas={shift={($(m-3-2)!0.5!(m-3-3)$)}}]	(m-2-1) edge[cell] node[right] {$\bar a$} (m-3-1);
				\path[transform canvas={shift={($(m-2-3)!0.5!(m-3-2)$)}}]	(m-1-2) edge[cell] node[right] {$T^2\pi$} (m-2-2)
										(m-2-2) edge[cell, transform canvas={yshift=-3pt}] node[right] {$\mu_J$} (m-3-2)
										(m-3-2) edge[cell, transform canvas={yshift=-2pt}] node[right] {$\bar J$} (m-4-2);
			\end{tikzpicture} = \begin{tikzpicture}[textbaseline]
				\matrix(m)[tab]
				{ T^2\gen J & T^2\gen J & \phantom{T^2\gen J} \\
					T^2A & T^2B & T^2B \\
					TA & TB & TB \\
					A & B & B \\ };
				\path[map]	(m-1-1) edge node[left] {$T^2\pi_A$} (m-2-1)
										(m-1-2) edge node[right] {$T^2\pi_B$} (m-2-2)
										(m-2-1) edge[barred] node[below] {$T^2J$} (m-2-2)
														edge node[left] {$Ta$} (m-3-1)
										(m-2-2) edge node[right] {$Tb$} (m-3-2)
										(m-2-3) edge node[right] {$\mu_B$} (m-3-3)
										(m-3-1) edge[barred] node[below] {$TJ$} (m-3-2)
														edge node[left] {$a$} (m-4-1)
										(m-3-2) edge node[right] {$b$} (m-4-2)
										(m-3-3) edge node[right] {$b$} (m-4-3)
										(m-4-1) edge[barred] node[below] {$J$} (m-4-2);
				\path				(m-1-1) edge[eq] (m-1-2)
										(m-2-2) edge[eq] (m-2-3)
										(m-4-2) edge[eq] (m-4-3);
				\path[transform canvas={shift={($(m-2-3)!0.5!(m-3-2)$)}}]	(m-1-1) edge[cell] node[right] {$T^2\pi$} (m-2-1)
										(m-2-1) edge[cell, transform canvas={yshift=-3pt}] node[right] {$T\bar J$} (m-3-1)
										(m-3-1) edge[cell, transform canvas={yshift=-2pt}] node[right] {$\bar J$} (m-4-1);
				\path[transform canvas={shift={($(m-3-2)!0.5!(m-3-3)$)}}]	(m-2-2) edge[cell] node[right] {$\bar b$} (m-3-2);
			\end{tikzpicture}
		\end{displaymath}
		To prove the coherence axiom for $\bar w$ (see \defref{definition: lax algebra}) we use the lemma above as follows. It is clear from the definitions of $w$ and $\bar w$ that, by postcomposing either side of the coherence axiom for $\bar w$ with $1_{\pi_A}$, we obtain the corresponding side of the coherence axiom for $\bar a$ precomposed with $1_{T^3\pi_A}$. Likewise, postcomposing them with $1_{\pi_B}$ we obtain the sides of the coherence axiom for $\bar b$, precomposed with $1_{T^3\pi_B}$. Since the coherence axioms for $\bar a$ and $\bar b$ hold, and because $1_{\pi_A}$ and $1_{\pi_B}$ are jointly monic, the coherence axiom for $\bar w$ follows.
		
		The vertical unitor cell $\cell{\hat w}{\id_{\gen J}}{w \of \eta_{\gen J}}$ is obtained in a similar way, by applying the $2$-dimension universal propery of $\pi$ to the unit axiom for $\bar J$ precomposed with $\pi$; it is unique such that $1_{\pi_A} \of \hat w = \hat a \of 1_{\pi_A}$ and $1_{\pi_B} \of \hat w = \hat b \of 1_{\pi_B}$. The coherence axioms for $\hat w$ follow from those for $\hat a$ and $\hat b$, as well as the joint monicity of $1_{\pi_A}$ and $1_{\pi_B}$, analogous to how the coherence axiom for $\bar w$ followed from that for $\bar a$ and $\bar b$ above. This completes the definition of the lax $T$-algebra $\gen J = (\gen J, w, \bar w, \hat w)$.
		
		\emph{Uniqueness of the algebra structure on $\gen J$.} Summarising the above, the algebra structure on $\gen J$ is determined as follows: the structure map $w$ is uniquely determined by the identity \eqref{structure map for tabulation}, while the associator $\bar w$ is uniquely determined by the identities $1_{\pi_A} \of \bar w = \bar a \of 1_{T^2\pi_A}$ and $1_{\pi_B} \of \bar w = \bar b \of 1_{T^2\pi_B}$, and the unitor $\hat w$ by the identities $1_{\pi_A} \of \hat w = \hat a \of 1_{\pi_A}$ and $1_{\pi_B} \of \hat w = \hat b \of 1_{\pi_B}$. It follows from \eqref{structure map for tabulation} that $\pi$ becomes a $T$-cell if we take the structure cells of the projections $\pi_A$ and $\pi_B$ to be trivial, while the identities determining $\bar w$ and $\hat w$ are precisely the coherence axioms for these trivial structure cells. Conversely, by the uniqueness of $w$, $\bar w$ and $\hat w$, any lift of $\pi$ to a $T$-cell with strict vertical $T$-morphisms must have $(w, \bar w, \hat w)$ as the algebraic structure on $\gen J$.
		
		\emph{Universal properties of $\pi$ as a $T$-cell.} We will show that $\pi$ forms a tabulation of $J$ in $\Alg lT$ and in $\Alg{ps}T$; proving that it forms one in $\Alg cT$ is similar to the proof for $\Alg lT$. For the $1$-dimensional universal property consider a $T$-cell $\phi$ as on the left below, where $X = (X, x, \bar x, \hat x)$ is a lax $T$-algebra and where $\phi_A$ and $\phi_B$ are lax $T$-morphisms. Since $\pi$ is a tabulation in $\K$ the cell $\phi$ factors uniquely through $\pi$ as a vertical morphism $\phi'$ as shown; we have to show that $\phi'$ can be lifted to a lax $T$-morphism.
		\begin{displaymath}
			\begin{tikzpicture}[textbaseline]
  			\matrix(m)[math175em]{X & X \\ A & B \\};
  			\path[map]  (m-1-1) edge node[left] {$\phi_A$} (m-2-1)
										(m-1-2) edge node[right] {$\phi_B$} (m-2-2)
										(m-2-1) edge[barred] node[below] {$J$} (m-2-2);
				\path				(m-1-1) edge[eq] (m-1-2);
				\path[transform canvas={shift={($(m-1-2)!(0,0)!(m-2-2)$)}}] (m-1-1) edge[cell] node[right] {$\phi$} (m-2-1);
			\end{tikzpicture} = \begin{tikzpicture}[textbaseline]
  			\matrix(m)[tab]{X & X \\ \tab J & \tab J \\ A & B \\};
  			\path[map]  (m-1-1) edge node[left] {$\phi'$} (m-2-1)
  									(m-1-2) edge node[right] {$\phi'$} (m-2-2)
  									(m-2-1) edge node[left] {$\pi_A$} (m-3-1)
										(m-2-2) edge node[right] {$\pi_B$} (m-3-2)
										(m-3-1) edge[barred] node[below] {$J$} (m-3-2);
				\path				(m-1-1) edge[eq] (m-1-2)
										(m-2-1) edge[eq] (m-2-2);
				\path[transform canvas={shift=(m-2-2)}] (m-2-1) edge[cell] node[right] {$\pi$} (m-3-1);
			\end{tikzpicture} \qquad\qquad \begin{tikzpicture}[textbaseline, x=2em, y=1.75em, font=\scriptsize]
				\draw (1,1) -- (2,1) -- (2,0) -- (0,0) -- (0,2) -- (1,2) -- (1,0)
							(0.5,1) node {$\bar{\phi_A}$};
				\draw[shift={(0.5,0.5)}]
							(1,0) node {$\phi$};
			\end{tikzpicture} \mspace{20mu} = \mspace{20mu} \begin{tikzpicture}[textbaseline, x=2em, y=1.75em, font=\scriptsize]
				\draw	(1,0) -- (1,2) -- (2,2) -- (2,0) -- (0,0) -- (0,2) -- (1,2)
							(0,1) -- (1,1)
							(1.5,1) node {$\bar{\phi_B}$};
				\draw[shift={(0.5,0.5)}]
							(0,0) node {$\bar J$}
							(0,1) node {$T\phi$};
			\end{tikzpicture}
		\end{displaymath}
		To obtain a structure cell for $\phi'$ we consider the $T$-cell axiom for $\phi$, as on the right above. Notice that $\bar J \of T\phi = \pi \of 1_w \of 1_{T\phi'}$ and $\phi \of 1_x = \pi \of 1_{\phi'} \of 1_x$ here, by the identity on the left above and \eqref{structure map for tabulation}, so that by invoking the $2$-dimensional universal property of $\pi$ we obtain a unique vertical cell $\cell{\bar{\phi'}}{w \of T\phi'}{\phi' \of x}$ such that $1_{\pi_A} \of \bar{\phi'} = \bar{\phi_A}$ and $1_{\pi_B} \of \bar{\phi'} = \bar{\phi_B}$. From these identities it follows that the coherence axioms for $\bar{\phi_A}$ and $\bar{\phi_B}$ coincide with those of $\bar{\phi'}$ postcomposed with $1_{\pi_A}$ and $1_{\pi_B}$ respectively. Since the latter are jointly monic the coherence axioms for $\bar{\phi'}$ follow, and we conclude that $(\phi', \bar{\phi'})$ forms a unique lift of $\phi'$; this completes the proof of $\pi$ satisfying the $1$-dimensional universal property in $\Alg lT$.
		
		To see that the same property is satisfied by $\pi$ in $\Alg{ps}T$ assume that, in the above, $\phi_A$ and $\phi_B$ are pseudo $T$-morphisms. Then, using the $2$-dimensional universal property of $\pi$ again, we can obtain a unique vertical cell $\cell\chi{\phi' \of x}{w \of T\phi'}$ such that $1_{\pi_A} \of \chi = \inv{\bar{\phi_A}}$ and $1_{\pi_B} \of \chi = \inv{\bar{\phi_B}}$. Using the joint monicity of $\pi_A$ and $\pi_B$ again, it follows that $\chi$ forms the inverse of $\bar{\phi'}$ and we conclude that the lift $(\phi', \bar{\phi'})$ of $\phi'$ is a pseudo $T$-morphism.
		 
		The $2$-dimensional universal property for $\pi$, as a $T$-cell, follows easily from the corresponding property for $\pi$ as a cell in $\K$. Indeed, consider an identity $\xi_A \hc \psi = \phi \hc \xi_B$ of $T$-cells, as in \defref{definition: tabulation}. By the $2$-dimensional universal property of $\pi$ there exists a unique cell $\xi'$ in $\K$ such that $1_{\pi_A} \of \xi' = \xi_A$ and $1_{\pi_B} \of \xi' = \xi_B$. It follows that the $T$-cell axiom for $\xi'$, postcomposed with $1_{\pi_A}$ and $1_{\pi_B}$, coincides with the $T$-cell axioms for $\xi_A$ and $\xi_B$ so that, by the joint monicity of $1_{\pi_A}$ and $1_{\pi_B}$, the $T$-cell axiom for $\xi'$ follows. We conclude that $\pi$ as a $T$-cell forms the tabulation of the horizontal $T$-morphism $J$, both in $\Alg lT$ and $\Alg{ps}T$.
		 
		\emph{The $T$-cell $\pi$ is opcartesian as soon as $\pi$ is opcartesian in $\K$.} If $\pi$ is an opcartesian cell in $\K$ then so is $T\pi$, by \propref{preserving of cartesian and opcartesian cells}. To prove that this ensures that $\pi$ is opcartesian as a $T$-cell we consider a $T$-cell $\phi$ as on the left below; in $\K$ it factors through the opcartesian cell $\pi$ as a cell $\phi'$, as shown.
		\begin{displaymath}
			\begin{tikzpicture}[textbaseline]
    		\matrix(m)[tab]{\gen J & \gen J \\ A & B \\ C & D \\};
    		\path[map]  (m-1-1) edge node[left] {$\pi_A$} (m-2-1)
      	 		        (m-1-2) edge node[right] {$\pi_B$} (m-2-2)
      	 		        (m-2-1) edge node[left] {$f$} (m-3-1)
      	 		        (m-2-2) edge node[right] {$g$} (m-3-2)
      	 		        (m-3-1) edge[barred] node[below] {$K$} (m-3-2);
      	\path				(m-1-1) edge[eq] (m-1-2);
    		\path[transform canvas={shift={($(m-2-1)!0.5!(m-1-1)$)}}] (m-2-2) edge[cell] node[right] {$\phi$} (m-3-2);
  		\end{tikzpicture} = \begin{tikzpicture}[textbaseline]
    		\matrix(m)[tab]{\gen J & \gen J \\ A & B \\ C & D \\};
    		\path[map]  (m-1-1) edge node[left] {$\pi_A$} (m-2-1)
      	     		    (m-1-2) edge node[right] {$\pi_B$} (m-2-2)
      	     		    (m-2-1) edge[barred] node[below] {$J$} (m-2-2)
      	     		            edge node[left] {$f$} (m-3-1)
      	     		    (m-2-2) edge node[right] {$g$} (m-3-2)
      	     		    (m-3-1) edge[barred] node[below] {$K$} (m-3-2);
      	\path				(m-1-1) edge[eq] (m-1-2);
    		\path[transform canvas={shift=(m-2-1))}]
      	 		        (m-1-2) edge[cell] node[right] {$\pi$} (m-2-2)
      	 		        (m-2-2) edge[transform canvas={yshift=-0.3em}, cell] node[right] {$\phi'$} (m-3-2);
  		\end{tikzpicture}
  	\end{displaymath}
  	To show that $\phi'$ satisfies the $T$-cell axiom consider the following equation of composites in $\K$, where the identities are given by the factorisation above, the $T$-cell axiom for $\phi$, again the factorisation above, and the $T$-cell axiom for $\pi$.
  	\begin{displaymath}
  		\begin{tikzpicture}[textbaseline, x=2em, y=1.75em, font=\scriptsize]
  			\draw	(1,0) -- (1,3) -- (0,3) -- (0,0) -- (2,0) -- (2,2) -- (0,2)
  						(0,1) -- (1,1)
  						(1.5,1) node {$\bar g$};
  			\draw[shift={(0.5,0.5)}]
  						(0,0) node {$\bar K$}
  						(0,1) node {$T\phi'$}
  						(0,2) node {$T\pi$};
  		\end{tikzpicture} \mspace{10mu} = \mspace{10mu} \begin{tikzpicture}[textbaseline, x=2em, y=1.75em, font=\scriptsize]
  			\draw (1,0) -- (1,3) -- (0,3) -- (0,0) -- (2,0) -- (2,2) -- (1,2)
  						(0,1) -- (1,1)
  						(0.5,0.5) node {$\bar K$}
  						(0.5,2) node {$T\phi$}
  						(1.5,1) node {$\bar g$};
  		\end{tikzpicture} \mspace{10mu} = \mspace{10mu} \begin{tikzpicture}[textbaseline, x=2em, y=1.75em, font=\scriptsize]
  			\draw (1,2) -- (0,2) -- (0,0) -- (1,0) -- (1,3) -- (2,3) -- (2,0) -- (3,0) -- (3,3) -- (4,3) -- (4,1) -- (3,1)
  						(1,1) -- (2,1)
  						(2,2) -- (3,2)
  						(0.5,1) node {$\bar f$}
  						(2.5,1) node {$\phi$};
  		\end{tikzpicture} \mspace{10mu} = \mspace{10mu} \begin{tikzpicture}[textbaseline, x=2em, y=1.75em, font=\scriptsize]
  			\draw	(1,2) -- (0,2) -- (0,0) -- (1,0) -- (1,3) -- (2,3) -- (2,0) -- (3,0) -- (3,3) -- (4,3) -- (4,1) -- (1,1)
  						(2,2) -- (3,2)
  						(0.5,1) node {$\bar f$}
  						(2.5,0.5) node {$\phi'$}
  						(2.5,1.5) node {$\pi$};
  		\end{tikzpicture} \mspace{10mu} = \mspace{10mu} \begin{tikzpicture}[textbaseline, x=2em, y=1.75em, font=\scriptsize]
  			\draw (1,0) -- (1,3) -- (2,3) -- (2,0) -- (0,0) -- (0,2) -- (2,2)
  						(1,1) -- (2,1)
  						(0.5,1) node {$\bar f$}
  						(1.5,0.5) node {$\phi'$}
  						(1.5,1.5) node {$\bar J$}
  						(1.5,2.5) node {$T\pi$};
  		\end{tikzpicture}
  	\end{displaymath}
  	Since the two sides of the equation above are the sides of the $T$-cell axiom for $\phi'$ precomposed with $T\pi$, the axiom itself follows because $T\pi$ is opcartesian, and factorisations through opcartesian cells are unique. This completes the proof.
  \end{proof}
	
	Combining \propref{lifting restrictions} and \propref{lifting tabulations} we conclude that if $T$ is a normal oplax double monad on an equipment $\K$ that has opcartesian tabulations, then $\Alg{ps}T$ is again an equipment that has opcartesian tabulations. It follows that pointwise Kan extensions in $\Alg{ps}T$ can be defined in terms of Kan extensions (by \thmref{pointwise Kan extensions in terms of Kan extensions}) so that pointwise right Kan extensions in the $2$-category $\Alg{ps}{V(T)}$ correspond to pointwise right Kan extensions along conjoints in $\Alg{ps}T$ (by \propref{pointwise right Kan extensions along conjoints as pointwise right Kan extensions in V(K)}).
	
	Even though in the cases $\textup w = \textup c$ and $\textup w = \textup l$ the double categories $\Alg wT$ fail to be equipments in general, pointwise Kan extensions in $\Alg wT$ can still be defined in terms of Kan extensions by the following variant of \thmref{pointwise Kan extensions in terms of Kan extensions}. Remember that, for a pseudo $T$-morphism $\map fCA$ and a horizontal $T$-morphism $\hmap JAB$, the restriction $J(f, \id)$ exists in each of the double categories $\Alg wT$, by \propref{lifting restrictions}.
	
	\begin{theorem} \label{algebraic pointwise Kan extensions in terms of Kan extensions}
		Let $T$ be a normal oplax double monad on an equipment $\K$ and let \mbox{$\textup w \in \brcs{\textup c, \textup l, \textup{ps}}$}. Consider a cell $\eps$ in $\Alg wT$ as on the left below.
		\begin{displaymath}
			\begin{tikzpicture}[textbaseline]
  			\matrix(m)[math175em]{A & B \\ M & M \\};
  			\path[map]  (m-1-1) edge[barred] node[above] {$J$} (m-1-2)
														edge node[left] {$r$} (m-2-1)
										(m-1-2) edge node[right] {$d$} (m-2-2);
				\path				(m-2-1) edge[eq] (m-2-2);
				\path[transform canvas={shift={($(m-1-2)!(0,0)!(m-2-2)$)}}] (m-1-1) edge[cell] node[right] {$\eps$} (m-2-1);
			\end{tikzpicture} \qquad\qquad\qquad\qquad\qquad \begin{tikzpicture}[textbaseline]
				\matrix(m)[math175em]{C & B \\ A & B \\ M & M \\};
				\path[map]	(m-1-1) edge[barred] node[above] {$J(f, \id)$} (m-1-2)
														edge node[left] {$f$} (m-2-1)
										(m-2-1) edge[barred] node[below] {$J$} (m-2-2)
														edge node[left] {$r$} (m-3-1)
										(m-2-2) edge node[right] {$d$} (m-3-2);
				\path				(m-1-2) edge[eq] (m-2-2)
										(m-3-1) edge[eq] (m-3-2);
				\path[transform canvas={shift=(m-2-1), yshift=-2pt}]	(m-2-2) edge[cell] node[right] {$\eps$} (m-3-2);
				\draw				($(m-1-1)!0.5!(m-2-2)$)	node {\textup{cart}};
			\end{tikzpicture} 
		\end{displaymath}
		For the following conditions the implications (a) $\Leftrightarrow$ (b) $\Rightarrow$ (c) hold, while (c) $\Rightarrow$ (a) holds as soon as $\K$ has opcartesian tabulations.
		\begin{enumerate}[label=(\alph*)]
			\item The cell $\eps$ defines $r$ as the pointwise right Kan extension of $d$ along $J$;
			\item for all pseudo $T$-morphisms $\map fCA$ the composite on the right above defines $r \of f$ as the pointwise right Kan extension of $d$ along $J(f, \id)$;
			\item for all strict $T$-morphisms $\map fCA$ the composite on the right above defines $r \of f$ as the right Kan extension of $d$ along $J(f, \id)$.
		\end{enumerate}
	\end{theorem}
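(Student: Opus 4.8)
The statement is the $\Alg wT$-analogue of \thmref{pointwise Kan extensions in terms of Kan extensions}, and my plan is to run the proof of that theorem inside the double category $\Alg wT$ rather than in an equipment. The reason for restricting to \emph{pseudo} $T$-morphisms in (b) and to \emph{strict} $T$-morphisms in (c) is precisely that these are the classes of $f$ for which the structures used in the base proof survive in $\Alg wT$: by \propref{lifting restrictions} the restriction $J(f,\id)$ exists in each of $\Alg cT$, $\Alg lT$ and $\Alg{ps}T$ when $f$ is pseudo, by the remark following that proposition the companion $f_*$ then also exists, and by \propref{lifting tabulations} the opcartesian tabulations lifted from $\K$ have \emph{strict} projections. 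Thus the work is to check that every step of the argument for \thmref{pointwise Kan extensions in terms of Kan extensions} only invokes restrictions, companions and tabulations that are available in $\Alg wT$, and that all the factorisations produced are again $T$-cells.

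The three formal implications are quick. For (b)$\Rightarrow$(a) I take $f = \id_A$, which is strict and hence pseudo; since $J(\id,\id) \iso J$ through an invertible cartesian cell, condition (b) for $\id_A$ is exactly (a). For (b)$\Rightarrow$(c) I note that a strict $T$-morphism is in particular pseudo, so (b) already asserts its conclusion for every strict $f$, and a pointwise right Kan extension is a fortiori an ordinary one. For (a)$\Rightarrow$(b) I reuse the argument establishing (a)$\Leftrightarrow$(b) in \thmref{pointwise Kan extensions in terms of Kan extensions}: it rests on the isomorphism $J(f,\id) \iso f_* \hc J$ and the pointwise universal property of $\eps$ from \defref{definition: right Kan extension}, and the only point needing verification is that the companion $f_*$ occurring there exists in $\Alg wT$, which holds for pseudo $f$ by the remark after \propref{lifting restrictions}. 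Equivalently, since $f_*$ exists, \propref{pointwise exact cells}(b) applies in $\Alg wT$ and shows the cartesian $T$-cell defining $J(f,\id)$ to be pointwise right exact.

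The substance is (c)$\Rightarrow$(a), which requires $\K$, and hence by \propref{lifting tabulations} also $\Alg wT$, to have opcartesian tabulations. First, condition (c) at the strict morphism $f = \id_A$ already exhibits $\eps$ as an \emph{ordinary} right Kan extension, so it remains to verify the extra factorisations of the pointwise property. Given a general $T$-cell $\phi$ of the pointwise form of \defref{definition: right Kan extension}, with a horizontal $T$-morphism $\hmap HCA$ precomposed, vertical source $\map sCM$ and target $\map dBM$, I would form the opcartesian tabulation $\tab H$ in $\Alg wT$, whose projections $\map{\pi_C}{\tab H}C$ and $\map{\pi_A}{\tab H}A$ are strict $T$-morphisms and whose defining $T$-cell $\pi$ is opcartesian. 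Precomposing $\phi$ with $\pi \hc \textup{cart}$, where $\textup{cart}$ defines $J(\pi_A, \id)$ (legitimate since $\pi_A$ is strict, by \propref{lifting restrictions}), produces a $T$-cell $J(\pi_A, \id) \Rar 1_M$ with vertical source $s \of \pi_C$ and target $d$. Condition (c) applied to the strict $T$-morphism $\pi_A$ factors this uniquely through $\eps \of \textup{cart}$ as a vertical $T$-cell $\psi\colon s \of \pi_C \Rar r \of \pi_A$; factoring $\psi$ through the opcartesian cell $\pi$ yields a unique $T$-cell $\cell{\phi'}H{1_M}$ with source $s$ and target $r$; and reassembling gives the required factorisation $\phi = \phi' \hc \eps$.

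The hard part will be the bookkeeping of this last argument: checking that the reconstructed $\phi'$ really does return $\phi$ upon horizontal composition with $\eps$, which I would verify by precomposing both sides with $\pi \hc \textup{cart}$ and invoking the two-dimensional universal property of the tabulation for uniqueness, exactly as in the base proof. What makes the statement genuinely more than a transcription of \thmref{pointwise Kan extensions in terms of Kan extensions} is that each intermediate factorisation must be shown to be a $T$-cell and not merely a cell of $\K$; this, however, is automatic, since the lifted cartesian cell of \propref{lifting restrictions} and the lifted tabulation cell of \propref{lifting tabulations} are themselves cartesian, respectively opcartesian, \emph{in $\Alg wT$}, so their universal factorisations land among $T$-cells, while condition (c) is by hypothesis a statement about Kan extensions in $\Alg wT$. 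The only genuinely new input beyond the base theorem is therefore the alignment, supplied by the previous section, between the admissible weakness of $f$---pseudo, so that companions exist, and strict, so that it matches the tabulation projections---and the liftability of restrictions and tabulations.
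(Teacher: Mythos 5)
Your proposal takes essentially the same route as the paper: the paper's own proof consists precisely of the observation that the argument for \thmref{pointwise Kan extensions in terms of Kan extensions} (Theorem 5.11 of \cite{Koudenburg14a}) applies to $\Alg wT$ without adjustment, because every companion and conjoint it uses is one of a pseudo $T$-morphism (available by \propref{lifting restrictions}) and because the projections of the lifted opcartesian tabulations are strict $T$-morphisms (available by \propref{lifting tabulations}). Your more detailed walk-through of the individual implications is a faithful expansion of that same one-paragraph argument.
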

	\begin{proof}
		The proof given in \cite{Koudenburg14a} for its Theorem 5.11, which has been restated in the present paper as \thmref{pointwise Kan extensions in terms of Kan extensions}, applies to the double categories $\Alg wT$ without needing any adjustment. This is because all conjoints and companions used in that proof exist in $\Alg wT$, as all of them are conjoints or companions of pseudo $T$\ndash morphisms, and because, in the proof of the implication (c) $\Rightarrow$ (a), the projections of the lifted tabulations in $\Alg wT$ are strict $T$-morphisms; see \propref{lifting tabulations}.
	\end{proof}
	
	Just like the proof of \thmref{pointwise Kan extensions in terms of Kan extensions} applies to the double category $\Alg lT$, so does the proof of \propref{pointwise right Kan extensions along conjoints as pointwise right Kan extensions in V(K)} (which is Proposition 5.12 of \cite{Koudenburg14a}). We thus obtain the following result, which describes pointwise right Kan extensions along conjoints in $\Alg lT$ in terms of right Kan extensions in the $2$-category $\Alg l{V(T)}$, in a sense  that is analogous to Street's definition of pointwise Kan extension given in \cite{Street74}.
	
	In the diagram on the right below $f \slash j$ denotes the `comma object' of \mbox{$\map fCA$} and $\map jBA$ in the $2$-category $\Alg l{V(T)}$, see e.g.\ Definition 5.2 of \cite{Koudenburg14a}. In terms of the double category $\Alg lT$, the comma object $f \slash j$ can be defined as the tabulation of the restriction $A(f, j)$; in particular $f \slash j$ exists whenever $f$ is a pseudo $T$-morphism and $\K$ is an equipment that has tabulations, by \propref{lifting restrictions} and \propref{lifting tabulations}.
	\begin{proposition}
		Let $T$ be a normal oplax double monad on an equipment $\K$ that has opcartesian tabulations. Consider a vertical $T$-cell $\eps$ between lax $T$-morphisms, as on the left below, as well as its factorisation through the opcartesian cell defining $j^*$, as shown.
		\begin{displaymath}
			\begin{tikzpicture}[textbaseline]
    		\matrix(m)[math175em]{B & B \\ A & \phantom A \\ M & M \\};
    		\path[map]  (m-1-1) edge node[left] {$j$} (m-2-1)
       			        (m-1-2) edge node[right] {$d$} (m-3-2)
       			        (m-2-1) edge node[left] {$r$} (m-3-1);
      	\path				(m-1-1) edge[eq] (m-1-2)
      							(m-3-1) edge[eq] (m-3-2);
    		\path[transform canvas={shift={($(m-2-1)!0.5!(m-1-1)$)}}] (m-2-2) edge[cell] node[right] {$\eps$} (m-3-2);
  		\end{tikzpicture} = \begin{tikzpicture}[textbaseline]
				\matrix(m)[math175em]{B & B \\ A & B \\ M & M \\};
				\path[map]	(m-1-1) edge node[left] {$j$} (m-2-1)
										(m-2-1) edge[barred] node[below] {$j^*$} (m-2-2)
														edge node[left] {$r$} (m-3-1)
										(m-2-2) edge node[right] {$d$} (m-3-2);
				\path				(m-1-1) edge[eq] (m-1-2)
										(m-1-2) edge[eq] (m-2-2)
										(m-3-1) edge[eq] (m-3-2);
				\path[transform canvas={shift=(m-2-1), yshift=-2pt}]	(m-2-2) edge[cell] node[right] {$\eps'$} (m-3-2);
				\draw				($(m-1-1)!0.5!(m-2-2)$)	node {\textup{opcart}};
			\end{tikzpicture} \qquad\qquad\qquad \begin{tikzpicture}[textbaseline]
				\matrix(m)[math2em, column sep=0.25em, row sep=2.5em]{f \slash j & & C \\ B & & A \\[-0.6em] \phantom M & M & \phantom M \\};
				\path[map]	(m-1-1) edge node[above] {$\pi_C$} (m-1-3)
														edge node[left] {$\pi_B$} (m-2-1)
										(m-1-3) edge node[right] {$f$} (m-2-3)
										(m-2-1) edge node[above] {$j$} (m-2-3)
														edge node[below left] {$d$} (m-3-2)
										(m-2-3) edge node[below right] {$r$} (m-3-2);
				\path				(m-1-3) edge[cell, shorten >=13pt, shorten <=13pt] node[above left] {$\pi$} (m-2-1)
										(m-2-3) edge[cell, shorten >=9pt, shorten <=9pt] node[below right, xshift=-2pt] {$\eps$} ($(m-2-1)!0.5!(m-3-2)$);
			\end{tikzpicture}
		\end{displaymath}
		The factorisation $\eps'$ defines $r$ as the pointwise right Kan extension of $d$ along $j^*$ in $\Alg lT$ precisely if, for each strict $T$-morphism $\map fCA$, the composite on the right above defines $r \of f$ as the right Kan extension of $d \of \pi_B$ along $\pi_C$ in $\Alg l{V(T)}$.
	\end{proposition}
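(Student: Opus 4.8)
The plan is to run the argument of \propref{pointwise right Kan extensions along conjoints as pointwise right Kan extensions in V(K)} (Proposition 5.12 of \cite{Koudenburg14a}) inside $\Alg lT$, replacing \thmref{pointwise Kan extensions in terms of Kan extensions} by its algebraic variant \thmref{algebraic pointwise Kan extensions in terms of Kan extensions}. First I would record that $\Alg lT$ carries all the structure this argument needs: by \propref{lifting restrictions} the conjoint $j^*$ of the lax $T$\ndash morphism $j$ exists, as does the restriction $j^*(f, \id)$ for every pseudo $T$-morphism $f$, and by \propref{lifting tabulations} every horizontal $T$-morphism admits an opcartesian tabulation whose projections are strict $T$-morphisms. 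Since a conjoint is a restriction, $j^* = A(\id, j)$, and restrictions are pseudofunctorial, I identify $j^*(f, \id) \iso A(\id, j)(f, \id) \iso A(f, j)$ for each such $f$.

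Applying the equivalence (a) $\Leftrightarrow$ (c) of \thmref{algebraic pointwise Kan extensions in terms of Kan extensions} to the cell $\eps'$ along the conjoint $j^*$ then shows that $\eps'$ defines $r$ as the pointwise right Kan extension of $d$ along $j^*$ precisely if, for every strict $T$-morphism $\map fCA$, the vertical composite of the cartesian cell defining $j^*(f, \id) \iso A(f, j)$ with $\eps'$ defines $r \of f$ as the ordinary right Kan extension of $d$ along $A(f, j)$ in $\Alg lT$. It remains, for each fixed strict $f$, to match this last condition with the condition that the composite on the right of the statement defines $r \of f$ as the right Kan extension of $d \of \pi_B$ along $\pi_C$ in $\Alg l{V(T)} = V(\Alg lT)$.

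This final translation is the heart of the proof and is precisely the step that, in \cite{Koudenburg14a}, links condition (c) of \thmref{pointwise Kan extensions in terms of Kan extensions} to Street's pointwise definition. The comma object $f \slash j$ is the opcartesian tabulation $\tab{A(f, j)}$, and its defining opcartesian cell $\pi$ exhibits $A(f, j)$ as the extension of the unit $1_{f \slash j}$ along the strict $T$-morphisms $\pi_C$ and $\pi_B$; via \eqref{cartesian and opcartesian cells in terms of companions and conjoints} this gives $A(f, j) \iso (\pi_C)^* \hc (\pi_B)_*$. Because right Kan extension along the companion $(\pi_B)_*$ is absorbed, through the defining cartesian cell of $(\pi_B)_*$, into precomposition with $\pi_B$, a right Kan extension of $d$ along $A(f, j)$ is the same datum as a right Kan extension of $d \of \pi_B$ along the conjoint $(\pi_C)^*$; and \propref{right Kan extensions along conjoints as right Kan extensions in V(K)}, applied to the strict $T$-morphism $\pi_C$, rewrites the latter as a right Kan extension of $d \of \pi_B$ along $\pi_C$ in $\Alg l{V(T)}$. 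Finally, composing $\pi$ with the cartesian cell defining $A(f, j)$ produces the comma $2$-cell $f \of \pi_C \Rar j \of \pi_B$, and chasing the cells through these identifications shows that the Kan-extension cell along $A(f, j)$ corresponds, on the nose, to the whiskered composite displayed on the right of the statement.

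The main obstacle is this last bookkeeping: tracking a single vertical cell through the successive universal factorisations---through the opcartesian cell for $j^*$, the cartesian cell for $A(f, j) \iso j^*(f, \id)$, the cartesian cell for the companion $(\pi_B)_*$, and the opcartesian tabulation $\pi$---so as to identify the defining cell with the comma composite $(\eps \of \pi_B) \of (r \of \pi)$. The point that makes the argument of Proposition 5.12 transfer without change is that every companion and conjoint invoked is that of a strict (hence pseudo) $T$-morphism, and each lifted tabulation has strict projections, so all the auxiliary restrictions and extensions live in the part of $\Alg lT$ that behaves like an equipment, by \propref{lifting restrictions} and \propref{lifting tabulations}.
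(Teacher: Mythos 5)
Your proposal is correct and takes essentially the same route as the paper: the paper's own ``proof'' consists of the remark that the proof of Proposition 5.12 of \cite{Koudenburg14a} applies to $\Alg lT$ without change, precisely because every companion and conjoint it uses is that of a pseudo (indeed strict) $T$-morphism and the lifted tabulations have strict projections, which is exactly the justification you spell out. Your more detailed tracing of the argument---via \thmref{algebraic pointwise Kan extensions in terms of Kan extensions}, the identification $j^*(f,\id) \iso A(f,j) \iso (\pi_C)^* \hc (\pi_B)_*$, and the absorption of the companion $(\pi_B)_*$ into precomposition---is a faithful unpacking of that transferred proof rather than a different argument.
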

	\begin{example}
		The `free strict monoidal category'-monad (\exref{example: free strict monoidal enriched category monad}) on the equipment $\Prof$ of (unenriched) profunctors satisfies the hypotheses of the proposition above. Thus a monoidal transformation $\cell\eps{r \of j}d$ of lax monoidal functors, as on the left above, defines $r$ as the pointwise Kan extension of $d$ along $j$ if, for every strict monoidal functor $\map fCA$, the composite on the right above defines $r \of f$ as the right Kan extension of $d \of \pi_B$ along $\pi_C$. In this case $f \slash j$ is the usual `comma category' that has triples $(z, u, y)$ as objects, where $z \in C$, $y \in B$ and $\map u{fz}{jy}$ in $A$. Its monoidal structure, that is induced by that of $C$ and $B$, maps a sequence $(\ul z, \ul u, \ul y)$ to the triple consisting of the tensor products $(z_1 \tens \dotsb \tens z_n)$ and $(y_1 \tens \dotsb \tens y_n)$ as well as the map
		\begin{displaymath}
			f(z_1 \tens \dotsb \tens z_n) = (fz_1 \tens \dotsb \tens fz_n) \xrar{(u_1 \tens \dotsb \tens u_n)} (jy_1 \tens \dotsb \tens jy_n) \xrar{j_\tens} j(y_1 \tens \dotsb \tens y_n).
		\end{displaymath} 
	\end{example}
	
	\section{Lifting algebraic Kan extensions}
  In this section we prove our main result, which gives conditions allowing Kan extensions to be lifted along the forgetful double functors $\Alg wT \to \K$. As an application we recover and generalise a result of Getzler, given in \cite{Getzler09}, on the lifting of symmetric monoidal pointwise left Kan extensions.
  
  We start by making the idea of `lifting Kan extensions' precise.
  \begin{definition}
  	Consider a normal oplax double functor $\map F\K\L$ as well as morphisms $\hmap JAB$ and $\map dBM$ in $\K$. We say that $F$ \emph{lifts} the (pointwise) right Kan extension of $d$ along $J$ if, given any cell $\eps$ in $\L$, as on the left below, that defines $r$ as the (pointwise) right Kan extension of $Fd$ along $FJ$, there exists a unique cell $\eps'$ in $\K$ as on the right such that $F\eps' = \eps$ and, moreover, $\eps'$ defines $r'$ as the (pointwise) right Kan extension of $d$ along $J$.
  	\begin{displaymath}
  		\begin{tikzpicture}[textbaseline]
  			\matrix(m)[math175em]{FA & FB \\ FM & FM \\};
  			\path[map]  (m-1-1) edge[barred] node[above] {$FJ$} (m-1-2)
														edge node[left] {$r$} (m-2-1)
										(m-1-2) edge node[right] {$Fd$} (m-2-2);
				\path				(m-2-1) edge[eq] (m-2-2);
				\path[transform canvas={shift={($(m-1-2)!(0,0)!(m-2-2)$)}}] (m-1-1) edge[cell] node[right] {$\eps$} (m-2-1);
			\end{tikzpicture} \qquad\qquad\qquad\qquad \begin{tikzpicture}[textbaseline]
  			\matrix(m)[math175em]{A & B \\ M & M \\};
  			\path[map]  (m-1-1) edge[barred] node[above] {$J$} (m-1-2)
														edge node[left] {$r'$} (m-2-1)
										(m-1-2) edge node[right] {$d$} (m-2-2);
				\path				(m-2-1) edge[eq] (m-2-2);
				\path[transform canvas={shift={($(m-1-2)!(0,0)!(m-2-2)$)}}] (m-1-1) edge[cell] node[right] {$\eps'$} (m-2-1);
			\end{tikzpicture}
  	\end{displaymath}
  \end{definition}
  
  To state the main theorem we need Definitions \ref{definition: preserving Kan extensions} and \ref{definition: right exact monad}.
  \begin{definition} \label{definition: preserving Kan extensions}
  	Let $\map dBM$, $\map fMN$ and $\hmap JAB$ be morphisms in a double category $\K$. We say that $f$ \emph{preserves} the (pointwise) right Kan extension of $d$ along $J$ if, for any cell $\eps$ on the left below that defines $r$ as the (pointwise) right Kan extension of $d$ along $J$, the composite $1_f \of \eps$ in the middle defines $f \of r$ as the (pointwise) right Kan extension of $f \of d$ along $J$.
  	\begin{displaymath}
  		\begin{tikzpicture}[textbaseline]
  			\matrix(m)[math175em]{A & B \\ M & M \\};
  			\path[map]  (m-1-1) edge[barred] node[above] {$J$} (m-1-2)
														edge node[left] {$r$} (m-2-1)
										(m-1-2) edge node[right] {$d$} (m-2-2);
				\path				(m-2-1) edge[eq] (m-2-2);
				\path[transform canvas={shift={($(m-1-2)!(0,0)!(m-2-2)$)}}] (m-1-1) edge[cell] node[right] {$\eps$} (m-2-1);
			\end{tikzpicture} \qquad\qquad\qquad \begin{tikzpicture}[textbaseline]
				\matrix(m)[math175em, column sep=1.5em]{A & B \\ M & M \\ N & N \\};
				\path[map]	(m-1-1) edge[barred] node[above] {$J$} (m-1-2)
														edge node[left] {$r$} (m-2-1)
										(m-1-2) edge node[right] {$d$} (m-2-2)
										(m-2-1) edge node[left] {$f$} (m-3-1)
										(m-2-2) edge node[right] {$f$} (m-3-2);
				\path				(m-2-1) edge[eq] (m-2-2)
										(m-3-1) edge[eq] (m-3-2);
				\path[transform canvas={shift=(m-2-2)}]	(m-1-1) edge[cell] node[right] {$\eps$} (m-2-1);
			\end{tikzpicture} \qquad\qquad\qquad \begin{tikzpicture}[textbaseline]
				\matrix(m)[math175em, column sep=1.5em]{TA & TB \\ TM & TM \\ M & M \\};
				\path[map]	(m-1-1) edge[barred] node[above] {$TJ$} (m-1-2)
														edge node[left] {$Tr$} (m-2-1)
										(m-1-2) edge node[right] {$Td$} (m-2-2)
										(m-2-1) edge node[left] {$m$} (m-3-1)
										(m-2-2) edge node[right] {$m$} (m-3-2);
				\path				(m-2-1) edge[eq] (m-2-2)
										(m-3-1) edge[eq] (m-3-2);
				\path[transform canvas={shift=(m-2-2)}]	(m-1-1) edge[cell] node[right] {$T\eps$} (m-2-1);
			\end{tikzpicture}
  	\end{displaymath}
  	
  	Next let $T$ be a normal oplax double monad on $\K$ and $M = (M, m, \bar m, \hat m)$ a lax $T$\ndash algebra. As a variation on the above, we say that \emph{the algebraic structure of $M$ preserves} the (pointwise) right Kan extension of $d$ along $J$ if for any cell $\eps$ as on the left above, that defines $r$ as the (pointwise) right Kan extension of $d$ along $J$, the composite $1_m \of T\eps$ on the right defines $m \of Tr$ as the (pointwise) right Kan extension of $m \of Td$ along $TJ$.
  \end{definition}
  
  Recall from \propref{pointwise Kan extensions in terms of weighted limits} that, in the double category $\enProf\V$, pointwise right Kan extensions along $\V$-weights $\hmap J1B$ coincide with $J$-weighted limits. Consequently we say that a $\V$-functor $\map fMN$ \emph{preserves} $J$-weighted limits if it preserves all pointwise right Kan extensions along $J$. That this coincides with the classical notion of `preserving weighted limits', see e.g.\ Section 3.2 in \cite{Kelly82}, in the case that $\V$ is complete and closed symmetric monoidal, follows from \propref{classical definition of weighted limit}. Notice that a $\V$-functor preserves all pointwise right Kan extensions whenever it preserves weighted limits, again by \propref{pointwise Kan extensions in terms of weighted limits}.
  
  \begin{example} \label{example: pointwise left Kan extensions preserved by colax monoidal structure}
  	In anticipation of \propref{lifting pointwise monoidal left Kan extensions} we describe monoidal $\V$-cat\-e\-gories whose monoidal structure preserves pointwise \emph{left} Kan extensions. Consider a monoidal $\V$-category $M = (M, \tens, \mf a, \mf i)$ (\exref{example: monoidal enriched categories}), a $\V$-functor \mbox{$\map dAM$} and a $\V$-profunctor $\hmap JAB$; we claim that the algebraic structure of $M$ preserves the pointwise left Kan extension of $d$ along $J$ whenever the $n$-ary tensor products $\tens_n = \brks{M^{\tens n} \xrar{j_n} TM \xrar\tens M}$ preserve it in every variable, for each $n \geq 1$, where $\nat{j_n}{(\dash)^{\tens n}}T$ denotes the double transformation that includes the $n$th tensor powers into $T$. Of course, because the components of $\mf a$ and $\mf i$ are isomorphisms, it suffices that the tensor product $\tens_2$ does so. Moreover, by the discussion above, it also suffices that $\tens_2$ preserves weighted colimits.
  	
  	To prove the claim we consider a transformation $\eta$, as on the left below, that defines $l$ as the pointwise left Kan extension of $d$ along $J$ (see \defref{definition: pointwise left Kan extension}). We have to show that the composite $1_\tens \of T\eta$ defines $\tens \of Tl$ as a pointwise left Kan extension. By \lemref{pointwise Kan extensions along TJ} below we may equivalently prove that the composites $1_{\tens_n} \tens \eta^{\tens n}$ in the middle define $\tens_n \of l^{\tens n}$ as pointwise left Kan extensions, for each $n \geq 1$.
  	\begin{displaymath}
			\begin{tikzpicture}[textbaseline]
  			\matrix(m)[math175em]{A & B \\ M & M \\};
  			\path[map]  (m-1-1) edge[barred] node[above] {$J$} (m-1-2)
														edge node[left] {$d$} (m-2-1)
										(m-1-2) edge node[right] {$l$} (m-2-2);
				\path				(m-2-1) edge[eq] (m-2-2);
				\path[transform canvas={shift={($(m-1-2)!(0,0)!(m-2-2)$)}}] (m-1-1) edge[cell] node[right] {$\eta$} (m-2-1);
			\end{tikzpicture} \qquad \begin{tikzpicture}[textbaseline]
				\matrix(m)[math175em]{A^{\tens n} & B^{\tens n} \\ M^{\tens n} & M^{\tens n} \\ M & M \\};
				\path[map]	(m-1-1) edge[barred] node[above] {$J^{\tens n}$} (m-1-2)
														edge node[left] {$d^{\tens n}$} (m-2-1)
										(m-1-2) edge node[right] {$l^{\tens n}$} (m-2-2)
										(m-2-1) edge node[left] {$\tens_n$} (m-3-1)
										(m-2-2) edge node[right] {$\tens_n$} (m-3-2);
				\path				(m-2-1) edge[eq] (m-2-2)
										(m-3-1) edge[eq] (m-3-2);
				\path[transform canvas={shift=(m-2-2)}]	(m-1-1) edge[cell] node[right] {$\eta^{\tens n}$} (m-2-1);
			\end{tikzpicture} \qquad \begin{tikzpicture}[textbaseline]
				\matrix(m)[math175em, column sep=1.25em]{B^{\tens i} \tens A^{\tens(n-i)} & B^{\tens i'} \tens A^{\tens (n - i')} \\ M^{\tens n} & M^{\tens n} \\ M & M \\};
				\path[map]	(m-1-1) edge[barred] node[above, inner sep=8pt] {$1_B^{\tens i} \tens J \tens 1_A^{\tens (n - i')}$} (m-1-2)
														edge node[left] {$l^{\tens i} \tens d^{\tens (n-i)}$} (m-2-1)
										(m-1-2) edge node[right] {$l^{\tens i'} \tens d^{\tens(n-i')}$} (m-2-2)
										(m-2-1) edge node[left] {$\tens_n$} (m-3-1)
										(m-2-2) edge node[right] {$\tens_n$} (m-3-2);
				\path				(m-2-1) edge[eq] (m-2-2)
										(m-3-1) edge[eq] (m-3-2);
				\path[transform canvas={shift=(m-2-2), xshift=-3.3em}]	(m-1-1) edge[cell] node[right] {$1^{\tens i}_l \tens \eta \tens 1^{\tens(n-i')}_d$} (m-2-1);
			\end{tikzpicture}
		\end{displaymath}
		
		To see that they do consider, for each $0 \leq i < n$, the transformation on the right above, where $i' = i + 1$. That it defines $\tens_n \of (l^{\tens i'} \tens d^{\tens(n-i')})$ as a pointwise left Kan extension follows, by using \propref{pointwise Kan extensions along unit tensored V-profunctors} twice, from the fact that for all objects $y_1, \dotsc, y_i \in B$ and $x_{i+2}, \dotsc, x_n \in A$ the transformations
		\begin{displaymath}
			(ly_1 \tens \dotsb \tens ly_i \tens \dash \tens dx_{i+2} \tens \dotsb \tens dx_n) \of \eta
		\end{displaymath}
		define pointwise Kan extensions, by the assumption on $\tens_n$. Therefore, by repeatedly applying \propref{iterated pointwise Kan extensions in V-Prof} to the horizontal composite of the transformations on the right above, we can conclude that the composite $1_{\tens_n} \of \eta^{\tens n}$ defines $\tens_n \of l^{\tens n}$ as a pointwise left Kan extension, as needed.
  \end{example}
  
  Let $\V$ be a cocomplete symmetric monoidal category whose tensor product preserves colimits in both variables, so that $\V$-profunctors form a monoidal double category $\enProf\V$ (\exref{example: monoidal double category of enriched profunctors}), and let $T$ denote the `free strict monoidal $\V$\ndash cat\-e\-gory'\ndash monad on $\enProf\V$ (\exref{example: free strict monoidal enriched category monad}). For each $n \geq 0$ we denote by $\nat{j_n}{(\dash)^{\tens n}}T$ the double transformation that includes the $n$th tensor powers into $T$.
  \begin{lemma} \label{pointwise Kan extensions along TJ}
  	Let $\V$ and $\nat{j_n}{(\dash)^{\tens n}}T$ be as above. A cell
  	\begin{displaymath}
  		\begin{tikzpicture}[textbaseline]
  			\matrix(m)[math175em]{TA & TB \\ M & M \\};
  			\path[map]  (m-1-1) edge[barred] node[above] {$TJ$} (m-1-2)
														edge node[left] {$d$} (m-2-1)
										(m-1-2) edge node[right] {$l$} (m-2-2);
				\path				(m-2-1) edge[eq] (m-2-2);
				\path[transform canvas={shift={($(m-1-2)!(0,0)!(m-2-2)$)}}] (m-1-1) edge[cell] node[right] {$\eta$} (m-2-1);
			\end{tikzpicture}
		\end{displaymath}
		defines $l$ as the pointwise left Kan extension of $d$ along $TJ$ if and only if, for each $n \geq 0$, the composite $J^{\tens n} \xRar{(j_n)_J} TJ \xRar\eta 1_M$ defines $l \of j_n$ as the pointwise left Kan extension of $d \of j_n$ along $J^{\tens n}$.
  \end{lemma}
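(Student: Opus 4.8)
The plan is to reduce both sides to statements about $\V$-weighted colimits at each object of the target $\V$-category, and then to exploit that $TB = \coprod_{n \geq 0} B^{\tens n}$ and that $TJ$ vanishes between sequences of unequal length. Applying the horizontal dual of \propref{pointwise Kan extensions in terms of weighted limits} to the left-hand side, $\eta$ defines $l$ as the pointwise left Kan extension of $d$ along $TJ$ precisely if, for every object $\ul y \in TB$, the composite of the cartesian cell defining the coweight $TJ(\id, \ul y)$ on $TA$ with $\eta$ exhibits $l\ul y$ as the $TJ(\id, \ul y)$-weighted colimit of $d$. Likewise the right-hand side says that, for each $n$ and each $\ul y \in B^{\tens n}$, the composite of the cartesian cell defining $J^{\tens n}(\id, \ul y)$ with $\eta \of (j_n)_J$ exhibits $l(j_n\ul y)$ as the $J^{\tens n}(\id, \ul y)$-weighted colimit of $d \of j_n$. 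Since the objects of $TB$ are exactly the disjoint union of the objects of the $B^{\tens n}$, the two families of conditions are indexed by the same set, so it suffices to compare them objectwise.

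Fix $n$ and $\ul y \in B^{\tens n}$. First I would observe that the component $(j_n)_J$ is the cartesian cell exhibiting $J^{\tens n} = TJ(j_n, j_n)$, as is clear from the description of $T$ in \exref{example: free strict monoidal enriched category monad}; consequently $J^{\tens n}(\id, \ul y)$ is the restriction of the coweight $TJ(\id, j_n\ul y)$ along $j_n$, and $(j_n)_J$ mediates between the two weighted-colimit conditions. The crux is then the single claim $(\star)$: the $TJ(\id, \ul y)$-weighted colimit of $d$ over $TA$ agrees, via $(j_n)_J$, with the $J^{\tens n}(\id, \ul y)$-weighted colimit of $d \of j_n$ over $A^{\tens n}$. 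This holds because $TJ(\ul x, \ul y) = \emptyset$ whenever $\lns{\ul x} \neq n$, so the coweight $TJ(\id, \ul y)$ is carried by the single summand $A^{\tens n}$ of $TA = \coprod_m A^{\tens m}$, on which it restricts to $J^{\tens n}(\id, \ul y)$.

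To make $(\star)$ precise I would unwind the universal property of \defref{definition: weighted limit} in its horizontal dual form. A test transformation against $TJ(\id, \ul y)$ has a component over each summand $A^{\tens m}$ of $TA$; because $\tens$ preserves colimits and $\emptyset$ is initial in $\V$, the components over the summands $m \neq n$ factor through $\emptyset$ and are therefore uniquely determined, while the component over $A^{\tens n}$ is precisely a test transformation against $J^{\tens n}(\id, \ul y)$. Hence the two universal properties coincide, and the required factorisations correspond under precomposition with the cartesian cell $(j_n)_J$; this proves $(\star)$ and, by the objectwise comparison above, the lemma in both directions. The main obstacle is exactly this bookkeeping for $(\star)$: one must check that transformations and their factorisations out of the coproduct $\V$-category $TA$ are determined by their behaviour on the single summand $A^{\tens n}$ supporting the weight, which rests on the vanishing of $TJ$ off the diagonal block together with $\emptyset \tens (\dash) = \emptyset$.
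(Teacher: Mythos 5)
Your argument is correct and follows the paper's proof in all essentials: both reduce to an objectwise comparison of weighted colimits via \propref{pointwise Kan extensions in terms of weighted limits}, and both identify $J^{\tens n}(\id, \ul y)$ with the restriction $TJ(j_n, \ul y)$ mediated by the cartesian cell $(j_n)_J$. The only difference is one of packaging: where you verify the transfer of the universal property by hand (using that $TJ$ vanishes off the length-$n$ block and that $\emptyset \tens (\dash) = \emptyset$), the paper records exactly the same observation as the statement that the cartesian cell restricting $TJ(\id, \ul y)$ along $j_n$ is also opcartesian, and then invokes the horizontal dual of \propref{pointwise exact cells}(a).
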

  \begin{proof}
  	For each $\ul y \in TB$ of length $n$ consider the identity below.
  	\begin{displaymath}
  		\begin{tikzpicture}[textbaseline]
  			\matrix(m)[math175em, column sep=2em]{\phantom{TB} & 1 \\ TA & 1 \\ TA & TB \\ M & M \\};
  			\path[map]	(m-1-1) edge[barred] node[above] {$TJ(j_n, \ul y)$} (m-1-2)
  													edge node[left] {$j_n$} (m-2-1)
  									(m-2-1) edge[barred] node[below] {$TJ(\id, \ul y)$} (m-2-2)
  									(m-2-2) edge node[right] {$\ul y$} (m-3-2)
  									(m-3-1) edge[barred] node[below] {$TJ$} (m-3-2)
  													edge node[left] {$d$} (m-4-1)
  									(m-3-2) edge node[right] {$l$} (m-4-2);
  			\path				(m-1-2) edge[eq] (m-2-2)
  									(m-2-1) edge[eq] (m-3-1)
  									(m-4-1) edge[eq] (m-4-2);
  			\path[transform canvas={shift=($(m-2-2)!0.5!(m-3-2)$), yshift=-3pt}]	(m-3-1) edge[cell] node[right] {$\eta$} (m-4-1);
  			\draw				(m-1-1) node {$A^{\tens n}$}
  									($(m-1-1)!0.5!(m-2-2)$) node {cart}
  									($(m-2-1)!0.5!(m-3-2)$) node[yshift=-3pt] {cart};
  		\end{tikzpicture} = \begin{tikzpicture}[textbaseline]
  			\matrix(m)[math175em, column sep=2em]{A^{\tens n} & 1 \\ A^{\tens n} & B^{\tens n} \\ TA & TB \\ M & M \\};
  			\path[map]	(m-1-1) edge[barred] node[above] {$J^{\tens n}(\id, \ul y)$} (m-1-2)
  									(m-1-2) edge node[right] {$\ul y$} (m-2-2)
  									(m-2-1) edge[barred] node[below] {$J^{\tens n}$} (m-2-2)
  													edge node[left] {$j_n$} (m-3-1)
  									(m-2-2) edge node[right] {$j_n$} (m-3-2)
  									(m-3-1) edge[barred] node[below] {$TJ$} (m-3-2)
  													edge node[left] {$d$} (m-4-1)
  									(m-3-2) edge node[right] {$l$} (m-4-2);
  			\path				(m-1-1) edge[eq] (m-2-1)
  									(m-4-1) edge[eq] (m-4-2);
  			\path[transform canvas={shift=($(m-2-2)!0.5!(m-3-2)$), yshift=-3pt}]	(m-3-1) edge[cell] node[right] {$\eta$} (m-4-1)
  									(m-2-1) edge[cell, transform canvas={xshift=-0.8em}] node[right] {$(j_n)_J$} (m-3-1);
  			\draw				($(m-1-1)!0.5!(m-2-2)$) node {cart};
  		\end{tikzpicture}
  	\end{displaymath}
  	In light of \propref{pointwise Kan extensions in terms of weighted limits} it suffices to prove that the composite of the bottom two cells on the left-hand side defines $l(\ul y)$ as a pointwise left Kan extension precisely if the full composite on the right-hand side defines $(l \of j_n)(\ul y)$ as one. It is readily checked that the top cartesian cell on the left-hand side is opcartesian, so that this equivalence follows from the horizontal dual of \propref{pointwise exact cells}(a).
  \end{proof}
  
  \begin{definition} \label{definition: right exact monad}
  	A double monad $T = (T, \mu, \eta)$ is called \emph{(pointwise) right exact} if the cells $\mu_J$ and $\eta_J$ are (pointwise) right exact for each horizontal morphism $\hmap JAB$ (see \defref{definition: exact cell}). (Pointwise) left exact double monads are defined analogously.
  \end{definition}
  
  \begin{example} \label{free strict monoidal category monad is pointwise exact}
  	The `free strict monoidal $\V$-category'-monad $T$ of \exref{example: free strict monoidal enriched category monad} is both pointwise left and right exact. For example, to show that the cell $\mu_J$, where $\hmap JAB$ is a $\V$-profunctor, is pointwise left exact it suffices, by the horizontal dual of \propref{pointwise exact cells}(b), to show that the factorisation $\mu_J'$ of $\mu_J$ through the cartesian cell defining $TJ(\id, \mu_B)$, as in the right-hand side below, is opcartesian.
  	\begin{displaymath}
			\begin{tikzpicture}[textbaseline]
    		\matrix(m)[math175em, column sep=1.5em]{T^2A & T^2B \\ TA & \\ C & D \\};
    		\path[map]  (m-1-1) edge[barred] node[above] {$T^2J$} (m-1-2)
        		                edge node[left] {$\mu_A$} (m-2-1)
        		        (m-1-2) edge node[right] {$g$} (m-3-2)
        		        (m-2-1) edge node[left] {$f$} (m-3-1)
        		        (m-3-1) edge[barred] node[below] {$K$} (m-3-2);
    		\path[transform canvas={shift={($(m-2-1)!0.5!(m-1-1)$)}}] (m-2-2) edge[cell] node[right] {$\chi$} (m-3-2);
  		\end{tikzpicture} = \begin{tikzpicture}[textbaseline]
    		\matrix(m)[math175em, column sep=2em]{T^2A & T^2B \\ TA & T^2B \\ C & D \\};
    		\path[map]  (m-1-1) edge[barred] node[above] {$T^2J$} (m-1-2)
        		                edge node[left] {$\mu_A$} (m-2-1)
            		    (m-2-1) edge[barred] node[below, inner sep=5pt] {$TJ(\id, \mu_B)$} (m-2-2)
            		            edge node[left] {$f$} (m-3-1)
            		    (m-2-2) edge node[right] {$g$} (m-3-2)
            		    (m-3-1) edge[barred] node[below] {$K$} (m-3-2);
        \path				(m-1-2) edge[eq] (m-2-2);
    		\path[transform canvas={shift=(m-2-1))}]
        		        (m-1-2) edge[cell] node[right] {$\mu_J'$} (m-2-2)
        		        (m-2-2) edge[transform canvas={yshift=-0.4em}, cell] node[right] {$\chi'$} (m-3-2);
  		\end{tikzpicture}
		\end{displaymath}
		To see this consider a cell $\chi$ as above, we have to show that it factors uniquely through $\mu_J'$ as shown. Given $\ul x \in TA$ and $\ull y \in T^2B$ notice that, if $\card{\ul x} = \card{\mu\ull y}$, then there exists a unique $\ull x \in T^2A$ with $\mu\ull x = \ul x$, $\card{\ull x} = \card{\ull y}$ and $\card{\ull x_i} = \card{\ull y_i}$ for all $1 \leq i \leq \card{\ull x}$; in this case we take as the $\V$-map $\map{\chi'_{(\ul x, \ull y)}}{TJ(\ul x, \mu\ull y)}{K(f\ul x, g\ull y)}$ the composite
		\begin{displaymath}
			TJ(\ul x, \mu\ull y) \iso T^2J(\ull x, \ull y) \xrar\chi K(f\ul x, g\ull y),
		\end{displaymath}
		where the isomorphism is the inverse of $(\mu_J)_{(\ull x, \ull y)}$; see \eqref{concatenation}. In the case that $\card{\ul x} \neq \card{\mu\ull y}$ we must take $\chi'_{(\ul x, \ull y)}$ to be the unique map $\emptyset \to K(f\ul x, g\ull y)$. It is clear that, defined like this, $\chi'$ forms the unique factorisation of $\chi$ through $\mu'_J$.
  \end{example}
  
  We are now ready to state and prove our main result. Its horizontal dual, \thmref{horizontal dual main theorem}, which concerns (pointwise) left Kan extensions between colax $T$\ndash algebras, has already been stated in the introduction where, to simplify exposition, it has been restricted to pseudo $T$-algebras and pointwise left Kan extensions.
  \begin{theorem}\label{main result}
  	Let $T$ be a right exact, normal oplax double monad on a double category $\K$ and let `weak' mean either `colax', `lax' or `pseudo'. Given lax $T$\ndash algebras $A$, $B$ and $M$, consider the following conditions on a horizontal $T$\ndash morphism \mbox{$\hmap JAB$} and a weak vertical $T$-morphism $\map dBM$:
  	\begin{enumerate}
  		\item[(p)]	the algebraic structure of $M$ preserves the right Kan extension of $d$ along $J$;
  		\item[(e)]	the structure cell of $J$ is right $d$-exact;
  		\item[(l)]	the forgetful double functor $\Alg wT \to \K$ lifts the right Kan extension of $d$ along $J$.
  	\end{enumerate}
  	The following hold:
  	\begin{enumerate}[label=\textup{(\alph*)}]
  		\item if `weak' means `colax' then (p) implies (l);
  		\item if `weak' means `lax' then (e) implies (l);
  		\item if `weak' means `pseudo' then any two of (p), (e) and (l) imply the third.
  	\end{enumerate}
  	
  	If $T$ is a pointwise right exact, normal pseudo double monad then analogous results hold for pointwise Kan extensions, obtained by replacing `right Kan extension' in (p) and (l) by `pointwise right Kan extension', and `right $d$-exact' in (e) by `pointwise right $d$-exact'.
  \end{theorem}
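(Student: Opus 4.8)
The plan is to start from a cell $\eps$ that defines $r$ as the right Kan extension of $d$ along $J$ in $\K$, equip $r$ with a weak vertical $T$-morphism structure $\bar r$ making $\eps$ into a $T$-cell that defines the right Kan extension in $\Alg wT$, and show this lift is unique. Everything rests on two auxiliary extensions along $TJ$ supplied by the hypotheses: condition (e) is precisely the assertion that $\eps \of \bar J$ defines $r \of a$ as the right Kan extension of $d \of b$ along $TJ$ (the meaning of $\bar J$ being right $d$-exact, \defref{definition: exact cell}), while condition (p) asserts that $1_m \of T\eps$ defines $m \of Tr$ as the right Kan extension of $m \of Td$ along $TJ$ (\defref{definition: preserving Kan extensions}). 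The structure cell $\bar d$ of $d$ is a vertical cell $m \of Td \Rar d \of b$ in the lax case and points the other way in the colax case; either way it bridges the two codomains $m \of Td$ and $d \of b$ of these extensions.

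First I would define $\bar r$ by a universal factorisation. In the lax case (b), assuming (e), the composite $\bar d \of (1_m \of T\eps)$ is a cell with horizontal source $TJ$ and vertical target $d \of b$, so it factors uniquely through $\eps \of \bar J$ to give the lax structure $\bar r \colon m \of Tr \Rar r \of a$. In the colax case (a), assuming (p), one instead factors $\bar d \of (\eps \of \bar J)$, whose vertical target is $m \of Td$, through $1_m \of T\eps$, obtaining the colax structure $\bar r \colon r \of a \Rar m \of Tr$. That $\eps$ is then a $T$-cell (\defref{definition: T-cell}) is immediate: substituting the structure cell $1_m$ of the horizontal unit $1_M$ together with $\bar J$ and $\bar d$, its $T$-cell axiom becomes exactly the defining equation of $\bar r$. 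The same observation yields uniqueness of the lift, since that axiom forces $\bar r$ to be the factorisation just described.

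The substantial work, and the main obstacle, is checking that $\bar r$ obeys the associativity and unit axioms of \defref{definition: weak algebra morphisms}. Here I would exploit that $T$ is right exact (\defref{definition: right exact monad}): because $\mu_J$ is right exact, composing it with $\eps \of \bar J$ produces a cell defining $r \of a \of \mu_A$ as the right Kan extension of $d \of b \of \mu_B$ along $T^2 J$, and right-exactness of $\eta_J$ similarly produces the extension along $A$ needed for the unit axiom. Both sides of each axiom share a common vertical source and target and can be compared after horizontal composition with these higher extensions; uniqueness of factorisation then yields the axiom, once the resulting composites are unwound using the coherence data already available---the associativity and unit axioms for $\bar d$ and for the horizontal $T$-morphism $\bar J$ (\defref{definition: horizontal T-morphism}), the algebra axioms for $A$ and $M$, naturality of $\mu$ and $\eta$, and the composition and unit axioms of the monad---exactly in the style of the diagram chase carried out for \propref{lifting restrictions}. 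It then remains to see that $\eps$ defines the right Kan extension in $\Alg wT$: any $T$-cell to be factored factors uniquely through $\eps$ in $\K$, and a further appeal to uniqueness of factorisations shows that factorisation is again a $T$-cell.

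Finally, for the pseudo case (c) the cell $\bar d$ is invertible, so the two auxiliary extensions $\eps \of \bar J$ and $1_m \of T\eps$ are related by horizontal composition with invertible vertical cells; since an invertible vertical cell and its inverse are both (pointwise) right exact (\propref{pointwise exact cells}(b)), such composition preserves the property of defining a right Kan extension. Assuming (e) and (p), building the structure cell from (e) in one direction and from (p) in the other gives mutually inverse cells, by uniqueness of factorisation and invertibility of $\bar d$, so $\bar r$ is invertible and $r$ becomes a pseudo $T$-morphism, whence (l); assuming (l) together with either of (p), (e), the now-invertible $\bar r$ lets one rewrite one auxiliary extension as the other composed with invertible cells, yielding the remaining condition. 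The pointwise statement is proved by the identical construction, now invoking that $T$ is pointwise right exact and normal pseudo, so that $\mu_J$ and $\eta_J$ are pointwise right exact and every auxiliary factorisation above has its pointwise analogue.
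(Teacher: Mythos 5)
Your proposal follows essentially the same route as the paper's proof: $\bar r$ is obtained as the unique factorisation of $(\eps \of \bar J) \hc \bar d$ through $1_m \of T\eps$ in the colax case and of $(1_m \of T\eps) \hc \bar d$ through $\eps \of \bar J$ in the lax case, the coherence axioms are checked against the higher extensions along $T^2J$ supplied by right exactness of $\mu_J$ and $\eta_J$, the factorisation equation is read off as the $T$-cell axiom for $\eps$, and part (c) is handled by invertibility of $\bar d$ and uniqueness of factorisations. The one step you pass over quickly is showing that the factorisation $\phi'$ of a $T$-cell $\phi$ through $\eps$ is again a $T$-cell: there the structure cell of $H \hc J$ involves the compositor $\cell{T_\hc}{T(H \hc J)}{TH \hc TJ}$, and one must precompose with $\inv{T_\hc}$ before invoking uniqueness of factorisations---which is exactly why normality of $T$ suffices for ordinary extensions ($H = 1_A$) but the pointwise statement requires $T$ to be pseudo.
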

  
  \begin{remark}
  	We remark that, in the ``pointwise part'' of the above theorem, the condition that $T$ be a pseudo double monad can be dispensed with whenever $\K$ is an equipment that has opcartesian tabulations.
  	
  	Indeed, consider a cell $\eps$ as on the left below and assume that it defines $r$ as a pointwise right Kan extension in $\K$. Using the arguments given in the proof below it can be lifted to form a $T$-cell in $\Alg wT$. Now, instead of showing that this lift defines a pointwise right Kan extension in $\Alg wT$ in the usual sense (\defref{definition: right Kan extension}), we may equivalently show that, for every strict $T$-morphism $\map fCA$, the composite on the right below defines $r \of f$ as an ordinary right Kan extension in $\Alg wT$, by \thmref{algebraic pointwise Kan extensions in terms of Kan extensions}. Proving the latter amounts to showing that certain factorisations in $\K$, through the composites on the right below, lift to $\Alg wT$; compare the lifting of the factorisation \eqref{T-cell factorisation} in the proof below. However, in contrast to \eqref{T-cell factorisation}, these factorisations will be vertical cells, so that requiring that the compositor $T_\hc$ be invertible is not necessary.
  	\begin{displaymath}
			\begin{tikzpicture}[textbaseline]
  			\matrix(m)[math175em]{A & B \\ M & M \\};
  			\path[map]  (m-1-1) edge[barred] node[above] {$J$} (m-1-2)
														edge node[left] {$r$} (m-2-1)
										(m-1-2) edge node[right] {$d$} (m-2-2);
				\path				(m-2-1) edge[eq] (m-2-2);
				\path[transform canvas={shift={($(m-1-2)!(0,0)!(m-2-2)$)}}] (m-1-1) edge[cell] node[right] {$\eps$} (m-2-1);
			\end{tikzpicture} \qquad\qquad\qquad\qquad\qquad \begin{tikzpicture}[textbaseline]
				\matrix(m)[math175em]{C & B \\ A & B \\ M & M \\};
				\path[map]	(m-1-1) edge[barred] node[above] {$J(f, \id)$} (m-1-2)
														edge node[left] {$f$} (m-2-1)
										(m-2-1) edge[barred] node[below] {$J$} (m-2-2)
														edge node[left] {$r$} (m-3-1)
										(m-2-2) edge node[right] {$d$} (m-3-2);
				\path				(m-1-2) edge[eq] (m-2-2)
										(m-3-1) edge[eq] (m-3-2);
				\path[transform canvas={shift=(m-2-1), yshift=-2pt}]	(m-2-2) edge[cell] node[right] {$\eps$} (m-3-2);
				\draw				($(m-1-1)!0.5!(m-2-2)$)	node {\textup{cart}};
			\end{tikzpicture} 
		\end{displaymath}
  \end{remark}
  
  \begin{proof}[of \thmref{main result}.]
  	We treat the ordinary and pointwise case simultaneously. As usual we denote the algebra structure on $A$ by $A = (A, a, \bar a, \hat a)$, where $\map a{TA}A$ is the structure map, $\cell{\bar a}{a \of Ta}{a \of \mu_A}$ is the associator and $\cell{\hat a}{\id_A}{a \of \eta_A}$ is the unitor; likewise $B = (B, b, \bar b, \hat b)$ and $M = (M, m, \bar m, \hat m)$, while the structure cell $\bar J$ of $J$ is of the form as on the left below. To start we consider a cell $\eps$ in $\K$, as on the right below, and assume that it defines $r$ either as the ordinary right Kan extension of $d$ along $J$ or as the pointwise right Kan extension of $d$ along $J$.
  	\begin{displaymath}
  		\begin{tikzpicture}[textbaseline]
				\matrix(m)[math175em]{TA & TB \\ A & B \\};
				\path[map]  (m-1-1) edge[barred] node[above] {$TJ$} (m-1-2)
														edge node[left] {$a$} (m-2-1)
										(m-1-2) edge node[right] {$b$} (m-2-2)
										(m-2-1) edge[barred] node[below] {$J$} (m-2-2);
				\path[transform canvas={shift={($(m-1-2)!(0,0)!(m-2-2)$)}}] (m-1-1) edge[cell] node[right] {$\bar J$} (m-2-1);			
			\end{tikzpicture} \qquad\qquad\qquad\qquad \begin{tikzpicture}[textbaseline]
  			\matrix(m)[math175em]{A & B \\ M & M \\};
  			\path[map]  (m-1-1) edge[barred] node[above] {$J$} (m-1-2)
														edge node[left] {$r$} (m-2-1)
										(m-1-2) edge node[right] {$d$} (m-2-2);
				\path				(m-2-1) edge[eq] (m-2-2);
				\path[transform canvas={shift={($(m-1-2)!(0,0)!(m-2-2)$)}}] (m-1-1) edge[cell] node[right] {$\eps$} (m-2-1);
			\end{tikzpicture} 
  	\end{displaymath}
  	
  	\emph{Part (a).} If `weak' means `colax' then the algebraic structure on $d$ is given by a cell $\cell{\bar d}{d \of b}{m \of Td}$ as in the composite on the left-hand side below.
  	\begin{equation} \label{colax algebra structure}
  		\begin{tikzpicture}[textbaseline]
  			\matrix(m)[math175em]{TA & TB & TB \\ A & B & TM \\ M & M & M \\};
  			\path[map]	(m-1-1) edge[barred] node[above] {$TJ$} (m-1-2)
  													edge node[left] {$a$} (m-2-1)
  									(m-1-2) edge node[right] {$b$} (m-2-2)
  									(m-1-3) edge node[right] {$Td$} (m-2-3)
  									(m-2-1) edge[barred] node[below] {$J$} (m-2-2)
  													edge node[left] {$r$} (m-3-1)
  									(m-2-2) edge node[right] {$d$} (m-3-2)
  									(m-2-3) edge node[right] {$m$} (m-3-3);
  			\path				(m-1-2) edge[eq] (m-1-3)
  									(m-3-1) edge[eq] (m-3-2)
  									(m-3-2) edge[eq] (m-3-3);
  			\path[transform canvas={shift={($(m-2-2)!0.5!(m-2-3)$)}}]	(m-1-1) edge[cell] node[right] {$\bar J$} (m-2-1)
  									(m-2-1) edge[cell, transform canvas={yshift=-3pt}] node[right] {$\eps$} (m-3-1);
  			\path[transform canvas={shift={($(m-2-3)!0.5!(m-3-2)$)}}]	(m-1-2) edge[cell] node[right] {$\bar d$} (m-2-2);
  		\end{tikzpicture} = \begin{tikzpicture}[textbaseline]
  			\matrix(m)[math175em]{TA & TA & TB \\ A & TM & TM \\ M & M & M \\};
  			\path[map]	(m-1-1) edge node[left] {$a$} (m-2-1)
  									(m-1-2) edge[barred] node[above] {$TJ$} (m-1-3)
  													edge node[left] {$Tr$} (m-2-2)
  									(m-1-3) edge node[right] {$Td$} (m-2-3)
  									(m-2-1) edge node[left] {$r$} (m-3-1)
  									(m-2-2) edge node[left] {$m$} (m-3-2)
  									(m-2-3) edge node[right] {$m$} (m-3-3);
  			\path				(m-1-1) edge[eq] (m-1-2)
  									(m-2-2) edge[eq] (m-2-3)
  									(m-3-1) edge[eq] (m-3-2)
  									(m-3-2) edge[eq] (m-3-3);
  			\path[transform canvas={shift={($(m-2-2)!0.5!(m-2-3)$)}}]	(m-1-2) edge[cell] node[right] {$T\eps$} (m-2-2);
  			\path[transform canvas={shift={($(m-2-3)!0.5!(m-3-2)$)}}]	(m-1-1) edge[cell] node[right] {$\bar r$} (m-2-1);
  		\end{tikzpicture}
  	\end{equation}
  	 Assuming that condition (p) holds, the second column in the right-hand side above defines $m \of Tr$ as the (pointwise) right Kan extension of $m \of Td$ along $TJ$, see \defref{definition: preserving Kan extensions}, so that in both the ordinary and the pointwise case the composite on the left factors uniquely as a vertical cell $\cell{\bar r}{r \of a}{m \of Tr}$, as shown.
  	 
  	 We claim that $\bar r$ makes $r$ into a colax $T$-morphism, that is it satisfies the associativity and unit axioms of \defref{definition: weak algebra morphisms}. To see that it satisfies the former consider the equation of composites below, whose left-hand and right-hand sides are given by the corresponding sides of the associativity axiom for $\bar r$, composed with $T^2 \eps$ on the right.
  	\begin{align*}
  		\begin{tikzpicture}[textbaseline, x=2em, y=1.75em, font=\scriptsize]
  			\draw	(1,2) -- (0,2) -- (0,0) -- (1,0) -- (1,3) -- (2,3) -- (2,1) -- (1,1)
  						(2,2) -- (4,2) -- (4,3) -- (3,3) -- (3,0) -- (2,0) -- (2,1)
  						(3.5,2.5) node {$T^2\eps$};
  			\draw[shift={(0.5,0)}]
  						(0,1) node {$\bar r$}
  						(1,2) node {$T\bar r$}
  						(2,1) node {$\bar m$};
  		\end{tikzpicture} &\mspace{7mu} = \mspace{7mu} \begin{tikzpicture}[textbaseline, x=2em, y=1.75em, font=\scriptsize]
  			\draw (2,2) -- (0,2) -- (0,0) -- (1,0) -- (1,3) -- (2,3) -- (2,1) -- (1,1)
  						(3,1) -- (2,1) -- (2,3) -- (3,3) -- (3,0) -- (4,0) -- (4,2) -- (3,2)
  						(1,1) -- (2,1);
  			\draw[shift={(0.5,0)}]
  						(0,1) node {$\bar r$}
  						(2,2) node {$T\bar d$}
  						(3,1) node {$\bar m$};
  			\draw[shift={(0.5,0.5)}]
  						(1,1) node {$T\eps$}
  						(1,2) node {$T\bar J$};
  		\end{tikzpicture} \mspace{7mu} = \mspace{7mu} \begin{tikzpicture}[textbaseline, x=2em, y=1.75em, font=\scriptsize]
  			\draw (1,0) -- (1,3) -- (0,3) -- (0,0) -- (2,0) -- (2,3) -- (3,3) -- (3,0) -- (4,0) -- (4,2) -- (3,2)
  						(0,1) -- (1,1)
  						(0,2) -- (2,2)
  						(2,1) -- (3,1);
  			\draw[shift={(0.5,0)}]
  						(1,1) node {$\bar d$}
  						(2,2) node {$T\bar d$}
  						(3,1) node {$\bar m$};
  			\draw[shift={(0.5,0.5)}]
  						(0,2) node {$T\bar J$}
  						(0,1) node {$\bar J$}
  						(0,0) node {$\eps$};
  		\end{tikzpicture} \mspace{7mu} = \mspace{7mu} \begin{tikzpicture}[textbaseline, x=2em, y=1.75em, font=\scriptsize]
  			\draw (1,3) -- (1,0) -- (0,0) -- (0,3) -- (2,3) -- (2,0) -- (3,0) -- (3,3) -- (4,3) -- (4,1) -- (3,1)
  						(0,1) -- (2,1)
  						(0,2) -- (1,2)
  						(2,2) -- (3,2);
  			\draw[shift={(0.5,0)}]
  						(1,2) node {$\bar b$}
  						(2,1) node {$\bar d$};
  			\draw[shift={(0.5,0.5)}]
  						(0,2) node {$T\bar J$}
  						(0,1) node {$\bar J$}
  						(0,0) node {$\eps$};
  		\end{tikzpicture} \\[1em]
  		&\mspace{7mu} = \mspace{7mu} \begin{tikzpicture}[textbaseline, x=2em, y=1.75em, font=\scriptsize]
  			\draw	(2,1) -- (0,1) -- (0,3) -- (2,3) -- (2,0) -- (1,0) -- (1,3)
  						(1,2) -- (3,2) -- (3,0) -- (2,0)
  						(3,2) -- (3,3) -- (4,3) -- (4,1) -- (3,1);
  			\draw[shift={(0.5,0)}]
  						(0,2) node {$\bar a$}
  						(2,1) node {$\bar d$};
  			\draw[shift={(0.5,0.5)}]
  						(1,2) node {$\mu_J$}
  						(1,1) node {$\bar J$}
  						(1,0) node {$\eps$};
  		\end{tikzpicture} \mspace{7mu} = \mspace{7mu} \begin{tikzpicture}[textbaseline, x=2em, y=1.75em, font=\scriptsize]
  			\draw (1,1) -- (0,1) -- (0,3) -- (1,3) -- (1,0) -- (2,0) -- (2,3) -- (4,3) -- (4,1) -- (2,1)
  						(1,2) -- (3,2)
  						(3,1) -- (3,3);
  			\draw[shift={(0.5,0)}]
  						(0,2) node {$\bar a$}
  						(1,1) node {$\bar r$};
  			\draw[shift={(0.5,0.5)}]
  						(2,1) node {$T\eps$}
  						(2,2) node {$\mu_J$};
  		\end{tikzpicture} \mspace{7mu} = \mspace{7mu} \begin{tikzpicture}[textbaseline, x=2em, y=1.75em, font=\scriptsize]
  			\draw	(1,1) -- (0,1) -- (0,3) -- (1,3) -- (1,0) -- (2,0) -- (2,3) -- (3,3) -- (3,1) -- (2,1)
  						(3,3) -- (4,3) -- (4,2) -- (3,2)
  						(1,2) -- (2,2);
  			\draw[shift={(0.5,0)}]
  						(0,2) node {$\bar a$}
  						(1,1) node {$\bar r$};
  			\draw[shift={(0.5,0.5)}]
  						(3,2) node {$T^2\eps$};
  		\end{tikzpicture}
  	\end{align*}
  	The identities here follow from the $T$-image the of factorisation \eqref{colax algebra structure}; the factorisation \eqref{colax algebra structure} itself; the associativity axiom for $\bar d$; the associativity axiom for $\bar J$; again \eqref{colax algebra structure}; the naturality of $\mu$. Now notice that the fourth column of the left and right-hand sides equals $1_m \of 1_{\mu_M} \of T^2 \eps = 1_m \of T\eps \of \mu_J$ which, because $\mu_J$ is assumed to be (pointwise) right exact, defines $m \of Tr \of \mu_A$ as the (pointwise) right Kan extension of $m \of Td \of \mu_B$ along $T^2J$. We conclude that the associativity axiom for $\bar r$ holds after it is composed on the right with a cell defining a right Kan extension; since factorisations through such cells are unique the axiom itself follows.
  	
  	\begin{align*}
  		\begin{tikzpicture}[textbaseline, x=2em, y=1.75em, font=\scriptsize]
  			\draw (1,0) -- (1,3) -- (0,3) -- (0,0) -- (2,0) -- (2,3) -- (3,3) -- (3,2) -- (1,2)
  						(1.5,1) node {$\hat m$}
  						(2.5,2.5) node {$\eps$};
  		\end{tikzpicture} &\mspace{13mu} = \mspace{13mu} \begin{tikzpicture}[textbaseline, x=2em, y=1.75em, font=\scriptsize]
				\draw	(1,0) -- (1,3) -- (2,3) -- (2,0)
							(2,2) -- (3,2) -- (3,0) -- (0,0) -- (0,1) -- (1,1)
							(0.5,0.5) node {$\eps$}
							(2.5,1) node {$\hat m$};
  		\end{tikzpicture} \mspace{13mu} = \mspace{13mu} \begin{tikzpicture}[textbaseline, x=2em, y=1.75em, font=\scriptsize]
  			\draw (2,1) -- (0,1) -- (0,0) -- (1,0) -- (1,3) -- (2,3) -- (2,0) -- (3,0) -- (3,3) -- (4,3) -- (4,1) -- (3,1)
  						(2,2) -- (3,2)
  						(0.5,0.5) node {$\eps$};
  			\draw[shift={(0.5,0)}]
  						(1,2) node {$\hat b$}
  						(2,1) node {$\bar d$};
  		\end{tikzpicture} \\[1em]
  		&\mspace{13mu} = \mspace{13mu} \begin{tikzpicture}[textbaseline, x=2em, y=1.75em, font=\scriptsize]
  			\draw	(2,1) -- (0,1) -- (0,3) -- (2,3) -- (2,0) -- (1,0) -- (1,3)
  						(1,2) -- (3,2) -- (3,0) -- (2,0)
  						(3,2) -- (3,3) -- (4,3) -- (4,1) -- (3,1);
  			\draw[shift={(0.5,0)}]
  						(0,2) node {$\hat a$}
  						(2,1) node {$\bar d$};
  			\draw[shift={(0.5,0.5)}]
  						(1,2) node {$\eta_J$}
  						(1,1) node {$\bar J$}
  						(1,0) node {$\eps$};
  		\end{tikzpicture}	\mspace{13mu} = \mspace{13mu} \begin{tikzpicture}[textbaseline, x=2em, y=1.75em, font=\scriptsize]
  			\draw (1,1) -- (0,1) -- (0,3) -- (1,3) -- (1,0) -- (2,0) -- (2,3) -- (4,3) -- (4,1) -- (2,1)
  						(1,2) -- (3,2)
  						(3,1) -- (3,3);
  			\draw[shift={(0.5,0)}]
  						(0,2) node {$\hat a$}
  						(1,1) node {$\bar r$};
  			\draw[shift={(0.5,0.5)}]
  						(2,1) node {$T\eps$}
  						(2,2) node {$\eta_J$};
  		\end{tikzpicture} \mspace{13mu} = \mspace{13mu} \begin{tikzpicture}[textbaseline, x=2em, y=1.75em, font=\scriptsize]
  			\draw	(1,1) -- (0,1) -- (0,3) -- (1,3) -- (1,0) -- (2,0) -- (2,3) -- (3,3) -- (3,1) -- (2,1)
  						(3,3) -- (4,3) -- (4,2) -- (3,2)
  						(1,2) -- (2,2);
  			\draw[shift={(0.5,0)}]
  						(0,2) node {$\hat a$}
  						(1,1) node {$\bar r$};
  			\draw[shift={(0.5,0.5)}]
  						(3,2) node {$\eps$};
  		\end{tikzpicture}
  	\end{align*}
  	
  	The unit axiom for $\bar r$ is proved similarly: we consider the equation above, whose left and right-hand sides coincide with those of the unit axiom for $\bar r$ composed with $\eps$ on the right. The identities follow from the unit axiom for $\bar d$, the unit axiom for $\bar J$, the factorisation \eqref{colax algebra structure} and the naturality of $\eta$. As before, the unit axiom for $\bar r$ follows from the fact that the last columns in the left and right-hand sides below are given by $1_m \of 1_{\eta_M} \of \eps = 1_m \of T\eps \of \eta_J$, which defines $m \of Tr \of \eta_A$ as the (pointwise) right Kan extension of $m \of Td \of \eta_B$ along $J$, because $\eta_J$ is (pointwise) right exact. We conclude that the cell $\bar r$ forms a colax structure cell for $\map rBM$. Moreover, notice that with this algebra structure on $r$ the factorisation \eqref{colax algebra structure} forms the $T$-cell axiom for $\eps$. In fact, since this factorisation is unique, this is the only way to lift the $\K$-cell $\eps$ to $\Alg cT$.
  	
  	Having obtained an algebra structure on $r$ it remains to show that $\eps$, as a $T$-cell, defines $r$ as the (pointwise) right Kan extension of $d$ along $J$. To see this consider a $T$-cell $\phi$ as on the left below where, in the case of an ordinary Kan extension, we assume $\hmap HCA$ to be the horizontal unit $1_A$. Since $\eps$ defines $r$ as the (pointwise) right Kan extension in $\K$ we obtain a unique factorisation $\phi'$ as shown below; we have to prove that $\phi'$ is a $T$-cell.
  	\begin{equation} \label{T-cell factorisation}
  		\begin{tikzpicture}[textbaseline]
				\matrix(m)[math175em]{C & A & B \\ M & & M \\};
				\path[map]	(m-1-1) edge[barred] node[above] {$H$} (m-1-2)
														edge node[left] {$s$} (m-2-1)
										(m-1-2) edge[barred] node[above] {$J$} (m-1-3)
										(m-1-3) edge node[right] {$d$} (m-2-3)
										(m-1-2) edge[cell] node[right] {$\phi$} (m-2-2);
				\path				(m-2-1) edge[eq] (m-2-3);
			\end{tikzpicture} = \begin{tikzpicture}[textbaseline]
				\matrix(m)[math175em]{C & A & B \\ M & M & M \\};
				\path[map]	(m-1-1) edge[barred] node[above] {$H$} (m-1-2)
														edge node[left] {$s$} (m-2-1)
										(m-1-2) edge[barred] node[above] {$J$} (m-1-3)
														edge node[right] {$r$} (m-2-2)
										(m-1-3) edge node[right] {$d$} (m-2-3);
				\path				(m-2-1) edge[eq] (m-2-2)
										(m-2-2) edge[eq] (m-2-3);
				\path[transform canvas={shift={($(m-1-2)!0.5!(m-2-3)$)}}] (m-1-1) edge[cell] node[right] {$\phi'$} (m-2-1)
										(m-1-2) edge[cell] node[right] {$\eps$} (m-2-2);
			\end{tikzpicture}
  	\end{equation}
  	To see this first notice that the compositor $\cell{T_\hc}{T(H \hc J)}{TH \hc TJ}$ is invertible in either case. Indeed if $r$ is an ordinary Kan extension then $H = 1_A$, so that this follows from the unit axiom for $T$ (see \eqref{unit axiom for normal oplax double functors}). On the other hand, if $r$ is a pointwise Kan extension then $T$ is assumed to be a pseudo double monad.
  	
  	Next consider the equation below, whose identities follow from the naturality of $T_\hc$ and the factorisation above; the $T$-cell axiom for $\phi$ (remember that $\ol{H \hc J} = (\bar H \hc \bar J) \of T_\hc$, see \eqref{structure cell of horizontal composite}); the factorisation above; the factorisation \eqref{colax algebra structure}.
  	\begin{align*}
  		\begin{tikzpicture}[textbaseline, x=2em, y=1.75em, font=\scriptsize]
  			\draw (3,2) -- (3,3) -- (1,3) -- (1,0) -- (0,0) -- (0,2) -- (3,2) -- (3,1) -- (1,1)
  						(2,1) -- (2,2)
  						(0.5,1) node {$\bar s$}
  						(1.5,1.5) node {$T\phi'$}
  						(2.5,1.5) node {$T\eps$}
  						(2,2.5) node {$T_\hc$};
  		\end{tikzpicture} \mspace{13mu} &= \mspace{13mu} \begin{tikzpicture}[textbaseline, x=2em, y=1.75em, font=\scriptsize]
  			\draw (1,1) -- (3,1) -- (3,2) -- (0,2) -- (0,0) -- (1,0) -- (1,2)
  						(0.5,1) node {$\bar s$}
  						(2,1.5) node {$T\phi$};
  		\end{tikzpicture} \mspace{13mu} = \mspace{13mu} \begin{tikzpicture}[textbaseline, x=2em, y=1.75em, font=\scriptsize]
  			\draw (2,1) -- (0,1) -- (0,0) -- (3,0) -- (3,2) -- (0,2) -- (0,1)
  						(1,1) -- (1,2)
  						(2,0) -- (2,2)
  						(0,2) -- (0,3) -- (2,3) -- (2,2)
							(2.5,1) node {$\bar d$}
							(1,0.5) node {$\phi$}
							(1,2.5) node {$T_\hc$};
				\draw[shift={(0.5,0.5)}]
							(0,1) node {$\bar H$}
							(1,1) node {$\bar J$};
  		\end{tikzpicture} \\[1em]
  		\mspace{13mu} &= \mspace{13mu} \begin{tikzpicture}[textbaseline, x=2em, y=1.75em, font=\scriptsize]
  			\draw (2,1) -- (0,1) -- (0,0) -- (3,0) -- (3,2) -- (0,2) -- (0,1)
  						(1,0) -- (1,2)
  						(2,0) -- (2,2)
  						(0,2) -- (0,3) -- (2,3) -- (2,2)
							(2.5,1) node {$\bar d$}
							(1,2.5) node {$T_\hc$};
				\draw[shift={(0.5,0.5)}]
							(0,0) node {$\phi'$}
							(0,1) node {$\bar H$}
							(1,0) node {$\eps$}
							(1,1) node {$\bar J$};
  		\end{tikzpicture} \mspace{13mu} = \mspace{13mu} \begin{tikzpicture}[textbaseline, x=2em, y=1.75em, font=\scriptsize]
  			\draw (2,2) -- (2,0) -- (0,0) -- (0,2) -- (3,2) -- (3,1) -- (2,1)
  						(1,0) -- (1,2)
  						(0,1) -- (1,1)
  						(0,2) -- (0,3) -- (3,3) -- (3,2)
  						(1.5,2.5) node {$T_\hc$}
  						(1.5,1) node {$\bar r$};
  			\draw[shift={(0.5,0.5)}]
  						(0,0) node {$\phi'$}
  						(0,1) node {$\bar H$}
  						(2,1) node {$T\eps$};
  		\end{tikzpicture}
  	\end{align*}
  	Precomposing the equation above with $\inv{T_\hc}$ we find that both sides of the $T$-cell axiom for $\phi'$ coincide after composition with $1_m \of T\eps$ on the right. Since the latter defines a (pointwise) right Kan extension the $T$-cell axiom itself follows. This finishes the proof of part (a).
  	
  	\emph{Part (b).} In the case that $d$ is a lax $T$-morphism, with structure cell $\cell{\bar d}{m \of Td}{d \of b}$, and the structure cell $\bar J$ of $J$ is (pointwise) right exact, so that the second column in the right-hand side below defines $r \of a$ as the (pointwise) right Kan extension of $d \of b$ along $TJ$, then the composite on the left below factors uniquely as a cell $\cell{\bar r}{m \of Tr}{r \of a}$ as shown.
  	\begin{equation} \label{lax algebra structure}
  		\begin{tikzpicture}[textbaseline]
  			\matrix(m)[math175em]{TA & TB & TB \\ TM & TM & B \\ M & M & M \\};
  			\path[map]	(m-1-1) edge[barred] node[above] {$TJ$} (m-1-2)
  													edge node[left] {$Tr$} (m-2-1)
  									(m-1-2) edge node[right] {$Td$} (m-2-2)
  									(m-1-3) edge node[right] {$b$} (m-2-3)
  									(m-2-1) edge node[left] {$m$} (m-3-1)
  									(m-2-2) edge node[right] {$m$} (m-3-2)
  									(m-2-3) edge node[right] {$d$} (m-3-3);
  			\path				(m-1-2) edge[eq] (m-1-3)
  									(m-2-1) edge[eq] (m-2-2)
  									(m-3-1) edge[eq] (m-3-2)
  									(m-3-2) edge[eq] (m-3-3);
  			\path[transform canvas={shift={($(m-2-2)!0.5!(m-2-3)$)}}]	(m-1-1) edge[cell] node[right] {$T\eps$} (m-2-1);
  			\path[transform canvas={shift={($(m-2-3)!0.5!(m-3-2)$)}}]	(m-1-2) edge[cell] node[right] {$\bar d$} (m-2-2);
  		\end{tikzpicture} = \begin{tikzpicture}[textbaseline]
  			\matrix(m)[math175em]{TA & TA & TB \\ TM & A & B \\ M & M & M \\};
  			\path[map]	(m-1-1) edge node[left] {$Tr$} (m-2-1)
  									(m-1-2) edge[barred] node[above] {$TJ$} (m-1-3)
  													edge node[left] {$a$} (m-2-2)
  									(m-1-3) edge node[right] {$b$} (m-2-3)
  									(m-2-1) edge node[left] {$m$} (m-3-1)
  									(m-2-2) edge[barred] node[below] {$J$} (m-2-3)
  													edge node[left] {$r$} (m-3-2)
  									(m-2-3) edge node[right] {$d$} (m-3-3);
  			\path				(m-1-1) edge[eq] (m-1-2)
  									(m-3-1) edge[eq] (m-3-2)
  									(m-3-2) edge[eq] (m-3-3);
  			\path[transform canvas={shift={($(m-2-2)!0.5!(m-2-3)$)}}]	(m-1-2) edge[cell] node[right] {$\bar J$} (m-2-2)
  									(m-2-2) edge[cell, transform canvas={yshift=-3pt}] node[right] {$\eps$} (m-3-2);
  			\path[transform canvas={shift={($(m-2-3)!0.5!(m-3-2)$)}}]	(m-1-1) edge[cell] node[right] {$\bar r$} (m-2-1);
  		\end{tikzpicture}
  	\end{equation}
  	Analogous to the proof of part (a) above, we can show that the cell $\bar r$ satisfies the associativity and unit axioms, so that it makes $\map rAM$ into a lax $T$-morphism. 
  	
  	Indeed, the associativity axiom follows from the identity below, which itself follows from the associativity axiom for $\bar J$, the factorisation above, the associativity axiom for $\bar d$ and the naturality of $\mu$. As before notice that the identity below is the associativity axiom for $\bar r$ composed on the right with the composite $\eps \of \bar J \of \mu_J$. This composite defines a (pointwise) right Kan extension, since both $\mu_J$ and $\bar J$ are assumed to be (pointwise) right exact, so that the associativity axiom itself follows.
  	\begin{displaymath}
  		\begin{tikzpicture}[textbaseline, x=2em, y=1.75em, font=\scriptsize]
  			\draw (3,3) -- (3,0) -- (4,0) -- (4,3) -- (2,3) -- (2,0) -- (1,0) -- (1,3) -- (0,3) -- (0,1) -- (1,1)
  						(4,1) -- (2,1)
  						(4,2) -- (3,2)
  						(2,2) -- (1,2);
  			\draw[shift={(0.5,0)}]
  						(0,2) node {$T\bar r$}
  						(1,1) node {$\bar r$}
  						(2,2) node {$\bar a$};
  			\draw[shift={(0.5,0.5)}]
  						(3,2) node {$\mu_J$}
  						(3,1) node {$\bar J$}
  						(3,0) node {$\eps$};
  		\end{tikzpicture} \mspace{13mu} = \mspace{13mu} \begin{tikzpicture}[textbaseline, x=2em, y=1.75em, font=\scriptsize]
  			\draw (3,0) -- (3,3) -- (4,3) -- (4,0) -- (2,0) -- (2,3) -- (1,3) -- (1,0) -- (0,0) -- (0,2) -- (1,2)
  						(4,1) -- (3,1)
  						(4,2) -- (2,2)
  						(1,1) -- (2,1);
  			\draw[shift={(0.5,0)}]
  						(2,1) node {$\bar r$}
  						(0,1) node {$\bar m$};
  			\draw[shift={(0.5,0.5)}]
  						(3,2) node {$\mu_J$}
  						(3,1) node {$\bar J$}
  						(3,0) node {$\eps$};
  		\end{tikzpicture}
  	\end{displaymath}
  	The unit axiom for $\bar r$ follows in the same way from the identity on the left below, which itself follows from the unit axioms for $\bar J$ and $\bar d$, the factorisation above and the naturality of $\eta$.
  	\begin{displaymath}
  		\begin{tikzpicture}[textbaseline, x=2em, y=1.75em, font=\scriptsize]
  			\draw (1,3) -- (1,0) -- (0,0) -- (0,3) -- (3,3) -- (3,0) -- (2,0) -- (2,3)
  						(1,1) -- (3,1)
  						(2,2) -- (3,2)
  						(1.5,2) node {$\hat a$};
  			\draw[shift={(0.5,0.5)}]
  						(2,0) node {$\eps$}
  						(2,1) node {$\bar J$}
  						(2,2) node {$\eta_J$};
  		\end{tikzpicture} \mspace{13mu} = \mspace{13mu} \begin{tikzpicture}[textbaseline, x=2em, y=1.75em, font=\scriptsize]
  			\draw (3,0) -- (3,3) -- (4,3) -- (4,0) -- (2,0) -- (2,3) -- (1,3) -- (1,0) -- (0,0) -- (0,2) -- (1,2)
  						(4,1) -- (3,1)
  						(4,2) -- (2,2)
  						(1,1) -- (2,1);
  			\draw[shift={(0.5,0)}]
  						(2,1) node {$\bar r$}
  						(0,1) node {$\hat m$};
  			\draw[shift={(0.5,0.5)}]
  						(3,2) node {$\eta_J$}
  						(3,1) node {$\bar J$}
  						(3,0) node {$\eps$};
  		\end{tikzpicture} \qquad\qquad \begin{tikzpicture}[textbaseline, x=2em, y=1.75em, font=\scriptsize]
  			\draw (1,2) -- (1,0) -- (3,0) -- (3,2) -- (0,2) -- (0,1) -- (1,1)
  						(2,0) -- (2,2)
  						(3,1) -- (2,1)
  						(0,2) -- (0,3) -- (3,3) -- (3,2)
  						(1.5,2.5) node {$T_\hc$}
  						(1.5,1) node {$\bar r$};
  			\draw[shift={(0.5,0.5)}]
  						(2,0) node {$\eps$}
  						(2,1) node {$\bar J$}
  						(0,1) node {$T\phi'$};
  		\end{tikzpicture} \mspace{13mu} = \mspace{13mu} \begin{tikzpicture}[textbaseline, x=2em, y=1.75em, font=\scriptsize]
  			\draw (1,1) -- (3,1) -- (3,0) -- (0,0) -- (0,2) -- (3,2) -- (3,1)
  						(2,0) -- (2,2)
  						(1,0) -- (1,2)
  						(1,2) -- (1,3) -- (3,3) -- (3,2)
  						(2,2.5) node {$T_\hc$}
							(0.5,1) node {$\bar s$};
				\draw[shift={(0.5,0.5)}]
							(2,0) node {$\eps$}
							(2,1) node {$\bar J$}
							(1,0) node {$\phi'$}
							(1,1) node {$\bar H$};
  		\end{tikzpicture}
  	\end{displaymath}
  	
  	Having given $\map rAM$ a lax $T$-morphism structure, we notice that the factorisation \eqref{lax algebra structure} forms the $T$-cell axiom for $\eps$. It remains to show that, as a $T$-cell, $\eps$ defines $r$ as the (pointwise) right Kan extension of $d$ along $J$. To do so we consider a $T$-cell $\phi$ as in \eqref{T-cell factorisation}, where now $s$ and $d$ are lax $T$-morphisms. Because $\eps$ defines $r$ as a (pointwise) right Kan extension, $\phi$ factors uniquely through $\eps$ as $\phi'$, as in \eqref{T-cell factorisation}, and we have to show that $\phi'$ is a $T$-cell. That the $T$-cell axiom for $\phi'$ holds when composed with $\eps \of \bar J$ on the right, and then with $\cell{T_\hc}{T(H \hc J)}{TH \hc TJ}$ on top, as shown on the right above, follows from the $T$-cell axiom for $\eps$ \eqref{lax algebra structure}, the factorisation $\phi = \phi' \hc \eps$, the naturality of $T_\hc$ and the $T$-cell axiom for $\phi$. Since $T_\hc$ is invertible and $\eps \of \bar J$ defines a (pointwise) right Kan extension the $T$-cell axiom for $\phi'$ follows. This concludes the proof of part (b).
  	
  	\emph{Part (c).} In this case $d$ is a pseudo $T$-morphism, with invertible structure cell \mbox{$\cell{\bar d}{m \of Td}{d \of b}$}. If two of (p), (e) and (l) hold then one of (p) and (e) must hold. Suppose that (e) holds; we will show that (p) and (l) are equivalent. By part (b) the cell $\eps$, that defines $r$ as a right Kan extension, lifts along $\Alg lT \to \K$ to a $T$-cell defining $(r, \bar r)$ as a right Kan extension in $\Alg lT$, where $\cell{\bar r}{m \of Tr}{r \of a}$ is uniquely determined by the identity \eqref{lax algebra structure}. If (p) holds then, using the fact that $\bar d$ is invertible, the left-hand side of \eqref{lax algebra structure} defines the right Kan extension of $d \of b$ along $TJ$. Because the second column in the right hand side of \eqref{lax algebra structure} does too, it follows that the factorisation $\bar r$ is invertible, so that the lift $(r, \bar r)$ of $r$ is a pseudomorphism; we conclude that condition (l) holds.
  	
  	On the other hand, if (l) holds then $r$ can be given the structure of a pseudo $T$\ndash mor\-phism such that $\eps$ forms a $T$-cell that defines $r$ as a right Kan extension in $\Alg{ps}T$. Since the lift of $\eps$ that we obtained by applying part (b) is unique, the structure cell $m \of Tr \Rar r \of a$ of the lift of $r$ to $\Alg{ps}T$ must coincide with the structure cell $\bar r$ that we obtained in \eqref{lax algebra structure}. Thus $\bar r$ is invertible and hence, by composing \eqref{lax algebra structure} on the right with $\inv{\bar d}$, we see that $1_m \of T\eps$ defines $m \of Tr$ as a right Kan extension. This means that the algebraic structure of $M$ preserves the right Kan extension of $d$ along $J$, that is (p) holds. We conclude that if (e) holds then (p) and (l) are equivalent. By applying analogous arguments to the factorisation \eqref{colax algebra structure} one shows that the equivalence of (e) and (l) follows from (p). This completes the proof of part (c).
  \end{proof}

 In the proposition below we apply \thmref{horizontal dual main theorem}, which is the horizontal dual of the theorem above, to the double monad of free strict monoidal $\V$-categories. Consider a (symmetric) monoidal $\V$-profunctor \mbox{$\hmap JAB$} (\exref{example: monoidal enriched profunctors}). Horizontally dual to the factorisation considered in \propref{pointwise exact cells}, its structure transformation $J_\tens$ factors uniquely as a transformation \mbox{$\cell{(J_\tens)^*}{\tens^*_A \hc TJ}{J(\id, \tens_B)}$}, of $\V$-profunctors $A \brar TB$, that is induced by the $\V$-maps
  \begin{multline*}
  	A\bigpars{x, (y_1 \tens \dotsb \tens y_n)} \tens J(y_1, z_1) \tens \dotsb \tens J(y_n, z_n) \xrar{\id \tens J_\tens} \\
  	A\bigpars{x, (y_1 \tens \dotsb \tens y_n)} \tens J\bigpars{(y_1 \tens \dotsb \tens y_n), (z_1 \tens \dotsb \tens z_n)} \xrar\lambda J\bigpars{x, (z_1 \tens \dotsb \tens z_n)}
  \end{multline*}
  where $x \in A$, $\ul y \in TA$ and $\ul z \in TB$. We say that $J$ satisfies the \emph{left Beck-Chevalley condition} if the transformation $(J_\tens)^*$ is an isomorphism.
  
  In the proposition below $\MonProf\V w$ (respectively $\sMonProf\V w$) denotes the double category of (symmetric) monoidal $\V$-categories, (symmetric) weak monoidal $\V$-functors, (symmetric) monoidal $\V$-profunctors and their transformations. For a double functor $\map F\K\L$, a horizontal morphism $\hmap JAB$ and an object $M$ in $\L$, we say that $F$ \emph{lifts all pointwise left Kan extensions along $J$ into $M$} if, for every vertical morphism $\map dAM$ in $\L$, $F$ lifts the pointwise left Kan extension of $d$ along $J$.
  
  \begin{proposition}\label{lifting pointwise monoidal left Kan extensions}
  	Let $\V$ be a cocomplete symmetric monoidal category whose tensor product preserves colimits on both sides, let $A$, $B$ and $M$ be monoidal $\V$-categories and let $\hmap JAB$ be a monoidal $\V$\ndash profunctor. Consider the following conditions:
  	\begin{enumerate}
  		\item[(p)]	the tensor product $\map{\tens_2}{M \tens M}M$ preserves pointwise left Kan extensions along $J$ in each variable;
  		\item[(e)]	$J$ satisfies the left Beck-Chevalley condition.
  	\end{enumerate}
  	If condition (e) holds then the forgetful double functor $\MonProf\V c \to \enProf\V$ lifts all pointwise left Kan extensions along $J$ into $M$. If (p) holds then so does \mbox{$\MonProf\V l \to \enProf\V$}; in the case that both conditions hold the same follows for $\MonProf\V{ps} \to \enProf\V$.
  	
  	Analogous assertions hold for the double functors $\sMonProf\V w \to \enProf\V$, for each $\textup w \in \set{\textup c, \textup l, \textup{ps}}$, in the case that $A$, $B$ and $M$ are symmetric monoidal $\V$\ndash cat\-e\-gories and $\hmap JAB$ is a symmetric monoidal $\V$-profunctor.
  \end{proposition}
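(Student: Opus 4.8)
The plan is to read the proposition off \thmref{horizontal dual main theorem}, the horizontal dual of the main result, applied to the `free strict monoidal $\V$-category'-monad $T$ on $\enProf\V$ from \exref{example: free strict monoidal enriched category monad}. This $T$ is normal pseudo, and by \exref{free strict monoidal category monad is pointwise exact} it is pointwise left exact, so the hypotheses of \thmref{horizontal dual main theorem} are met. Under the identifications of \exref{example: monoidal enriched categories} and \exref{example: monoidal enriched profunctors}, the double category $\MonProf\V w$ is the full sub-double category of $\Alg wT$ determined by the pseudo $T$-algebras: monoidal $\V$-categories are pseudo $V(T)$-algebras, weak monoidal $\V$-functors are weak $V(T)$-morphisms, monoidal $\V$-profunctors are horizontal $T$-morphisms, and monoidal transformations are $T$-cells. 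In particular $A$, $B$ and $M$ are pseudo $T$-algebras, $J$ is a horizontal $T$-morphism, and the forgetful double functor $\MonProf\V w \to \enProf\V$ is the restriction of $\Alg wT \to \enProf\V$. Since $A$, $B$ and $M$ are pseudo algebras, any lift produced by the theorem automatically lands in $\MonProf\V w$, and a pointwise left Kan extension in $\Alg wT$ is a fortiori one in the smaller double category $\MonProf\V w$; hence lifting along $\Alg wT \to \enProf\V$ yields lifting along $\MonProf\V w \to \enProf\V$. It therefore suffices to translate conditions (p) and (e) of the proposition into the homonymous conditions of \thmref{horizontal dual main theorem}, for every weak vertical $T$-morphism (that is, weak monoidal $\V$-functor) $\map dAM$, and then invoke parts (a), (b) and (c).

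First I would translate the preservation condition, which is exactly the content of \exref{example: pointwise left Kan extensions preserved by colax monoidal structure}: for a monoidal $\V$-category $M$, if the binary tensor product $\tens_2$ preserves the pointwise left Kan extension of $d$ along $J$ in each variable, then so does the full algebraic structure $\map m{TM}M$. Thus condition (p) of the proposition implies condition (p) of \thmref{horizontal dual main theorem} for each such $d$.

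Next I would translate the exactness condition. The point is that the factorisation $\cell{(J_\tens)^*}{\tens^*_A \hc TJ}{J(\id, \tens_B)}$, recalled just before the proposition, is precisely the cell $\phi_*$ appearing in the horizontal dual of \propref{pointwise exact cells} when applied to the structure cell $\bar J = J_\tens$ of $J$. By (the horizontal dual of) part (b) of that proposition, $\bar J$ is pointwise left exact exactly when this factorisation is invertible, i.e.\ exactly when $J$ satisfies the left Beck-Chevalley condition; and a pointwise left exact cell is in particular pointwise left $d$-exact for every $\map dAM$, which is condition (e) of \thmref{horizontal dual main theorem}. With both translations in hand the three assertions follow at once: (e) with part (a) lifts along $\MonProf\V c \to \enProf\V$; (p) with part (b) lifts along $\MonProf\V l \to \enProf\V$; and (p) and (e) with part (c) lift along $\MonProf\V{ps} \to \enProf\V$. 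Since the translations hold for every weak monoidal $d$, in each case all pointwise left Kan extensions along $J$ into $M$ are lifted.

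For the symmetric variant I would rerun the argument with the `free symmetric strict monoidal $\V$-category'-monad $T_\Sigma$ (\exref{example: symmetric monoidal enriched category}) in place of $T$, using that $T_\Sigma$ is a normal pseudo double monad and that symmetric monoidal $\V$-categories, symmetric weak monoidal $\V$-functors and symmetric monoidal $\V$-profunctors are its pseudo algebras, weak morphisms and horizontal morphisms. The main obstacle is that \exref{free strict monoidal category monad is pointwise exact} establishes pointwise left exactness only for $T$, so I would need to check it for $T_\Sigma$ as well; this amounts to rerunning that example's computation, verifying that the factorisations of the cells $\mu_J$ and $\eta_J$ of $T_\Sigma$ through the relevant cartesian cells are opcartesian, the only change being the bookkeeping of the permutations twisting the hom-objects of $T_\Sigma$. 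Everything else is formally identical to the non-symmetric case.
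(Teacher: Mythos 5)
Your handling of the non-symmetric assertions coincides with the paper's proof: apply \thmref{horizontal dual main theorem} to the free strict monoidal $\V$-category monad $T$, citing \exref{free strict monoidal category monad is pointwise exact} for the hypothesis on $T$, \exref{example: pointwise left Kan extensions preserved by colax monoidal structure} to translate condition (p), and the horizontal dual of \propref{pointwise exact cells}(b) to identify the left Beck-Chevalley condition with pointwise left exactness of the structure cell of $J$. That part is fine and needs no further comment.

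For the symmetric assertions you take a genuinely different route, and you underestimate what it costs. The paper does \emph{not} rerun the argument for $T_\Sigma$: it first lifts the Kan extension to $\MonProf\V w$ using $T$, and then shows by a direct calculation (via \lemref{pointwise Kan extensions along TJ} and the symmetry axioms for $J$ and $d$) that the induced structure cell $l_\tens$ automatically satisfies the symmetry axiom \eqref{symmetry axiom}, so the lift passes further along $\sMonProf\V w \to \MonProf\V w$. Your route needs more than a re-verification that $T_\Sigma$ is pointwise left exact. First, condition (e) of \thmref{horizontal dual main theorem} for $T_\Sigma$ concerns the factorisation $\tens_A^* \hc T_\Sigma J \Rar J(\id, \tens_B)$ of the \emph{symmetric} structure cell, whereas the left Beck-Chevalley condition as defined before the proposition concerns $\tens_A^* \hc TJ \Rar J(\id, \tens_B)$; these are different cells, and their equivalence requires an argument absorbing the permutation summands of $T_\Sigma J$ into the coend defining $\hc$ via the symmetries of $A$ and of $J$. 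Second, \exref{example: pointwise left Kan extensions preserved by colax monoidal structure} and \lemref{pointwise Kan extensions along TJ}, which you need to translate (p), are stated and proved for $T$ only. Third, the computation in \exref{free strict monoidal category monad is pointwise exact} does not transfer by mere bookkeeping: opcartesianness of $\mu_J'$ there rests on the existence of a \emph{unique} $\ull x$ with $\mu\ull x = \ul x$ and matching block lengths, but for $T_\Sigma$ a summand $\Tens_i J(x_{\rho i}, (\mu\ull y)_i)$ of $T_\Sigma J(\ul x, \mu\ull y)$ indexed by a general $\rho \in \Sigma_N$ need not respect the block structure of any single $\ull x$, so one must instead compute a coend over $T_\Sigma A(\ul x, \mu\ull x)$. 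Each of these steps is plausibly true, but together they amount to substantially more work than the paper's two-step lift, which is designed precisely to avoid all of them.
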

  \begin{proof}
  	The assertions for the forgetful double functors $\MonProf\V w \to \enProf\V$ follow immediately from applying \thmref{horizontal dual main theorem} to the `free strict monoidal $\V$\ndash cat\-e\-gory'\ndash monad $T$ of \exref{example: free strict monoidal enriched category monad}. Indeed, $T$ is pointwise left exact (\exref{free strict monoidal category monad is pointwise exact}) and the conditions (p) and (e) above imply the corresponding conditions in \thmref{horizontal dual main theorem}, by \exref{example: pointwise left Kan extensions preserved by colax monoidal structure} and the horizontal dual of \propref{pointwise exact cells}(b).
  	
  	To see that the assertions remain true in the symmetric case we claim that, for each choice of $\textup w \in \set{\textup c, \textup l, \textup{ps}}$, any lift of a pointwise left Kan extension along the forgetful double functor $\MonProf\V w \to \enProf\V$ can be further lifted along the forgetful double functor \mbox{$\sMonProf\V w \to \MonProf\V w$}. We will prove this claim in the case that $\textup w = \textup l$, in the other cases it can be proved analogously. Hence consider symmetric monoidal $\V$\ndash categories $A$, $B$ and $M$, a symmetric lax monoidal $\V$-functor $\map dAM$ and a symmetric monoidal $\V$-profunctor $\hmap JAB$, and assume that in $\enProf\V$ the transformation $\eta$ on the left below defines a $\V$-functor $\map lBM$ as the pointwise left Kan extension of $d$ along $J$.
  	\begin{equation} \label{lax monoidal structure}
			\begin{tikzpicture}[textbaseline]
  			\matrix(m)[math175em]{A & B \\ M & M \\};
  			\path[map]  (m-1-1) edge[barred] node[above] {$J$} (m-1-2)
														edge node[left] {$d$} (m-2-1)
										(m-1-2) edge node[right] {$l$} (m-2-2);
				\path				(m-2-1) edge[eq] (m-2-2);
				\path[transform canvas={shift={($(m-1-2)!(0,0)!(m-2-2)$)}}] (m-1-1) edge[cell] node[right] {$\eta$} (m-2-1);
			\end{tikzpicture} \qquad\quad \begin{tikzpicture}[textbaseline]
  			\matrix(m)[math175em]{TA & TA & TB \\ TM & A & B \\ M & M & M \\};
  			\path[map]	(m-1-1) edge node[left] {$Td$} (m-2-1)
  									(m-1-2) edge[barred] node[above] {$TJ$} (m-1-3)
  													edge node[left] {$\tens$} (m-2-2)
  									(m-1-3) edge node[right] {$\tens$} (m-2-3)
  									(m-2-1) edge node[left] {$\tens$} (m-3-1)
  									(m-2-2) edge[barred] node[below] {$J$} (m-2-3)
  													edge node[left] {$d$} (m-3-2)
  									(m-2-3) edge node[right] {$l$} (m-3-3);
  			\path				(m-1-1) edge[eq] (m-1-2)
  									(m-3-1) edge[eq] (m-3-2)
  									(m-3-2) edge[eq] (m-3-3);
  			\path[transform canvas={shift={($(m-2-2)!0.5!(m-2-3)$)}}]	(m-1-2) edge[cell] node[right] {$J_\tens$} (m-2-2)
  									(m-2-2) edge[cell, transform canvas={yshift=-3pt}] node[right] {$\eta$} (m-3-2);
  			\path[transform canvas={shift={($(m-2-3)!0.5!(m-3-2)$)}}]	(m-1-1) edge[cell] node[right] {$d_\tens$} (m-2-1);
  		\end{tikzpicture} = \begin{tikzpicture}[textbaseline]
  			\matrix(m)[math175em]{TA & TB & TB \\ TM & TM & B \\ M & M & M \\};
  			\path[map]	(m-1-1) edge[barred] node[above] {$TJ$} (m-1-2)
  													edge node[left] {$Td$} (m-2-1)
  									(m-1-2) edge node[right] {$Tl$} (m-2-2)
  									(m-1-3) edge node[right] {$\tens$} (m-2-3)
  									(m-2-1) edge node[left] {$\tens$} (m-3-1)
  									(m-2-2) edge node[right] {$\tens$} (m-3-2)
  									(m-2-3) edge node[right] {$l$} (m-3-3);
  			\path				(m-1-2) edge[eq] (m-1-3)
  									(m-2-1) edge[eq] (m-2-2)
  									(m-3-1) edge[eq] (m-3-2)
  									(m-3-2) edge[eq] (m-3-3);
  			\path[transform canvas={shift={($(m-2-2)!0.5!(m-2-3)$)}}]	(m-1-1) edge[cell] node[right] {$T\eta$} (m-2-1);
  			\path[transform canvas={shift={($(m-2-3)!0.5!(m-3-2)$)}}]	(m-1-2) edge[cell] node[right] {$l_\tens$} (m-2-2);
  		\end{tikzpicture}
		\end{equation}
		If the condition (p) holds then $1_\tens \of T\eta$ defines $\tens \of Tl$ as a pointwise left Kan extension and, horizontally dual to \eqref{colax algebra structure}, the lax monoidal structure \mbox{$\nat{l_\tens}{\tens \of Tl}{l \of \tens}$} that makes $l$ into a lax monoidal $\V$-functor is obtained as the unique factorisation on the right above.
		\begin{displaymath}
			\begin{tikzpicture}[textbaseline]
				\matrix(m)[math175em]{B^{\tens n} & B^{\tens n} & B^{\tens n} \\
					M^{\tens n} & & B^{\tens n} \\
					& B & B \\
					M & M & M \\};
				\path[map]	(m-1-1) edge node[left] {$l^{\tens n}$} (m-2-1)
										(m-1-2) edge node[left, inner sep=1pt] {$\tens_n$} (m-3-2)
										(m-1-3) edge node[right] {$\sigma$} (m-2-3)
										(m-2-1) edge node[left] {$\tens_n$} (m-4-1)
										(m-2-3) edge node[right] {$\tens_n$} (m-3-3)
										(m-3-2) edge node[right] {$l$} (m-4-2)
										(m-3-3) edge node[right] {$l$} (m-4-3);
				\path				(m-1-1) edge[eq] (m-1-2)
										(m-1-2) edge[eq] (m-1-3)
										(m-3-2) edge[eq] (m-3-3)
										(m-4-1) edge[eq] (m-4-2)
										(m-4-2) edge[eq] (m-4-3);
				\path[transform canvas={shift=($(m-2-3)!0.5!(m-3-2)$)}]	(m-2-1) edge[cell] node[right] {$l_{\tens_n}$} (m-3-1);
				\path[transform canvas={shift=($(m-3-2)!0.5!(m-3-3)$), xshift=-0.6em}]	(m-1-2) edge[cell] node[right] {$\mf s_\sigma$} (m-2-2);
			\end{tikzpicture} = \begin{tikzpicture}[textbaseline]
				\matrix(m)[math175em]{B^{\tens n} & B^{\tens n} & B^{\tens n} & B^{\tens n} \\
					M^{\tens n} & M^{\tens n} & B^{\tens n} & B^{\tens n} \\
					& M^{\tens n} & M^{\tens n} & B \\
					M & M & M & M \\};
				\path[map]	(m-1-1) edge node[left] {$l^{\tens n}$} (m-2-1)
										(m-1-2) edge node[right] {$l^{\tens n}$} (m-2-2)
										(m-1-3) edge node[right] {$\sigma$} (m-2-3)
										(m-1-4) edge node[right] {$\sigma$} (m-2-4)
										(m-2-1) edge node[left] {$\tens_n$} (m-4-1)
										(m-2-2) edge node[right] {$\sigma$} (m-3-2)
										(m-2-3) edge node[right] {$l^{\tens n}$} (m-3-3)
										(m-2-4) edge node[right] {$\tens_n$} (m-3-4)
										(m-3-2) edge node[right] {$\tens_n$} (m-4-2)
										(m-3-3) edge node[right] {$\tens_n$} (m-4-3)
										(m-3-4) edge node[right] {$l$} (m-4-4);
				\path				(m-1-1) edge[eq] (m-1-2)
										(m-1-2) edge[eq] (m-1-3)
										(m-1-3) edge[eq] (m-1-4)
										(m-2-1) edge[eq] (m-2-2)
										(m-2-3) edge[eq] (m-2-4)
										(m-3-2) edge[eq] (m-3-3)
										(m-4-1) edge[eq] (m-4-2)
										(m-4-2) edge[eq] (m-4-3)
										(m-4-3) edge[eq] (m-4-4);
				\path[transform canvas={shift=(m-3-3), xshift=-0.5em}]	(m-2-1) edge[cell] node[right] {$\mf s_\sigma$} (m-3-1)
										(m-2-3) edge[cell] node[right] {$l_{\tens_n}$} (m-3-3);
			\end{tikzpicture}
		\end{displaymath}
		
		We claim that $l_\tens$ is compatible with the symmetries on $B$ and $M$, in the sense that it makes a diagram like \eqref{symmetry axiom} commute. Rewritten in terms of vertical cells in $\enProf\V$, this means that the identity above holds; here $l_{\tens_n} = l_\tens \of 1_{j_n}$, where $\nat{j_n}{(\dash)^{\tens n}}T$ denotes the double transformation that includes the $n$th tensor powers into $T$. The following equation, where $d_{\tens_n} = d_\tens \of 1_{j_n}$, $J_{\tens_n} = J_\tens \of (j_n)_J$ and whose identities are described below, shows that the two sides above coincide after composition on the left with $1_{\tens_n} \of \eta^{\tens n}$. The latter defines a pointwise left Kan extension because $1_\tens \of T\eta$ does, by \lemref{pointwise Kan extensions along TJ}, so that the identity above follows.
		\begin{align*}
			\begin{tikzpicture}[textbaseline, x=2em, y=1.75em, font=\scriptsize]
				\draw	(1,2) -- (0,2) -- (0,3) -- (2,3) -- (2,0) -- (1,0) -- (1,3)
							(2,3) -- (3,3) -- (3,1) -- (2,1);
				\draw[shift={(0.5,0.5)}]	(0,2) node {$\eta^{\tens n}$}
							(1,1) node {$l_{\tens_n}$}
							(2,1.5) node {$\mf s_\sigma$};
			\end{tikzpicture} &\mspace{13mu} = \mspace{13mu} \begin{tikzpicture}[textbaseline, x=2em, y=1.75em, font=\scriptsize]
				\draw (1,3) -- (1,0) -- (0,0) -- (0,3) -- (2,3) -- (2,0) -- (1,0)
							(1,1) -- (3,1) -- (3,3) -- (2,3);
				\draw[shift={(0.5,0.5)}]	(0,1) node {$d_{\tens_n}$}
							(1,0) node {$\eta$};
				\draw[shift={(0.5,0)}]	(1,2) node {$J_{\tens_n}$}
							(2,2) node {$\mf s_\sigma$};
			\end{tikzpicture} \mspace{13mu} = \mspace{13mu} \begin{tikzpicture}[textbaseline, x=2em, y=1.75em, font=\scriptsize]
				\draw (1,3) -- (1,0) -- (0,0) -- (0,3) -- (3,3) -- (3,1) -- (1,1)
							(2,3) -- (2,0) -- (3,0) -- (3,1)
							(2,2) -- (3,2);
				\draw[shift={(0.5,0.5)}]	(0,1) node {$d_{\tens_n}$}
							(1,1.5) node {$\mf s_\sigma$}
							(2,0) node {$\eta$}
							(2,1) node {$J_{\tens_n}$}
							(2,2) node {$\sigma_J$};
			\end{tikzpicture} \\[1em]
			&\mspace{13mu} = \mspace{13mu} \begin{tikzpicture}[textbaseline, x=2em, y=1.75em, font=\scriptsize]
				\draw (1,2) -- (0,2) -- (0,0) -- (1,0) -- (1,3) -- (2,3) -- (2,0) -- (4,0) -- (4,3) -- (3,3) -- (3,0)
							(1,1) -- (2,1)
							(3,1) -- (4,1)
							(2,2) -- (4,2);
				\draw[shift={(0.5,0)}]	(0,1) node {$\mf s_\sigma$}
							(2,1) node {$d_{\tens_n}$};
				\draw[shift={(0.5,0.5)}]	(3,0) node {$\eta$}
							(3,1) node {$J_{\tens_n}$}
							(3,2) node {$\sigma_J$};
			\end{tikzpicture} \mspace{13mu} = \mspace{13mu} \begin{tikzpicture}[textbaseline, x=2em, y=1.75em, font=\scriptsize]
				\draw (1,2) -- (0,2) -- (0,0) -- (1,0) -- (1,3) -- (2,3) -- (2,1) -- (3,1) -- (3,3) -- (2,3)
							(1,1) -- (2,1)
							(2,2) -- (4,2) -- (4,0) -- (3,0) -- (3,1);
				\draw[shift={(0.5,0)}]	(0,1) node {$\mf s_\sigma$}
							(3,1) node {$l_{\tens_n}$};
				\draw[shift={(0.5,0.5)}]	(2,1) node {$\eta^{\tens n}$}
							(2,2) node {$\sigma_J$};
			\end{tikzpicture} \mspace{13mu} = \mspace{13mu} \begin{tikzpicture}[textbaseline, x=2em, y=1.75em, font=\scriptsize]
				\draw	(2,2) -- (0,2) -- (0,3) -- (1,3) -- (1,0) -- (2,0) -- (2,3) -- (3,3) -- (3,0) -- (4,0) -- (4,2) -- (3,2)
							(2,1) -- (3,1);
				\draw[shift={(0.5,0)}]	(0,2.5) node {$\eta^{\tens n}$}
							(1,1) node {$\mf s_\sigma$}
							(3,1) node {$l_{\tens_n}$};
			\end{tikzpicture}
		\end{align*}
		 The identities above follow from the composite of \eqref{lax monoidal structure} with $(j_n)_J$; the symmetry axiom for $J$ (see \exref{example: monoidal enriched profunctors}); the symmetry axiom for $d$; \eqref{lax monoidal structure} composed with $(j_n)_J$ again; the naturality of $\sigma$.
		 
		 We conclude that $l$, as a lax monoidal $\V$-functor, has a unique lift along the forgetful double functor $\map U{\sMonProf\V w}{\MonProf\V w}$. That $\eta$, as a cell in $\sMonProf\V w$, defines this lift as the pointwise left Kan extension of $d$ along $J$ follows readily from the fact that $U$ restricts to the identity on cells. This completes the proof.
	\end{proof}
  
  As a corollary to the previous proposition we can now recover Proposition 2.3 of \cite{Getzler09}. Given a symmetric monoidal $\V$-functor $\map jAB$ recall that the representable $\V$-profunctor $\hmap{j_*}AB$ is monoidal, by \propref{lifting restrictions}. It satisfies the left Beck-Chevalley condition if the transformation \mbox{$\nat{\bigpars{(j_*)_\tens}^*}{\tens^*_A \hc Tj_*}{B(j, \tens_B)}$} of profunctors $A \brar TB$, that is induced by the compositions
  \begin{align*}
  	A\bigpars{x, (y_1 &\tens \dotsb \tens y_n)} \tens B(jy_1, z_1) \tens \dotsb \tens B(jy_n, z_n) \\
  	&\xrar{\id \tens \tens_B} A\bigpars{x, (y_1 \tens \dotsb \tens y_n)} \tens B\bigpars{(jy_1 \tens \dotsb \tens jy_n), (z_1 \tens \dotsb \tens z_n)} \\
  	&\xrar{j \tens \lambda_{\inv{j_\tens}}} B\bigpars{jx, j(y_1 \tens \dotsb \tens y_n)} \tens B\bigpars{j(y_1 \tens \dotsb \tens y_n), (z_1 \tens \dotsb \tens z_n)} \\
  	&\xrar\mu B\bigpars{jx, (z_1 \tens \dotsb \tens z_n)},
  \end{align*}
  is invertible. To see what this means in the case of $\V = \Set$, we first notice that the composite profunctor $\hmap{\tens^*_A \hc Tj_*}A{TB}$, see \exref{example: profunctors}, is given by the sets \mbox{$(\tens^* \hc Tj_*)\bigpars{x, (z_1, \dotsc, z_n)}$} forming equivalence classes of sequences of morphisms
  \begin{displaymath}
  	\bigpars{x \xrar p (y_1 \tens \dotsb \tens y_n), jy_1 \xrar{u_1} z_1, \dotsc, jy_n \xrar{u_n} z_n},
  \end{displaymath}
  where $p \in A$ and $u_i \in B$, under the smallest equivalence relation $\sim$ that relates the pairs $(p, u_1, \dotsc, u_n) \sim (p', u'_1, \dotsc, u'_n)$ for which there exist maps $\map{r_i}{y_i}{y'_i}$ in $A$ that make the diagrams
  \begin{displaymath}
  	\begin{tikzpicture}[textbaseline]
  		\matrix(m)[math2em, row sep=1.0em]{& (y_1 \tens \dotsb \tens y_n) \\ x & \\ & (y'_1 \tens \dotsb \tens y'_n) \\};
  		\path[map]	(m-1-2) edge node[right] {$(r_1 \tens \dotsb \tens r_n)$} (m-3-2)
  								(m-2-1) edge node[above left] {$p$} (m-1-2)
  												edge node[below left] {$p'$} (m-3-2);
  	\end{tikzpicture} \qquad\qquad \text{and} \qquad\qquad \begin{tikzpicture}[textbaseline]
  		\matrix(m)[math2em, row sep=1.0em]{jy_i & \\ & z_i \\ jy'_i & \\};
  		\path[map]	(m-1-1) edge node[above right] {$u_i$} (m-2-2)
  												edge node[left] {$jr_i$} (m-3-1)
  								(m-3-1) edge node[below right] {$u'_i$} (m-2-2);
  	\end{tikzpicture}
  \end{displaymath}
  commute, the latter for each $i = 1, \dotsc, n$. In terms of $\sim$, the profunctor $j_*$ satisfies the left Beck-Chevalley condition if every morphism $\map v{jx}{(z_1 \tens \dotsb \tens z_n)}$ in $B$ can be represented as
  \begin{displaymath}
  	\begin{tikzpicture}
  		\matrix(m)[math2em]{jx & (z_1 \tens \dotsb \tens z_n) \\ j(y_1 \tens \dotsb \tens y_n) & (jy_1 \tens \dotsb \tens jy_n), \\};
  		\path[map]	(m-1-1) edge node[above] {$v$} (m-1-2)
  												edge node[left] {$jp$} (m-2-1)
  								(m-2-1) edge node[below] {$\inv{j_\tens}$} (m-2-2)
  								(m-2-2) edge node[right] {$(u_1 \tens \dotsb \tens u_n)$} (m-1-2);
  	\end{tikzpicture}
  \end{displaymath}
  where $p \in A$ and $u_i \in B$, and that any two such representations $(p, u_1, \dotsc, u_n)$ and $(p', u'_1, \dotsc, u'_n)$ are related under $\sim$.
  
  In the corollary below we have denoted by $\V\textup -\mathsf{sMonCat}$ the $2$-category of symmetric monoidal $\V$-categories, symmetric monoidal $\V$-functors and monoidal $\V$-natural transformations.
  \begin{corollary}[Getzler] \label{Getzler}
  	Let $\V$ be a closed symmetric monoidal category that is both complete and cocomplete. Consider symmetric monoidal $\V$-categories $A$, $B$ and $M$, as well as a symmetric monoidal $\V$-functor $\map jAB$. If $\hmap{j_*}AB$ satisfies the left Beck-Chevalley condition and $M$ is cocomplete, such that its tensor product preserves $\V$-weighted colimits in each variable, then the functor
  	\begin{displaymath}
  		\map{\lambda_j}{\V\textup -\mathsf{sMonCat}(B, M)}{\V\textup -\mathsf{sMonCat}(A,M)},
  	\end{displaymath}
  	that is given by precomposition with $j$, admits a left adjoint that is given by pointwise left Kan extension along $j$.
  \end{corollary}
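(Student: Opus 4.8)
The plan is to apply the symmetric, pseudo version of \propref{lifting pointwise monoidal left Kan extensions}, taking the horizontal $T_\Sigma$\ndash morphism to be the companion $\hmap{j_*}AB$. Since $j$ is a symmetric monoidal $\V$-functor it is a pseudo $T_\Sigma$-morphism, so by \propref{lifting restrictions} the representable $\V$-profunctor $j_*$ is a symmetric monoidal $\V$-profunctor and is the companion of $j$ in the equipment $\sMonProf\V{ps}$. Two correspondences, neither of which needs tabulations, link the double-categorical content to the $2$-categorical statement of the corollary. Before lifting, the horizontal dual of the final assertion of \propref{pointwise Kan extensions in terms of weighted limits} (valid because $\V$ is complete and closed symmetric monoidal) identifies pointwise left Kan extensions of a $\V$-functor $\map dAM$ along $j$, in Kelly's classical sense, with pointwise left Kan extensions of $d$ along $j_*$ in $\enProf\V$. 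After lifting, the horizontal dual of \propref{right Kan extensions along conjoints as right Kan extensions in V(K)} identifies ordinary left Kan extensions along the companion $j_*$ in $\sMonProf\V{ps}$ with ordinary left Kan extensions along $j$ in $V(\sMonProf\V{ps}) = \V\textup -\mathsf{sMonCat}$.

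First I would verify the two hypotheses of \propref{lifting pointwise monoidal left Kan extensions}. Condition (e) is precisely the assumption that $j_*$ satisfies the left Beck-Chevalley condition. For condition (p), recall the observation following \defref{definition: preserving Kan extensions} that a $\V$-functor preserving $\V$-weighted colimits preserves all pointwise left Kan extensions, by \propref{pointwise Kan extensions in terms of weighted limits}; hence the hypothesis that $\tens_2\colon M \tens M \to M$ preserves $\V$-weighted colimits in each variable yields condition (p) at once. As $T_\Sigma$ is pointwise left exact (\exref{free strict monoidal category monad is pointwise exact}), \propref{lifting pointwise monoidal left Kan extensions} applies and shows that $\sMonProf\V{ps} \to \enProf\V$ lifts every pointwise left Kan extension along $j_*$ into $M$; such a lift is in particular an ordinary left Kan extension along $j_*$ in $\sMonProf\V{ps}$.

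It remains to assemble these lifts into the desired adjoint. Because $\V$ is complete, cocomplete and closed symmetric monoidal and $M$ is cocomplete, every $\map dAM$ admits a classical pointwise left Kan extension $\map{\mathrm{Lan}_j d}BM$ in $\enCat\V$, computed by a $\V$-weighted colimit. Transporting it along the first correspondence, lifting it through $\sMonProf\V{ps} \to \enProf\V$, and transporting back along the second correspondence produces an ordinary left Kan extension of $d$ along $j$ in $\V\textup -\mathsf{sMonCat}$ whose underlying $\V$-functor is $\mathrm{Lan}_j d$. Its universal property is exactly the natural isomorphism $\V\textup -\mathsf{sMonCat}(B,M)(\mathrm{Lan}_j d, g) \iso \V\textup -\mathsf{sMonCat}(A,M)(d, g \of j)$; since this holds for every $d$, the assignment $d \mapsto \mathrm{Lan}_j d$ constitutes a left adjoint to $\lambda_j = (\dash) \of j$ that is given by pointwise left Kan extension along $j$, as asserted.

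The main obstacle is to keep the two correspondences and the single lifting step coherent, so that lifting the underlying $\V$-functor $\mathrm{Lan}_j d$ to a symmetric monoidal one actually lifts the entire defining cell and transports its universal property intact; the quantitative input about $M$ enters only through condition (p), which \propref{pointwise Kan extensions in terms of weighted limits} reduces to the stated colimit-preservation hypothesis. Beyond that, the only remaining point is the routine $2$-categorical fact that the existence of all left Kan extensions along $j$ in $\V\textup -\mathsf{sMonCat}$ is equivalent to $\lambda_j$ admitting a left adjoint.
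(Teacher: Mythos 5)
Your proposal is correct and follows essentially the same route as the paper's proof: existence of classical pointwise left Kan extensions from cocompleteness of $M$, verification of conditions (p) and (e) so that \propref{lifting pointwise monoidal left Kan extensions} lifts them to $\sMonProf\V{ps}$, the correspondence of (the horizontal dual of) \propref{right Kan extensions along conjoints as right Kan extensions in V(K)} to pass to $\V\textup-\mathsf{sMonCat}$, and the standard equivalence between existence of all left Kan extensions along $j$ and $\lambda_j$ admitting a left adjoint. The only difference is that you spell out the intermediate identifications slightly more explicitly than the paper does.
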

  \begin{proof}
  	It is well known that, for any morphism $\map jAB$ in a $2$-category $\C$, the functor $\map{\lambda_j}{\C(B, M)}{\C(A,M)}$, given by precomposition with $j$, has a left adjoint precisely if all left Kan extensions along $j$ exist; see e.g.\ Section X.3 of \cite{MacLane98} for the case $\C = \Cat$. It is also well known that all pointwise left Kan extensions into the $\V$-category $M$ (in the sense of the final assertion of \propref{pointwise Kan extensions in terms of weighted limits}) exist whenever it is cocomplete; see Proposition 4.33 of \cite{Kelly82}. By the assumptions on $M$ and $j_*$ we may apply the previous proposition to these pointwise left Kan extensions, which shows that they are lifted to the double category $\sMonProf\V{ps}$. Therefore, to conclude the proof, it suffices to recall that (pointwise) left Kan extensions along $j_*$, in $\sMonProf\V{ps}$, correspond to left Kan extensions along $j$ in $\V\textup -\mathsf{sMonCat}$, by (the horizontal dual of) \propref{right Kan extensions along conjoints as right Kan extensions in V(K)}.
	\end{proof}
  
  \section{Application: free bicommutative Hopf monoids}
  In this last section we use \corref{Getzler} above to obtain a left adjoint to the forgetful functor
  \begin{displaymath}
  	\mathsf{ubicHopf}(M) \to \mathsf{ucocComon}(M),
  \end{displaymath}
  from the category of `unbiased' bicommutative Hopf monoids, in a suitable symmetric monoidal category $M$, to the category of unbiased cocommutative comonoids in $M$. The idea is to regard these categories as categories of algebras of `PROPs'. Informally, a PROP is an algebraic structure that defines a type of algebra that lives in a symmetric monoidal category, and whose operations may have multiple inputs and multiple outputs; the following formal definition is the unbiased variant of the original definition, that was given in Section 24 of \cite{MacLane65}.
  
  \begin{definition}
		A \emph{PROP} $\mb P$ is a symmetric strict monoidal category $\mb P$ whose monoid of objects equals $(\mb N, +, 0)$. A \emph{morphism} $\map\phi{\mb P}{\mb Q}$ of PROPs is a symmetric strict monoidal functor that restricts to the identity on objects.
  \end{definition}
  
  Unraveling this, a PROP $\mb P$ consists of a category $\mb P$ with $\ob \mb P = \mb N$, equipped with functors $\map{\tens_k}{\mb P^k}{\mb P}$, one for each $k \geq 0$, and symmetries $\map{\mf s_\sigma}nn$, that are functorial in $\sigma \in \Sigma_n$, satisfying the following axioms. Firstly the tensor products restrict to addition $(n_1, \dotsc, n_k) \mapsto n_1 + \dotsb + n_k$ on objects, with $0$ as unit, and they are strictly associative and unital in the sense that
  \begin{displaymath}
  	\bigpars{(\xi_{11} \tens \dotsb \tens \xi_{1l_1}) \tens \dotsb \tens (\xi_{k1} \tens \dotsb \tens \xi_{kl_k})} = (\xi_{11} \tens \dotsb \tens \xi_{kl_k}),
  \end{displaymath}
  for any double sequence of morphisms $\map{\xi_{ij}}{m_{ij}}{n_{ij}}$ in $\mb P$, and $(\xi) = \xi$ for each single morphism $\map\xi mn$. Secondly $(\mf s_{\sigma_1} \tens \dotsb \tens \mf s_{\sigma_k}) = \mf s_{(\sigma_1, \dotsc, \sigma_k)}$, where $(\sigma_1, \dotsc, \sigma_k)$ denotes the disjoint union of the permutations $\sigma_i \in \Sigma_{n_i}$, while the symmetry
  \begin{displaymath}
  	\map{\mf s_{\tau_{(n_1, \dotsc, n_k)}}}{n_1 + \dotsb + n_k}{n_{\tau 1} + \dotsb + n_{\tau k}},	
  \end{displaymath}
  for the block permutation $\tau_{(n_1, \dotsc, n_k)}$ that is induced by $\tau \in \Sigma_k$ (see \exref{example: symmetric monoidal enriched category}), is required to be natural in each $n_1, \dotsc, n_k$.
  
  \begin{example}
  	We denote by $\mb F$ the PROP that is (isomorphic to) the skeletal category of finite sets. More precisely, we think of a morphism $\map\xi mn$ in $\mb F$ as being a function $\map\xi{\brks m}{\brks n}$ between the ordinals $\brks m = \set{0 < \dotsb < m-1}$ and \mbox{$\brks n = \set{0 < \dotsb < n-1}$}. The tensor product $(\xi_1 \tens \dotsb \tens \xi_k)$ is given by disjoint union:
  	\begin{displaymath}
  		\brks{m_1 + \dotsb + m_k} \iso \brks{m_1} \djunion \dotsb \djunion \brks{m_k} \xrar{\xi_1 \djunion \dotsb \djunion \xi_k} \brks{n_1} \djunion \dotsb \djunion \brks{n_k} \iso \brks{n_1 + \dotsb + n_k},
  	\end{displaymath}
  	where the bijections are unique such that they preserve order; here the order on the disjoint union $\brks{m_1} \djunion \dotsb \djunion \brks{m_k}$ is induced by the orders of each $\brks{m_i}$ and by the rule that, for each pair $i < j$, all elements of $\brks{m_i}$ are less than those of $\brks{m_j}$. Finally, the symmetries $\map{\mf s_\sigma}nn$ are simply given by the permutations of $\brks n$.
  \end{example}
  
  \begin{example} \label{example: PROP H}
  	We denote by $\mb H$ the PROP that is (isomorphic to) the skeletal category of free, finitely generated abelian groups. Precisely, a morphism $\map\xi mn$ in $\mb H$ is a homomorphism $\map \xi{\mb Z^m}{\mb Z^n}$, which we identify with an $n \times m$-matrix with coefficients in $\mb Z$. The monoidal structure on $\mb H$ is given by disjoint sum, so that the tensor product of morphisms $\map{\xi_i}{m_i}{n_i}$ is identified with the block matrix
  	\begin{displaymath}
  		\begin{pmatrix}
				 \xi_1 & 0 & \hdots & 0 \\
				 0 & \xi_2 & \ddots & \vdots \\
				 \vdots & \ddots & \ddots & 0 \\
				 0 & \hdots & 0 & \xi_k \\
			\end{pmatrix}.
  	\end{displaymath}
  	The symmetries $\map{\mf s_\sigma}nn$ are given by permuting the generators of $\mb Z^n$.
  	
  	The opposite $\op{\mb F}$ of the PROP $\mb F$, which is again a PROP, can be embedded into $\mb H$ by mapping each morphism $m \to n$, corresponding to a function $\map\xi{\brks n}{\brks m}$, to the $n \times m$-matrix $(\xi_{ij})$ that is given by $\xi_{ij} = 1$ if $\xi i = j$ and $0$ otherwise. Thus the image of $\op{\mb F}$ in $\mb H$ is the subPROP consisting of all matrices that contain precisely one non-zero entry in each row, whose coefficient is $1$. This gives a morphism of PROPs $\map j{\op{\mb F}}{\mb H}$.
  \end{example}
  
  \begin{definition} \label{definition: algebra of PROP}
  	Let $\mb P$ be a PROP and $M = (M, \tens, \mf a, \mf i)$ a symmetric monoidal category. An \emph{algebra} $A$ of $\mb P$ in $M$ is a symmetric monoidal functor $\map A{\mb P}M$. A \emph{morphism} $\map fAB$ of such algebras is a monoidal transformation $\nat fAB$. In other words, the category of algebras of $\mb P$ in $M$ is defined as
  	\begin{displaymath}
  		\PAlg{\mb P, M} = \sMonCat(\mb P, M),
  	\end{displaymath}
  	where $\sMonCat$ denotes the $2$-category of symmetric monoidal categories, symmetric monoidal functors and monoidal transformations.
  \end{definition}
  
  	In describing algebras of PROPs it is useful to abbreviate $A^{\tens n} = (\overbrace{A \tens \dotsb \tens A}^{\text{$n$ times}})$.
  \begin{example}
  	An algebra of the PROP $\mb F$ in $M$ is essentially an \emph{unbiased commutative monoid}; that is, an object $A$ of $M$ equipped with a family $\mu$ of morphisms $\map{\mu_n}{A^{\tens n}}A$, one for each $n \geq 0$, that make the following diagrams commute, where on the right $\sigma$ is any permutation in $\Sigma_n$.
  	\begin{displaymath}
  		\begin{tikzpicture}[baseline]
  			\matrix(m)[math2em, column sep=5.5em]{(	A^{\tens n_1} \tens \dotsb \tens A^{\tens n_k}) & A^{\tens k} \\
  				A^{\tens(n_1 + \dotsb + n_k)} & A \\};
  			\path[map]	(m-1-1) edge node[above] {$(\mu_{n_1} \tens \dotsb \tens \mu_{n_k})$} (m-1-2)
  													edge node[left] {$\mf a$} (m-2-1)
  									(m-1-2) edge node[right] {$\mu_k$} (m-2-2)
  									(m-2-1) edge node[below] {$\mu_{n_1 + \dotsb + n_k}$} (m-2-2);
  		\end{tikzpicture} \qquad \begin{tikzpicture}[baseline]
  			\matrix(m)[math2em]{A & (A) \\ & A \\};
  			\path[map]	(m-1-1) edge node[above] {$\mf i$} (m-1-2)
  													edge node[below left] {$\id$} (m-2-2)
  									(m-1-2)	edge node[right] {$\mu_1$} (m-2-2);
  		\end{tikzpicture} \qquad \begin{tikzpicture}[baseline]
  			\matrix(m)[math2em]{A^{\tens n} & \\ A^{\tens n} & A \\};
  			\path[map]	(m-1-1) edge node[above right] {$\mu_n$} (m-2-2)
  													edge node[left] {$\mf s_\sigma$} (m-2-1)
  									(m-2-1) edge node[below] {$\mu_n$} (m-2-2);
  		\end{tikzpicture}
  	\end{displaymath}
  	To be precise, every algebra $\map A{\mb F}M$ induces the structure of an unbiased commutative monoid on $A(1)$, using the maps
  	\begin{displaymath}
  		A(1)^{\tens n} \xrar{A_\tens} A(n) \xrar{A\pars{\map !{\brks n}{\brks 1}}} A(1)
  	\end{displaymath}
  	where $\map !{\brks n}{\brks 1}$ denotes the unique map into the terminal set $\brks 1$. That this family satisfies the axioms above is easy to check. Moreover, it is not hard to show that the assignment $A \mapsto A(1)$ extends to an equivalence of categories
  	\begin{displaymath}
  		\PAlg{\mb F, M} \simeq \mathsf{ucMon}(M),
  	\end{displaymath}
  	where $\mathsf{ucMon}(M)$ denotes the category of unbiased commutative monoids in $M$, in which a morphism $\map f{(A, \mu_A)}{(B, \mu_B)}$ is simply a map $\map fAB$ in $M$ such that $\mu_B \of f^{\tens n} = f \of \mu_A$. In proving this it is helpful to notice that each morphism $\map \xi mn$ in $\mb F$ factors as
  	\begin{displaymath}
  		\begin{tikzpicture}
  			\matrix(m)[math2em, column sep=1.5em]{m & & n, \\ & m_1 + \dotsb + m_n & \\};
  			\path[map]	(m-1-1) edge node[above] {$\xi$} (m-1-3)
  													edge node[below left] {$\mf s_\sigma$} (m-2-2)
  									(m-2-2) edge node[below right] {$(! \tens \dotsb \tens !)$} (m-1-3);
  		\end{tikzpicture}
  	\end{displaymath}
  	where $\sigma \in \Sigma_m$ is unique up to composition with disjoint unions $(\tau_1, \dotsc, \tau_n)$ of symmetries $\tau_i \in \Sigma_{m_i}$.
  	
  	Dual to the above, in the notion of an \emph{unbiased cocommutative comonoid} $A$ the structure is given by a family of maps $\map{\delta_n}A{A^{\tens n}}$. Analogous to the equivalence above we have $\PAlg{\op{\mb F}, M} \simeq \mathsf{ucocComon}(M)$.
  \end{example}
  \begin{example}
  	An \emph{unbiased bicommutative bimonoid} in $M$ is a triple $A = (A, \mu, \delta)$ where $(A, \mu)$ forms an unbiased commutative monoid and $(A, \delta)$ forms an unbiased cocommutative comonoid, such that the compatibility diagrams on the left below commute. Here the isomorphism moves each copy of $A$ in $(A^{\tens n})^{\tens m}$, that is indexed by $(i, j) \in \brks m \times \brks n$, to the copy in $(A^{\tens m})^{\tens n}$ that is indexed by $(j, i) \in \brks n \times \brks m$.
  	\begin{displaymath}
  		\begin{tikzpicture}[baseline]
  			\matrix(m)[math, column sep=0.75em, yshift=3.5em]{(A^{\tens n})^{\tens m} & (A^{\tens m})^{\tens n} \\};
  			\matrix(n)[math2em, yshift=-1.72em]{A^{\tens m} & & A^{\tens n} \\ & A & \\};
  			\path[desc]	(m-1-1) edge node {$\iso$} (m-1-2);
  			\path[map]	(m-1-2) edge node[right, yshift=2pt] {$(\mu_m)^{\tens n}$} (n-1-3)
  									(n-1-1) edge node[left, yshift=2pt] {$(\delta_n)^{\tens m}$} (m-1-1)
  													edge node[below left] {$\mu_m$} (n-2-2)
  									(n-2-2) edge node[below right] {$\delta_n$} (n-1-3);
  		\end{tikzpicture} \qquad \qquad \begin{tikzpicture}[baseline]
	  		\matrix(m)[math2em]{(A \tens A) & & (A \tens A) \\ A & () & A \\ (A \tens A) & & (A \tens A) \\};
	  		\path[map]	(m-1-1) edge node[above] {$(S \tens \id)$} (m-1-3)
	  								(m-1-3) edge node[right] {$\mu_2$} (m-2-3)
	  								(m-2-1) edge node[left] {$\delta_2$} (m-1-1)
	  												edge node[below] {$\mu_0$} (m-2-2)
	  												edge node[left] {$\delta_2$} (m-3-1)
	  								(m-2-2) edge node[below] {$\delta_0$} (m-2-3)
	  								(m-3-1) edge node[below] {$(\id \tens S)$} (m-3-3)
	  								(m-3-3) edge node[right] {$\mu_2$} (m-2-3);
	  	\end{tikzpicture}
  	\end{displaymath}
  	A morphism $A \to B$ of unbiased bicommutative bimonoids is a map $A \to B$ in $M$ that is simultaneously a map of monoids and comonoids. Furthermore, an unbiased bicommutative bimonoid $A$ is called an \emph{unbiased bicommutative Hopf monoid} whenever there exists a map $\map SAA$ that makes the diagram on the right above commute. Such a map, which is necessarily unique (see e.g.\ Lemma 37 of \cite{Porst13}), is called the \emph{antipode} of $A$.
  	
  	The algebras of the PROP $\mb H$ are essentially the unbiased bicommutative Hopf monoids in $M$: each algebra $\map A{\mb H}M$ induces the structure of such a Hopf monoid on $A(1)$ using the maps
  	\begin{flalign*}
  		&& \mu_n &= \bigbrks{A(1)^{\tens n} \xrar{A_\tens} A(n) \xrar{A(1 \hdots 1)} A(1)}, & \\
  		&& \delta_n &= \bigbrks{A(1) \xrar{A(1 \hdots 1)^{\textup t}}A(n) \xrar{\inv{A_\tens}} A(1)^{\tens n}} & \\
  		\text{and} && S &= \bigbrks{A(1) \xrar{A(-1)} A(1)},
  	\end{flalign*}
  	where $(1 \hdots 1)^{\textup t}$ denotes the transpose of the $1 \times n$-matrix $(1 \hdots 1)$. That these maps satisfy the bicommutative bimonoid axioms and the Hopf monoid axiom follows directly from various matrix equations. For example, the top of the diagram on the right above is the $A$-image of the matrix equation
\begin{displaymath}
	\begin{pmatrix}1 & 1 \\\end{pmatrix} \begin{pmatrix}-1 & 0 \\ 0 & 1 \\\end{pmatrix} \begin{pmatrix}1 \\ 1 \\\end{pmatrix} = \begin{pmatrix}0 \\\end{pmatrix}.
\end{displaymath}
		One can show that the assignment $A \mapsto A(1)$ extends to an equivalence of categories
		\begin{displaymath}
			\PAlg{\mb H, M} \simeq \mathsf{ubicHopf}(M),
		\end{displaymath}
		where $\mathsf{ubicHopf}(M)$ denotes the category of unbiased bicommutative Hopf monoids in $M$. In fact it is shown in \cite{Wadsley08} that, in the case that $M$ is symmetric strict monoidal, the category of symmetric strict monoidal functors $\mb H \to M$ is isomorphic to the category of biased bicommutative Hopf monoids; it is straightforward to modify (parts of) the proof given there into a proof for the equivalence above.
  \end{example}
  
  As promised, we finish by showing that unbiased bicommutative Hopf monoids that are freely generated by unbiased cocomutative comonoids exist. For a much more detailed and thorough treatment of free, and cofree, non-commutative Hopf monoids we refer the reader to \cite{Porst13}.
  \begin{theorem}
  	Let $M$ be a cocomplete symmetric monoidal category whose tensor product preserves colimits in each variable. The forgetful functor
  	\begin{displaymath}
  		\mathsf{ubicHopf}(M) \to \mathsf{ucocComon}(M),
  	\end{displaymath}
  	of unbiased bicommutative Hopf monoids to unbiased cocommutative comonoids, admits a left adjoint.
  \end{theorem}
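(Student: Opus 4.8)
The plan is to exhibit the forgetful functor as precomposition with the morphism of PROPs $\map j{\op{\mb F}}{\mb H}$ of \exref{example: PROP H}, and then to apply \corref{Getzler} with $\V = \Set$. First I would invoke the equivalences $\mathsf{ubicHopf}(M) \simeq \PAlg{\mb H, M} = \sMonCat(\mb H, M)$ and $\mathsf{ucocComon}(M) \simeq \PAlg{\op{\mb F}, M} = \sMonCat(\op{\mb F}, M)$ established in the examples above. Under these equivalences the cocommutative comultiplication underlying a Hopf monoid $\map A{\mb H}M$ is recovered from the matrices lying in the image of $j$, so that the forgetful functor is identified with the functor $\lambda_j$ of precomposition with $j$. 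Since left adjoints transport along equivalences, it suffices to produce a left adjoint to $\lambda_j$.

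To apply \corref{Getzler} I take $\V = \Set$, which is complete, cocomplete and closed symmetric monoidal, together with $A = \op{\mb F}$ and $B = \mb H$; these PROPs are symmetric strict monoidal, hence symmetric monoidal $\Set$-categories, and $\Set\text-\sMonCat = \sMonCat$. The category $M$ is cocomplete by hypothesis, and as every $\Set$-weighted colimit in an ordinary category is built from conical colimits and copowers, the assumption that the tensor product of $M$ preserves colimits in each variable yields that it preserves $\V$-weighted colimits in each variable. The one remaining hypothesis, which I expect to be the crux, is that the representable $\V$\ndash profunctor $\hmap{j_*}{\op{\mb F}}{\mb H}$ satisfies the left Beck-Chevalley condition.

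For this I would use the explicit description of the condition given above for $\V = \Set$. As $j$ is the identity on objects and strict monoidal, a morphism $\map v{jx}{(z_1 \tens \dotsb \tens z_n)}$ in $\mb H$ is an integer matrix whose rows split into blocks $v_1, \dotsc, v_n$, while a representation of $v$ consists of a morphism $\map px{(y_1 \tens \dotsb \tens y_n)}$ in $\op{\mb F}$ — that is, a function between the underlying finite sets, with components $p_i$ — together with matrices $\map{u_i}{y_i}{z_i}$ satisfying $v_i = u_i \of jp_i$. Existence is immediate: taking every $y_i = x$, every $p_i$ the identity and $u_i = v_i$, so that $p$ is the fold map $x \to x^{\tens n}$, produces a canonical representation $(\mathrm{fold}, \vec v)$. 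For uniqueness up to $\sim$ I would show that an arbitrary representation $(p, \vec u)$ is related to the canonical one in a single step, taking the witnessing maps $r_i$ of the defining relation of $\sim$ to be the components $p_i$ themselves: the first compatibility then reduces to the observation that composing the fold map with the $p_i$ blockwise recovers $p$, and the second is precisely the representation identity $v_i = u_i \of jp_i$. Hence any two representations of $v$ are $\sim$-equivalent, and the left Beck-Chevalley condition holds.

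With this in hand, \corref{Getzler} supplies a left adjoint to $\lambda_j$, given by pointwise left Kan extension along $j$; such extensions exist into the cocomplete category $M$ by Proposition 4.33 of \cite{Kelly82}. Transporting this adjunction back along the equivalences of the first paragraph yields the desired left adjoint to the forgetful functor. The genuine obstacle is the Beck-Chevalley verification of the third paragraph; the rest is bookkeeping with the algebra–PROP equivalences and the reduction of $\Set$-weighted colimits to ordinary ones.
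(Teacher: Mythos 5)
Your proposal is correct and follows essentially the same route as the paper: reduce via the PROP--algebra equivalences to producing a left adjoint to $\lambda_j$, apply \corref{Getzler} with $\V = \Set$, and verify the left Beck-Chevalley condition for $j_*$ by exhibiting the canonical representation of $\map v{jx}{(z_1 \tens \dotsb \tens z_n)}$ through the fold map (the paper's block column of identity matrices) and showing any other representation is one-step $\sim$-related to it via $r_i = p_i$ (the paper's blocks $\chi_i$ of $\zeta'$). The only difference is notational---functions and fold maps versus block matrices---so the two arguments coincide.
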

  \begin{proof}
  	We may just as well prove that the composite of the forgetful functor with the equivalences $\PAlg{\mb H, M} \simeq \mathsf{ubicHopf}(M)$ and $\mathsf{ucocComon}(M) \simeq \PAlg{\op{\mb F}, M}$ admits a left adjoint. In light of \defref{definition: algebra of PROP}, this composite is simply the functor
  	\begin{displaymath}
  		 \map{\lambda_j}{\mathsf{sMonCat}(\mb H, M)}{\mathsf{sMonCat}(\op{\mb F},M)}	
  	\end{displaymath}
  	that is given by precomposition with the embedding of PROPs $\map j{\op{\mb F}}{\mb H}$. Hence the proof follows if we can show that we may apply \corref{Getzler}, which means that we have to show that the companion $\hmap{j_*}{\op{\mb F}}{\mb H}$ satisfies the left Beck-Chevalley condition.
  	
  	Thus we consider a morphism $\xi$ in $\mb H$ that is of the form $\map\xi m{n_1 + \dotsb + n_k}$; following the discussion preceding \corref{Getzler}, we have to show that it can be uniquely represented as a composite $\xi = (\xi_1 \tens \dotsb \tens \xi_k) \of \zeta$, where the $\map{\xi_i}{l_i}{n_i}$ are maps in $\mb H$ and $\map\zeta m{l_1 + \dotsb + l_k}$ is a map in $\op{\mb F}$. Recalling the definitions of $\mb H$ and $\map j{\op{\mb F}}{\mb H}$ (see \exref{example: PROP H}), this means that $\xi$ is a $(n_1 + \dotsb + n_k) \times m$-matrix with coefficents in $\mb Z$, the $\xi_i$ are such $n_i \times l_i$-matrices, and $\zeta$ is an $(l_1 + \dotsb + l_k) \times m$-matrix that contains precisely one non-zero entry in each row, whose coefficent is $1$. It is easy to show that such a representation exists: we take $l_i = m$ and let $\xi_i$ be the blocks of $\xi$ as shown below. As the $km \times m$-matrix $\zeta$ we take the block matrix of identity matrices $I_m$ of dimension $m$, as on the right. 
		\begin{displaymath}
			\xi = \begin{pmatrix} \xi_1 \\ \xi_2 \\ \vdots \\ \xi_k \end{pmatrix} =	\begin{pmatrix}
					 \xi_1 & 0 & \hdots & 0 \\
					 0 & \xi_2 & \ddots & \vdots \\
					 \vdots & \ddots & \ddots & 0 \\
					 0 & \hdots & 0 & \xi_k \\
				\end{pmatrix} \begin{pmatrix} I_m \\ I_m \\ \vdots \\ I_m \end{pmatrix}
		\end{displaymath}
		
		It remains to prove that the representation $\xi = (\xi_1 \tens \dotsb \tens \xi_k) \of \zeta$ is unique, in the sense explained before \corref{Getzler}. Hence consider another such representation $\xi = (\xi'_1 \tens \dotsb \tens \xi'_k) \of \zeta'$, where each $\xi'_i$ is a $n_i \times l'_i$-matrix and $\zeta'$ is a $(l'_1 + \dotsb + l'_k)\times m$\ndash matrix in $\op{\mb F} \subset \mb H$; it suffices to show that there exist $l'_i \times m$-matrices $\chi_i$ in $\op{\mb F}$ satisfying $\zeta' = (\chi_1 \tens \dotsb \tens \chi_k) \of \zeta$ as well as $\xi_i = \xi'_i \of \chi_i$, for each $i = 1, \dotsc, k$. Using the same trick, we take the $\chi_i$ to be defined by the identity
		\begin{displaymath}
			\zeta' = \begin{pmatrix} \chi_1 \\ \chi_2 \\ \vdots \\ \chi_k \end{pmatrix} =	\begin{pmatrix}
					 \chi_1 & 0 & \hdots & 0 \\
					 0 & \chi_2 & \ddots & \vdots \\
					 \vdots & \ddots & \ddots & 0 \\
					 0 & \hdots & 0 & \chi_k \\
				\end{pmatrix} \begin{pmatrix} I_m \\ I_m \\ \vdots \\ I_m \end{pmatrix}.
		\end{displaymath}
		Cleary $\zeta' \in \op{\mb F}$ implies that each $\chi_i \in \op{\mb F}$, while  $\xi_i = \xi'_i \of \chi_i$ follows by postcomposing both sides of the identity above with $(\xi'_1 \tens \dotsb \tens \xi'_k)$.
  \end{proof}

\end{document}